\documentclass[11pt]{article}
\usepackage{fontspec}
\usepackage{amsmath,amssymb,amsthm,mathtools,mathrsfs}
\usepackage{microtype,booktabs,longtable,array,enumitem,fvextra,xurl}
\usepackage[margin=1in]{geometry}
\usepackage[dvipsnames]{xcolor}
\usepackage[colorlinks=true,linkcolor=MidnightBlue,citecolor=MidnightBlue,urlcolor=MidnightBlue]{hyperref}
\hypersetup{pdftitle={Irrationality exponents of logarithmic numbers: values, quotients, and graded approximation},pdfauthor={Gregory V. Chudnovsky},pdfsubject={Developed with OpenAI ChatGPT; consolidated mathematical proofs and formalization status}}
\numberwithin{equation}{section}
\newtheorem{theorem}{Theorem}[section]
\newtheorem{proposition}[theorem]{Proposition}
\newtheorem{lemma}[theorem]{Lemma}
\newtheorem{corollary}[theorem]{Corollary}
\theoremstyle{definition}

\theoremstyle{remark}
\newtheorem{remark}[theorem]{Remark}
\newcommand{\Q}{\mathbb Q}
\newcommand{\R}{\mathbb R}
\newcommand{\C}{\mathbb C}
\newcommand{\Z}{\mathbb Z}

\newcommand{\ii}{\mathrm i}

\title{Irrationality exponents of logarithmic numbers\\[4pt]
\large Values, quotients, and graded approximation}
\author{Gregory V. Chudnovsky\thanks{Developed with OpenAI ChatGPT, including mathematical exposition and formalization assistance. The formalization scope and cited classical inputs are stated in the appendix.}}
\date{October 16, 2026 --- Lean statement review through Checkpoint136}
\begin{document}
\maketitle
\begin{abstract}
We establish degree-dependent irrationality-exponent bounds for logarithms
at algebraic arguments and sharper bounds when conjugate logarithms match.
A two-torus construction gives an upper bound of three for ratios of
logarithms of multiplicatively independent positive rational numbers.
We also treat normalized quadratic logarithms and circular logarithms.
The rational-logarithm and rational-power exponent-two cases were treated
earlier by Liu, Jiang and Zhang; we retain them within the broader
framework and its Lean verification. Starting from the interpolation and
determinant method in OpenAI's September 24, 2026 preprint on the
irrationality exponent of $\pi$, we track the arithmetic cost of algebraic
embeddings and the effects of matching conjugate logarithms.
A second, graded Pad\'e construction
gives near-optimal measures for powers and bounded-degree polynomial
values at sufficiently small logarithmic arguments, with estimates uniform
in both coefficient height and rational denominator. We state the branch,
independence, parameter-order, and approximation-field hypotheses explicitly.
The Lean supplement has reviewed endpoint correspondences for all 62
numbered results: 53 direct checked endpoints or compositions, and nine
applications with named classical theorem inputs. Native proofs of those
inputs and separate formalizations of every unnumbered assertion are not
supplied. Appendix~\ref{app:formal-status} gives the exact declarations,
hypotheses, and verification evidence.

\end{abstract}

\noindent\textit{Version note.} The October 16, 2026 manuscript date
identifies the mathematical edition reviewed through Checkpoint136.
Attribution and literature comparisons were updated on October 20, 2026;
see Section~\ref{lit:liu-jiang-zhang} and the publication record in
Appendix~\ref{app:formal-status}.

\paragraph{Contribution and source attribution.}
The impetus for this investigation was OpenAI's preprint
\emph{The irrationality exponent of $\pi$ is $2$} \cite{OpenAI2026}.
It supplied the separated-weight determinant method and substantial
formal infrastructure. Section~\ref{lit:liu-jiang-zhang} compares the
present paper with Liu, Jiang and Zhang's earlier logarithmic preprint
\cite{LiuJiangZhang2026}. Our emphasis is on number-field norms, matching
embeddings, two-torus quotient bounds, graded applications, and precise
formal verification. The investigation, proof development, formalization,
and exposition were carried out by Gregory V. Chudnovsky with OpenAI
ChatGPT; the appendix distinguishes written proofs from checked endpoints.

\clearpage
{\small\tableofcontents}
\clearpage
% ===== END frontmatter.tex =====

% ===== BEGIN overview.tex =====
\section{Results and organization}\label{intro:results}

This paper brings together the logarithmic investigations of
7--16 October 2026, including the expanded proofs and verification checkpoints through136.
Its purpose is to give one connected proof, with the
arithmetic and analytic estimates visible at the points where they are
used. The common question is how closely a fixed logarithmic number can
be approximated by rational numbers. We write $\mu(x)$ for the ordinary
irrationality exponent of an irrational real number $x$. The definition,
including the convention on reduced denominators, is given in
Section~\ref{prelim:section}.

There are two main constructions. Moving-center interpolation gives
results at a fixed algebraic argument. Graded Pad\'e approximation gives
uniform results for powers and polynomial targets when the argument is
sufficiently small. Their hypotheses and conclusions differ. In
particular, an exponent-two theorem for a logarithm does not imply the
same theorem for its quotient with another logarithm or for an
unrestricted higher power.

\renewcommand{\arraystretch}{1.18}
\begin{longtable}{@{}>{\raggedright\arraybackslash}p{0.28\textwidth}>{\raggedright\arraybackslash}p{0.43\textwidth}>{\raggedright\arraybackslash}p{0.21\textwidth}@{}}
\toprule
\textbf{Target and hypotheses} & \textbf{Conclusion} & \textbf{Proof}\\
\midrule
\endfirsthead
\toprule
\textbf{Target and hypotheses} & \textbf{Conclusion} & \textbf{Proof}\\
\midrule
\endhead
Positive algebraic $a\ne1$
& $2\le\mu(\log a)\le2\min_{N\ge1}[\Q(a^N):\Q]$.
In particular the exponent is two if a positive power of $a$ is rational.
& Theorem~\ref{slog:thm:real}; Section~\ref{slog:sec:parameters}.\\
Nonzero fixed logarithm $\lambda$ of an algebraic $a\ne0$
& $2\le\mu_G(\lambda)\le2\min_{N\ge1}[\Q(\ii,a^N):\Q(\ii)]$,
for approximation over $\Q(\ii)$ using denominator modulus.
& Theorem~\ref{slog:thm:complex}.\\
Real $x\ne0$, $\alpha,c\in E^\times$; $d=[E:\Q]$ and
$e^{\sigma(c)x}=\sigma(\alpha)$ at $s$ embeddings
& $2\le\mu(x)\le2d/s$. Exact matching at every embedding gives
$\mu(x)=2$.
& Theorem~\ref{conj:partial}.\\
Positive norm $\pm1$ element $\alpha\ne1$ of $\Q(\sqrt D)$,
$D>0$ rational nonsquare
& $\mu(\log\alpha/\sqrt D)=2$.
& Corollary~\ref{conj:real-quadratic}.\\
Nonzero rational $r$; positive rational $D$
& $\mu(\arctan r)=2$ and $\mu(\pi/\sqrt D)=2$.
& Corollaries~\ref{slog:cor:atan} and~\ref{conj:fd-closure}.\\
Multiplicatively independent positive rational $a,b$
& $2\le\mu(\log a/\log b)\le3$.
& Theorem~\ref{ratio:thm:ratio}.\\
Ratios with compatible conjugate logarithms or algebraic slopes
& Bounds $3d/s$ under the exact branch and embedding conditions stated
in the theorems; these include normalized quadratic and mixed circular
examples.
& Theorems~\ref{ratio:matching} and~\ref{ratio:normalized}.\\
Fixed integer $n\ge1$ and $\varepsilon>0$
& $\mu(\log^n(1+1/b))\le2+\varepsilon$ and
$\mu(\arctan^n(1/b))\le2+\varepsilon$ for all sufficiently large
integers $b$, depending on $n,\varepsilon$.
& Theorem~\ref{graded:small-log-theorem}.\\
Nonconstant $P\in\Q[X]$, $\deg P\le n$, and
$h_d(z)=\sum_{j\ge0}d^jz^{2j+1}/(2j+1)$
& For $A>1+1/\varepsilon$, a threshold
$b\ge C(n,\varepsilon,A)|d|^A$ gives
$\mu(P(h_d(1/b)))\le2+\varepsilon$, uniformly in the choice of $P$.
& Theorem~\ref{graded:quadratic-uniform}.\\
Varying integer polynomial coefficients and rational denominator
& Joint lower bounds record both coefficient height and denominator;
their constants are uniform at a fixed evaluation point.
& Theorem~\ref{graded:joint}, Corollary~\ref{graded:joint-hd}, and
Theorem~\ref{graded:two-log-joint}.\\
\bottomrule
\end{longtable}
\renewcommand{\arraystretch}{1}

Some concrete comparisons help fix the scope. We obtain
\[
 \mu(\log2)=2,\qquad
 \mu\!\left(\frac{\log\varphi}{\sqrt5}\right)=2,
 \qquad
 2\le\mu\!\left(\frac{\log2}{\log3}\right)\le3,
 \quad \varphi=\frac{1+\sqrt5}{2}.
\]
The unnormalized $\log\varphi$ is covered here only by
$2\le\mu(\log\varphi)\le4$. The upper bound three for
$\log2/\log3$ is not an equality assertion. For the small-argument
theorems, the threshold depends on the requested error in the exponent;
they do not prove exponent exactly two for every one of the fixed
higher-power targets.

The moving-center and two-torus geometric arguments are proved in the
body of the paper. The graded argument uses the explicitly stated
differential-system rank theorem of Fischler and Rivoal, together with
the classical transcendence theorems needed to check its hypotheses.
These are written mathematical proofs. Their formalization status is a
separate statement. The 11 October 2026 evening release supplies completed
Lean endpoints for the real and Gaussian single-logarithm bounds, the
single-logarithm $2d/s$ matching bound, natural-parameter normalized
quadratic logarithms, restricted powers, and the rational-base quotient
bound $2\le\mu\le3$. It also proves the moving-center interpolation
needed by these applications and uniform rational-slope two-torus
interpolation. Later steps supply ordinary arctangent and homogeneous
two-logarithm measures, real and Gaussian fractional-linear invariance,
the arbitrary rational-parameter quadratic norm and linear-closure
interfaces, and the analytic quadratic logarithmic family. They also
verify the invariant-field degree bound, reciprocal-trace formulas,
the explicit quartic example, and the signed character-product descent.
The later character increments complete the normalized character-sum
exponent theorem, including its actual field realization, norm and
logarithmic identity. Subsequent checkpoints complete the matched and
normalized ratio endpoints, the literal graded application recipes, the
sharp bivariate collision estimate, and bounded-fiber occupancy geometry.
The converse matching classification is checked relative to
Gel'fond--Schneider; the general short-jet rank and geometric multiplicity
applications retain the explicitly listed standard foundations.
Appendix~\ref{app:formal-status} gives a reviewed correspondence for each
of the 62 numbered results, with the exact scope of these inputs. The
written proofs remain distinct from the kernel-checked conditional
applications and from native proofs of their classical foundations.

The paper is organized so that the fixed-value arguments can be read
before the graded construction. The preceding literature is discussed
next. After the conventions and elementary invariances, we prove the
moving-center theorem, construct its determinant, and compare the
arithmetic and analytic bounds. We then retain several embeddings in
that comparison and pass to the two-torus construction for ratios.
Detailed verification companions explain the quadratic analytic family
and the invariant-field, reciprocal, and character-product arguments.
The final mathematical sections develop the graded rank and coefficient
height estimates from their linear systems. This project is independent
of the separate lemniscate-period paper and the Math 2 project; it does
not resume the investigation of powers of $\pi$.
\subsection{The exponent-two and exponent-three conclusions}
\label{intro:proof-guide-23}

The results collected below are mathematical theorems with the hypotheses
and proofs given in the cited sections. A statement that its Lean
formalization is incomplete concerns the additional machine verification
of that proof. It does not designate the mathematical statement as a
conjecture. Conversely, a completed Lean result is identified by its exact
declaration and assumptions; compilation of related lemmas is not used as
a substitute for a formalized endpoint.

The following guide isolates the conclusions in the range
$2\le\mu\le3$. Degree-dependent theorems enter this range only under
the additional numerical condition displayed. All targets here are real,
and all exponents concern ordinary rational approximation.

\renewcommand{\arraystretch}{1.15}
\begin{longtable}{@{}>{\raggedright\arraybackslash}p{0.42\textwidth}>{\raggedright\arraybackslash}p{0.20\textwidth}>{\raggedright\arraybackslash}p{0.30\textwidth}@{}}
\toprule
\textbf{Theorem family} & \textbf{Conclusion} & \textbf{Lean status}\\
\midrule
\endfirsthead
\toprule
\textbf{Theorem family} & \textbf{Conclusion} & \textbf{Lean status}\\
\midrule
\endhead
Positive algebraic arguments having a positive integer power in $\Q$;
Theorem~\ref{slog:thm:real}
& $\mu(\log a)=2$ for $a\ne1$
& Completed endpoints.\\
One normalized logarithm with $s$ matching embeddings of a degree-$d$
field; Theorem~\ref{conj:partial}
& $2\le\mu\le2d/s\le3$ when $2d\le3s$
& Completed partial-matching endpoint.\\
Nonzero finite rational sums with a common matching scalar;
Proposition~\ref{extquad:common-scalar-closure} and
Corollary~\ref{extquad:normalized-sums}
& $\mu=2$
& Completed sum endpoints, including fixed-parameter quadratic families.\\
Normalized character sums of conjugate logarithms;
Corollary~\ref{conj:character} and Theorem~\ref{ext:character-exponent}
& $\mu=2$
& Completed endpoint, including the actual quadratic realization and norm.\\
Multiplicatively independent positive rational logarithm ratios;
Theorem~\ref{ratio:thm:ratio}
& $2\le\mu\le3$
& Completed rational-base endpoint.\\
Compatible conjugate ratios, and normalized or mixed ratios, with
all embeddings matched; Theorems~\ref{ratio:matching} and
\ref{ratio:normalized}
& $2\le\mu\le3$
& Checked matched/normalized endpoint families with their exact branch and embedding hypotheses; see Appendix~\ref{app:formal-status}.\\
Small-argument powers and polynomial targets;
Theorems~\ref{graded:small-log-theorem} and
\ref{graded:quadratic-uniform}, with $0<\varepsilon\le1$
& $2\le\mu\le2+\varepsilon\le3$
& Checked applications on explicit classical inputs, including the literal uniform threshold recipe and both real-branch power families.\\
\bottomrule
\end{longtable}
\renewcommand{\arraystretch}{1}

For the last row the argument must satisfy the stated threshold for the
chosen degree and $\varepsilon$. One cannot vary $\varepsilon$ while
holding an arbitrary argument fixed and conclude exponent two.
The joint-measure theorem in Section~\ref{graded:two-logs} has its own
coefficient-height quantifier; its displayed quotient consequence
$\mu\le3+\rho$ is not an additional exponent-three endpoint.
The independent rational-base theorem already supplies exponent at most
three for those particular rational logarithm ratios.

The first six families are developed through interpolation, the
arithmetic norm of one selected minor, its analytic estimate, and the
parameter comparison. The small-argument family additionally uses the
precisely stated differential-system rank theorem in
Section~\ref{graded:framework}. Rational fractional-linear transformations
preserve the resulting irrationality exponents when their determinant is
nonzero; their separate completed Lean statements are listed in
Appendix~\ref{app:formal-status}.

% ===== END overview.tex =====

% ===== BEGIN literature.tex =====
% Literature introduction for the consolidated logarithmic paper.
% Standalone fragment: requires amsmath, amssymb, hyperref/xurl.
% Exact result statements and Lean coverage belong to the audited main text.
\section{Introduction and relation to earlier work}
\label{sec:literature}

Logarithms give a natural testing ground for quantitative Diophantine
approximation. Their transcendence, differential equations and analytic
continuation are well understood, yet these facts do not by themselves
determine the quality of their rational approximations. For an irrational
real number $x$, the assertion $\mu(x)=2$ says that, for every
$\varepsilon>0$, the inequality
\[
  0<\left|x-\frac pq\right|<q^{-2-\varepsilon}
\]
has only finitely many solutions in reduced rational numbers $p/q$.
The dependence of the exceptional denominator threshold on
$\varepsilon$ is part of the statement. This assertion does not require
a positive constant $c$ for which $|x-p/q|\ge c/q^2$ holds for all
rationals.

The present paper brings together several logarithmic approximation
problems that share constructions but require different arithmetic
arguments: individual logarithms, quotients of two logarithms,
coefficient-uniform linear forms, and powers at sufficiently small
rational arguments. We state their hypotheses separately. In particular,
a conclusion for $\log\alpha$ is not automatically a conclusion for
$\log\alpha/\log\beta$, and a family of near-optimal bounds at arguments
tending to $1$ is not an exponent-two theorem at a fixed argument.
The point of collecting the results is to explain the common machinery
once and then make each additional argument visible.

\subsection{The earlier preprint of Liu, Jiang and Zhang}
\label{lit:liu-jiang-zhang}

Liu, Jiang and Zhang's \emph{Irrationality exponents of logarithms of
positive rational numbers} \cite{LiuJiangZhang2026} first appeared on
arXiv on October 7, 2026; the version compared here is v2, dated
October 8. Its Theorem~1 treats $\mu(\log r)=2$ for positive rational
$r\ne1$; Corollaries~6--7 include nonzero rational logarithmic sums,
rational affine changes, and positive arguments with a rational power.
Its Theorems~2 and~9 treat distinct multiplicative centers and bounded
fiber occupancy. Both papers use separated logarithmic weights and
arithmetic--analytic determinant comparisons. No novelty is claimed here
for those overlapping conclusions or for bounded-fiber interpolation.

Their Section~8 explicitly leaves general algebraic conjugate-determinant
estimates untreated. Our number-field and matching-embedding results,
two-torus quotient bounds, and graded applications have separate proofs
and hypotheses below. This comparison identifies scope relative to that
preprint, rather than asserting priority against all preceding work.
The accompanying Lean supplement supplies a further verification
contribution with the exact limits stated in Appendix~\ref{app:formal-status}.
This literature comparison was added on October 20, 2026; the retained
manuscript date is not a claim of earlier public appearance.

\subsection{Classical logarithmic approximation}

Mahler's work on logarithms of algebraic numbers
\cite{Mahler1953} constructs algebraic-function approximations to the
logarithm and derives quantitative estimates after specialization at
algebraic arguments. Baker subsequently obtained explicit rational
approximation estimates for logarithms of certain rational numbers
\cite{Baker1964}. These works illustrate a recurring principle: an
approximation construction must combine small analytic remainders with
arithmetic control of its coefficients. A high order of vanishing, on
its own, does not provide the required numerical estimate.

The use of Legendre polynomials and related Pad\'e constructions was
developed by Alladi and Robinson \cite{AlladiRobinson1980}.
Simultaneous logarithmic Pad\'e approximants give access to several
logarithms at once; Rhin and Toffin \cite{RhinToffin1986} used this
approach for rational arguments close to $1$. Rhin's subsequent work
\cite{Rhin1987} shows how alternative polynomial constructions and
careful arithmetic estimates improve bounds obtained by direct
approximation. Rukhadze's treatment of $\log2$ \cite{Rukhadze1987}
and Marcovecchio's use of the Rhin--Viola method
\cite{Marcovecchio2009} are further landmarks. For example,
Marcovecchio obtained the upper bound $3.57455391$ for
$\mu(\log2)$. We cite this as a specific established comparison,
without claiming here to give a complete table of numerical records.
The work of Bondareva, Luchin and Salikhov on $\log3$
\cite{BondarevaLuchinSalikhov2018} supplies another example of the
importance of optimizing the polynomial and arithmetic parts together.

There is also an essential distinction between approximation by
rationals and approximation by elements of a fixed number field.
Marcovecchio and Viola \cite{MarcovecchioViola2012} study
number-field irrationality and nonquadraticity measures for several
algebraic logarithms, using complex integrals and saddle-point
estimates. Hirata-Kohno, Muroi and Washio \cite{HKMW24} treat shifted
logarithms over number fields through Pad\'e approximants and effective
Poincar\'e--Perron arguments, with estimates at the different
archimedean places. These are especially relevant precedents when
conjugate embeddings enter an arithmetic norm calculation. Their
notions of height and approximation field must, however, be retained
when comparing the resulting exponents with the ordinary rational
exponent used here.

K\"uhne \cite{Kuhne2015} gives a systematic construction of
logarithmic Pad\'e approximants using Siegel's lemma in a proof of
transcendence. This complements the explicit integral constructions:
one may obtain useful approximants either from a formula or from a
linear-algebraic existence argument. In either case the numerical
irrationality exponent requires a further comparison of coefficient
growth, denominators, nonvanishing and analytic decay.

\subsection{Two logarithms and the role of qualitative independence}

The theory of linear forms in logarithms provides both qualitative
independence statements and quantitative lower bounds. Baker's series
\cite{Baker1966,Baker1967III} is fundamental in this respect;
see also the systematic account in \cite[Chapters~1--2]{Baker1975}.
For positive algebraic numbers that are multiplicatively independent,
their real logarithms are linearly independent over $\mathbb Q$;
Baker's theorem strengthens this to linear independence over
$\overline{\mathbb Q}$. His transcendence theorem for nonzero
algebraic linear forms in logarithms also gives the corresponding
independence after adjoining $1$. We use these statements only with
the specified algebraic arguments and fixed logarithms. They do not
assert algebraic independence of arbitrary products or powers of
several logarithms.

The distinction matters for a quotient. If $\alpha,\beta>0$,
$\beta\ne1$, and $\alpha,\beta$ are multiplicatively dependent,
then $\log\alpha/\log\beta$ is rational. In the independent case,
rational approximation to the quotient is equivalent to smallness of
a homogeneous linear form:
\[
 \left|\frac{\log\alpha}{\log\beta}-\frac pq\right|
   =\frac{|q\log\alpha-p\log\beta|}{q|\log\beta|}.
\]
For approximations in a fixed bounded neighborhood of the quotient,
$\max\{|p|,q\}$ is comparable to $q$. Thus a uniform lower bound
of order $H^{-\kappa}$ for the numerator yields an irrationality
bound of order $q^{-\kappa-1}$, with constants absorbed by an
arbitrarily small exponent margin. This elementary conversion explains
why the normalizations of linear-form measures and irrationality
exponents must not be interchanged.

A concrete benchmark is furnished by
\cite[Theorem~1]{BondarevaLuchinSalikhov2018}. With
$h_1,h_2,h\in\mathbb Z$ and $H=\max\{|h_1|,|h_2|\}$,
that theorem gives, for all sufficiently large $H$,
\[
 |h_1\log2+h_2\log3+h|>H^{-4.116201}.
\]
Its stated Corollary~1 is $\mu(\log3)\le5.116201$.
Taking $h=0$ and applying the preceding conversion also gives
$\mu(\log2/\log3)\le5.116201$.
This last implication is the comparison made here; it does not
identify the inhomogeneous three-term estimate with the homogeneous
two-term problem or assert that the quoted bound is the latest record.

Interpolation determinants furnish an important approach to quantitative
two-logarithm estimates; Laurent's work \cite{Laurent2008} is a
direct methodological predecessor. In the constructions below,
qualitative independence is used to exclude a rank obstruction when
the applicable rank theorem requires it. The eventual exponent still
comes from separately proved arithmetic and analytic estimates.
In particular, neither Baker's qualitative theorem nor maximal rank
alone supplies an optimal irrationality exponent.

\subsection{Graded Pad\'e approximation and its precise scope}

Two papers of 1983 are relevant and should be distinguished.
G.~V.~Chudnovsky's paper on linear forms in exponentials and binomials
\cite{Chudnovsky1983} discusses quantitative arithmetic approximation;
its final page includes the small-argument logarithmic-power
phenomenon revisited here. The paper of D.~V.~Chudnovsky and
G.~V.~Chudnovsky \cite{ChudnovskyChudnovsky1983} treats functional
Pad\'e approximation to solutions of linear differential equations,
using Wronskians and graded subrings of Picard--Vessiot extensions.
Functional approximation and numerical specialization are related,
but the passage between them has additional arithmetic requirements.

In 1984, G.~V.~Chudnovsky \cite{Chudnovsky1984} announced optimal
irrationality and linear-independence statements for values of
$E$-functions at rational points, based on graded Pad\'e
approximation. Fischler and Rivoal \cite{FR2025} explain the
zero-estimate gap in the general argument and the subsequent
conditional results of Zudilin. They prove the exponent-two result
for irrational rational-point values of $E$-functions with rational
Taylor coefficients, removing the additional independence assumptions
through a generalized Shidlovskii lemma.

The present logarithmic applications use their general
rational-differential-system rank results, with all nondegeneracy
hypotheses checked at the point of use. Their $E$-function
irrationality theorem is not being applied to $\log(1+z)$, which
has a finite singularity and is not an $E$-function. In particular,
the linear independence over $\overline{\mathbb Q}$ of successive
powers of one transcendental logarithm is a different input from the algebraic-linear
independence needed for a system of several logarithms. The rank
statements, coefficient estimates and specialization arguments are
therefore presented separately.

The small-argument conclusion has the quantifier order
\[
 \forall n\ge1\quad\forall\varepsilon>0\quad
 \exists b_0(n,\varepsilon)\quad\forall b\ge b_0(n,\varepsilon).
\]
Here the evaluation argument changes with the allowed excess over
$2$. Holding $b$ fixed and then letting $\varepsilon$ tend to zero
would be a different statement. Our treatment of this phenomenon is
an explicit reconstruction and development of the earlier method,
not a claim that the underlying small-argument phenomenon is new.

\subsection{The impetus from OpenAI's work}

The immediate impetus for the present investigation was OpenAI's
preprint \emph{The irrationality exponent of $\pi$ is $2$}, dated
September~24, 2026 \cite{OpenAI2026}. Its proof combines weighted
interpolation along logarithmic curves with a nonzero determinant.
After denominators are cleared, arithmetic gives a lower bound for
that determinant. Analytic expansion gives an incompatible upper
bound: either repeated Taylor degrees force a collision saving, or
large transverse indices supply powers of small approximation errors.
The interpolation thresholds are independent of the subsequently
chosen centers, which is crucial to the order of parameter choices.
The paper proves an eventual bound for every exponent greater than
$2$, without asserting an effective denominator threshold.

We explicitly acknowledge this construction as a starting point,
including the weighted simplices, separated choices of scales, and
the arithmetic--analytic determinant comparison. The overlap with
\cite{LiuJiangZhang2026} is identified above. The present number-field,
matching-embedding, two-torus and graded applications require the
additional arguments given below. Each inherited
input and each additional assertion is identified where it enters
the proof. A logarithmic consequence is not attributed to the OpenAI
paper merely because the construction was inspired by it.

For reproducibility, the accompanying dependency records distinguish
the manuscript \cite{OpenAI2026} from the proof repository
\cite{OpenAILean2026}, whose recorded upstream revision is
\nolinkurl{fd4aeeb2ee4fc729c18d98444fed42fd0529eeeb}.
The public Lean scope statement for the $\pi$ result covers the
eventual approximation inequality and the supremum characterization
of its irrationality exponent. It does not include the manuscript's
Flint--Hills-series consequence. Likewise, formal verification of a
supporting logarithmic lemma is distinguished here from formal
verification of a complete numerical theorem.

\subsection{How to read the consolidated argument}

The proofs are organized around three questions. First, why is there
a nonzero determinant with the prescribed rows and columns? This is
the interpolation or rank question. Second, how small can a nonzero
determinant with those coefficients be? This is the arithmetic
question, including every conjugate factor required by a norm.
Third, how small does the assumed approximation force the same
determinant to be? This is the analytic question. The comparison is
valid only when both estimates concern the same determinant and all
constants have the declared parameter dependence.

This organization also explains the different conclusions in the
several logarithmic settings. Rational arguments, quadratic
conjugation, reciprocal matching and several logarithms change the
arithmetic cost or the information available at the other embeddings.
The common determinant mechanism does not erase those differences.
We give each numerical conclusion with its actual hypotheses and
identify conditional consequences separately. The detailed proofs
and the formalization record provide complementary checks: the prose
explains the construction and estimates, while the record specifies
which statements have been represented and checked in Lean.
% ===== END literature.tex =====

% ===== BEGIN preliminaries.tex =====
\section{Approximation conventions and elementary transfers}
\label{prelim:section}

\subsection{Ordinary and Gaussian irrationality exponents}
\label{prelim:exponents}
For an irrational real number $x$, its ordinary irrationality exponent is
\begin{equation}\label{prelim:eq:mu}
 \mu(x)=\sup\left\{\nu>0:
 0<\left|x-\frac pq\right|<q^{-\nu}
 \text{ for infinitely many reduced }\frac pq\in\Q,\ q\ge2\right\}.
\end{equation}
A fixed multiplicative constant in the approximation inequality does not
change this supremum. Indeed, if $Cq^{-\nu}$ is an upper bound and
$\nu'<\nu$, then $Cq^{-\nu}<q^{-\nu'}$ for all sufficiently large $q$.
We use this observation whenever a fixed scaling or a height comparison
introduces a constant.

For a complex number $z\notin\Q(\ii)$, define
\begin{equation}\label{prelim:eq:gaussian}
 \mu_G(z)=\sup\left\{\nu>0:
 0<\left|z-\frac PQ\right|<|Q|^{-\nu}
 \text{ for infinitely many reduced }\frac PQ\in\Q(\ii),\ |Q|>1\right\}.
\end{equation}
Here $P,Q\in\Z[\ii]$ are relatively prime in that Euclidean ring;
multiplication by a unit does not alter $|Q|$. The height is $|Q|$, not
$|Q|^2$. This convention is essential for the numerical exponent two.
Ordinary rational approximation is used throughout for real targets.
For a genuinely nonreal target, rational numbers on the real axis cannot
approximate arbitrarily closely, so \eqref{prelim:eq:gaussian} is a
separately specified approximation problem.

\begin{lemma}[The Dirichlet lower exponent]\label{prelim:dirichlet}
Every irrational real $x$ has $\mu(x)\ge2$. Every
$z\notin\Q(\ii)$ has $\mu_G(z)\ge2$.
\end{lemma}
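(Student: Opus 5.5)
The plan is to prove both assertions by a pigeonhole (Dirichlet) argument. Each gives infinitely many distinct reduced approximants with error at most a constant times the inverse square of the height. The remark after \eqref{prelim:eq:mu}, that a fixed multiplicative constant does not change the supremum, then shows that every $\nu<2$ lies in the defining set. Hence the supremum is at least $2$.

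\emph{Real case.} For an integer $N\ge1$, I apply the pigeonhole principle to the $N+1$ fractional parts $\{qx\}$, $0\le q\le N$, in $N$ subintervals of $[0,1)$. This gives $1\le q\le N$ and $p\in\Z$ with $|qx-p|<1/N\le1/q$. Reducing $p/q$ only decreases $q$ and keeps the inequality, so we obtain a reduced fraction with $|x-p/q|<q^{-2}$. The difference is nonzero because $x$ is irrational.

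To get infinitely many such fractions, I argue as follows. Given finitely many approximants, let $\delta>0$ be the minimum of their (positive) errors and take $N>1/\delta$. The new approximant has error $<1/(qN)\le 1/N<\delta$, so it is new. The denominators of the approximants tend to infinity, since only finitely many reduced fractions with bounded denominator lie near $x$; hence eventually $q\ge2$. Finally, for $\nu<2$ we have $q^{-2}<q^{-\nu}$, so $\nu$ belongs to the set in \eqref{prelim:eq:mu}.

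\emph{Gaussian case.} Here I use a two-dimensional pigeonhole. Consider $Qz$ with $Q=a+b\ii$, $0\le a,b\le M$; there are $(M+1)^2$ such $Q$. Reduce $Qz$ modulo the lattice $\Z[\ii]$ into the unit square, and cut that square into $M^2$ subsquares of side $1/M$. Two values of $Q$ then collide. Their difference $Q\ne0$ satisfies $|Q|\le\sqrt2M$, and for some $P\in\Z[\ii]$
\[
 |Qz-P|\le\sqrt2/M\le 2/|Q|.
\]
This gives $|z-P/Q|\le2|Q|^{-2}$. Removing a Gaussian gcd (available since $\Z[\ii]$ is Euclidean) only decreases $|Q|$, so the bound survives for the reduced fraction. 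The difference is nonzero since $z\notin\Q(\ii)$.

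Infinitely many distinct approximants, and $|Q|>1$ eventually, follow exactly as in the real case: in any bounded disc only finitely many Gaussian fractions with $|Q|$ bounded exist. The constant $2$ is absorbed for $\nu<2$ by the remark after \eqref{prelim:eq:mu}, giving $\mu_G(z)\ge2$.

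The only delicate point is bookkeeping rather than an obstacle. We must confirm that the height convention $|Q|$ (not $|Q|^2$) gives exponent $2$. It does: the pigeonhole uses about $|Q|^2$ multipliers to produce error about $|Q|^{-1}$ in $Qz$. We must also make sure that reduction and the condition $|Q|>1$ are handled, as described above.
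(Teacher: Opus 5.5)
Your proof is correct and follows the paper's argument essentially verbatim. You use the same one- and two-dimensional pigeonhole with $N+1$ (respectively $(N+1)^2$) multipliers, the same bounds $|Qz-P|\le\sqrt2/N$, $|Q|\le\sqrt2N$ and $|z-P/Q|\le2/|Q|^2$, and the same observation that reduction only decreases the denominator. The only cosmetic difference is how you get infinitely many distinct approximants: you use the minimum-error trick, while the paper argues directly that bounded reduced denominators would confine the converging approximants to a finite set. For your Gaussian case, state explicitly that the error is at most $\sqrt2/M$ because $|Q|\ge1$.
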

\begin{proof}
For the real assertion, place the $N+1$ points
$0,x,\ldots,Nx$ modulo $\Z$ in $N$ intervals of length $1/N$.
Subtracting two points in the same interval gives integers $p,q$ with
$1\le q\le N$ and $0<|qx-p|\le1/N$. Consequently
$|x-p/q|\le1/(Nq)\le1/q^2$.
The reduced denominators obtained as $N\to\infty$ are unbounded:
otherwise the approximants, which converge to $x$ and stay in a fixed
bounded interval, would lie in a finite set. That would force $x$ to be
rational. Reducing a fraction only decreases its denominator, so the
upper bound in terms of the reduced denominator remains valid.
Every exponent below two therefore occurs infinitely often.

For the Gaussian assertion, take the $(N+1)^2$ points
$(a+b\ii)z$ modulo $\Z[\ii]$, $0\le a,b\le N$, in $N^2$ squares
of side $1/N$ in a fundamental square. Two points in the same box give
\[
 0<|Qz-P|\le\sqrt2/N,
 \qquad 0<|Q|\le\sqrt2N.
\]
Thus $|z-P/Q|\le2/|Q|^2$. The errors tend to zero even after
reduction, and a bounded set contains only finitely many Gaussian
fractions of bounded reduced denominator. Since $z\notin\Q(\ii)$,
the denominators are unbounded. The constant two can be absorbed by
any smaller exponent.
\end{proof}

\begin{lemma}[Rational fractional-linear invariance]\label{prelim:mobius}
Let $F=\Q$ with the usual absolute value or $F=\Q(\ii)$ with the
complex modulus. Let $x\notin F$, and, in the first case, assume $x$ is
real. If $a,b,c,d\in F$ and $ad-bc\ne0$, then
\[
 T(x)=\frac{ax+b}{cx+d}\notin F,
 \qquad \mu_F(T(x))=\mu_F(x),
\]
where $\mu_{\Q}=\mu$ and $\mu_{\Q(\ii)}=\mu_G$.
\end{lemma}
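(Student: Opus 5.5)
Since $T$ is invertible with inverse $T^{-1}(y)=(dy-b)/(-cy+a)$, also with coefficients in $F$ and determinant $ad-bc\ne0$, it suffices to prove the single inequality $\mu_F(T(x))\ge\mu_F(x)$ for every admissible $T$ and $x$. Applying it to $T^{-1}$ and $T(x)$ then gives the reverse inequality. First, $T(x)\notin F$. If $c=0$ then $d\ne0$ and $T(x)=(a/d)x+b/d$ with $a\ne0$. If $c\ne0$ then $cx+d\ne0$ because $x\notin F$. In either case, $T(x)=y\in F$ would give $x=T^{-1}(y)\in F$, a contradiction. In the real case $T(x)$ is real, so $\mu_F(T(x))$ is defined. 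After multiplying $a,b,c,d$ by a common nonzero element of $\Z$ (or $\Z[\ii]$) we may assume $a,b,c,d$ are integral; this does not change $T$.

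Fix $\nu<\mu_F(x)$ with $\nu>2$. This is possible since $\mu_F(x)\ge2$ by Lemma~\ref{prelim:dirichlet}; if $\mu_F(x)=2$ there is nothing to prove, because Lemma~\ref{prelim:dirichlet} also gives $\mu_F(T(x))\ge2$. There are then infinitely many reduced $P/Q$ with $0<|x-P/Q|<|Q|^{-\nu}$. For all but finitely many of them, $P/Q$ lies in a small disc around $x$ on which $cz+d$ is bounded away from $0$. Set $P'=aP+bQ$ and $Q'=cP+dQ$, so that $T(P/Q)=P'/Q'$ with $Q'\ne0$.

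The key steps are the following.

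\textbf{Distance.} Use the identity
\[
 T(x)-T(P/Q)=\frac{(ad-bc)(x-P/Q)}{(cx+d)(cP/Q+d)} .
\]
It gives $0<|T(x)-P'/Q'|\le C_1|x-P/Q|<C_1|Q|^{-\nu}$. The left side is nonzero because $T$ is injective and $x\ne P/Q$.

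\textbf{Heights.} We have $|Q'|\le|Q|\,(|c||P/Q|+|d|)\le C_2|Q|$, since $P/Q$ stays bounded. Reducing $P'/Q'$ to lowest terms with denominator $Q''$ only decreases the modulus, so $|Q''|\le|Q'|\le C_2|Q|$. Hence $|T(x)-P'/Q'|<C_1C_2^{\nu}|Q''|^{-\nu}$, and the constant is absorbed into any $\nu'<\nu$ as in the remark after \eqref{prelim:eq:mu}.

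\textbf{Infinitely many distinct approximants.} This is the step I expect to need the most care, since several $P/Q$ might a priori give the same reduced $P'/Q'$. It does not happen: $T$ is injective, and distinct reduced $P/Q$ are distinct points, so the $P'/Q'$ are pairwise distinct. They converge to $T(x)\notin F$ and lie in a bounded set. Only finitely many elements of $F$ in a bounded set have reduced denominator of modulus at most a given bound, so $|Q''|\to\infty$. In particular $|Q''|>1$ (respectively $q\ge2$) holds eventually, as the definitions require.

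Therefore every $\nu'<\mu_F(x)$ is attained infinitely often for $T(x)$, so $\mu_F(T(x))\ge\mu_F(x)$. The symmetric argument with $T^{-1}$ gives equality, including the case where both exponents are $+\infty$. The same argument covers the real and Gaussian cases, using $|\cdot|$ as the height.
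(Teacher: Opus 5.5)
Your proof is correct and follows the paper's argument. You clear denominators, transfer errors via $T(x)-T(y)=(ad-bc)(x-y)/((cx+d)(cy+d))$, compare $cP+dQ$ with $Q$, absorb the constants, and apply the same argument to $T^{-1}$ for the reverse inequality and for $T(x)\notin F$. The only variation concerns the reduced denominators. The paper shows they are comparable in both directions, since common divisors of $aP+bQ$ and $cP+dQ$ divide $ad-bc$. You use only the upper bound $|Q''|\le|Q'|\le C_2|Q|$, which is the direction actually needed, and obtain $|Q''|\to\infty$ from injectivity of $T$ together with finiteness of bounded-denominator fractions in a bounded set.
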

\begin{proof}
Clear the fixed denominators of $a,b,c,d$, so that all four lie in
$\Z$ or $\Z[\ii]$. Write $\Delta=ad-bc\ne0$. The denominator
$cx+d$ cannot vanish, since otherwise $x\in F$. Near $x$,
\begin{equation}\label{prelim:eq:mobiuserror}
 |T(x)-T(y)|=
 \frac{|\Delta|\,|x-y|}{|cx+d|\,|cy+d|}\asymp|x-y|,
\end{equation}
with constants depending only on $T,x$.
For a reduced approximant $P/Q$, the transformed numerator and denominator
are $aP+bQ$ and $cP+dQ$. Their common divisors divide both $\Delta P$
and $\Delta Q$, and hence divide $\Delta$, because $P,Q$ are relatively
prime. Thus reduction changes the denominator by a bounded factor.
Moreover $|cP+dQ|=|Q|\,|cP/Q+d|\asymp|Q|$ as $P/Q\to x$.
The transformed reduced denominator is therefore comparable to $|Q|$.
Equation~\eqref{prelim:eq:mobiuserror} transfers every admissible exponent,
with an arbitrarily small loss to absorb constants. Apply the same argument
to $T^{-1}$ for equality. The inverse also shows that $T(x)\notin F$.
\end{proof}

In particular, adding a rational number, multiplying by a nonzero rational
number, and taking a reciprocal preserve the ordinary exponent. The
corresponding operations over $\Q(\ii)$ preserve the Gaussian exponent.
An arbitrary algebraic multiplier is not covered by this lemma; the
normalized algebraic logarithms treated later require their own proof.

\paragraph{Lean scope.}
The fractional-linear invariance is now formalized for the extended
real exponent, including infinity, in
\nolinkurl{QuadraticPi/RealMobiusInvariance.lean}, and for the Gaussian
extended-real exponent in
\nolinkurl{QuadraticPi/GaussianMobiusInvariance.lean}.
For the finite logarithmic endpoints the corresponding real-valued
supremum agrees with the extended definition; no finiteness assumption
is hidden in the general invariance statement.

\subsection{Qualitative independence and logarithmic branches}
\label{prelim:transcendence}
We use three classical facts from transcendence theory; see
\cite{Baker1975}. Hermite--Lindemann says that the exponential of a nonzero
algebraic number is transcendental. Thus every nonzero logarithm of a
nonzero algebraic number is transcendental. The same applies to a nonzero
algebraic multiple or quotient of such a logarithm. Gelfond--Schneider
says that, for algebraic $\beta\notin\{0,1\}$ and algebraic irrational
$r$, every value $\exp(r\lambda_\beta)$ with
$\exp(\lambda_\beta)=\beta$ is transcendental. Thus a quotient
$\lambda_\alpha/\lambda_\beta$ of chosen logarithms of algebraic
numbers, with this condition on $\beta$, cannot be algebraic irrational.
Baker's theorem implies that
logarithms of algebraic numbers which are linearly independent over $\Q$
are linearly independent over $\overline{\Q}$; its version including the
constant $1$ gives the numerical independence used in the graded rank
argument. These facts provide qualitative independence, not the sharp
quantitative exponents asserted later.

For positive real $a$, $\log a$ means the real logarithm. A complex
logarithm $\lambda$ always means a fixed value satisfying $e^\lambda=a$;
no single-valued global logarithm is being asserted. Different embeddings
of a number field act on algebraic coefficients, not on a chosen
transcendental value $\lambda$. At each embedding where an analytic
estimate is applied, the matching logarithm is specified by its own
exponential identity. This is the reason for the explicit matching
hypotheses in the conjugate-transfer theorems.

If positive rational $a,b$ are multiplicatively independent, then
$a,b\ne1$ and $\log a/\log b$ is irrational: a rational value $p/q$
would give $a^q=b^p$. The resulting quotient is not assigned exponent
two merely because the two individual logarithms have exponent two.
Its approximation problem concerns the simultaneous linear form
$q\log a-p\log b$, and is proved by a different determinant.

\subsection{The order of limiting operations}
All parameter choices below are finite. In the interpolation method the
number of transverse variables is chosen first, followed by the separated
weights and their associated rational approximants. Only then is the
polynomial cutoff sent to infinity through the required multiples.
Any constant depending on the fixed algebraic numbers, embeddings, or
chosen branch is fixed before that final limit. In the graded method the
grade and evaluation point are fixed before the auxiliary polynomial
degree tends to infinity.

The small-argument conclusions have the logical form
\[
 \forall n\ge1\ \forall\varepsilon>0\ \exists b_0(n,\varepsilon)
 \ \forall b\ge b_0(n,\varepsilon):
 \mu\bigl(\log^n(1+1/b)\bigr)\le2+\varepsilon.
\]
They do not exchange the quantifiers in $b$ and $\varepsilon$ and do not
establish exponent two for every fixed power at every fixed logarithmic
argument. Uniformity in polynomial coefficients is a separate conclusion,
proved below by using one auxiliary family for all those coefficients.
% ===== END preliminaries.tex =====

% ===== BEGIN single_log_and_geometry.tex =====
\section{Single logarithms: statements and proof strategy}

The argument in this part adapts the separated-weight determinant method
of OpenAI's proof of $\mu(\pi)=2$ \cite{OpenAI2026}. The change from
$\pi$ to a logarithm is geometrically substantial: the local
multiplicative coordinates are now $a^j$, rather than all being $1$.
We prove the required interpolation statement below, with all constants
chosen in the order needed by the Diophantine argument. We also give the
arithmetic and analytic estimates in full. In particular, the collision
estimate will not be used as an unexplained black box.

For a real irrational $x$, write
\[
 \mu(x)=\sup\{\nu>0:0<|x-p/q|<q^{-\nu}
   \text{ for infinitely many reduced }p/q\in\Q\}.
\]
For $z\notin\Q(\ii)$, define its Gaussian exponent by
\[
 \mu_G(z)=\sup\{\nu>0:0<|z-P/Q|<|Q|^{-\nu}
   \text{ for infinitely many reduced }P/Q\in\Q(\ii)\}.
\]
Gaussian numerators and denominators lie in $\Z[\ii]$, with $|Q|>1$.
Associates do not affect this definition.

The rational and rational-power cases of the following theorem also
appear in \cite[Theorem~1 and Corollary~7]{LiuJiangZhang2026}.

\begin{theorem}\label{slog:thm:real}
Let $a>0$ be algebraic, $a\ne1$, and $d=[\Q(a):\Q]$. Then $\log a$
is irrational and
\[
 2\le\mu(\log a)\le2d.
\]
In particular $\mu(\log a)=2$ if $a$ is rational. More generally, if
$a^N\in\Q$ for some integer $N\ge1$, then $\mu(\log a)=2$.
\end{theorem}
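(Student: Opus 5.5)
The plan has three independent parts: irrationality, the lower bound, and the upper bound $2d$ together with the rational-power refinement.

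\emph{Irrationality and the lower bound.} Irrationality follows from the transcendence facts of Section~\ref{prelim:transcendence}. Since $a\ne1$ is positive, $\log a$ is a nonzero real logarithm of an algebraic number. By Hermite--Lindemann it is transcendental, in particular irrational. The lower bound $\mu(\log a)\ge2$ is then Lemma~\ref{prelim:dirichlet}.

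\emph{The rational-power refinement.} I reduce it to the main bound by a scaling. If $a^N\in\Q$ with $N\ge1$, put $b=a^N$. Then $b$ is a positive rational with $b\ne1$, and $\log a=\frac1N\log b$. By Lemma~\ref{prelim:mobius} with the rational multiplier $1/N$, we get $\mu(\log a)=\mu(\log b)$. The bound $2d$ applied to $b$, where $d=[\Q(b):\Q]=1$, gives $\mu(\log b)\le2$. Combined with the Dirichlet bound this yields equality. So everything rests on the estimate $\mu(\log a)\le2d$ for a general algebraic $a$.

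\emph{The upper bound via the determinant method.} Suppose $|\log a-p/q|<q^{-\nu}$ holds for infinitely many reduced $p/q$, with $\nu>2d$; I aim for a contradiction. I follow the separated-weight interpolation of \cite{OpenAI2026}, in the order fixed in Section~\ref{prelim:section}.

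\begin{enumerate}[label=(\roman*)]
\item Fix the number of transverse variables and the separated weights first. Their rational approximants $p_i/q_i$ are taken from the assumed sequence.
\item Consider the functions $z^k e^{jz\log a}$, whose values at integer points $m$ are $m^k a^{jm}\in\Q(a)$. These values are interpolated on the weighted simplices.
\item Choose a nonzero minor $\Delta$ of the resulting interpolation matrix. Its nonvanishing is the interpolation (rank) statement, proved in this section with thresholds independent of the later centers.
\end{enumerate}

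\emph{Arithmetic lower bound.} Clear denominators with the standard power $D^M$ of a denominator of $a$, so that $D^M\Delta$ is a nonzero algebraic integer in $\Q(a)$. Its norm is then at least $1$ in absolute value. Bound the $d-1$ conjugates $\sigma(D^M\Delta)$ trivially by Hadamard's inequality. This gives
\[
|\Delta|\ \ge\ \exp\bigl(-(d-1)\,U-d\,V\bigr),
\]
where $U$ is the logarithmic size of a conjugate entry and $V$ is the denominator cost. This is exactly where the factor $d$ enters: a single real embedding carries the analytic information, while the other $d-1$ embeddings are only bounded.

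\emph{Analytic upper bound.} Expand each column around the moving centres $p_i/q_i$. The smallness $|q_i\log a-p_i|<q_i^{1-\nu}$ makes the columns nearly dependent. Either the Taylor degrees repeat, which forces a collision saving, or large transverse indices contribute powers of the approximation errors. In both cases $|\Delta|$ is at most $\exp(-c\,\nu\,W)$, where $W$ is the total weight.

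\emph{Comparison and main obstacle.} Letting the polynomial cutoff tend to infinity through the required multiples, the main terms compare as $\nu\cdot(\text{weight})$ against $2d\cdot(\text{weight})$. The constants depending on $a$, the embeddings and the branch are fixed beforehand and are absorbed into the margin, contradicting $\nu>2d$.

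I expect the main obstacle to be the analytic collision estimate for the geometry of the multiplicative coordinates $a^{jm}$, which are not all $1$ as in the $\pi$ case. This estimate has to be proved explicitly, with its saving matching the arithmetic term exactly. Otherwise the exponent obtained would exceed $2d$.
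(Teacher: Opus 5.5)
The easy parts are fine. Hermite--Lindemann does give irrationality; the paper instead extracts $\lambda\notin F$ from the determinant argument itself, since an exact representation can be given arbitrarily large denominators. Lemma~\ref{prelim:dirichlet} gives the lower bound, and your rational-power clause is the same rational-scaling reduction the paper uses.

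The gap is the upper bound $\mu(\log a)\le2d$, which is the whole content of the theorem. Your outline contains no construction that could prove it. The matrix in (ii) has entries $m^ka^{jm}$ and involves neither $\log a$ nor the approximants $p_i/q_i$. Its determinant is therefore a fixed algebraic number, and the hypothesis $|q_i\log a-p_i|<q_i^{1-\nu}$ has no way to make it small; ``expanding each column around the moving centres $p_i/q_i$'' has nothing to act on.

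In the paper the approximants enter the entries themselves:
\begin{itemize}
\item the centers are $(a^j,jr_1,\ldots,jr_m)$ in coordinates $(Y,X_1,\ldots,X_m)$, with separated weights $w_i=\lceil\log q_i\rceil$ and $v_i=w_i/\theta$;
\item the entries are the jet coefficients \eqref{slog:eq:entry} along the logarithmic curves through these centers;
\item a nonzero full-row minor comes from Theorem~\ref{geom:thm:main} together with the exact shear Lemma~\ref{slog:lem:shear}.
\end{itemize}
Only then can one write $jr_i=j\lambda+\epsilon_{ji}$ and use the finite translation identity \eqref{slog:eq:translation}. Each term is then split between the collision saving of Lemma~\ref{slog:lem:collision} and the approximation saving $\nu(A(1-\eta)-\bar b)$.

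Your arithmetic and comparison steps also misidentify where the numbers come from. Clearing only a denominator of $a$ costs $O(K/w_0)$ per $MH$, which is negligible. The dominant cost is the power $\prod_iQ_i^{J_i}$ needed to clear the approximants. It is the row scaling by $\prod_iQ_i^{-\beta_i}$ that reduces this cost to $MH(1-\bar b)$. The norm then gives the leading term $-d(1-\bar b)$ in \eqref{slog:eq:lower}, while the $d-1$ conjugates contribute only errors that vanish in the parameter limit.

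The exponent $2d$ is then not an unexplained comparison of $\nu\cdot(\text{weight})$ with $2d\cdot(\text{weight})$. One needs $\nu(A-\theta)>d(1-\theta)$. The growth requirements $K/w_0\to0$, $v_0\to\infty$ and $L\to\infty$ force $CB<1<C\theta/B$ and $A<B$. Hence $A^2<\theta$ and $(A-\theta)/(1-\theta)<\tfrac12$, which is exactly why $\nu>2d$ is needed.

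Without the correct auxiliary matrix, the $\bar b$ bookkeeping on both the arithmetic and analytic sides, and this parameter balance, the proposal states the conclusion rather than proving it.
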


\begin{theorem}\label{slog:thm:complex}
Let $a\ne0$ be algebraic, and let $\lambda\ne0$ be any fixed complex
number with $e^\lambda=a$. Put $d=[\Q(\ii,a):\Q(\ii)]$. Then
$\lambda\notin\Q(\ii)$ and
\[
 2\le\mu_G(\lambda)\le2d.
\]
In particular, if $a\in\Q(\ii)^\times$, then $\mu_G(\lambda)=2$.
The same conclusion holds whenever $a^N\in\Q(\ii)$ for some $N\ge1$.
\end{theorem}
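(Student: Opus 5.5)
The plan is to run the proof of Theorem~\ref{slog:thm:real} with $\Q$ replaced by $\Q(\ii)$ throughout, using the Gaussian height $|Q|$ of \eqref{prelim:eq:gaussian}.

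\textbf{Qualitative part and lower bound.} If $\lambda\in\Q(\ii)$, then $\lambda$ is a nonzero algebraic number with $e^\lambda=a$ algebraic. This contradicts Hermite--Lindemann (Section~\ref{prelim:transcendence}), so $\lambda\notin\Q(\ii)$. The lower bound $\mu_G(\lambda)\ge2$ is then the Gaussian half of Lemma~\ref{prelim:dirichlet}.

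\textbf{Reduction of the special cases.} Suppose $a^N\in\Q(\ii)$. Then $N\lambda$ is a nonzero logarithm of $a^N$, and the field $\Q(\ii,a^N)$ has degree one over $\Q(\ii)$. Multiplication by $N$ is a Gaussian fractional-linear map, so Lemma~\ref{prelim:mobius} gives $\mu_G(\lambda)=\mu_G(N\lambda)$. Applying the degree bound to $N\lambda$ therefore gives $\mu_G(\lambda)=2$. The case $a\in\Q(\ii)^\times$ is $N=1$. Everything thus reduces to the bound $\mu_G(\lambda)\le2d$.

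\textbf{Upper bound.} Assume, for some $\nu>2d$, that there are infinitely many reduced $P/Q\in\Q(\ii)$ with $|\lambda-P/Q|<|Q|^{-\nu}$. I would then follow the separated-weight determinant scheme, in the parameter order fixed in the preliminaries.

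1. Fix the number of transverse variables and the weights first. Then choose several such approximants $P_k/Q_k$ with widely separated heights.
2. By the moving-center interpolation statement, select a nonzero minor $\Delta$ of the interpolation matrix for the monomials $z^{k}e^{jz}$. Its rows are Taylor data at the centers, with $\lambda$ replaced by the rational approximants. The entries are polynomials in the $P_k/Q_k$ and in the powers $a^j$.
3. After clearing the Gaussian denominators, which are powers of the $Q_k$ together with factorials and a fixed denominator of $a$, the number $\Delta$ lies in $\Z[\ii][a]$ up to a controlled factor.
4. Arithmetic lower bound: take the norm from $\Q(\ii,a)$ to $\Q(\ii)$. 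This is a nonzero element of $\Z[\ii]$, so its modulus is at least $1$. The $d-1$ conjugate factors are bounded trivially by coefficient growth. Hence $|\Delta|$ is at least the reciprocal of roughly the $(d-1)$-fold trivial size.
5. Analytic upper bound: at the identity embedding, the exact relation $e^\lambda=a$ holds. So $\Delta$ is the determinant evaluated at $\lambda$, perturbed by the errors $|\lambda-P_k/Q_k|$. Expanding in these errors either gives the collision saving from repeated Taylor degrees, or gives powers of $|Q_k|^{-\nu}$ from large transverse indices.
6. Comparison: letting the polynomial cutoff tend to infinity, the logarithmic sizes match only if $\nu\le2d$. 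The factor $d$ enters exactly as the norm's $d$ conjugates, against exponent $2$ at one matched embedding.

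\textbf{Main obstacle.} I expect the analytic estimate in step~5 to be the hard part, specifically the collision estimate with the moving centers $a^j$. It has to give a saving of exactly exponent two per unit of height, with constants fixed before the final limit. I would also need the Gaussian version of the denominator bookkeeping, using $|Q|$ rather than $|Q|^2$. The natural route is to copy the real-case estimate verbatim, checking that it only uses the modulus and the integrality of Gaussian integers.
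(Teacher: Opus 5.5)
Your outline has the same architecture as the paper's proof, which handles $F=\Q$ and $F=\Q(\ii)$ simultaneously. That proof uses interpolation at the centers $(a^j,jr_1,\ldots,jr_m)$ and one nonzero minor in $E=F(a)$. It takes the norm over the $d$ embeddings fixing $F$, with crude bounds at the other $d-1$, and applies translation plus collision at the distinguished embedding; a parameter comparison then forces $\nu\le2d$. Your Hermite--Lindemann proof of $\lambda\notin\Q(\ii)$ is fine, and so is your reduction of the power clauses through Lemma~\ref{prelim:mobius}. (The paper instead obtains $\lambda\notin F$ from the determinant argument itself, since an exact representation could be given arbitrarily large denominators.)

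The genuine gap is in step~3. The matrix supplied by \eqref{geom:eq:map} has entries $[t^su^\beta]\,P\bigl(a^j(1+t),(jr_i+\log(1+t)+u_i)_i\bigr)$ with $s$ up to $H/v_0$. So it contains Taylor coefficients of powers of $\log(1+t)$ of degree comparable to $H$. Clearing them entrywise, by factorials as you propose or by least common multiples, costs about $(H/v_0)\log H$ per row, i.e.\ $MH\cdot O(\log H/v_0)$ in total. All weights are fixed before $H\to\infty$, so this is not a controlled factor. It swamps everything on the scale $MH$ at which $d(1-\bar b)$ is compared with $\nu(A(1-\eta)-\bar b)$ and with the collision saving, and step~6 fails.

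The missing idea is Lemma~\ref{slog:lem:shear}. Replace $\log(1+t)$ in the $i$th coordinate by its truncation $G_i$ of degree $<T_i=\lceil w_i/(\theta v_0)\rceil$. The omitted tail has weight at least $v_i$, so the substitution acts on the coefficient packet by a unitriangular row change and leaves every full-row minor unchanged. The arithmetic is then done on the truncated entries. These are cleared by $\prod_i\operatorname{lcm}(1,\ldots,T_i-1)^{\lfloor H/w_i\rfloor}$ per row, at cost $O(m/(\theta v_0)+\sum_i1/w_i)$ per unit of $MH$. The translation \eqref{slog:eq:translation} is applied to the \emph{same} minor, written with the exact logarithm.

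Your phrase ``powers of the $Q_k$'' also hides a point that is essential for the constant $2d$. Columns must be scaled by $\prod_iQ_i^{\gamma_i}$ and rows by $\prod_iQ_i^{-\beta_i}$. This is legitimate because every nonzero term of the determinant pairs each row with a column having $\gamma\ge\beta$. It produces the leading cost $d(1-\bar b)$ in \eqref{slog:eq:lower} instead of $d$. The gap $\nu(A(1-\eta)-\bar b)-d(1-\bar b)=\nu A(1-\eta)-d-(\nu-d)\bar b$ is worst at $\bar b=\theta$. Combined with the constraint $A^2<\theta$ forced by the parameter scheme, this is what produces the threshold $\nu>2d$. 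Without the row factors one would need $\nu(A-\theta)>d$ with $A-\theta<\sqrt\theta-\theta\le1/4$, giving at best $4d$.
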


Theorem~\ref{slog:thm:complex} includes nonzero branches at $a=1$;
allowing them is harmless. The assumption $\lambda\ne0$ excludes the
rational logarithm zero.

\begin{corollary}\label{slog:cor:atan}
For every nonzero rational $r$, $\mu(\arctan r)=2$.
\end{corollary}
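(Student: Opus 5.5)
Write $r=u/v$ with $u,v\in\Z$, $v>0$, $u\ne0$. The plan is to express $\arctan r$ as a Gaussian-rational multiple of a logarithm of a Gaussian rational, and then combine Theorem~\ref{slog:thm:complex} with Lemma~\ref{prelim:mobius}. Put $a=(v+\ii u)/(v-\ii u)\in\Q(\ii)^\times$. Since $|a|=1$ and $\arg(v+\ii u)=\arctan r\in(-\pi/2,\pi/2)$, the number $\lambda=2\ii\arctan r$ satisfies $e^\lambda=a$. Moreover $\lambda\ne0$ because $r\ne0$. Theorem~\ref{slog:thm:complex} then gives $\lambda\notin\Q(\ii)$ and $\mu_G(\lambda)=2$. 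Multiplying by the Gaussian rational $1/(2\ii)$ is a fractional-linear map with nonzero determinant over $F=\Q(\ii)$. By Lemma~\ref{prelim:mobius}, $x=\arctan r\notin\Q(\ii)$ and $\mu_G(x)=2$. In particular $x$ is a real irrational, so $\mu(x)$ is defined, and Lemma~\ref{prelim:dirichlet} gives $\mu(x)\ge2$.

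It remains to compare the ordinary exponent of the real number $x$ with its Gaussian exponent, and we need only the inequality $\mu(x)\le\mu_G(x)$. Suppose $0<|x-p/q|<q^{-\nu}$ for infinitely many reduced $p/q$ with $q\ge2$. Each such $p/q$ is also a Gaussian fraction $P/Q$ with $P=p$ and $Q=q$. We must check that it is reduced in $\Z[\ii]$. A common Gaussian divisor $\pi$ of $p$ and $q$ gives $N(\pi)\mid\gcd(p^2,q^2)=1$, so $\pi$ is a unit. Hence the Gaussian height is $|Q|=q$, and the same inequality $0<|x-P/Q|<|Q|^{-\nu}$ holds infinitely often. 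It follows that $\mu(x)\le\mu_G(x)=2$, and so $\mu(\arctan r)=2$.

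The only delicate step is choosing the branch in the first paragraph. The identity $e^{2\ii\arctan r}=a$ must hold exactly for the fixed value $\lambda$. This holds because $(v+\ii u)/(v-\ii u)=e^{2\ii\theta}$ with $\theta=\arg(v+\ii u)$, and $\theta=\arctan(u/v)$ since $v>0$. Theorem~\ref{slog:thm:complex} allows any nonzero branch, so nothing further is required. The transfer from $\mu_G$ to $\mu$ is elementary, as shown above, and loses nothing because the heights agree exactly.
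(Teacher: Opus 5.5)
Your proof is correct and follows the paper's argument: the same Cayley quotient $a=(1+\ii r)/(1-\ii r)$ and branch $\lambda=2\ii\arctan r$ feed into Theorem~\ref{slog:thm:complex}, followed by a real-to-Gaussian transfer and the Dirichlet lower bound. The only difference is packaging. You move the factor $2\ii$ with Lemma~\ref{prelim:mobius} and then note that a reduced rational fraction stays reduced in $\Z[\ii]$, which gives $\mu(x)\le\mu_G(x)$ with identical heights. The paper instead multiplies the rational approximants by $2\ii$ directly and absorbs the constant and the reduction losses by hand.
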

\begin{proof}
The number $a=(1+\ii r)/(1-\ii r)$ belongs to $\Q(\ii)$ and has modulus
one. Choose $\lambda=2\ii\arctan r$, so that $e^\lambda=a$.
If rational approximants $p/q$ to $\arctan r$ had an exponent
$\nu>2$, choose $2<\nu'<\nu$. For large $q$, multiplication by
$2\ii$ would give Gaussian approximants with error at most
$2q^{-\nu}<q^{-\nu'}$. Reducing the Gaussian fraction can only
decrease its denominator modulus, so the resulting approximation still
has error less than the reduced denominator to the power $-\nu'$.
These reduced denominators are unbounded, since otherwise the
approximants in a bounded region would form a finite set and could not
converge to $\lambda\notin\Q(\ii)$. This contradicts
Theorem~\ref{slog:thm:complex}. Thus $\mu(\arctan r)\le2$.
Irrationality and the real Dirichlet bound give the reverse inequality.
\end{proof}

\paragraph{Complex scope.}
The ordinary exponent defined using $p/q\in\Q$ is a real-number
notion. If $\operatorname{Im}\lambda\ne0$, then
$|\lambda-p/q|\ge|\operatorname{Im}\lambda|$, so it is not appropriate
to assert that this exponent is two. Theorem~\ref{slog:thm:complex} specifies
one natural complex replacement. For positive real $a$ we use its real
logarithm in Theorem~\ref{slog:thm:real}.

The proof has four stages. First, interpolation produces a nonzero
determinant whose entries are algebraic numbers. Second, clearing its
denominators and taking a norm gives a lower bound. Third, very accurate
rational approximations to $\lambda$ make the same determinant small:
either many rows share a Taylor coefficient vector, or many rows carry
large powers of the approximation errors. Finally, weights are chosen
so that these two bounds are incompatible. The field degree occurs
in the arithmetic norm bound and is responsible for the factor $d$
in the general upper bound.

\section{Weighted interpolation at moving multiplicative centers}\label{geom:moving}

Theorem~\ref{geom:thm:main} follows the geometric method of
\cite[Section 2]{OpenAI2026}. We reproduce the argument because neither
the change in the multiplicative centers nor the order of quantifiers
can be omitted. The strengthened condition involving $S$ records the
largest number of centers in one multiplicative fiber.
Compare \cite[Theorems~2 and~9]{LiuJiangZhang2026} for the distinct-center
and bounded-fiber statements.

For $W=(w_0,\ldots,w_m)\in\Q_{>0}^{m+1}$, let $\mathcal P_W(H)$ be the
complex vector space with monomial basis
\[
 Y^hX_1^{\gamma_1}\cdots X_m^{\gamma_m},\qquad
 w_0h+\sum_iw_i\gamma_i\le H.
\]
For $V=(v_0,\ldots,v_m)>0$, write
\[
 \mathcal J_V(H)=\C[[t,u_1,\ldots,u_m]]/F_V^{\ge H},
 \qquad
 F_V^{\ge H}=\{f:\text{every monomial of }f\text{ has weight at least }H\}.
\]
Thus the target coefficients have \emph{strict} weight less than $H$.
All monomial exponents are nonnegative integers. In particular, the
source includes its degree boundary whereas the target excludes its
weight boundary.

\begin{theorem}[Varying centers with bounded fiber occupancy]
\label{geom:thm:main}
Fix integers $m,K,S\ge1$, with $S\le K$, and positive rationals
$w_0,v_0,\theta$ such that
\begin{equation}\label{geom:eq:volume}
 0<\theta<1,\qquad S\theta^m<1,\qquad
 K\frac{w_0}{v_0}\theta^m<1.
\end{equation}
There are successive lower thresholds for positive rational weights
$w_1,\ldots,w_m$, each depending only on these fixed data and earlier
weights, with the following property. Put $v_i=w_i/\theta$ for $i>0$.
For each such choice of weights there is an integer $R\ge1$, depending
only on the weights, such that the following holds for every collection
\[
 a_j=(b_j,c_{j1},\ldots,c_{jm})\in\C^\times\times\C^m,
 \qquad 0\le j<K.
\]
Assume that at most $S$ of the $b_j$ have any one value, and that, within
each group having the same $b_j$, the $c_{ji}$ are pairwise distinct for
every $i$. There is an $H_0$, allowed to depend on the centers, for which
\begin{equation}\label{geom:eq:map}
 \mathcal P_W(H)\longrightarrow\bigoplus_{j<K}\mathcal J_V(H),\qquad
 P\longmapsto
 \left[P\bigl(b_j(1+t),(c_{ji}+u_i+\log(1+t))_i\bigr)\right]_{j<K}
\end{equation}
is surjective whenever $H\ge H_0$ is an integer divisible by $R$.
\end{theorem}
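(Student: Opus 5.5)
We prove surjectivity by duality: the map \eqref{geom:eq:map} is surjective exactly when no nonzero linear functional on the target annihilates its image. A functional on $\bigoplus_j\mathcal J_V(H)$ is a tuple of coefficient vectors
\[
 \bigl(\xi_{j,k,\beta}\bigr),\qquad v_0k+\sum_i v_i\beta_i<H .
\]
Annihilating the image means
\[
 \sum_{j,k,\beta}\xi_{j,k,\beta}\,\partial_t^k\partial_u^\beta\Bigl[(b_j(1+t))^h\prod_i\bigl(c_{ji}+u_i+\log(1+t)\bigr)^{\gamma_i}\Bigr]_{t=u=0}=0
\]
for every source monomial $Y^hX^\gamma$. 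We argue by induction on $m$, eliminating the transverse variables one at a time from $X_m$ down to $X_1$. This is why the weights are chosen successively, with $w_m$ much larger than $w_{m-1}$, and so on.

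\textbf{Step 1: last variable.} Consider the action of $X_m$ alone. Because $v_m=w_m/\theta$, the source degree in $X_m$ reaches $H/w_m$ while the target jet order in $u_m$ stays below $H/v_m=\theta H/w_m$. The centers $c_{jm}$ are pairwise distinct within each fiber. So one-variable Hermite interpolation in $X_m$ separates the centers inside one $b$-fiber: polynomials of degree $<H/w_m$ in $X_m$ surject onto jets of order $<\theta H/w_m$ at the at most $S$ points $c_{jm}$ once $S\theta<1$. In the full weighted count this becomes the condition $S\theta^m<1$, because each level divides the available transverse volume by $\theta$.

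\textbf{Step 2: reduction and induction.} Block by the $u_m$-order $\ell$ and the $X_m$-degree. Peel off the top weight layers to reduce to an $(m-1)$-variable problem with budget $H-w_m\gamma_m$. The lower thresholds on $w_m$ (relative to $w_0,\dots,w_{m-1}$) ensure that the Taylor contributions of $\log(1+t)$ to lower $u_i$-coordinates have strictly larger weight. Hence the map is block-triangular for a weight filtration, and it suffices to prove surjectivity on the associated graded pieces. These pieces are products of the one-variable Hermite maps with the $(m-1)$-dimensional maps. $R$ is chosen as a common denominator of all $w_i,v_i$, so that every layer boundary is an integer when $R\mid H$.

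\textbf{Step 3: base case $m=0$, the $Y$ direction.} At the bottom we must show that the polynomials $Y^h$ with $w_0h\le H'$ surject onto the $t$-jets of order $<H'/v_0$ at the distinct values $b_j$, after the $u$-variables have been absorbed. Once the transverse variables have separated the points within each fiber, only the distinct fibers remain, at most $K$ of them. The jets of $b^h(1+t)^h$ in $t$ up to order $<H'/v_0$ are spanned by $h^k b^h$. This is a confluent Vandermonde system in the distinct $b$-values, which is surjective when the number of unknowns exceeds the number of conditions. Summed over layers, the condition becomes $K(w_0/v_0)\theta^m<1$, which together with the dimension ratio is exactly \eqref{geom:eq:volume}. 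The threshold $H_0$ depending on the centers absorbs the boundary errors in the lattice-point counts, which are of lower order in $H$.

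\textbf{Main obstacle.} The crux is Step 2. The coupling term $\log(1+t)$ enters every $X_i$ and mixes the $t$-direction with all the $u_i$. We must show that, for suitably separated weights, this mixing is strictly upper-triangular in the weight filtration and so disappears in the associated graded map. We would first try to verify this by checking that $\log(1+t)=t+O(t^2)$ raises the $v$-weight by at least $v_0-v_i\cdot 0$. We would then compare the source weight $w_i$ with the target weight $v_i$, using lower bounds on $w_i$ large in terms of $v_0$ and the earlier $w$'s.
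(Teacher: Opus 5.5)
Your Step~2 rests on the claim that the contributions of $\log(1+t)$ are strictly higher order in the target filtration, so that the associated graded map is an uncoupled product of Hermite maps. The weights go the other way, and this is a genuine gap.

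\emph{The coupling is not higher order.} The thresholds are lower bounds, and your own scheme takes $w_i$ large compared with $v_0$. (The paper's separation conditions include $w_1>C_*v_0$.) Hence $v_i=w_i/\theta>v_0$. In $\mathcal J_V(H)$ the variable $t$ therefore has strictly smaller weight than every $u_i$, and the image $u_i+\log(1+t)$ of $X_i-c_{ji}$ has leading term $t$, not $u_i$. Only the tail $t^k$ with $kv_0\ge v_i$ has higher weight and can be removed by a unitriangular change; this is exactly Lemma~\ref{slog:lem:shear}. The initial segment $G_i(t)$, of length about $v_i/v_0$, always remains. Enlarging $w_i$ makes that segment longer, not shorter.

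\emph{The uncoupled map is not surjective anyway.} Even if the coupling did disappear, the uncoupled map fails to be surjective under \eqref{geom:eq:volume}. Its $u^0$-block sends $P$ to the $t$-jets of $P(b_j(1+t),c_j)$. This is a polynomial of degree at most $H/w_0$ in $t$, so it cannot realize jets of order $<H/v_0$ once $v_0<w_0$. The hypotheses allow this already for $K=S=m=1$: take $w_0=2$, $v_0=1$, $\theta=2/5$, so that $K(w_0/v_0)\theta=4/5<1$. In the applications $v_0=2K\theta^mw_0\ll w_0$. The $t$-jets are filled only because the $X$-monomials enter through $\log(1+t)$. Along $z\mapsto(b_je^z,c_j+z)$ a column $Y^hX^\gamma$ restricts to $b_j^he^{hz}\prod_i(c_{ji}+z)^{\gamma_i}$. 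The coupling is the source of the interpolation, not a perturbation of it.

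\emph{The counting has the same defect.} A product decomposition needs every factor to be surjective, which means factorwise inequalities. You would need $S\theta<1$ for the Hermite step in $X_m$. You would also need $N_b\,w_0/v_0\le1$ for the confluent Vandermonde step, where $N_b$ is the number of distinct $b_j$. The hypotheses give only the volume inequalities $S\theta^m<1$ and $K(w_0/v_0)\theta^m<1$. In the applications $S=K=\lfloor C^m\rfloor$ with $C>1$, so $S\theta\gg1$. Moreover, for $b_j=a^j$ with positive $a\ne1$ all $K$ values are distinct, and $Kw_0/v_0=1/(2\theta^m)\gg1$. Your sentences saying these conditions ``become'' the volume conditions after summing over layers are exactly where the proof is missing, and no layer-by-layer interpolation supplies them.

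\emph{What the paper does instead.} The paper proves a global positivity statement, the curve inequality of Proposition~\ref{geom:prop:curve}. The argument proceeds as follows.
\begin{itemize}
\item The volume hypotheses enter only as the dimension count for an auxiliary polynomial with contact $(1+3\sigma)N$.
\item That polynomial is differentiated along the commuting frame $D_0=Y\partial_Y+\sum_i\partial_{X_i}$, $D_i=\partial_{X_i}$. At every center this frame is $(1+t)\partial_t,\partial_{u_i}$, so it absorbs the logarithmic coupling.
\item The zero--pole comparison forces these derivatives to vanish on the curve.
\item The persistent-component argument, using Lemma~\ref{geom:lem:mult} with separated normal bases, and a residue argument force $Y$ to be constant.
\item The condition $S\theta^m<1$ is used only in the resulting fiber case.
\end{itemize}
Surjectivity is then deduced by blowing up the jet ideal. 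The curve inequality makes $A-(1+\sigma)E$ nef, hence $A-E$ ample, and Serre vanishing is applied to $H^1(X,I^n\otimes\mathcal L^n)$ with $H=nR$.
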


More formally, the assertion is that for fixed base data there exist
functions $T_i(w_1,\ldots,w_{i-1})$ such that
\[
 \begin{gathered}
 \forall w_1>T_1\ \forall w_2>T_2(w_1)\ \cdots\
 \forall w_m>T_m(w_1,\ldots,w_{m-1}),\\
 \exists R>0\ \forall\text{ admissible centers }\{a_j\}\
 \exists H_0\ \forall H\ge H_0,\quad R\mid H
 \ \Longrightarrow\ \eqref{geom:eq:map}\text{ is onto}.
 \end{gathered}
\]
All weights in these quantifiers are positive rationals, and $R,H_0,H$
are integers. The positions of the centers do not occur in the
functions $T_i$ or in the choice of $R$.

The logarithm throughout is the formal series with constant term zero.
There is no choice of a global logarithm of $b_j$ in this statement.

\begin{corollary}[The required extension of PA]\label{geom:cor:PA}
Theorem~\ref{geom:thm:main} holds with the original conditions
$K\theta^m<1$ and $K(w_0/v_0)\theta^m<1$, and with coordinatewise
distinctness across all centers.
\end{corollary}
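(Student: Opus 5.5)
This is a specialization of Theorem~\ref{geom:thm:main}: we check that the original hypotheses imply those of the theorem with a suitable choice of $S$, and then invoke it directly. Take $S=1$. Since $S\le K$ holds trivially for $K\ge1$, the parameters $m,K,S$ are admissible.

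The first step is to check the volume conditions \eqref{geom:eq:volume}. The hypothesis $K\theta^m<1$ together with $K\ge1$ gives $\theta^m<1$. This is $S\theta^m<1$ for $S=1$, and it also yields $0<\theta<1$ because $\theta$ is a positive rational. The third condition, $K(w_0/v_0)\theta^m<1$, is assumed verbatim. If one prefers $S=K$, the condition $S\theta^m<1$ is exactly the original $K\theta^m<1$, and the argument is the same.

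The second step is to check the center hypotheses. Coordinatewise distinctness across all centers means that the $b_j$ are pairwise distinct and, for each $i$, the $c_{ji}$ are pairwise distinct. Pairwise distinct $b_j$ means each value of $b_j$ occurs at most once, so at most $S=1$ centers share it. The within-group condition then holds vacuously, since every group is a singleton. With $S=K$ instead, occupancy is automatically at most $K$, and distinctness of the $c_{ji}$ within each group follows from global distinctness.

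The conclusion follows: Theorem~\ref{geom:thm:main} supplies the thresholds $T_i$, the integer $R$, and, for each admissible collection of centers, the constant $H_0$, so that the map \eqref{geom:eq:map} is onto whenever $H\ge H_0$ and $R\mid H$. The quantifier order is unchanged, because $S$ is fixed from the base data $m,K$ before any weights are chosen. I expect no substantive obstacle. The only point needing care is that the thresholds $T_i$ and the integer $R$ depend on $S$, and with the choice $S=1$ (or $S=K$) that dependence is through data already fixed, so it is harmless.
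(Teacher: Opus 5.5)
Your fallback choice $S=K$ is exactly the paper's proof; the paper's entire argument is ``Take $S=K$''. Your verification of that branch is correct. $S\le K$ holds with equality, and $S\theta^m<1$ is literally $K\theta^m<1$. Occupancy at most $K$ is automatic because there are only $K$ centers. Distinctness of each coordinate $c_{ji}$ across all $j$ implies distinctness inside every group with a common $b_j$. Present this branch as the proof.

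Your primary route with $S=1$ does not prove the corollary as intended. It reads ``coordinatewise distinctness across all centers'' as also requiring the $b_j$ to be pairwise distinct. The hypothesis is instead the global form of the theorem's within-group condition: for every $i$, the additive coordinates $c_{ji}$ are pairwise distinct over all $j$, with no condition on the multiplicative coordinates. That is the point of recovering the original PA formulation, in which the torus coordinates are all equal to $1$, and the applications depend on it:

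\begin{itemize}
\item With $b_j=a^j$, the values repeat when $a$ is a root of unity. This includes the nonzero branches at $a=1$ allowed in Theorem~\ref{slog:thm:complex}, where every $b_j=1$.
\item The proof of Theorem~\ref{conj:partial} states explicitly that repeated values of $\alpha^j$ are allowed.
\end{itemize}

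If all $b_j$ coincide, the occupancy is $K$. The hypothesis that at most $S=1$ of the $b_j$ share a value then fails, so Theorem~\ref{geom:thm:main} cannot be invoked with $S=1$.

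A warning sign about your reading: it would make the statement a weaker form of Corollary~\ref{geom:cor:distinct}. That corollary needs only distinct $b_j$, no condition on the $c_{ji}$, and no hypothesis $K\theta^m<1$. The presence of $K\theta^m<1$ in this corollary signals that a single fiber of $b$ may contain all $K$ centers.
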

\begin{proof}Take $S=K$.\end{proof}

\begin{corollary}[Distinct multiplicative centers]\label{geom:cor:distinct}
If the $b_j$ are pairwise distinct, the condition $S\theta^m<1$ is
automatic with $S=1$. No distinctness condition on the $c_{ji}$ is needed.
In particular this applies to $b_j=\alpha^j$ for a real $\alpha>1$.
\end{corollary}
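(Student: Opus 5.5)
The plan is to derive Corollary~\ref{geom:cor:distinct} directly from Theorem~\ref{geom:thm:main} with $S=1$. The steps are to check that this choice is admissible, that the fiber hypothesis holds, that the distinctness condition on the $c_{ji}$ is vacuous, and that the example fits.

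\textbf{Admissibility of $S=1$.} Theorem~\ref{geom:thm:main} requires $1\le S\le K$, which holds because $K\ge1$. The volume conditions \eqref{geom:eq:volume} then reduce to
\[
0<\theta<1,\qquad \theta^m<1,\qquad K\frac{w_0}{v_0}\theta^m<1.
\]
The middle condition is automatic: from $0<\theta<1$ and $m\ge1$ we get $\theta^m<1$. Hence $S\theta^m<1$ imposes no constraint beyond $0<\theta<1$, and only the remaining condition $K(w_0/v_0)\theta^m<1$ has to be imposed.

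\textbf{Fiber occupancy and the $c_{ji}$.} If the $b_j$ are pairwise distinct, each value of $b_j$ occurs for at most one index $j$, so the occupancy bound holds with $S=1$. Every group of indices sharing a common $b_j$ is then a singleton. The requirement that the $c_{ji}$ be pairwise distinct within each group, for each $i$, is therefore vacuous. So the $c_{ji}\in\C$ may be arbitrary, and may even coincide across different centers.

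\textbf{Conclusion and the example.} With these choices all hypotheses of Theorem~\ref{geom:thm:main} hold. Its conclusion applies verbatim: the same successive thresholds $T_i$, the same $R$, and the same $H_0$, with $H_0$ depending on the centers. For $b_j=\alpha^j$ with real $\alpha>1$ and $0\le j<K$, the map $j\mapsto\alpha^j$ is strictly increasing, so the $b_j$ are pairwise distinct nonzero complex numbers. This gives the final sentence of the corollary.

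There is no real obstacle here. The only point needing care is that the quantifier order is unchanged: $S$ is part of the fixed base data, so fixing $S=1$ before the weights are chosen is legitimate.
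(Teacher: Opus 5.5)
Your proposal is correct and is essentially the paper's argument: specialize Theorem~\ref{geom:thm:main} to $S=1$, note that $\theta^m<1$ follows from $0<\theta<1$, observe that the within-fiber distinctness of the $c_{ji}$ is vacuous because every fiber is a singleton, and use injectivity of $j\mapsto\alpha^j$ for real $\alpha>1$. The paper also remarks after Proposition~\ref{geom:prop:curve} that for distinct $b_j$ the fiberwise auxiliary-polynomial step of the underlying proof can be omitted; that simplifies the proof of the theorem itself and is not needed for deducing the corollary.
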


\subsection{A local multiplicity estimate with uniform constants}

Here a normal basis means a set of ambient vector fields whose images
form a basis of the quotient by the tangent space of a subvariety at a
general smooth point.

\paragraph{Generality convention for the multiplicity statement.}
The analytic open set is understood to meet the component under
consideration. A coordinate or frame normal basis is a basis at general
smooth points of that same analytic component domain: it is valid on a
relatively open dense subset there. No Zariski openness of an analytic
frame's normality locus is presumed. These conditions are made explicit
at the checked paper endpoint in Appendix~\ref{app:formal-status}.

\begin{lemma}[Weighted transverse multiplicity]\label{geom:lem:mult}
Fix $\varepsilon>0$. Give affine coordinates degree weights
$\rho_a\in\Q_{>0}$. Let $D_b$
be a commuting analytic frame on an open subset, with positive costs
$\kappa_b$. Suppose polynomials $f_\ell$ of weighted degree at most $N>0$
have an irreducible common-zero component $Z$ of codimension $k\ge1$,
and
\[
 D^\gamma f_\ell|_Z=0\quad\hbox{if}\quad
 \sum_b\kappa_b\gamma_b<\varepsilon N.
\]
For any coordinate normal basis $A$ and frame normal basis $B$, each
of size $k$,
\begin{equation}\label{geom:eq:mult}
 \prod_{a\in A}\rho_a
 \le 2k!\,\varepsilon^{-k}\prod_{b\in B}\kappa_b.
\end{equation}
There is no threshold depending on $Z$, the equations, or the frame.
\end{lemma}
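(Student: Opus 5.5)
The plan is to compare two computations of one local intersection multiplicity at a general point of $Z$. The lower bound comes from the vanishing hypothesis, measured in the frame $D$. The upper bound comes from a weighted B\'ezout estimate, measured in the coordinate weights $\rho$.

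\emph{Choice of point and slice.} By the generality convention, the coordinate normal basis $A$ and the frame normal basis $B$ are each valid on a relatively open dense subset of the analytic component domain of $Z$. I therefore pick a smooth point $p$ of $Z$, away from the other common-zero components, at which both are normal bases. Let $\Sigma$ be the $k$-dimensional affine slice through $p$ obtained by freezing every coordinate not in $A$. Since the $\partial/\partial x_a$, $a\in A$, span a complement to $T_pZ$, the slice meets $Z$ transversally, so $p$ is an isolated point of $Z\cap\Sigma$. It is also an isolated common zero of the restrictions $g_\ell=f_\ell|_\Sigma$, because $p$ lies on no other component. Each $g_\ell$ is a polynomial in the $x_a$ of weighted degree at most $N$ for the weights $\rho_a$.

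\emph{Lower bound from the frame.} Near $p$ the commuting frame $D_b$, $b\in B$, together with a complementary tangent frame, can be straightened to analytic coordinates $(y_b,z)$ with $D_b=\partial/\partial y_b$ and $Z=\{y=0\}$. The hypothesis says that the Taylor coefficients of $f_\ell$ in $y$, along $Z$, vanish for every $\gamma$ with $\sum_b\kappa_b\gamma_b<\varepsilon N$. Hence the $f_\ell$ lie in the monomial ideal generated by the $y^\gamma$ with $\sum_b\kappa_b\gamma_b\ge\varepsilon N$. Restricting to $\Sigma$, the functions $y_b|_\Sigma$ form local coordinates at $p$, again by transversality. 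So the ideal $I=(g_\ell)$ in $\mathcal O_{\Sigma,p}$ is contained in the corresponding monomial ideal, and its colength is at least
\[
 \#\Bigl\{\gamma\in\Z_{\ge0}^k:\ \sum_b\kappa_b\gamma_b<\varepsilon N\Bigr\}
 \ \ge\ \frac{(\varepsilon N)^k}{k!\prod_{b\in B}\kappa_b}.
\]
The last inequality is threshold-free: the unit cubes $\gamma+[0,1)^k$ cover the open simplex, since each point of the simplex has its integer floor inside it. This uniformity is what makes the constant independent of $Z$, the equations and the frame.

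\emph{Upper bound from weighted B\'ezout.} I choose $k$ generic linear combinations $h_1,\ldots,h_k$ of the $g_\ell$ that still have $p$ as an isolated zero. Generic combinations of a family defining an isolated point can be taken to generate an ideal with the same integral closure, so their local multiplicity is at least $\dim\mathcal O_{\Sigma,p}/I$. The weighted B\'ezout theorem in weighted projective space then bounds the total number of isolated solutions, with multiplicity, of $k$ polynomials of weighted degree at most $N$ by $N^k/\prod_a\rho_a$. The factor $2$ in \eqref{geom:eq:mult} gives slack here: for instance if one passes to the lcm of the denominators of the rational weights, or if one compares Hilbert--Samuel multiplicity with colength rather than using the exact B\'ezout count.

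Combining the two bounds gives
\[
 \frac{(\varepsilon N)^k}{k!\prod_b\kappa_b}\le\frac{2N^k}{\prod_a\rho_a},
\]
which rearranges to \eqref{geom:eq:mult}.

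\emph{Main obstacle.} I expect the hardest step to be the transfer between the two coordinate systems: showing that the frame vanishing order survives restriction to the algebraic coordinate slice, and that passing to $k$ generic combinations does not lose multiplicity. I would handle the first by the transversality of $\Sigma$ to $Z$, which makes $(y_b|_\Sigma)$ a local coordinate system at $p$. For the second I would use the comparison $e(I)\ge\operatorname{colength}(I)$ for $\mathfrak m$-primary ideals, with $e(I)$ realized by a generic reduction.
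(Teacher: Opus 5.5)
Your proposal is correct and follows essentially the same route as the paper. Both use a general smooth point, flow coordinates for the commuting frame, the transverse coordinate slice, the monomial-ideal colength count via half-open unit cubes, $k$ generic linear combinations, and a weighted B\'ezout bound for isolated zeros; the paper derives that bound from the ordinary refined B\'ezout inequality via the substitution $x_a=\lambda_a+z_a^{M\rho_a}$ with $M\rho_a\in\Z_{>0}$ and unramified preimages, rather than citing it for weighted projective space. Two minor points: the reduction/integral-closure step is unnecessary, since $J=(h_1,\ldots,h_k)\subseteq I$ already gives $\dim\mathcal O_{\Sigma,p}/J\ge\dim\mathcal O_{\Sigma,p}/I$, and the factor $2$ is never actually used, so your argument, like the paper's, proves the inequality without it.
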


\begin{proof}
Choose a smooth point $x$ of $Z$ where both normal-basis conditions hold
and where no other common-zero component passes. Commuting flows of the
fields in $B$, starting from $Z$, give local analytic coordinates $(z,b)$,
with $Z=\{b=0\}$. The assumed derivatives say that the expansions
$f_\ell(z,b)$ have no monomial in $b$ of $B$-weight less than
$\varepsilon N$; its coefficient is identically zero as a function of $z$.

The affine slice obtained by fixing the coordinates outside $A$ is
transverse to $Z$ at $x$. On this slice $b$ is again a local coordinate
system and $z=z(b)$ is analytic. Substituting $z(b)$ cannot introduce a
monomial of smaller weight: each surviving term already contains a
factor $b^\eta$ of weight at least $\varepsilon N$. The ideal of the
restricted equations is therefore contained in the ideal of series of
that weight or higher. The common zero is isolated on this slice,
because the support near $x$ is $Z$ and the slice is transverse.
Consequently the ideal has finite colength. Inclusion of ideals gives
a surjection of its local quotient onto the quotient by the weighted
monomial ideal, so that its colength is at least
\[
 \#\{\eta\in\mathbb Z_{\ge0}^k:\textstyle\sum_{b\in B}\kappa_b\eta_b<\varepsilon N\}
 \ge \frac{(\varepsilon N)^k}{k!\prod_{b\in B}\kappa_b}.
\]
For the last inequality, the half-open unit cubes based at the admissible
integer points cover the positive simplex of weighted radius
$\varepsilon N$. This estimate is valid for every $N>0$.

Take $k$ generic constant linear combinations of the restricted equations
whose common zero remains isolated at $x$. Such combinations exist by
successively avoiding the finitely many positive-dimensional local
components. Their ideal is contained in the original ideal, so their
local intersection length is at least its colength. Each combination
still has degree at most $N$ in the weights indexed by $A$.

That local intersection length is at most $N^k/\prod_{a\in A}\rho_a$.
Indeed choose an integer $M$ with $M\rho_a\in\mathbb Z_{>0}$ and make
the finite substitutions
\[
 x_a=\lambda_a+z_a^{M\rho_a}.
\]
Choose $\lambda_a\ne x_a(x)$, so all preimages of $x$ are unramified.
There are $\prod_{a\in A}M\rho_a$ of them, each with the same local
length. The substituted equations have ordinary degree at most $MN$.
The ordinary B\'ezout inequality for isolated zeros bounds the sum of
their local lengths by $(MN)^k$; other positive-dimensional components
do not invalidate the bound \cite[Corollary VIII.3]{Mondal2021}. Dividing by
the number of preimages proves the claimed upper bound. Comparing the
two bounds proves even \eqref{geom:eq:mult} without its harmless factor $2$.
\end{proof}

\subsection{Choosing the weights before choosing the centers}

Choose a positive rational $\sigma$ so small that
\begin{equation}\label{geom:eq:sigma}
 \begin{split}
 (1+3\sigma)^{m+1}K(w_0/v_0)\theta^m&<1,\\
 (1+3\sigma)^m S\theta^m&<1,\\
 (1+\sigma)\theta&<1.
 \end{split}
\end{equation}
Let $L=m+2$ and
\[
 C_* =\max_{1\le k\le m}2k!(L/\sigma)^k.
\]
Require, for index sets $A,B\subseteq\{0,\ldots,m\}$ of the same
size between $1$ and $m$,
\begin{equation}\label{geom:eq:separate}
 \prod_{a\in A}w_a>C_*\prod_{b\in B}v_b
\end{equation}
whenever the largest positive index in $A\mathbin\triangle B$ belongs
to $A$. There is no condition of this kind when no positive index differs.

These requirements determine successive lower thresholds. If the
largest differing index is $i$, cancel every index above $i$ occurring
in both sets; its contribution is $w_j/v_j=\theta$. The remaining
ratio is $w_i$ times a positive expression involving only lower weights,
$w_0,v_0,\theta$ and the two finite index sets. Thus all conditions for
index $i$ follow from one finite lower bound on $w_i$. Subsequent choices
do not alter them. In particular the procedure has not used any $b_j$
or $c_{ji}$.

\subsection{The curve inequality}

Let $C$ be an irreducible curve in affine space, and compute orders on
its smooth complete model. Define
\[
 \deg_W C=\sum_P\max\left(0,-\frac{\operatorname{ord}_PY}{w_0},
                    -\frac{\operatorname{ord}_PX_1}{w_1},\ldots,
                    -\frac{\operatorname{ord}_PX_m}{w_m}\right).
\]
An identically zero coordinate has order $+\infty$ and contributes no pole.
For a branch $P$ through $a_j$, put
\begin{equation}\label{geom:eq:contact}
 h_P=\min\left(\frac{\operatorname{ord}_P(Y/b_j-1)}{v_0},
       \min_i\frac{\operatorname{ord}_P(X_i-c_{ji}-\log(Y/b_j))}{v_i}\right).
\end{equation}
All these local orders are positive, possibly infinite; their minimum
is finite on a nonconstant branch, since these are formal coordinates.
Only branches through the specified affine centers enter the sums below.

\begin{proposition}\label{geom:prop:curve}
For the weights just selected,
\begin{equation}\label{geom:eq:curve}
 \deg_W C\ge(1+\sigma)\sum_{P\mapsto a_j}h_P.
\end{equation}
\end{proposition}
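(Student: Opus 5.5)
The plan is to prove \eqref{geom:eq:curve} by counting zeros of an auxiliary polynomial restricted to $C$, with one ingredient from each side. The upper bound comes from poles measured by $\deg_W C$. The lower bound comes from the contact orders $h_P$ at the centers. When no suitable auxiliary polynomial survives on $C$, Lemma~\ref{geom:lem:mult} together with the separation conditions \eqref{geom:eq:separate} takes over. None of the constants may depend on the $b_j,c_{ji}$; this is consistent with the weights having been fixed beforehand.

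\emph{Step 1 (zero counting).} Suppose $G\in\mathcal P_W(D)$ does not vanish identically on $C$. Each monomial $Y^hX^\gamma$ of $G$ has pole order at $P$ at most $D\max(0,-\operatorname{ord}_PY/w_0,\ldots,-\operatorname{ord}_PX_m/w_m)$, so the pole divisor of $G|_C$ has degree at most $D\deg_W C$.

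Now suppose $G$ vanishes at $a_j$ to weighted order at least $\eta D$ in the $V$-weighted formal coordinates $(t,u)=(Y/b_j-1,(X_i-c_{ji}-\log(Y/b_j))_i)$. Then $\operatorname{ord}_P G|_C\ge\eta D\,h_P$ for every branch $P$ through $a_j$, by the definition \eqref{geom:eq:contact}. Equating the number of zeros with the number of poles gives $\deg_W C\ge\eta\sum_P h_P$. The target is therefore to find such a $G$ with $\eta\ge1+\sigma$.

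\emph{Step 2 (existence by dimension count).} The space $\mathcal P_W(D)$ has dimension about $D^{m+1}/((m+1)!\prod_a w_a)$. The jet conditions number about $K(\eta D)^{m+1}/((m+1)!\prod_a v_a)$. Since $v_i=w_i/\theta$ for $i>0$, their ratio is $K(w_0/v_0)\theta^m\eta^{m+1}$. The first line of \eqref{geom:eq:sigma}, with $\eta=1+3\sigma$, makes this ratio less than $1$, so a nonzero $G$ exists. This $G$ might, however, vanish on $C$.

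To deal with that, restrict the count to the ring of the smallest variety $Z\supseteq C$ cut out by such vanishing polynomials. Concretely, consider the family of all $G$ satisfying the jet conditions, and let $Z$ be an irreducible component of their common zero set containing $C$. If $\dim Z=1$, then $Z=C$ and some $G$ in the family does not vanish on $C$, which is Step 1. Otherwise $Z$ has codimension $k$ with $1\le k\le m$. The jet conditions, transported by the commuting frame $\partial_t$ (adjusted by $\log(1+t)$) and $\partial_{u_i}$ with costs $v_a$, then give exactly the hypothesis of Lemma~\ref{geom:lem:mult} along $Z$, with $\varepsilon$ of order $\sigma/L$.

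\emph{Step 3 (excluding intermediate $Z$).} This is the main obstacle. Lemma~\ref{geom:lem:mult} yields
\[
 \prod_{a\in A}w_a\le C_*\prod_{b\in B}v_b
\]
for a coordinate normal basis $A$ and a frame normal basis $B$ of $Z$. I would first show that $A$ and $B$ can be chosen so that the largest positive index in $A\triangle B$ lies in $A$. The heuristic is that if $X_i$ is transverse to $Z$ but $u_i$ is not, then the top-index coordinate direction is missing from the frame normal space. Then \eqref{geom:eq:separate} gives a contradiction.

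The delicate cases are the following:
\begin{itemize}
\item $A$ and $B$ differ only in index $0$. This is the fiber situation, where $Y$ is constant on $Z$.
\item $A=B$.
\end{itemize}
Here no separation inequality is available. I would instead rerun the dimension count inside the coordinate subspace $\{Y=b\}$ or on the relevant projection. The number of centers on a single fiber is at most $S$, and within each such group the $c_{ji}$ are pairwise distinct for every $i$. The second line of \eqref{geom:eq:sigma} then supplies the needed inequality $(1+3\sigma)^mS\theta^m<1$, which lets the induction descend to a lower-dimensional coordinate-closed variety still containing $C$.

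Iterating, we reach $Z=C$. The step that costs the inequality is the comparison of the lost factor $(1+3\sigma)$ with $(1+\sigma)$: the transfer through Lemma~\ref{geom:lem:mult} and the truncations at finite $D$ are each absorbed by one factor of $1+\sigma$ as $D\to\infty$. The third line of \eqref{geom:eq:sigma} keeps the exponent bookkeeping strict.
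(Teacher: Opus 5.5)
Your proposal has a genuine gap at the point where you invoke Lemma~\ref{geom:lem:mult}. That lemma needs all frame derivatives $D^\gamma f_\ell$ of cost below $\varepsilon N$ to vanish on $Z$. In your Step~2, $Z$ is a component of the common zero set of all polynomials with jet order at least $(1+3\sigma)D$. A derivative of cost $c$ of such a $G$ has jet order only $(1+3\sigma)D-c$, so it lies outside the family, and nothing forces it to vanish on $Z$. The dichotomy is also inconsistent. $C\subseteq Z$ means every member of the family vanishes on $C$, so the case $\dim Z=1$ cannot send you back to Step~1. The same objection defeats your endgame of iterating down to $Z=C$.

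The paper obtains the needed vanishing from the failure of \eqref{geom:eq:curve} itself. The global frame $D_0=Y\partial_Y+\sum_i\partial_{X_i}$, $D_i=\partial_{X_i}$ preserves $W$-degree and equals $(1+t_j)\partial_{t_j},\partial_{u_{ji}}$ at every center. Hence each $D^\gamma F_N$ of cost at most $\sigma N$ keeps order $(1+2\sigma)N$ there. Zero--pole counting against $\deg_W C<(1+\sigma)\sum h_P$ then forces $D^\gamma F_N|_C=0$. Only after this does the nested chain of zero sets of derivatives of cost $\le r\sigma N/L$, $L=m+2$, give, by a dimension count, a persistent component $Z\supseteq C$ satisfying the lemma with $\varepsilon=\sigma/L$.

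Second, the separation inequality does not give a contradiction; it gives a differential identity, and your Step~3 lacks the argument that turns this identity into geometry. Let $i$ be the largest positive index whose coordinate direction is not tangent to $Z$. Then \eqref{geom:eq:separate} forces every frame normal basis to contain $D_i$. The other frame vectors span $\ker(dX_i-dY/Y)$, so they fail to fill the normal quotient, and hence $dX_i=dY/Y$ on $Z$. Restrict to a curve in $Z$ on which $Y$ is nonconstant. At a zero or pole of $Y$, $dY/Y$ has residue $\operatorname{ord}_PY\ne0$, while the exact form $dX_i$ has residue zero. So $Y$ is constant on $Z$; if every positive coordinate direction is tangent, $dY=0$ directly.

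This residue argument is what places $C$ in a fiber. Your split into ``$A,B$ differ only at $0$'' and ``$A=B$'' gives no reason for $Y$ to be constant, and $A=B\ni i$ is exactly the generic outcome.

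Inside the fiber, the paper reruns the count using $K_b(1+3\sigma)^m\theta^m<1$. A uniqueness-of-normal-basis argument then makes some $X_i$ constant on a component containing $C$. Distinctness of the $c_{ji}$ within the fiber means $C$ meets only one center $a_j$. The proof then ends with a direct divisor count, not with your Step~1. For a nonconstant $X_l$, the function $X_l-c_{jl}$ has at most $w_l\deg_W C$ poles and at least $v_lh_P$ zeros on each branch. This gives $\deg_W C\ge\theta^{-1}\sum h_P>(1+\sigma)\sum h_P$, and this is where $(1+\sigma)\theta<1$ is used.
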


\begin{proof}
Suppose a curve violates \eqref{geom:eq:curve}; it must meet a center.
For all sufficiently large integers $N$, coefficient counting supplies
a nonzero polynomial $F_N$ of $W$-degree at most $N$ whose local expansions
at every center have $V$-order at least $(1+3\sigma)N$. The ratio of
the numbers of imposed coefficients and available coefficients tends to
\[
 (1+3\sigma)^{m+1}K(w_0/v_0)\theta^m<1.
\]
The conditions need not be independent for this existence assertion.
For clarity, for any fixed positive weights $\rho_1,\ldots,\rho_n$,
the number of lattice points with $\sum\rho_i e_i\le N$ is
\[
 \frac{N^n}{n!\prod_i\rho_i}+O(N^{n-1}).
\]
Covering the corresponding simplex by unit cubes proves this estimate;
changing a weak cutoff to a strict cutoff changes only a boundary
contribution of order $O(N^{n-1})$. Applying it to the source and to
the $K$ local packets gives the stated ratio. Since this ratio is
strictly less than one, the linear system imposing the vanishing has
a nonzero solution for every sufficiently large $N$.

On $Y\ne0$ use the commuting frame
\[
 D_0=Y\partial_Y+\sum_i\partial_{X_i},\qquad
 D_i=\partial_{X_i}.
\]
At the $j$th center let $t_j=Y/b_j-1$ and
$u_{ji}=X_i-c_{ji}-\log(Y/b_j)$. Direct differentiation gives
\begin{equation}\label{geom:eq:localframe}
 D_0=(1+t_j)\partial_{t_j},\qquad D_i=\partial_{u_{ji}}.
\end{equation}
Every $D_a$ preserves the bound on polynomial degree and reduces
weighted local order by at most $v_a$. Hence $D^\gamma F_N$ has order
at least $(1+2\sigma)N$ at all centers if $\sum_av_a\gamma_a\le\sigma N$.
On a branch through a center, its order is at least
$(1+2\sigma)Nh_P$. If its restriction to $C$ were nonzero, its total
pole order would be at most $N\deg_W C$, smaller than the displayed
total zero order. The divisor equality on the complete curve rules this
out. We obtain
\begin{equation}\label{geom:eq:vanish}
 D^\gamma F_N|_C=0\qquad(\textstyle\sum_av_a\gamma_a\le\sigma N).
\end{equation}

Set $\delta=\sigma N/L$. Let $V_r$ be the common zero set in $Y\ne0$
of derivatives of cost at most $r\delta$, for $0\le r\le L$.
The part $C^\circ=C\cap\{Y\ne0\}$ belongs to all these zero sets.
Starting with a component of $V_L$ containing $C^\circ$, choose
containing components backwards. Their dimensions lie between $1$ and
$m$, since $F_N\ne0$. Some two adjacent components coincide, say in $Z$.
It is a component of the earlier derivative zero set, and every further
derivative of its defining equations of cost less than $\delta$ vanishes
on $Z$. Apply Lemma~\ref{geom:lem:mult} with $\varepsilon=\sigma/L$.
For all coordinate and frame normal bases $A,B$ of $Z$ we have
\begin{equation}\label{geom:eq:bases}
 \prod_{a\in A}w_a\le C_*\prod_{b\in B}v_b.
\end{equation}
Here the phrase ``defining equations'' means the finite list
$D^\gamma F_N$ used for the earlier zero set; it does not assert that
this list generates the reduced ideal of $Z$. That assertion is not
needed in Lemma~\ref{geom:lem:mult}. If the equal adjacent components
occur at stages $r,r+1$, differentiating an equation of cost at most
$r\delta$ by cost less than $\delta$ produces an equation from stage
$r+1$. This explains precisely why the lemma applies. The existence of
equal adjacent components follows from strict dimension decrease for
proper inclusions of irreducible varieties.

Check the finitely many generic normal-basis rank conditions on one
common smooth open subset of $Z$.
We now show that $Y$ is constant on $Z$. If some positive coordinate
direction is nontangent, let $i$ be its largest index. All larger
positive indices occur in neither kind of normal basis. A coordinate
normal basis containing $i$ exists. Every frame normal basis must also
contain $i$, by \eqref{geom:eq:separate} and \eqref{geom:eq:bases}. The images of
the other frame vectors therefore fail to span the normal quotient.
Their ambient span is the hyperplane
\[
 Q=\ker(dX_i-dY/Y).
\]
For a hyperplane $Q$ and tangent subspace $T$, failure of the image of
$Q$ to fill the quotient by $T$ is equivalent to $T\subseteq Q$.
Thus $dX_i=dY/Y$ on the smooth locus of $Z$.

If $Y$ were nonconstant on $Z$, take a smooth point with $dY\ne0$
and cut by general hyperplanes to obtain an algebraic curve through it
on which $Y$ remains nonconstant. On its smooth complete model the
identity of differentials persists. The nonconstant meromorphic
function $Y$ has a zero or pole; there $dY/Y$ has nonzero residue,
whereas the exact meromorphic differential $dX_i$ has residue zero.
This contradiction proves that $Y$ is constant. If instead every
positive coordinate direction was tangent to $Z$, those $m$ independent
directions already fill its tangent space, since $Z$ is proper; then
$dY=0$ directly, with the same conclusion.

It follows that the violating curve lies in some fiber $Y=b\ne0$.
Only the centers with $b_j=b$ can lie on it. Let their number be $K_b$,
so $1\le K_b\le S$. Inside this fiber the contact becomes
\[
 h_P=\min_i\operatorname{ord}_P(X_i-c_{ji})/v_i.
\]
The ratio of condition counts to coefficient counts for an auxiliary
polynomial $G_N(X_1,\ldots,X_m)$ is now
\[
 K_b(1+3\sigma)^m\theta^m<1.
\]
Choose nonzero $G_N$ of weighted degree at most $N$ with local order at
least $(1+3\sigma)N$ at these centers. The same divisor argument gives
$\partial_X^\gamma G_N|_C=0$ for $\sum_iv_i\gamma_i\le\sigma N$.
For $m=1$ even $G_N|_C=0$ is impossible: a curve in the affine line is
the whole line, on which a nonzero polynomial cannot vanish identically.

For $m\ge2$, repeat the nested-component argument in this fiber.
It produces a component $Z'$ containing the affine curve, of codimension
between $1$ and $m-1$, and satisfies \eqref{geom:eq:bases} for index sets in
$\{1,\ldots,m\}$. Here coordinate and frame directions coincide.
There can be only one normal-basis index set: two different sets, ordered
so their largest differing index belongs to the first, would contradict
\eqref{geom:eq:separate}. Select an index $i$ in this nonempty unique basis.
All other coordinate directions fail to span the normal quotient,
so $dX_i=0$ on $Z'$ and $X_i$ is constant on $Z'$.
Distinctness within the fiber implies that $Z'$, and hence $C$, meets
at most one prescribed center.

Let that center be $a_j$. Choose a coordinate $X_l$ nonconstant on $C$.
The meromorphic function $X_l-c_{jl}$ has total pole order at most
$w_l\deg_W C$, and at least $v_lh_P$ zeros at each branch through $a_j$.
Consequently
\[
 \deg_W C\ge\theta^{-1}\sum_{P\mapsto a_j}h_P
             >(1+\sigma)\sum_{P\mapsto a_j}h_P,
\]
contradicting the supposed violation. This proves \eqref{geom:eq:curve}.
\end{proof}

\begin{remark}
If the $b_j$ are distinct, once $Y$ is constant the curve can meet only
one center. The entire fiberwise auxiliary-polynomial and persistent-
component argument can then be omitted; the last divisor comparison
applies immediately.
\end{remark}

\subsection{From curves to all coefficient packets}

We give the global step because it is responsible for both the strict
cutoff and the order of quantifiers in Theorem~\ref{geom:thm:main}.

Choose an integer $R>0$, depending only on $W,V$, such that
$R/w_a$ and $R/v_a$ are positive integers for all $a$. Increasing $R$
if necessary, all coordinates have degree at most $R$. Embed affine
space by every monomial of $W$-degree at most $R$, including $1$, and
let $X$ be its projective closure. Its constant-coordinate chart is
exactly the original affine space: the image there is the closed graph
of the monomials, with the original coordinates among them. Write
$\mathcal L=\mathcal O_X(1)$, which is very ample.

For a curve meeting this chart,
\begin{equation}\label{geom:eq:degree}
 \deg(\mathcal L|_C)=R\deg_W C.
\end{equation}
Indeed, on its smooth complete model the largest pole of the homogeneous
coordinate functions at each point is exactly the right-hand local
maximum: the corresponding pure power with exponent $R/w_a$ attains
it. The constant section handles maximum zero. Clearing these poles
gives sections without a common zero, proving \eqref{geom:eq:degree}.

Choose Taylor polynomials $G_i(t)$ of $\log(1+t)$ whose omitted terms
have $t$-weight strictly greater than $v_i$; for example retain terms of
degrees below an integer $T_i$ with $v_0T_i>v_i$. At $a_j$ use the
\emph{algebraic} regular coordinates
\[
 t_j=Y/b_j-1,\qquad u'_{ji}=X_i-c_{ji}-G_i(t_j).
\]
Changing between $u'_{ji}$ and $u_{ji}$ adds a series of weight greater
than $v_i$. Both this substitution and its inverse preserve every
weighted filtration ideal, and hence give inverse automorphisms on
every finite packet. They also preserve the minimum \eqref{geom:eq:contact}:
if $t_j$ is nonzero on a branch, the normalized order of its tail is
strictly larger than $\operatorname{ord}_P(t_j)/v_0$; if it is identically zero, the
tail vanishes. Thus it cannot change the minimum.

At $a_j$ form the ideal
\[
 I_j=\bigl(t_j^{R/v_0},(u'_{j1})^{R/v_1},\ldots,(u'_{jm})^{R/v_m}\bigr).
\]
It has finite colength. The centers are distinct by the hypotheses, so
these ideals extend to a coherent ideal sheaf $I$ on $X$, equal to the
unit ideal away from the centers. For example one may take the kernel
of the map to the direct sum of their finite local quotient sheaves.
Its valuation on each curve branch through a center is
\begin{equation}\label{geom:eq:valuation}
 \operatorname{ord}_P I=Rh_P.
\end{equation}

Take the ordinary blow-up $p:X'\to X$ of $I$. With its exceptional
effective Cartier divisor $E$,
\[
 I\mathcal O_{X'}=\mathcal O_{X'}(-E),\qquad A=p^*\mathcal L.
\]
The bundle $-E$ is relatively ample; these are ordinary Rees-algebra
blow-up facts, valid here without smoothness of $X$ or $X'$
\cite[Tags 02OS and 0AG6]{StacksProject}.

We claim $A-(1+\sigma)E$ is nef. A noncontracted integral curve upstairs
mapping to a curve meeting the affine chart maps birationally, since
the blow-up is an isomorphism away from finitely many points.
Equations \eqref{geom:eq:degree}--\eqref{geom:eq:valuation} and
Proposition~\ref{geom:prop:curve} give nonnegative degree on that curve.
A curve whose image misses the affine chart misses $E$ altogether.
A contracted curve has $A$-degree zero and strictly positive $-E$-degree
by relative ampleness. These cases cover all integral curves.
On a noncontracted curve the intersection with $E$ is computed by
pulling back to its smooth complete model. There the ideal $I$
becomes a principal ideal and the degree of its zero divisor is the
sum of the valuations $Rh_P$. This is the link between the branchwise
curve inequality and the intersection calculation on the blow-up.

For some integer $a>1$, $aA-E$ is ample: choose $b$ making
$I\otimes\mathcal L^b$ globally generated, embed the blow-up in
$X\times\mathbb P^r$ by those generators, and restrict the product
ample bundle to obtain $(b+1)A-E$. The exact identity
\[
 A-E=\frac{a-1}{a(1+\sigma)-1}\bigl(A-(1+\sigma)E\bigr)
       +\frac{\sigma}{a(1+\sigma)-1}(aA-E)
\]
shows that $A-E$ is ample, since a positive rational multiple of an
ample class plus a nef class is ample \cite[Corollary 1.4.10]{Lazarsfeld2004}.

For all sufficiently large $n$, the ordinary Rees algebra gives
\begin{equation}\label{geom:eq:push}
 p_*\mathcal O_{X'}(-nE)=I^n,\qquad
 R^q p_*\mathcal O_{X'}(-nE)=0\quad(q>0).
\end{equation}
To justify the word ``ordinary'', work over each member of a finite
affine cover of $X$. The Rees algebra is a Noetherian graded algebra
generated in degree one. Its degree-$n$ piece is precisely $I^n$.
The graded-piece comparison and higher-cohomology vanishing on Proj
apply for all sufficiently large $n$, and the finite cover gives one
threshold \cite[Tags 0AG6 and 0AG7]{StacksProject}. No equality with an integral
closure of $I^n$, and no assertion for small $n$, is used.

The projection formula, Leray, and Serre vanishing for $A-E$ yield
\[
 H^1(X,I^n\otimes\mathcal L^n)
 =H^1(X',\mathcal O_{X'}(n(A-E)))=0
\]
for all sufficiently large $n$. Therefore the exact sequence for $I^n$
makes $H^0(X,\mathcal L^n)$ surject onto its local coefficient classes
modulo $I^n$. Completing these finite-length local quotients does not
alter them. Each generator of $I_j$ has weight exactly $R$, so
\[
 I_j^n\subseteq F_V^{\ge nR}.
\]
There is thus a surjection from the quotient by $I_j^n$ onto the desired
packet of weight strictly less than $nR$. Equality of these ideals is
neither asserted nor needed.
The Serre-vanishing assertion used here is the positive-degree
cohomology vanishing for every coherent sheaf after a sufficiently high
power of an ample line bundle \cite[Tag 0B5U]{StacksProject}.

Finally, for large $n$, all sections of $\mathcal L^n$ come from
homogeneous degree-$n$ polynomials in the embedding coordinates.
Apply Serre vanishing to the homogeneous ideal sheaf of $X$ in the
ambient projective space. Each such polynomial restricts on the affine
chart to a polynomial of $W$-degree at most $nR$. Put $H=nR$ and undo
the filtered coordinate changes above. This proves surjectivity of
\eqref{geom:eq:map}.

All geometric size thresholds can depend on the chosen centers, and
their finite maximum determines $H_0$. The integer $R$ was fixed before
any center. This proves every quantifier of Theorem~\ref{geom:thm:main}.

\section{The determinant and its arithmetic lower bound}

We treat two base fields simultaneously. Let $F=\Q$, with real
$\lambda=\log a$, or let $F=\Q(\ii)$, with arbitrary fixed nonzero
$\lambda$ satisfying $e^\lambda=a$. Put $E=F(a)$ and $d=[E:F]$.
Suppose, toward a contradiction, that approximations
\begin{equation}\label{slog:eq:approximations}
 r_i=P_i/Q_i,\qquad |\lambda-r_i|\le|Q_i|^{-\nu},\qquad
 w_i=\lceil\log|Q_i|\rceil
\end{equation}
can be chosen with unbounded denominators. Here $P_i,Q_i$ are in
$\Z$ or $\Z[\ii]$, according to $F$, and $\nu>2d$.
Choose the weights successively, and then fix them for the degree
limit. Large denominators ensure $r_i\ne0$ and
$|r_i|\le|\lambda|+1$.

Take $b_j=a^j$, $c_{ji}=jr_i$, $0\le j<K$. The distinctness
condition in Theorem~\ref{geom:thm:main} holds since $r_i\ne0$.
Set
\[
 T_i=\left\lceil w_i/(\theta v_0)\right\rceil,\quad
 G_i(t)=\sum_{1\le k<T_i}\frac{(-1)^{k+1}t^k}{k},\quad
 L_i=\operatorname{lcm}(1,\ldots,T_i-1).
\]
The usual empty lcm equals one. The following identity permits this
short arithmetic truncation while keeping the exact logarithm in the
analytic estimate.

\begin{lemma}[Exact invariance of a full-row minor]\label{slog:lem:shear}
Fix a finite coefficient packet consisting of all
$(j,s,\beta)$ with $v_0s+\sum_i v_i\beta_i<H$, and fix any
selection of as many source columns as there are rows. Let $A_G$
be the square coefficient matrix obtained from
\[
 P\bigl(b_j(1+t),(c_{ji}+G_i(t)+u_i)_i\bigr).
\]
Suppose every nonzero term $t^a$ of $T_i(t)$ satisfies
$v_0a\ge v_i$. Then, with the same row and column orders,
\[
 \det A_{G+T}=\det A_G.
\]
This includes equality at the weight boundary.
\end{lemma}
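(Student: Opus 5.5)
The plan is to realize the change $G\mapsto G+T$ as a substitution acting on the target packet alone, and to show that this substitution is unipotent with respect to the packet order. For each row index $j$, compare the two expansions
\[
 P\bigl(b_j(1+t),(c_{ji}+G_i(t)+T_i(t)+u_i)_i\bigr)
 \quad\text{and}\quad
 P\bigl(b_j(1+t),(c_{ji}+G_i(t)+u_i)_i\bigr).
\]
The first is obtained from the second by the formal substitution $\phi:u_i\mapsto u_i+T_i(t)$, with $t$ fixed. Because the source polynomial $P$ enters only through its composite with this map, the coefficient vector of the first expansion equals $M_\phi$ applied to the coefficient vector of the second. Here $M_\phi$ is the matrix of $\phi$ acting on monomials $t^su^\beta$, and it is the same for every $P$ and every column. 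Consequently $A_{G+T}=M\,A_G$, where $M$ is the restriction of the block-diagonal map $\bigoplus_j\phi$ to the packet rows.

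Next I would check that $\phi$ preserves the filtration by $F_V^{\ge h}$. A monomial $t^su^\beta$ maps to $t^s\prod_i(u_i+T_i(t))^{\beta_i}$. Expanding, each factor $u_i$ is either kept or replaced by a term $t^a$ with $v_0a\ge v_i$. So every monomial in the image has weight at least $v_0s+\sum v_i\beta_i$, and exactly one of them, namely the monomial itself, retains the same exponents. Hence, modulo $F_V^{\ge H}$, $\phi$ induces an endomorphism of the finite packet of weight $<H$. The same computation applies to $\phi^{-1}:u_i\mapsto u_i-T_i(t)$, which also preserves the ideal. Thus the packet map is well defined: monomials of weight $\ge H$ are sent into $F_V^{\ge H}$, so truncating before or after substitution agrees. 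This is exactly where the inclusive hypothesis $v_0a\ge v_i$ and the strict cutoff in the packet matter, and it gives the claimed equality including the weight boundary.

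Finally, I would order the packet monomials by weight and, within equal weight, by total $u$-degree $|\beta|$, then lexicographically. Every non-identity term of $\phi(t^su^\beta)$ has weight $\ge$ the original weight and strictly smaller $u$-degree $|\beta|$. So in the order of (weight ascending, then $|\beta|$ ascending) the matrix $M$ is unitriangular. The only care needed is to choose one linear order in which both weight-increase and $u$-degree-decrease push terms strictly to one side. Ordering by the pair $(\text{weight},-|\beta|)$ does this, since a term either has strictly larger weight or equal weight with smaller $|\beta|$, and so lies strictly later in either case. Hence $\det M=1$.

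It remains to pass to the selected minor. The rows of $A_G$ are all packet coefficients, with the row order fixed, and the columns are any fixed selection of source monomials. Since $M$ acts on rows only, $\det A_{G+T}=\det M\cdot\det A_G$, once the row order is conjugated into the triangular order; a simultaneous permutation does not change $\det M$. The main obstacle I expect is bookkeeping: verifying that the packet really is closed under $\phi$ in the strict-cutoff quotient. The terms where $T_i$ has $v_0a=v_i$ exactly keep the weight unchanged and are harmless only because they lower $|\beta|$.
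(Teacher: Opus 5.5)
Your proof is correct and follows the paper's argument. Both realize $G\mapsto G+T$ as the substitution $u_i\mapsto u_i+T_i(t)$, which acts on the target coefficients by a column-independent, center-block-diagonal matrix that preserves $F_V^{\ge H}$ and is unitriangular. The only difference is the order: the paper orders rows by Taylor degree $s$, since every non-identity term strictly raises $s$ because $T_i$ has positive order, and this handles the boundary case $v_0a=v_i$ with no tie-breaker. Your key $(\text{weight},-|\beta|)$ also works, but the order you state first ($|\beta|$ ascending within a weight class) is not triangular.
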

\begin{proof}
Substitution $u_i\mapsto u_i+T_i(t)$ preserves the ideal of
monomials of weight at least $H$. Its inverse is the substitution
with $-T_i$, so it induces an automorphism of the finite quotient.
More explicitly, a coefficient of $t^ku^\gamma$ can contribute to
the coefficient of $t^su^\beta$ only if $\gamma\ge\beta$ and
\[
 v_0(s-k)\ge\sum_i v_i(\gamma_i-\beta_i).
\]
The source coefficient therefore has weight at most the target weight,
and so already belongs to the packet. Every term other than the
unchanged coefficient has $k<s$, since all nonzero $T_i$ have positive
order. Ordering rows by increasing Taylor degree, and arbitrarily
inside each degree, makes this coefficient transformation triangular
with diagonal entries $1$. This holds independently in each center
block. Hence $A_{G+T}=UA_G$ with $\det U=1$, which proves the claim.
\end{proof}

For our choice of $T_i$, the logarithmic tail starts at weight
$v_0T_i\ge w_i/\theta=v_i$. Lemma~\ref{slog:lem:shear} therefore
identifies every full-row minor formed with $G_i$ with the minor formed
using the exact logarithm and the same selected columns. This is
determinant equality under a target coordinate change; individual
entries need not agree.

The columns are $Y^hX^\gamma$ with $w_0h+w\cdot\gamma\le H$;
the rows are $\rho=(j,s,\beta)$ with
$v_0s+w\cdot\beta/\theta<H$. The entry is
\begin{equation}\label{slog:eq:entry}
 [t^su^\beta]\,P\bigl(a^j(1+t),(jr_i+G_i(t)+u_i)_i\bigr).
\end{equation}
Theorem~\ref{geom:thm:main} supplies a nonzero square minor
$\Delta_H\in E$ using all $M=M_H$ rows. Define
\[
 \bar b=\frac1{MH}\sum_\rho w\cdot\beta,\qquad
 M\sim\frac{K\theta^mH^{m+1}}{(m+1)!v_0\prod_iw_i},
 \qquad0\le\bar b\le\theta.
\]

Choose an integer $b\ge1$ such that $ba$ is an algebraic integer.
Put
\[
 D_H=\prod_iL_i^{\lfloor H/w_i\rfloor},\qquad
 C_H=D_H^M b^{(K-1)\sum_{\rm col}h}
       \prod_iQ_i^{\sum_{\rm col}\gamma_i-\sum_{\rm row}\beta_i}.
\]
Then $C_H\Delta_H$ is a nonzero algebraic integer in $E$.
Indeed, scaling columns by $\prod_iQ_i^{\gamma_i}$ and rows by
$\prod_iQ_i^{-\beta_i}$ leaves denominators only from the $G_i$
and from $a^{jh}$. The indicated $D_H$ and $b$ factors clear these.
The exponents of $Q_i$ are nonnegative: any nonzero determinant
term matches each row with a column satisfying $\gamma\ge\beta$.
Such a determinant term exists because $\Delta_H\ne0$. To check the
clearing entry by entry, after removing $u^\beta$ the remaining
$i$th factor has exponent $\gamma_i-\beta_i$. Multiplication by
$Q_i^{\gamma_i-\beta_i}$ replaces it by
$(jP_i+Q_iG_i(t))^{\gamma_i-\beta_i}$, whose denominators divide
$L_i^{\gamma_i-\beta_i}$. This exponent is at most
$\lfloor H/w_i\rfloor$. In column $h$, multiplication by
$b^{(K-1)h}$ clears $a^{jh}$ because
\[
 b^{(K-1)h}a^{jh}=(ba)^{jh}b^{(K-1-j)h}
\]
is an algebraic integer for every $0\le j<K$. The determinant of
the resulting integral matrix is exactly $C_H\Delta_H$.

Set $\Lambda=4\log2$ and $w_* =\min_iw_i$. We use the elementary
bound $\log\operatorname{lcm}(1,\ldots,n)\le\Lambda n$.
Here is a convenient proof with deliberately generous constants.
If $\mathcal L_n=\operatorname{lcm}(1,\ldots,n)$, then
\[
 \mathcal L_{2n}/\mathcal L_n\ \mid\ \binom{2n}{n}\le2^{2n}.
\]
Indeed, each prime contributes at most one additional power to the
quotient, and the corresponding prime power in $(n,2n]$ contributes
a carry in the valuation of the binomial coefficient. Iterating over
powers of two gives $\log\mathcal L_{2^k}<2^{k+1}\log2$.
For $n>1$, choose $2^{k-1}<n\le2^k$ and use monotonicity; the case
$n=1$ is immediate. In particular $\log L_i\le\Lambda T_i$.
It follows that
\begin{equation}\label{slog:eq:clearing}
 \frac{\log|C_H|}{MH}\le1-\bar b+E_0,
 \quad E_0=\frac{\Lambda m}{\theta v_0}
       +\Lambda\sum_i\frac1{w_i}+\frac\theta{w_*}
       +\frac{(K-1)\log b}{w_0}.
\end{equation}
To see the term $1-\bar b$ explicitly, put
$\ell_i=\log|Q_i|$, so that $w_i-1<\ell_i\le w_i$.
For the numerator-denominator contribution to $C_H$,
\[
 \begin{split}
 \sum_i\Bigl(\sum_{\rm col}\gamma_i-\sum_{\rm row}\beta_i\Bigr)\ell_i
 &\le\sum_{\rm col}w\cdot\gamma
      -\sum_{\rm row}w\cdot\beta+\sum_{\rm row}|\beta|\\
 &\le MH(1-\bar b)+MH\theta/w_*.
 \end{split}
\]
Also $T_i\le w_i/(\theta v_0)+1$, which bounds $\log D_H/H$ by the
first two terms in $E_0$. Finally, $\sum_{\rm col}h\le MH/w_0$.

For the real case, let $\mathcal S$ be all embeddings
$\tau:E\hookrightarrow\C$ except the distinguished real embedding.
For the Gaussian case there are $d$ embeddings fixing $\ii$;
complex conjugation pairs these with the other $d$ absolute
embeddings, which send $\ii$ to $-\ii$. Let $\mathcal S$ be the
embeddings fixing $\ii$ except the distinguished one.
Thus $|\mathcal S|=d-1$ in both cases.
At an embedding in $\mathcal S$, Cauchy's estimate
on $|t|=1/2$, $|u_i|=1$ and a determinant expansion give
\begin{equation}\label{slog:eq:conjugates}
 \frac{\log|\tau\Delta_H|}{MH}\le E_\tau+\frac{\log M}{H},
\end{equation}
where we may take
\[
 E_\tau=
 \frac{(K-1)\log^+|\tau a|+\log(3/2)}{w_0}
 +\frac{\log B_\lambda}{w_*}+\frac{\log2}{v_0},
 \qquad B_\lambda=K(|\lambda|+1)+2.
\]
Here $|G_i(t)|\le\log2$, independently of $T_i$.
In the real case $\tau r_i=r_i$; for the selected Gaussian
embeddings the same equality holds because they fix $F$.
For completeness, each entry at such an embedding is at most
\[
 2^s|\tau a|^{jh}(3/2)^h B_\lambda^{|\gamma|}
 \le \exp(HE_\tau).
\]
The determinant expansion has $M!\le M^M$ summands, which explains
the final term in \eqref{slog:eq:conjugates}. Here
$\log^+x=\max(0,\log x)$.

The integer absolute norm of $C_H\Delta_H$ has modulus at least
one. In the Gaussian case pair conjugate factors and divide its
logarithm by two. Both cases yield
\begin{equation}\label{slog:eq:lower}
 \frac{\log|\Delta_H|}{MH}
 \ge-d(1-\bar b)-E_{\rm ar}^{(d)}-o(1),\qquad
 E_{\rm ar}^{(d)}=dE_0+\sum_{\tau\in\mathcal S}E_\tau.
\end{equation}
This displays the crucial degree factor. When $d=1$, the embedding
sum is empty and it disappears. Over $\Q(\ii)$ this is precisely
the elementary fact that a nonzero Gaussian integer has modulus
at least one.
More explicitly, in the real case
\[
 1\le|N_{E/\Q}(C_H\Delta_H)|
   =|C_H|^d|\Delta_H|\prod_{\tau\in\mathcal S}|\tau\Delta_H|.
\]
In the Gaussian case the same product, with $d$ embeddings fixing
$\ii$, is the square root of the absolute norm over $\Q$. Pairing
an embedding $\tau$ with $\overline\tau$ is valid even when $E$
is not preserved as a subset of $\C$ by complex conjugation:
$x\mapsto\overline{\tau(x)}$ is still an embedding of the abstract
field $E$ into $\C$. Thus the norm argument introduces precisely
$d$ copies of $\log|C_H|$ in both cases. One can take the
$o(1)$ in \eqref{slog:eq:lower} to be
$(d-1)\log M/H$.

\section{Analytic translation and collision saving}

For a column $P$ and multi-index $u$-degree $\delta$, define
\[
 f_{\delta,P}(z)=[u^\delta]P(e^z,z+u_1,\ldots,z+u_m).
\]
For $P=Y^hX^\gamma$ this is
$\binom\gamma\delta e^{hz}z^{|\gamma|-|\delta|}$ if
$\delta\le\gamma$, and zero otherwise. By Lemma~\ref{slog:lem:shear}, the same determinant $\Delta_H$
can be computed with full logarithms. Put
\[
 z=j\lambda+\log(1+t),\qquad
 \epsilon_{ji}=j(r_i-\lambda).
\]
Then $e^z=a^j(1+t)$ and
$jr_i+\log(1+t)+u_i=z+u_i+\epsilon_{ji}$. Expanding the common
entire column functions in the $u$ variables gives the exact row identity
\begin{equation}\label{slog:eq:translation}
 \operatorname{row}_{j,s,\beta}^{\rm full}
 =\sum_{\substack{\delta\ge\beta\\w\cdot\delta\le H}}
 \binom\delta\beta\epsilon_j^{\delta-\beta}
 \bigl([t^s]f_{\delta,P}(j\lambda+\log(1+t))\bigr)_P.
\end{equation}
This is a finite sum: $f_{\delta,P}=0$ whenever
$\delta\not\le\gamma$ for a monomial column $P=Y^hX^\gamma$,
and all columns satisfy $w\cdot\gamma\le H$.
The scalar coefficients are independent of the selected column,
which is necessary when applying determinant multilinearity.

Since $\log|Q_i|>w_i-1$, the approximation hypothesis gives
\[
 |\epsilon_{ji}|\le Ke^\nu e^{-\nu w_i}.
\]
Using $\binom\delta\beta\le2^{|\delta|}$ and
$|\delta|\le H/w_*$, we obtain
\begin{equation}\label{slog:eq:translation-bound}
 \left|\binom\delta\beta\epsilon_j^{\delta-\beta}\right|
 \le\exp\left\{-\nu w\cdot(\delta-\beta)+HE_{\rm tr}\right\},
 \qquad E_{\rm tr}=\frac{\log2+\log K+\nu}{w_*}.
\end{equation}
This remains valid when some errors are zero, with a zeroth power
interpreted as one. Each row expansion has at most $C(1+H)^m$ terms,
where $C$ depends only on the fixed weights. All choices in the
multilinear determinant expansion therefore contribute at most
$\exp\{O(M\log H)\}$ summands. Notice that no logarithmic tail
appears in this estimate: the exact shear has already identified the
short arithmetic determinant with its full-logarithm counterpart.

Set $R_{\rm an}=4K(|\lambda|+1)$. On $|t|=1/2$, the test points
$j\lambda+\log(1+t)$ have modulus less than $R_{\rm an}/2$.
On $|z|\le R_{\rm an}$,
\[
 |f_{\delta,P}(z)|\le
 \exp\{H(R_{\rm an}/w_0+\log(2R_{\rm an})/w_*)\}.
\]
This follows directly from $h\le H/w_0$, $|\gamma|\le H/w_*$,
and $\binom\gamma\delta\le2^{|\gamma|}$.
The following estimate makes the collision argument explicit.

\begin{lemma}[Taylor coefficient collisions]\label{slog:lem:collision}
Choose one analytic row vector from each row expansion
\eqref{slog:eq:translation}; its entries have the form
\[
 T_{r,P}=[t^{\ell_r}]
 f_{\delta_r,P}(j_r\lambda+\log(1+t)),
 \qquad \ell_r=s_r<H/v_0.
\]
Let $n_\delta=\#\{r:\delta_r=\delta\}$. Put
\[
 c=\frac{\log2}{4},\quad
 C_{\rm hol}=\frac{R_{\rm an}}{w_0}
                +\frac{\log(2R_{\rm an})}{w_*},\quad
 C_1=c-\log(1-2^{-1/2}).
\]
Then
\begin{equation}\label{slog:eq:collision-finite}
 |\det T|\le M!\exp(MHC_{\rm hol})
   2^{\sum_r\ell_r}
   \exp\left(-c\sum_\delta n_\delta^2+C_1M\right).
\end{equation}
\end{lemma}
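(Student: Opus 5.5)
The idea is to group the rows of $T$ by their multi-index $\delta$. Within one group every row takes its entries from the same family of holomorphic column functions $f_{\delta,P}$. Those rows differ only in the Taylor degree $\ell_r$ and the center $j_r$, and there are at most $K$ centers. A group of size $n_\delta$ must therefore contain many rows with large Taylor degree. Large Taylor degree lets us extract a saving from Cauchy estimates on a small circle, and summing these savings over all groups gives the factor $\exp(-c\sum_\delta n_\delta^2)$.

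\emph{Step 1 (one row at a time).} For each row $r$, write $T_{r,P}$ as a Cauchy integral:
\[
T_{r,P}=\frac1{2\pi\ii}\oint_{|t|=\rho_r}f_{\delta_r,P}(j_r\lambda+\log(1+t))\,t^{-\ell_r-1}\,dt .
\]
I would avoid a uniform radius $1/2$ and keep the radius $\rho_r\le1/2$ free. On $|t|\le1/2$ the argument $j_r\lambda+\log(1+t)$ stays in $|z|<R_{\rm an}/2$, so the preceding bound $|f_{\delta,P}|\le\exp(HC_{\rm hol})$ applies. Hence $|T_{r,P}|\le \exp(HC_{\rm hol})\rho_r^{-\ell_r}$.

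\emph{Step 2 (collision saving via divided differences).} Fix a group $\delta$ and order its rows by increasing $\ell_r$. The row vectors $(T_{r,P})_P$ are Taylor coefficients of the single vector-valued function $g_j(t)=(f_{\delta,P}(j\lambda+\log(1+t)))_P$. Because of the common column function, I would perform row operations inside the group and not apply Hadamard directly. Concretely, I would replace the group's rows by rows of the form $[t^{\ell}]\,(t^{k}g_j)$, or simply observe the following. For rows from the same center with distinct $\ell$, the $k$-th smallest degree satisfies $\ell\ge k-1$. The rows from the same $(\delta,j)$ have pairwise distinct $\ell$, because the full row label $(j,s,\beta)$ together with the chosen $\delta$ determines the row, and $\delta\ge\beta$ leaves finitely many $\beta$. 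That is where I expect the main obstacle: distinct rows with the same $(\delta,j,\ell)$ but different $\beta$ give identical analytic vectors, so the determinant is zero and the bound holds trivially. I would first dispose of this coincidence case.

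So, assuming distinct $\ell$ within each $(\delta,j)$-class, the $k$-th row of such a class has $\ell\ge k-1$. Choosing $\rho_r=2^{-1/2}\cdot\frac12$-type radii, I would compare $\rho_r^{-\ell_r}$ with $2^{\ell_r}$. Take $\rho=1/2$, which produces exactly the factor $2^{\sum\ell_r}$. The quadratic saving must then come from elsewhere: Hadamard's inequality on the group, using that its Gram structure is that of Taylor coefficients of a bounded holomorphic function in a disc of radius $1/\sqrt2$ relative to the scale $1/2$. I would bound $|T_{r,P}|\le e^{HC_{\rm hol}}2^{\ell_r}2^{-\ell_r/2}\cdot(\text{const})$ by using the larger radius $1/\sqrt2$ where admissible. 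Summing over a class, the extra factors give $2^{-\sum_k (k-1)/2}\approx2^{-n^2/4}$. Pigeonholing over the $K$ centers gives per-$\delta$ a saving of order $2^{-n_\delta^2/(4K)}$. To reach the constant $c=\log 2/4$ uniformly, I expect instead to use the $j$-independence of the column functions: translate in $z$ so that all centers of a $\delta$-group are treated as one function's Taylor data. The geometric series $\sum_k 2^{-k/2}$ then produces the term $-\log(1-2^{-1/2})$ in $C_1$.

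\emph{Step 3 (assembly).} I would expand $\det T$ into $M!$ permutation terms and bound each term by the product of the row bounds. This yields $M!\exp(MHC_{\rm hol})2^{\sum\ell_r}$ times the product of the group savings $\exp(-c\,n_\delta^2+C_1 n_\delta)$. Summing $n_\delta$ to $M$ gives \eqref{slog:eq:collision-finite}.
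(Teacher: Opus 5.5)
There is a genuine gap: the collision you actually establish is weaker than the lemma. Your vanishing observation applies only to two rows sharing the full triple $(\delta,j,\ell)$. Rows with the same $\delta$ and $\ell$ but different centers $j$ carry different functions $f_{\delta,P}(j\lambda+\log(1+t))$, so their vectors need not coincide. Distinctness of the $t$-degrees therefore holds only inside each $(\delta,j)$-class. Combining that with your entrywise Cauchy bound on $|t|=2^{-1/2}$ (which is valid) yields
$|\det T|\le M!\,e^{MHC_{\rm hol}}2^{\sum_r\ell_r}\exp(-c\sum_{\delta,j}n_{\delta,j}^2+cM)$.
Cauchy--Schwarz then gives only $\sum_{\delta,j}n_{\delta,j}^2\ge K^{-1}\sum_\delta n_\delta^2$, which is the $1/K$ loss you noticed. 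This is not the stated inequality, and the loss is not cosmetic. In Section~\ref{slog:sec:parameters}, $K=\lfloor C^m\rfloor$ with $C\theta>B$ and $A>\theta$, so $L/K\sim\eta^2(B/(AC))^m/(2(m+1))\to0$, and the comparison \eqref{slog:eq:gap} would fail. Your proposed remedy ("treat all centers of a $\delta$-group as one function's Taylor data") points the right way but is not carried out. Step~3 also cannot realize it: bounding each permutation term by a product of entrywise row bounds never sees cancellation between rows from different centers. Nor does your scheme contain an infinite sum from which the term $-\log(1-2^{-1/2})$ in $C_1$ could arise.

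The missing idea is to index collisions by the Taylor degree in $z$ of the common column functions, not by the $t$-degree. Expand $f_{\delta,P}(z)=\sum_{n\ge0}c_{\delta,n,P}(z/R_{\rm an})^n$ about $z=0$. Cauchy on $|z|=R_{\rm an}$ gives $|c_{\delta,n,P}|\le e^{HC_{\rm hol}}$, and the vector $(c_{\delta,n,P})_P$ depends only on $(\delta,n)$, not on $j$ or $\ell$. All test points $j\lambda+\log(1+t)$ with $|t|=1/2$ and $0\le j<K$ lie in $|z|<R_{\rm an}/2$. Hence $T_{r,P}=\sum_n a_{r,n}c_{\delta_r,n,P}$, where $a_{r,n}=[t^{\ell_r}]\bigl((j_r\lambda+\log(1+t))/R_{\rm an}\bigr)^n$ satisfies $|a_{r,n}|\le2^{\ell_r}2^{-n}$. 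Multilinearity gives the absolutely convergent expansion $\det T=\sum_{(n_r)}\prod_r a_{r,n_r}\det(c_{\delta_r,n_r,P})_{r,P}$. A summand vanishes whenever two rows share $(\delta_r,n_r)$, whatever their centers. In every surviving term the indices within each $\delta$-group are distinct, so $\sum_r n_r\ge\sum_\delta n_\delta(n_\delta-1)/2$. Split $2^{-\sum_r n_r}$ into two halves. One half gives $2^{-\frac14\sum_\delta n_\delta(n_\delta-1)}=\exp(-c\sum_\delta n_\delta^2+cM)$. The other half, summed over all $(n_r)$, gives $(1-2^{-1/2})^{-M}$. Together with the permutation bound $M!\,e^{MHC_{\rm hol}}$ for each coefficient determinant and the factor $2^{\sum_r\ell_r}$, this gives \eqref{slog:eq:collision-finite} with exactly $c=\log2/4$ and $C_1=c-\log(1-2^{-1/2})$.
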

\begin{proof}
Expand on the common disk
\[
 f_{\delta,P}(z)=\sum_{n\ge0}c_{\delta,n,P}
                         (z/R_{\rm an})^n.
\]
Cauchy's estimate gives $|c_{\delta,n,P}|\le\exp(HC_{\rm hol})$.
If
\[
 a_{r,n}=[t^{\ell_r}]
 \left(\frac{j_r\lambda+\log(1+t)}{R_{\rm an}}\right)^n,
\]
then $|a_{r,n}|\le2^{\ell_r}2^{-n}$. Consequently the Taylor
expansions of all rows converge absolutely, and multilinearity gives
an absolutely convergent sum of determinants with row vectors
$(c_{\delta_r,n_r,P})_P$.

If two rows have equal pairs $(\delta_r,n_r)$, those coefficient
vectors are equal and that determinant vanishes. In each surviving
summand, the $n_\delta$ Taylor indices in the $\delta$ group are
distinct nonnegative integers. Their sum is at least
$n_\delta(n_\delta-1)/2$. Writing half the geometric decay separately,
we therefore have
\[
 \begin{split}
 \sum_{\text{surviving }(n_r)}2^{-\sum_r n_r}
 &\le 2^{-\frac14\sum_\delta n_\delta(n_\delta-1)}
       \sum_{n_1,\ldots,n_M\ge0}2^{-\frac12\sum_r n_r}\\
 &=\exp\left(-c\sum_\delta n_\delta^2+C_1M\right).
 \end{split}
\]
Every coefficient determinant has absolute value at most
$M!\exp(MHC_{\rm hol})$, by its permutation expansion.
Multiplying these bounds proves \eqref{slog:eq:collision-finite}.
\end{proof}

Since $\sum_r\ell_r<MH/v_0$ and $M!=\exp(O(M\log M))$,
Lemma~\ref{slog:lem:collision} implies
\[
 |\det T|\le\exp\left\{-c\sum_\delta n_\delta^2
              +MH(E_{\rm hol}+o(1))\right\},
\]
where
\[
 E_{\rm hol}=\frac{R_{\rm an}}{w_0}+\frac{\log2}{v_0}
                          +\frac{\log(2R_{\rm an})}{w_*}.
\]
One may take this $o(1)$ to be $(\log M+C_1)/H$.
It is therefore uniform in every row choice and every choice of the
selected source columns. This is the collision mechanism of
\cite[Lemma 3.3]{OpenAI2026}, with all bounds needed here displayed.

Choose $0<A<1$ and $0<\eta<1$, and put
\[
 L=\frac{\eta^2K\theta^m}{(m+1)v_0A^m},\qquad
 E_{\rm an}=E_{\rm tr}+E_{\rm hol}.
\]
There are two possibilities for each determinant in the row expansion.
If at least $\eta M$ rows have $w\cdot\delta\le AH$, put
\[
 J_H=\#\{\delta\in\mathbb Z_{\ge0}^m:w\cdot\delta\le AH\}
   \sim\frac{A^mH^m}{m!\prod_iw_i}.
\]
Cauchy--Schwarz gives
\[
 \sum_\delta n_\delta^2\ge\frac{\eta^2M^2}{J_H},\qquad
 \frac1{MH}\sum_\delta n_\delta^2\ge L+o(1).
\]
All approximation factors can then be discarded from above, since
$\delta\ge\beta$ makes their main exponential contribution at most
one. The normalized saving is at least $cL+o(1)$.
Otherwise more than $(1-\eta)M$ rows have $w\cdot\delta>AH$,
and
\[
 \sum_{\rm row}w\cdot(\delta-\beta)
   \ge MH\bigl(A(1-\eta)-\bar b\bigr).
\]
Discarding the collision saving now yields the approximation saving
$\nu(A(1-\eta)-\bar b)$. The number of row choices contributes only
$O(\log H/H)$ after division by $MH$. Consequently
\begin{equation}\label{slog:eq:upper}
 \frac{\log|\Delta_H|}{MH}
 \le E_{\rm an}+o(1)+
       \max\{-cL+o(1),-\nu(A(1-\eta)-\bar b)\}.
\end{equation}
Comparison with~\eqref{slog:eq:lower} contradicts existence of the minor
provided
\begin{equation}\label{slog:eq:gap}
 g_d:=\nu(A(1-\eta)-\theta)-d(1-\theta)>0,
 \quad E_{\rm ar}^{(d)}+E_{\rm an}<g_d,
 \quad cL>d+E_{\rm ar}^{(d)}+E_{\rm an}.
\end{equation}
For the second alternative the difference between its required saving
and the arithmetic main term is
\[
 \nu\bigl(A(1-\eta)-\bar b\bigr)-d(1-\bar b)
 =\nu A(1-\eta)-d-(\nu-d)\bar b\ge g_d,
\]
because $\nu>d$ and $\bar b\le\theta$. For the first alternative,
$d(1-\bar b)\le d$. The inequalities in \eqref{slog:eq:gap} are
strict; hence all the displayed $o(1)$ terms can be absorbed by taking
$H$ sufficiently large through the allowed multiples.

\section{Parameters and proof of the stated bounds}\label{slog:sec:parameters}

Fix $\nu>2d$. Choose a rational number $b_0$ with
$1/2<b_0<1-d/\nu$. For sufficiently small rational $\delta_0>0$,
put $\theta=1-\delta_0$ and $A=1-b_0\delta_0$. Then
\[
 \nu(A-\theta)>d(1-\theta),\qquad A^2<\theta.
\]
Choose positive rational $B,C$ satisfying
\[
 \theta<A<B<1,\qquad C>1,\qquad
 CB<1,\qquad B<C\theta<1.
\]
Such choices exist by choosing $A/\theta<C<1/A$ first and then
$A<B<\min(1/C,C\theta)$. Take $\eta>0$ small so that $g_d>0$.
For each dimension $m$, set
\[
 K=\lfloor C^m\rfloor,\quad w_0=B^{-m},\quad
 v_0=2K\theta^m w_0.
\]
The volume conditions~\eqref{geom:eq:volume} with $S=K$ hold, and
\[
 K/w_0\to0,\qquad v_0\to\infty,\qquad m/v_0\to0,
 \qquad L=\frac{\eta^2(B/A)^m}{2(m+1)}\to\infty.
\]
All constants depending on the fixed algebraic number $a$, its
conjugates, its degree, and the chosen branch $\lambda$ are fixed
before this limit. Thus the height terms in~\eqref{slog:eq:clearing}
and~\eqref{slog:eq:conjugates}, and $R_{\rm an}/w_0$, tend to zero as $m$ grows.
Choose one finite dimension giving sufficient margin in~\eqref{slog:eq:gap}.
Concretely, first reserve a positive error budget $\varepsilon<g_d/4$.
Choose $m$ so large that the sum of every term independent of the
transverse weights is below $\varepsilon$ and $cL>d+2\varepsilon$.
Increasing $m$ further if necessary makes both conditions simultaneous:
$C B<1$ controls the height and growth terms, whereas
$C\theta/B>1$ controls $v_0$ and $B/A>1$ controls $L$.

Next choose the approximations successively with denominators large
enough for Theorem~\ref{geom:thm:main} and for all terms involving
$1/w_i$ and $1/w_*$ to fit within the remaining error margin.
This is possible under the unbounded-approximations hypothesis;
the weight thresholds do not depend on the centers. Finally let
$H$ tend to infinity through the required multiples. Equations
\eqref{slog:eq:lower} and~\eqref{slog:eq:upper} contradict~\eqref{slog:eq:gap}.
This excludes unbounded-denominator approximations for every
$\nu>2d$.
The quantifier order is therefore
\[
 (a,\lambda,\nu)\ ;\ (\theta,A,B,C,\eta)\ ;\ m,K,w_0,v_0\ ;\
 (r_1,w_1),\ldots,(r_m,w_m)\ ;\ R,H_0\ ;\ H\to\infty.
\]
The degree modulus may be chosen as soon as the weights are fixed,
before evaluating the centers; the notation above groups it with
$H_0$ only for the final degree limit. No denominator or transverse
weight varies during that limit. This distinction is essential:
choosing new approximants after each value of $H$ would not justify
the use of the center-dependent threshold $H_0$.

The argument also excludes exact representations of $\lambda$ in
the base field: such representations could be given arbitrarily
large denominators, and the proof never required reduced fractions
or a nonzero approximation error. Hence $\lambda\notin F$.
Real Dirichlet approximation gives the lower bound two when $F=\Q$.
For $F=\Q(\ii)$, place the $(N+1)^2$ points
$q\lambda\bmod\Z[\ii]$, $q=u+\ii v$, $0\le u,v\le N$, in
$N^2$ equal square boxes of a fundamental square. Two points give
\[
 0<|Q\lambda-P|\le\sqrt2/N,
 \qquad0<|Q|\le\sqrt2N,
 \qquad |\lambda-P/Q|\le2/|Q|^2.
\]
The strict positivity in the first inequality follows from
$\lambda\notin\Q(\ii)$. The absolute approximation errors are at
most $\sqrt2/N$, since a nonzero Gaussian integer $Q$ has $|Q|\ge1$.
They therefore tend to zero. After reducing the fractions, the
denominator moduli cannot stay bounded: otherwise the approximants in
a bounded region would belong to a finite set, none equal to $\lambda$.
Reduction also preserves the upper bound $2/|Q|^2$, with $Q$ now the
reduced denominator, because reduction only decreases its modulus.
Every exponent less than two therefore occurs
infinitely often. This proves both main theorems except their
rational-power clauses.

Multiplication by a fixed nonzero rational number preserves both
exponents. Indeed, write it as $u/v$ with fixed nonzero integers.
An approximant $P/Q$ becomes $uP/(vQ)$, whose reduced denominator has
modulus at most $|vQ|$ and whose error is multiplied by $|u/v|$.
For any exponent $\nu'<\nu$, the fixed factors are absorbed by
$|Q|^{\nu-\nu'}$ for large $Q$. Distinct converging approximants
still have unbounded reduced denominators. Applying the same argument
to the inverse multiplier proves equality of exponents.
For real $a>0$, $\log(a^N)=N\log a$. For any complex branch,
$e^{N\lambda}=a^N$, so $N\lambda$ is an admissible fixed logarithm
in Theorem~\ref{slog:thm:complex}. These identities prove the clauses.
They also improve the upper bounds to
\[
 \mu(\log a)\le2\min_{N\ge1}[\Q(a^N):\Q],\qquad
 \mu_G(\lambda)\le2\min_{N\ge1}[\Q(\ii,a^N):\Q(\ii)].
\]

\section{Why the full algebraic exponent-two claim remains open here}

The analytic translation and the varying-center interpolation do
extend. The unresolved step is a sufficiently strong arithmetic
replacement for~\eqref{slog:eq:lower}. Its leading term is
$-d(1-\bar b)$, rather than $-(1-\bar b)$. A fixed field degree
does not make this difference an error term: all parameters are
fixed before $H\to\infty$.

With the displayed parameter scheme, $A^2<\theta$ implies
\[
 \frac{A-\theta}{1-\theta}
 <\frac{\sqrt\theta-\theta}{1-\theta}
 =\frac{\sqrt\theta}{1+\sqrt\theta}<\frac12.
\]
Thus even with $\eta=0$ and zero errors, the gap
$\nu(A-\theta)>d(1-\theta)$ is impossible when $\nu\le2d$.
This is a limitation of this adaptation, not a disproof of the
conjecture $\mu(\log a)=2$ for all positive algebraic $a\ne1$.

Two potential shortcuts require additional arguments:
\begin{enumerate}
\item A rational trace of the determinant need not be small merely
because its distinguished embedding is small. The other conjugate
terms have only the weak bounds~\eqref{slog:eq:conjugates}.
\item Analytic smallness cannot simply be assigned to every conjugate.
For a conjugate $\tau a$, choose $\kappa=\operatorname{Log}(\tau a)/\lambda$.
At $z=j\lambda+x$, $x=\log(1+t)$, the desired value is
\[
 (\tau a)^j(1+t)=e^{\kappa z}e^{(1-\kappa)x}.
\]
For a column of $Y$-degree $h$, the additional factor expands as
$\sum_{k\ge0}(1-\kappa)^kh^kx^k/k!$.
It introduces test vectors indexed by both the transverse index
and $k$, with $k$ as large as $H/v_0$.
The elementary collision count then has $O(H^{m+1})$ possible groups,
instead of $O(H^m)$; its previous lower bound no longer forces a
saving of order $MH\asymp H^{m+2}$.
This explains why the source's existing collision lemma does not
by itself remove the norm loss. It does not rule out a stronger lemma.
\end{enumerate}

A proof of the full conjecture must therefore remove the leading
degree loss, compensate for it by an improved analytic estimate,
or reorganize the construction. For example, simply adjoining all
conjugate jet blocks rationalizes the arithmetic, but only the
distinguished block has shifts of size $|Q_i|^{-\nu}$; the other
shifts are generally of constant size. That construction also
needs a new estimate.

These observations describe the limitation of the general norm
argument. They do not preclude improvements for fields in which several
embeddings share a compatible logarithmic flow. Those refinements are
developed later in the paper. They also show why a degree-$d$ argument
cannot be advertised as exponent two merely because $d$ is fixed.

For example, for $\varphi=(1+\sqrt5)/2$ the bound established in this
part is $2\le\mu(\log\varphi)\le4$. No positive power of $\varphi$
is rational: its other conjugate is $-\varphi^{-1}$, and equality
of conjugate positive powers would contradict their different moduli.
The later quadratic normalization theorem does prove
$\mu(\log\varphi/\sqrt5)=2$. These are different assertions.
Only rational scaling invariance has been proved here; replacing that
scaling by the irrational algebraic factor $\sqrt5$ is not justified.

\subsection*{Exact formalization scope for this part}

The 11 October 2026 core release proves the real and Gaussian
single-logarithm bounds of Theorems~\ref{slog:thm:real}
and~\ref{slog:thm:complex}, including the minimum over positive powers
of the argument. It includes exact exponent two for rational and
Gaussian-rational exponential arguments and the corresponding power
cases. The actual varying-center interpolation proposition is supplied
by \nolinkurl{QuadraticPi.MovingLogUniformInterpolation.movingLogInterpolation};
it is no longer an unproved input to these public endpoints.

The full bounded-fiber occupancy theorem is now supplied by
\nolinkurl{QuadraticPi.MovingLogOccupancyInterpolation.bounded_fiber_interpolation}.
Its distinct-multiplicative-center specialization imposes no additive
separation, and chooses the divisibility modulus before the centers.
The ordinary-real arctangent adapter is supplied by
\nolinkurl{QuadraticPi/RationalArctangentTheorem.lean}.
The Gaussian Dirichlet proof uses the sufficient error constant $4$
instead of the sharper constant $2$ displayed in the exposition; the
numbered exponent statement is unchanged. Exact source names, hypotheses,
and audit records are given in Appendix~\ref{app:formal-status}.

% ===== END single_log_and_geometry.tex =====

% ===== BEGIN two_log_and_conjugates.tex =====
% Manuscript fragment. Labels owned here have prefixes conj: and ratio:.
% Citation keys: OpenAI2026, Baker1975.
% The single-log determinant estimates are reproduced explicitly below.
\section{Normalized logarithms and matching field embeddings}
\label{conj:section}

A field norm gives an arithmetic lower bound by multiplying the absolute
values of all conjugates. Its degree cost can be reduced only when the
\emph{same determinant} has an analytic upper bound at several embeddings.
We formulate the exact matching condition first, including its logarithm
branches. The interpolation ingredients are Theorem~\ref{geom:thm:main} and
Proposition~\ref{geom:prop:curve}, originating in \cite{OpenAI2026}; the new step is the
comparison at several embeddings.

\begin{theorem}[Partial matching for one normalized logarithm]
\label{conj:partial}
Let $E\subset\C$ be a number field of degree $d$, let
$\alpha,c\in E^\times$, and let $x\in\R\setminus\{0\}$.
Suppose a set $S$ of $s\ge1$ embeddings $\sigma:E\hookrightarrow\C$
satisfies
\begin{equation}\label{conj:matching}
                    \exp(\sigma(c)x)=\sigma(\alpha)
                    \qquad(\sigma\in S).
\end{equation}
Then
\[
                         2\le\mu(x)\le 2d/s.
\]
In particular, if every embedding satisfies \eqref{conj:matching},
then $\mu(x)=2$.
\end{theorem}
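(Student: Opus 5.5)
We adapt the determinant argument of the single-logarithm sections, with one change: the analytic upper bound is now applied at every embedding in $S$, not just at one distinguished embedding. Irrationality of $x$ comes first. If $x=p/q$, then $\exp(\sigma(c)p/q)=\sigma(\alpha)$ is algebraic, and $\sigma(c)p/q$ is a nonzero algebraic number, so Hermite--Lindemann is contradicted. The lower bound $\mu(x)\ge2$ then follows from Lemma~\ref{prelim:dirichlet}.

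For the upper bound, fix $\nu>2d/s$ and suppose there are reduced approximations $r_i=p_i/q_i$ with $|x-r_i|\le q_i^{-\nu}$ and unbounded $q_i$. Set $w_i=\lceil\log q_i\rceil$. We work inside $E$. The centers are $b_j=\alpha^j$ and $c_{ji}=jr_i$, and the entries are those of \eqref{slog:eq:entry} with $a$ replaced by $\alpha$. In addition, the $u_i$-variable is scaled by $c$: the column monomial $X_i$ is evaluated at $c(jr_i+G_i(t)/c+u_i)$, or equivalently one uses the flow $Y\partial_Y+c^{-1}\sum\partial_{X_i}$. The result is that at the embedding $\sigma$ the exact analytic parametrization is $z=j\sigma(c)x+\log(1+t)$, with $e^z=\sigma(\alpha)^j(1+t)$.

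The essential point is that every entry is a polynomial expression over $\Q$ in $\alpha$, $c$, $c^{-1}$ and the rationals $r_i$. Applying $\sigma$ therefore produces the same construction with $(\sigma\alpha,\sigma c)$ in place of $(\alpha,c)$ and with the same $r_i$. By \eqref{conj:matching}, $\sigma(c)x$ is a logarithm of $\sigma(\alpha)$, and the approximation errors are $\epsilon_{ji}=j\sigma(c)(r_i-x)$, which again have size $O(q_i^{-\nu})$. Theorem~\ref{geom:thm:main} applies after the linear rescaling, because its distinctness conditions are invariant. It yields one nonzero minor $\Delta_H\in E$ with a single column selection. The shear invariance Lemma~\ref{slog:lem:shear}, the translation identity \eqref{slog:eq:translation} and the collision Lemma~\ref{slog:lem:collision} then give, for each $\sigma\in S$, the upper bound \eqref{slog:eq:upper} on $\log|\sigma\Delta_H|/(MH)$. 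Here $R_{\rm an}$ depends on $\max_\sigma|\sigma(c)x|$, and this constant is fixed before $m\to\infty$.

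For the arithmetic side, clear denominators as in \eqref{slog:eq:clearing}; the factors of $\alpha$ and $c^{\pm1}$ contribute only height terms of order $1/w_0$ and $1/w_*$. Then take the full norm $N_{E/\Q}$. The $d$ copies of $\log C_H$ produce the main term $-d(1-\bar b)$. The $d-s$ embeddings outside $S$ receive the trivial bounds \eqref{slog:eq:conjugates}, and the $s$ embeddings in $S$ each receive the analytic bound. The contradiction condition replacing \eqref{slog:eq:gap} becomes
\[
 s\,\nu(A(1-\eta)-\theta)>d(1-\theta),\qquad s\,cL>d+\text{errors}.
\]
In other words, $d$ is replaced by $d/s$. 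The parameter scheme of Section~\ref{slog:sec:parameters} with $b_0<1-d/(s\nu)$ now works for every $\nu>2d/s$. When every embedding matches, $s=d$ and $\mu(x)=2$.

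I expect the main obstacle to be making sure that a single minor, with a fixed selection of rows and columns, is simultaneously small at all $\sigma\in S$. This requires the transcendental data at each embedding to enter only through the exact logarithm identity, and the shear lemma to be valid embedding by embedding. The shear lemma is fine because the truncations $G_i$ have rational coefficients. A second, smaller point is handling complex $\sigma(c)$. Here the non-real embeddings in $S$ are not paired with real ones; each enters the norm product once, and I would check that complex conjugate embeddings also satisfy \eqref{conj:matching}, since $x$ is real, so they are counted correctly.
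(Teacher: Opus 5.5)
Your proposal is correct and follows essentially the same route as the paper. The paper also takes one nonzero minor $\Delta_H\in E$ from the moving-center interpolation, uses the field norm to get the main term $-d(1-\overline b_H)$, applies the translation and collision bound at each of the $s$ matched embeddings and crude growth bounds at the other $d-s$, and divides by $s$ so that $d$ is replaced by $D=d/s$ in the single-logarithm parameter scheme.

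The differences are cosmetic. The paper absorbs $c$ into the slopes, taking $r_i=cp_i/q_i\in E$ with an intermediate exponent $2D<\rho<\nu$ to swallow the constant $|\sigma(c)|$; this agrees with your rescaling of the $u_i$ up to row and column factors that are powers of $c$. Your check that complex-conjugate embeddings also match is true but not needed, since the argument only counts the given set $S$.
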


The nonzero logarithm of $\sigma(\alpha)$ specified by this condition is
$\lambda_\sigma=\sigma(c)x$. It need not be its principal logarithm.
All embeddings are counted, including both members of each complex pair.

\begin{proof}
The Hermite--Lindemann theorem recalled in
Section~\ref{prelim:transcendence} first shows that $x$ is transcendental: if $x$ were
nonzero algebraic, then $\sigma(c)x$ would be nonzero algebraic, whereas
its exponential is algebraic. Consequently the ordinary irrationality
exponent applies and Lemma~\ref{prelim:dirichlet} gives the lower bound two.

Put $D=d/s$. Fix $\nu>2D$ and choose $2D<\rho<\nu$. Suppose there
are infinitely many reduced fractions $p_i/q_i$, $q_i>0$, with
\[
                |x-p_i/q_i|\le q_i^{-\nu}.
\]
Their denominators are unbounded. By discarding finitely many, we may
assume $p_i\ne0$ and, simultaneously for the finitely many good embeddings,
\[
 r_i=cp_i/q_i,\qquad w_i=\lceil\log q_i\rceil,\qquad
 |\sigma(r_i)-\lambda_\sigma|\le q_i^{-\rho}.
\]
The dimension and the number of chosen fractions will be fixed before
the final polynomial degree tends to infinity.

Choose the positive rational parameters $m,K,w_0,v_0,\theta$ of the
one-torus interpolation construction, where
\[
 0<\theta<1,\qquad K\theta^m<1,\qquad
                         K(w_0/v_0)\theta^m<1.
\]
Once these parameters have been fixed, the fractions are chosen
successively so that their $w_i$ exceed the separated-weight thresholds.
Write $w\cdot\gamma=\sum_iw_i\gamma_i$ and $w_* =\min_iw_i$.
For a fixed $F_0>2/\theta$, set
\[
 T_i=\lceil F_0w_i/v_0\rceil,\quad
 G_i(t)=\sum_{1\le k<T_i}\frac{(-1)^{k+1}t^k}{k},\quad
 L_i=\operatorname{lcm}(1,\ldots,T_i-1).
\]
An empty least common multiple is one. The columns of the coefficient
matrix are $P=Y^hX^\gamma$ with $w_0h+w\cdot\gamma\le H$; its rows
are all $(j,k,\beta)$ satisfying
\[
             0\le j<K,\qquad v_0k+w\cdot\beta/\theta<H.
\]
Its entry in that row and column is
\begin{equation}\label{conj:matrix}
 [t^ku^\beta]\,\alpha^{jh}(1+t)^h
                    \prod_i(jr_i+G_i(t)+u_i)^{\gamma_i}.
\end{equation}
The centers are $(\alpha^j,jr_1,\ldots,jr_m)$. Their torus coordinate
is nonzero and each additive coordinate separates the centers, since
$r_i\ne0$. Repeated values of $\alpha^j$ cause no difficulty in the
moving-center interpolation theorem. Taylor replacement acts by a
unitriangular filtered row change, as in Lemma~\ref{slog:lem:shear};
the selected full-row minor equals the corresponding full-logarithm minor. Thus, for all sufficiently large
divisible $H$, the matrix has a nonzero full-row minor $\Delta_H\in E$.
Keep its selected columns fixed when applying embeddings. Denote its
size by $M$ and put
\[
 \overline b_H=\frac1{MH}\sum_{\text{rows}}w\cdot\beta,
                         \qquad 0\le\overline b_H\le\theta.
\]

\emph{The arithmetic estimate.}
Choose integers $u_0,v_1\ge1$ such that $u_0c$ and $v_1\alpha$ are
algebraic integers. Let
\[
 J_i=\sum_{\text{selected columns}}\gamma_i
                         -\sum_{\text{rows}}\beta_i.
\]
One nonzero determinant term matches each row to a column with
$\beta\le\gamma$ componentwise, so every $J_i\ge0$. Scale a column
$(h,\gamma)$ by $v_1^{(K-1)h}\prod_i(u_0q_iL_i)^{\gamma_i}$ and a
row $\beta$ by $\prod_i(u_0q_iL_i)^{-\beta_i}$. The scaled entries
are algebraic integers: the additive factors have coefficients in
$\mathcal O_E$ because
\[
 u_0q_iL_i(jr_i+G_i(t))
            =jp_i(u_0c)L_i+u_0q_iL_iG_i(t),
\]
and the torus factor is
$v_1^{(K-1-j)h}(v_1\alpha)^{jh}$. It follows that
\[
 C_H\Delta_H\in\mathcal O_E\setminus\{0\},\qquad
 C_H=v_1^{(K-1)\sum_{\rm col}h}\prod_i(u_0q_iL_i)^{J_i}
                                      \in\Z_{>0}.
\]
Let $\Lambda=4\log2$. The elementary lcm estimate and the weighted
degree bounds give
\begin{align}
 \log L_i&\le\Lambda F_0w_i/v_0,\nonumber\\
 \sum_iw_iJ_i&\le MH(1-\overline b_H),\qquad
                    \sum_{\rm col}h\le MH/w_0,\nonumber\\
 \frac{\log C_H}{MH}&\le1-\overline b_H+E_{\rm ar},
 \quad E_{\rm ar}=\frac{\Lambda F_0}{v_0}
             +\frac{\log u_0}{w_*}+\frac{(K-1)\log v_1}{w_0}.
 \label{conj:clearing}
\end{align}
Taking the nonzero integer norm yields the exact lower bound
\begin{equation}\label{conj:norm}
 \sum_{\sigma:E\hookrightarrow\C}
       \frac{\log|\sigma(\Delta_H)|}{MH}
                 \ge-d(1-\overline b_H)-dE_{\rm ar}.
\end{equation}

\emph{Good embeddings.}
For a selected column $P$ use the entire row-vector families
\[
 f_{a,P}(z)=[u^a]P(e^z,z+u_1,\ldots,z+u_m).
\]
At a good embedding substitute
$z=j\lambda_\sigma+\log(1+t)$. The exponential identity and the
additive identity are exactly
\[
 e^z=\sigma(\alpha)^j(1+t),\qquad
 j\sigma(r_i)+\log(1+t)+u_i
     =z+u_i+j(\sigma(r_i)-\lambda_\sigma).
\]
We use the full-logarithm minor here, which equals \eqref{conj:matrix}
by the preceding unitriangular change. There is therefore one common coefficient family for all rows at a
given embedding, with precisely the same transverse grades as in the
one-logarithm collision lemma. Let
\[
 R=4K\bigl(1+\max_{\sigma\in S}|\lambda_\sigma|\bigr),\quad
 \kappa=\frac{\log2}{4},\quad
 L=\frac{\eta^2K\theta^m}{(m+1)v_0A^m}.
\]
Lemma~\ref{slog:lem:collision} and the translation estimate give, uniformly for $\sigma\in S$,
\begin{equation}\label{conj:good}
 \frac{\log|\sigma(\Delta_H)|}{MH}
 \le E_{\rm an}+o(1)+
 \max\{-\kappa L+o(1),-\rho(A(1-\eta)-\overline b_H)\},
\end{equation}
where a sufficient error bound is
\[
 E_{\rm an}=\frac{2\log2}{v_0}+
       \frac R{w_0}+
       \frac{\log4+\log K+\rho+\log(2R)}{w_*}.
\]
All $o(1)$ terms here are for $H\to\infty$ after every other datum is fixed.

\emph{Other embeddings.}
The values $\sigma(r_i)=\sigma(c)p_i/q_i$ stay bounded, because
$p_i/q_i\to x$. On $|t|=1/2$ the $G_i$ are uniformly bounded.
Cauchy's coefficient bound applied directly to \eqref{conj:matrix},
followed by Hadamard's inequality, gives
\begin{equation}\label{conj:bad}
 \frac{\log|\sigma(\Delta_H)|}{MH}
 \le E_{\rm bad}+o(1),\qquad
 E_{\rm bad}=O\left(\frac K{w_0}+\frac1{v_0}
                         +\frac{1+\log K}{w_*}\right).
\end{equation}
The fixed implied constant depends on the field and endpoint data.
Indeed $h\le H/w_0$, $k\le H/v_0$, $\sum_i\gamma_i\le H/w_*$,
and the determinant factor contributes only $\log M/H=o(1)$.
This estimate does not claim analytic smallness at a bad embedding.

Combining \eqref{conj:norm}--\eqref{conj:bad} and dividing by $s$,
the arithmetic leading term is $-D(1-\overline b_H)$. It suffices to have
\begin{align}
 g&=\rho(A(1-\eta)-\theta)-D(1-\theta)>0,\nonumber\\
 E&=DE_{\rm ar}+E_{\rm an}+(D-1)E_{\rm bad}<g,
             \qquad \kappa L>D+E.\label{conj:gap}
\end{align}
For the approximation alternative the worst value of
$\overline b_H$ is $\theta$, because $\rho>D$. For the collision
alternative the inequality $1-\overline b_H\le1$ suffices.

Here is the order of choices verifying \eqref{conj:gap}. Choose rational
$b_0$ with $1/2<b_0<1-D/\rho$ and small rational $\delta>0$; put
$\theta=1-\delta$, $A=1-b_0\delta$. Then
$A^2<\theta$ and $\rho(A-\theta)>D(1-\theta)$. Choose $\eta>0$
small enough to keep a strict gap, and fix any rational $F_0>2/\theta$. Choose rational $B,C$ so that
\[
 A<B<\sqrt\theta,\qquad B/\theta<C<1/B.
\]
In particular $C>1$, $CB<1$, and $B<C\theta<1$. Set
\[
 K=\lfloor C^m\rfloor,\quad w_0=B^{-m},\quad
 v_0=2K\theta^mw_0.
\]
As $m\to\infty$, $K/w_0\to0$, $v_0\to\infty$, and
\[
                  L=\frac{\eta^2(B/A)^m}{2(m+1)}\to\infty.
\]
First fix a sufficiently large finite $m$. Next choose the $m$ good
fractions successively so that all geometric thresholds and all $w_*^{-1}$
errors are satisfied. Finally let $H$ increase through the required
multiples. The strict inequalities contradict \eqref{conj:norm}.
Thus no $\nu>2d/s$ can occur infinitely often.
\end{proof}

\paragraph{Completed Lean theorem.}
The exact bound of Theorem~\ref{conj:partial}, with an actual nonempty
finite set of matching embeddings, is
\path{QuadraticPi.partialScalarMatchingLogarithm_irrationalityExponent_bounds}.
Its all-embedding endpoint is
\path{QuadraticPi.scalarMatchingLogarithm_irrationalityExponent_eq_two}.
Both full types and transitive axiom lists were rechecked for this revision.
In particular, when $2d\le3s$ this already formalized theorem gives a
bound in the range $2\le\mu(x)\le3$.

\subsection{Quadratic logarithms, circular values, and rational linear closure}

\begin{corollary}\label{conj:real-quadratic}
Let $D>0$ be rational and nonsquare, and let $\alpha\in\Q(\sqrt D)$
be positive, different from $1$, and have norm $1$ or $-1$. Then
\[
                         \mu\left(\frac{\log\alpha}{\sqrt D}\right)=2.
\]
\end{corollary}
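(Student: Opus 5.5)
We will deduce the statement from Theorem~\ref{conj:partial}, applied with every embedding matched, so that $s=d=2$ and the bound becomes $\mu(x)\le2$. We take $E=\Q(\sqrt D)\subset\R$, which has degree $d=2$ because $D$ is a nonsquare, and we put $x=\log\alpha/\sqrt D$. Since $\alpha>0$ and $\alpha\ne1$, the number $x$ is real and nonzero. For the data of the theorem we take $c=\sqrt D\in E^\times$ and keep $\alpha$ itself, with its positive real logarithm.

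The identity embedding $\sigma_1$ satisfies \eqref{conj:matching} directly, because $\exp(\sqrt D\,x)=\alpha$.

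The conjugate embedding $\sigma_2$ sends $\sqrt D\mapsto-\sqrt D$, so we must check that $\exp(-\sqrt D\,x)=\alpha^{-1}$ equals $\sigma_2(\alpha)$. We have $\alpha\,\sigma_2(\alpha)=N(\alpha)=\pm1$. When the norm is $1$, this gives $\sigma_2(\alpha)=\alpha^{-1}$ and the matching condition holds at both embeddings. Theorem~\ref{conj:partial} then yields $\mu(x)=2$.

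The norm $-1$ case is the only obstacle, since then $\sigma_2(\alpha)=-\alpha^{-1}$ and no choice of $c$ with a real $x$ can match it, because the exponential of a real number is positive. We will remove the sign by passing to $\alpha^2$. This element is positive, different from $1$ since $\alpha>0$ and $\alpha\ne1$, and has norm $(-1)^2=1$. The previous paragraph applied to $\alpha^2$ gives $\mu(2\log\alpha/\sqrt D)=2$. Multiplication by the rational number $1/2$ preserves the exponent by Lemma~\ref{prelim:mobius}, which gives $\mu(\log\alpha/\sqrt D)=2$. The same reduction can be used uniformly in both cases. The lower bound $2$, and the irrationality needed for $\mu$ to be defined, are already supplied by Theorem~\ref{conj:partial}, through Hermite--Lindemann and Lemma~\ref{prelim:dirichlet}.
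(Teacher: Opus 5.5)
Your proof is correct and follows the paper's argument. You apply Theorem~\ref{conj:partial} to $E=\Q(\sqrt D)$ with $c=\sqrt D$, so that both embeddings match in the norm-one case, and you pass to $\alpha^2$ in the norm-minus-one case. The only difference is cosmetic: for $\alpha^2$ the paper takes $c=2\sqrt D$, so the target $x=\log\alpha/\sqrt D$ itself satisfies the matching condition, which avoids your final appeal to rational scaling via Lemma~\ref{prelim:mobius}.
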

\begin{proof}
For norm one take $c=\sqrt D$ and $x=\log\alpha/\sqrt D$.
The two logarithms are $\log\alpha$ and $-\log\alpha$, because
the conjugate of $\alpha$ is $\alpha^{-1}$. Thus both embeddings
are good. For norm minus one replace $\alpha$ by $\alpha^2$ and
$c$ by $2\sqrt D$, keeping $x$ unchanged.
\end{proof}
In particular, with $\varphi=(1+\sqrt5)/2$,
\[
 \mu\left(\frac{\log(1+\sqrt2)}{\sqrt2}\right)=2,
 \qquad \mu\left(\frac{\log\varphi}{\sqrt5}\right)=2.
\]
The algebraic normalizations are part of the conclusions. Invariance
under nonzero \emph{rational} scaling does not license removing a
factor $\sqrt D$. In the determinant proof the normalization has
an explicit function: conjugation changes both $\sqrt D$ and the chosen
logarithm to their negatives, so the approximation error is small at
both embeddings. Already in the norm-one case, for unnormalized
rational approximants to $\log\alpha$, conjugation fixes the approximants while negating the
logarithm. The second error would then tend to $2\log\alpha$, which
is nonzero. The averaged small-error argument would no longer apply.

For $D\in\Q^\times$, choosing $\sqrt D=\ii\sqrt{-D}$ when $D<0$, let
\begin{equation}\label{conj:fd}
 f_D(z)=\frac1{\sqrt D}\log\frac{1+\sqrt D\,z}{1-\sqrt D\,z}
       =2\sum_{n\ge0}\frac{D^nz^{2n+1}}{2n+1}
\end{equation}
denote its branch at zero. For $D>0$ and real $z$, use
$|\sqrt D\,z|<1$; for $D<0$ use its continuous real branch
$f_D(z)=2\arctan(\sqrt{-D}\,z)/\sqrt{-D}$ for all real $z$.
In either case $(1-Dz^2)f_D'(z)=2$.

\begin{corollary}\label{conj:fd-closure}
For fixed $D\in\Q^\times$, every nonzero rational linear combination
of finitely many admissible rational values $f_D(r)$ has irrationality
exponent two. If $D<0$, the combination may also include the value
$\pi/\sqrt{-D}$. In particular
\[
 \mu(f_D(r))=2\quad(r\ne0),\qquad
              \mu(\pi/\sqrt D)=2\quad(D\in\Q_{>0}).
\]
\end{corollary}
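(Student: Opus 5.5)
The strategy is to exhibit each target as a scalar-matching normalized logarithm in the sense of Theorem~\ref{conj:partial}, with $E=\Q(\sqrt D)$ (degree two) and \emph{both} embeddings satisfying \eqref{conj:matching}. That theorem then gives exponent two directly. Lemma~\ref{prelim:mobius} disposes of the rational coefficients. If $D$ is a rational square, $f_D(r)$ is a rational multiple of $\log$ of a positive rational, and Theorem~\ref{slog:thm:real} applies instead. I treat that case separately at the end and assume otherwise that $D$ is not a square.

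\emph{Step 1: one value.} Put $c=2\sqrt D$ and $\alpha=\bigl((1+\sqrt D\,r)/(1-\sqrt D\,r)\bigr)^2\in E^\times$; the square is used to avoid sign and branch issues. For $D>0$ and $|\sqrt D r|<1$ we have $\exp(\sqrt D f_D(r))=(1+\sqrt Dr)/(1-\sqrt Dr)$, so $\exp(c\,x)=\alpha$ with $x=f_D(r)/2\cdot 2=f_D(r)$ after adjusting the constant. Conjugation sends $\sqrt D\mapsto-\sqrt D$ and $\alpha\mapsto\alpha^{-1}$. Since $f_D(r)$ is real and odd in $\sqrt D$ formally, we get $\exp(-c\,x)=\alpha^{-1}$, so both embeddings match.

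For $D<0$, the real branch is $f_D(z)=2\arctan(\sqrt{-D}z)/\sqrt{-D}$. Here $\sqrt D\,f_D(r)=2\ii\arctan(\sqrt{-D}r)$, and its exponential is $(1+\ii\sqrt{-D}r)/(1-\ii\sqrt{-D}r)$, exactly as in Corollary~\ref{slog:cor:atan}. Complex conjugation gives the second matching embedding. Also $x\ne0$ for $r\ne0$.

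\emph{Step 2: finite sums and $\pi$.} For $x=\sum_k q_k f_D(r_k)$ with $q_k\in\Q$, clear denominators so that $Nx=\sum n_k f_D(r_k)$ with $n_k\in\Z$. Set $\alpha=\prod_k\alpha_k^{n_k}$ and keep the same $c=2\sqrt D$. Multiplicativity of the matching identities at each embedding gives $\exp(\sigma(c)Nx/2\cdots)=\sigma(\alpha)$ simultaneously at both embeddings, with exactly the same branch. This is the key point: the matching logarithm $\lambda_\sigma=\sigma(c)x$ is additive, so no principal-branch issue arises.

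For $D<0$, the term $\pi/\sqrt{-D}$ enters through $\exp(\sqrt D\cdot 2\pi/\sqrt{-D})=e^{2\pi\ii}=1$. This identity is conjugation compatible, so $\alpha$ is unchanged while $x$ acquires the summand. Theorem~\ref{conj:partial} requires $Nx\ne0$, which is the hypothesis. It then gives $\mu(Nx)=2$, and Lemma~\ref{prelim:mobius} transfers this to $x$.

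\emph{Step 3: the displayed special cases.} The case $\mu(f_D(r))=2$ is Step~1. For $\mu(\pi/\sqrt D)$ with $D>0$, apply the $-D$ case: $\pi/\sqrt{D}$ is the $\pi/\sqrt{-(-D)}$ term with the field $\Q(\sqrt{-D})$. When $-D$ is minus a square, that field is $\Q(\ii)$ and the argument is unchanged. When $D$ itself is a rational square, $\pi/\sqrt D$ is a rational multiple of $\pi=2\arctan 1\cdot2$, and Corollary~\ref{slog:cor:atan} applies. The square case of general sums for $D>0$ reduces to $\log$ of a positive rational via Theorem~\ref{slog:thm:real}.

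\emph{Main obstacle.} The delicate point is the consistent choice of branches. One must check that, for each admissible $r$, the chosen real-analytic branch of $f_D$ really satisfies the exponential identity at \emph{both} embeddings with the same scalar $\sigma(c)$. For $D<0$ this needs the continuous $\arctan$ branch for large $|r|$. The identity $\exp(2\ii\arctan y)=(1+\ii y)/(1-\ii y)$ holds for all real $y$, and the additive structure then takes care of sums. Nonvanishing of $x$ is by hypothesis, so no transcendence input beyond Theorem~\ref{conj:partial} is needed.
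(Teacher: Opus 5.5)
Your proof is correct and takes the paper's route: you write the cleared combination as a scalar-matching target in $\Q(\sqrt D)$ with both embeddings good, and apply Theorem~\ref{conj:partial} with $s=d$. The differences are cosmetic:
\begin{itemize}
\item squaring the Cayley quotients is harmless but unnecessary, since conjugation already inverts them;
\item you treat square $D>0$ via Theorem~\ref{slog:thm:real}, whereas the paper uses $E=\Q$ with its single embedding;
\item $\pi/\sqrt{-D}$ contributes $e^{2\pi\ii}=1$ in your version rather than $-1$;
\item you rescale by Lemma~\ref{prelim:mobius} instead of absorbing $N$ into $c$.
\end{itemize}
The garbled display in Step~2 should simply read $\exp(\sigma(c)Nx)=\sigma(\alpha)$.
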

\begin{proof}
For a nonzero combination $x$, choose an integer $n\ge1$ clearing
its rational coefficients. Then $\exp(n\sqrt D\,x)=\alpha$ is a
product of integer powers of the algebraic torus coordinates in
\eqref{conj:fd}; the circular value contributes an integer power of $-1$.
If $\Q(\sqrt D)$ is quadratic, its other embedding sends $\sqrt D$
to $-\sqrt D$ and $\alpha$ to $\alpha^{-1}$, so both exponential
identities hold with $c=n\sqrt D$. If $D$ is a positive rational
square the field is $\Q$ and only its identity embedding is needed.
Apply Theorem~\ref{conj:partial} with all embeddings good.
For the last circular assertion use $E=\Q(\sqrt{-D})$, $c=\ii\sqrt D$,
$\alpha=-1$ and $x=\pi/\sqrt D$ directly.
\end{proof}
For orientation, the exact identities
\[
 f_1(1/3)=\log2,\quad
 \tfrac12 f_2(1/2)=\frac{\log(1+\sqrt2)}{\sqrt2},\quad
 \tfrac12 f_5(1/5)=\frac{\log\varphi}{\sqrt5}
\]
put familiar constants in one differential family. The fixed-$D$
restriction and the exclusion of zero are essential. The torus identity
behind the addition formula is rational, but logarithmic addition must
retain its chosen branches; arbitrary principal-branch replacements
after successive additions would lose periods.

\subsection{The invariant field and the limit of exact scalar matching}

\begin{theorem}\label{conj:invariant}
Let $\alpha,c$ be nonzero algebraic numbers and let $x\in\R\setminus\{0\}$
satisfy $e^{cx}=\alpha$. Set
\[
 E=\Q(\alpha,c),\qquad
 K=\Q\bigl(c^2,\alpha+\alpha^{-1},c(\alpha-\alpha^{-1})\bigr).
\]
Then
\[
                            \mu(x)\le2[K:\Q].
\]
Moreover the good embeddings of this minimal field $E$ are exactly
its embeddings fixing $K$, and their number is $[E:K]\in\{1,2\}$.
\end{theorem}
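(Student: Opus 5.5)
The bound will come from Theorem~\ref{conj:partial} applied to $E=\Q(\alpha,c)$ with $S$ the set of embeddings fixing $K$. Then $s=[E:K]$, $d=[E:\Q]$, and $2d/s=2[K:\Q]$. So the whole task reduces to identifying the good embeddings, meaning those $\sigma:E\hookrightarrow\C$ with $\exp(\sigma(c)x)=\sigma(\alpha)$, where the identity embedding is the distinguished one.

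The first step is to show that an embedding is good if and only if it fixes $K$. Suppose $\sigma$ is good. Both $cx$ and $\sigma(c)x$ are logarithms of algebraic numbers, and they have algebraic ratio $\kappa=\sigma(c)/c$. By Gel'fond--Schneider, as recalled in Section~\ref{prelim:transcendence}, applied to $\exp(\kappa\cdot cx)=\sigma(\alpha)$, $\kappa$ must be rational; we need $\alpha\ne1$ here, and this holds because $cx\ne0$ and $\alpha=1$ would force $cx\in2\pi\ii\Z$, a case to treat separately. Since $\sigma(c)$ and $c$ are conjugate, they have equal absolute norm over $\Q$ of $c$, so the rational number $\kappa$ has $|\kappa|=1$, i.e.\ $\kappa=\pm1$. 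If $\kappa=1$, then $\sigma(c)=c$ and $\sigma(\alpha)=e^{cx}=\alpha$, so $\sigma$ is the identity on $E$. If $\kappa=-1$, then $\sigma(c)=-c$ and $\sigma(\alpha)=\alpha^{-1}$. In both cases $\sigma$ fixes $c^2$, $\alpha+\alpha^{-1}$ and $c(\alpha-\alpha^{-1})$, hence fixes $K$.

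Conversely, $E=K(c,\alpha)$, and $c$ satisfies $X^2-c^2$ over $K$. If $c\ne0$ and $\alpha\ne\alpha^{-1}$, then $\alpha=\bigl(\alpha+\alpha^{-1}+c(\alpha-\alpha^{-1})/c\bigr)/2$ lies in $K(c)$, so $E=K(c)$ and $[E:K]\le2$. If instead $\alpha^2=1$, then $\alpha\in\Q\subset K$ already and again $E=K(c)$. An embedding fixing $K$ therefore sends $c\mapsto\pm c$. If it sends $c\mapsto c$, it fixes $\alpha$ and is good. If it sends $c\mapsto-c$, it must send $\alpha-\alpha^{-1}$ to its negative while fixing $\alpha+\alpha^{-1}$, which forces $\alpha\mapsto\alpha^{-1}$. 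Then $\exp(-cx)=\alpha^{-1}$ confirms that it is good. This gives the count $[E:K]\in\{1,2\}$, and $s=[E:K]\ge1$ makes Theorem~\ref{conj:partial} applicable.

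The main obstacle is the transcendence step showing that $\kappa$ is rational. The condition excluding $\beta=1$ in Gel'fond--Schneider must be checked, and when $\alpha=\pm1$ the branch ambiguity must be handled, since there $cx\in\pi\ii\Z$. In that degenerate case I would argue directly: $\sigma(c)x$ and $cx$ both lie in $\pi\ii\Q$, so $\kappa$ is automatically real rational, and the same norm argument gives $\kappa=\pm1$. Everything after that step is an elementary Galois computation.
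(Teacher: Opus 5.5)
Your proposal is correct and follows essentially the paper's proof. You apply Theorem~\ref{conj:partial} to the $[E:K]$ embeddings fixing $K$, using $E=K(c)$, and you force $\sigma(c)/c\in\Q$ by Gel'fond--Schneider when $\alpha\ne1$ and by the direct $2\pi\ii\Z$ argument when $\alpha=1$. The only minor variation is that you get $\sigma(c)/c=\pm1$ from equality of the norms of the conjugates $c$ and $\kappa c$, so $\kappa^{[\Q(c):\Q]}=1$, whereas the paper uses the finite order of an extension of $\sigma$ to a normal closure; both are equally elementary.
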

\begin{proof}
Write $S=\alpha+\alpha^{-1}$ and $T=c(\alpha-\alpha^{-1})$.
Since $\alpha=(S+T/c)/2$, one has $E=K(c)$ and $[E:K]\le2$.
If the index is two, the nontrivial $K$-automorphism sends
$(c,\alpha)$ to $(-c,\alpha^{-1})$; both it and the identity satisfy
the exponential matching condition. If the index is one, the identity
alone suffices. Theorem~\ref{conj:partial} gives the displayed bound.

To classify all good embeddings, suppose $\sigma$ is good. If
$\alpha\ne1$, then
\[
 \sigma(\alpha)=\exp\bigl((\sigma(c)/c)\,cx\bigr),
                       \qquad cx\ne0.
\]
The Gel'fond--Schneider theorem \cite[Chapters~1--2]{Baker1975}, used with this
specific logarithm $cx$ of $\alpha$, forces $\sigma(c)/c\in\Q$.
If $\alpha=1$, the two logarithms $cx$ and $\sigma(c)x$ are nonzero
integer multiples of $2\pi\ii$, and their ratio is again rational.
Extend $\sigma$ to an automorphism $\tau$ of a normal closure. If
$\tau(c)=rc$ with rational $r$, the finite order of $\tau$ gives
$r^n=1$ for some $n$, hence $r=1$ or $-1$. Thus a good embedding
has exactly one of the two actions
\[
        (c,\alpha)\longmapsto(c,\alpha),\qquad
        (c,\alpha)\longmapsto(-c,\alpha^{-1}).
\]
These elements generate $E$. Both actions fix $K$, and every
$K$-embedding has one of these actions and is good. This proves the claim.
\end{proof}

\paragraph{Lean scope.}
The degree bound, the index-one-or-two statement under minimal generation,
and both directions of the classification are now checked. The converse
uses the explicit standard Gel'fond--Schneider interface in
\nolinkurl{QuadraticPi.ScalarMatchingInvariantConverse.invariantField_theorem}.
The direct degree bound does not require that converse. The all-matching
rigidity consequence has its own checked endpoint.

An immediate rigidity statement is useful: if \emph{all} embeddings of
some number field containing $\alpha,c$ are good, then
$c^2\in\Q$ and $\alpha\in\Q(c)$. Indeed every automorphism of a
normal closure sends $c$ to $\pm c$, so fixes $c^2$; every automorphism
fixing $c$ also fixes $\alpha=e^{cx}$. Exact all-embedding matching
therefore descends to degree at most two. It is stronger than matching
absolute-value logarithms, and it is not a mechanism for arbitrarily
high-degree examples. The statement about at most two good embeddings
uses the \emph{minimal} field $E=\Q(\alpha,c)$; in a larger ambient
field the restrictions can have multiple extensions.

\begin{corollary}[Reciprocal trace degree]\label{conj:trace}
For every positive algebraic $\alpha\ne1$,
\[
 \mu\left(\frac{\log\alpha}{\alpha-\alpha^{-1}}\right)
                   \le2[\Q(\alpha+\alpha^{-1}):\Q].
\]
Equivalently, for real algebraic $t>2$ of degree $m$,
\[
 \mu\left(\frac{\log((t+\sqrt{t^2-4})/2)}{\sqrt{t^2-4}}\right)
                         \le2m.
\]
\end{corollary}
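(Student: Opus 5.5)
The plan is to read this off Theorem~\ref{conj:invariant} by choosing the normalizing scalar $c=1/(\alpha-\alpha^{-1})$.

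\emph{Setup.} Since $\alpha>0$ and $\alpha\ne1$, we have $\alpha-\alpha^{-1}\ne0$, so $c$ is a nonzero real algebraic number. Put $x=\log\alpha/(\alpha-\alpha^{-1})$. Then $x$ is real and nonzero, and $e^{cx}$ would need to equal $\alpha$. This fails as written: $cx=\log\alpha/(\alpha-\alpha^{-1})^2$, not $\log\alpha$. So I will instead take $c=\alpha-\alpha^{-1}$ and keep the same $x$. Then $cx=\log\alpha$ and $e^{cx}=\alpha$, and Theorem~\ref{conj:invariant} applies with $E=\Q(\alpha)$. Its invariant field is
\[
 K=\Q\bigl(c^2,\ \alpha+\alpha^{-1},\ c(\alpha-\alpha^{-1})\bigr).
\]

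\emph{Computing $K$.} Write $S=\alpha+\alpha^{-1}$. Then
\[
 c^2=(\alpha-\alpha^{-1})^2=S^2-4,
 \qquad
 c(\alpha-\alpha^{-1})=c^2=S^2-4.
\]
Both extra generators therefore lie in $\Q(S)$, so $K=\Q(S)$. The theorem gives $\mu(x)\le2[\Q(\alpha+\alpha^{-1}):\Q]$, which is the first claim.

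\emph{Equivalent form.} For real algebraic $t>2$, set $\alpha=(t+\sqrt{t^2-4})/2>1$. Then $\alpha^{-1}=(t-\sqrt{t^2-4})/2$, so
\[
 \alpha+\alpha^{-1}=t,\qquad \alpha-\alpha^{-1}=\sqrt{t^2-4}.
\]
The target becomes $\log\alpha/\sqrt{t^2-4}$, and the degree of $\Q(t)$ is $m$. Conversely, every positive $\alpha\ne1$ either has this form, or $\alpha<1$. In the second case, replacing $\alpha$ by $\alpha^{-1}$ leaves the quotient $\log\alpha/(\alpha-\alpha^{-1})$ unchanged, since numerator and denominator both change sign. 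It also leaves $\alpha+\alpha^{-1}$ unchanged. With $t=\alpha+\alpha^{-1}>2$ by AM--GM, the two formulations match.

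\emph{Expected obstacle.} The argument is essentially bookkeeping. The only point needing care is checking the hypotheses of Theorem~\ref{conj:invariant}: $c$ and $\alpha$ must be nonzero algebraic, $x$ must be real and nonzero, and the identity $e^{cx}=\alpha$ must hold exactly with the real logarithm. That theorem's degree bound does not rely on its Gel'fond--Schneider converse, so no further transcendence input is needed.
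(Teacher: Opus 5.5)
Your proof is correct and is essentially the paper's argument. You take $c=\alpha-\alpha^{-1}$ in Theorem~\ref{conj:invariant}, observe that $c^2=(\alpha+\alpha^{-1})^2-4=c(\alpha-\alpha^{-1})$ so $K=\Q(\alpha+\alpha^{-1})$, and use the invariance under $\alpha\mapsto\alpha^{-1}$ to pick the root above one in the $t$-form; in the final write-up, simply delete the abandoned first attempt with $c=1/(\alpha-\alpha^{-1})$.
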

\begin{proof}
Take $c=\alpha-\alpha^{-1}$ in Theorem~\ref{conj:invariant}.
Then $c^2=(\alpha+\alpha^{-1})^2-4$ and
$c(\alpha-\alpha^{-1})=c^2$, so $K=\Q(\alpha+\alpha^{-1})$.
Replacing $\alpha$ by $\alpha^{-1}$ leaves the target unchanged;
we can therefore choose the root above one in the equivalent formulation.
\end{proof}
For a reciprocal irreducible minimal polynomial of degree $2m$,
the map $\alpha\mapsto\alpha^{-1}$ is a nontrivial involution,
whose fixed field is $\Q(\alpha+\alpha^{-1})$; the trace degree is
$m$. For example,
\[
 \alpha=\frac{2+\sqrt2+\sqrt{2+4\sqrt2}}2
\]
has irreducible polynomial $X^4-4X^3+4X^2-4X+1$. To check
irreducibility, the only possible rational roots are $\pm1$, neither
a root. A factorization into monic integral quadratics has constants
both $1$ or both $-1$. Constants $1$ would require integer roots of
$u^2+4u+2$, and constants $-1$ force opposite cubic and linear
coefficients. Both are impossible. Its trace is $2+\sqrt2$, giving
\[
 \mu\left(\frac{\log((2+\sqrt2+\sqrt{2+4\sqrt2})/2)}
                         {\sqrt{2+4\sqrt2}}\right)\le4.
\]

\begin{corollary}[Quadratic character combinations]\label{conj:character}
Let $E/\Q$ be a finite totally real Galois extension with group $G$,
fix a real embedding $\phi:E\hookrightarrow\R$, and let
$\chi:G\to\{1,-1\}$ be a nontrivial character. There is a positive
nonsquare rational $D_\chi$, chosen independently of the input $a$,
such that
\[
 \phi(E^{\ker\chi})=\Q(\sqrt{D_\chi}).
\]
For every $a\in E$ with $\phi(g(a))>0$ for all $g\in G$, put
\[
 \beta_\chi(a)=\prod_{g\in G}g(a)^{\chi(g)}.
\]
If $\beta_\chi(a)\ne1$, then
\[
 \mu\left(\frac1{\sqrt{D_\chi}}
       \sum_{g\in G}\chi(g)\log\phi(g(a))\right)=2.
\]
The same conclusion holds for a specified positive nonsquare rational
$D_\chi$ satisfying the displayed equality of real subfields.
\end{corollary}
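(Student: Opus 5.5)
The plan is to reduce Corollary~\ref{conj:character} to Theorem~\ref{conj:partial} with all embeddings good, using the field $F=E^{\ker\chi}$ and the element $\beta=\beta_\chi(a)$.

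\textbf{Step 1: the quadratic subfield.} Since $\chi$ is nontrivial with values $\pm1$, $\ker\chi$ has index two, so $F=E^{\ker\chi}$ is quadratic. It is real because $E$ is totally real. Hence $\phi(F)=\Q(\sqrt{D_\chi})$ for some positive nonsquare rational $D_\chi$, which depends only on $\chi$ and $\phi$. Any other admissible $D'$ generates the same real quadratic field, so $\sqrt{D'}=q\sqrt{D_\chi}$ with $q\in\Q^\times$. The final target then changes by the nonzero rational factor $q^{-1}$, and Lemma~\ref{prelim:mobius} preserves the exponent. This disposes of the last sentence of the statement.

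\textbf{Step 2: $\beta$ is a quadratic unit-like element.} For $h\in G$ we have
\[
h(\beta)=\prod_g hg(a)^{\chi(g)}=\prod_{g'}g'(a)^{\chi(h^{-1}g')}=\beta^{\chi(h)}.
\]
So $\beta$ is fixed by $\ker\chi$, hence lies in $F$, and the nontrivial automorphism of $F$ sends $\beta$ to $\beta^{-1}$. Thus $N_{F/\Q}(\beta)=1$. The positivity hypothesis gives $\phi(\beta)>0$, and $\phi(\beta)\ne1$ by assumption.

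\textbf{Step 3: the logarithmic identity.} Because every $\phi(g(a))$ is positive, real logarithms add:
\[
\log\phi(\beta)=\sum_g\chi(g)\log\phi(g(a)).
\]
The target is therefore $x=\log\phi(\beta)/\sqrt{D_\chi}$. Transporting through $\phi$, this is exactly the norm-one case of Corollary~\ref{conj:real-quadratic} with $\alpha=\phi(\beta)\in\Q(\sqrt{D_\chi})$, giving $\mu(x)=2$.

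Equivalently, one can apply Theorem~\ref{conj:partial} directly with field $\Q(\sqrt{D_\chi})$, $c=\sqrt{D_\chi}$ and $\alpha=\phi(\beta)$. The conjugate embedding negates both $c$ and $\log\alpha$, so both embeddings match.

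\textbf{Main obstacle.} The only delicate point is the bookkeeping between abstract $G$ and the chosen real embedding $\phi$. One must check that $\phi(\beta)$ lies in $\phi(F)$ and that conjugation on $\phi(F)$ corresponds to $\beta\mapsto\beta^{-1}$. This follows because any embedding of $F$ restricts from some $\phi\circ h$, and $\phi\circ h(\beta)=\phi(\beta)^{\chi(h)}$.

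Nonvanishing of $x$ is automatic from $\phi(\beta)\ne1$.
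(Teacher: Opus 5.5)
Your proposal is correct and follows essentially the paper's route (the short proof of Corollary~\ref{conj:character}, expanded in Theorem~\ref{ext:character-exponent}). The steps coincide: realize the index-two fixed field in $\R$ as $\Q(\sqrt{D_\chi})$; use the equivariance $h(\beta)=\beta^{\chi(h)}$ to obtain norm one; use positivity for the real logarithmic identity; then apply Corollary~\ref{conj:real-quadratic}. The only difference is the specified-$D_\chi$ clause: the paper uses $\phi|_{E^{\ker\chi}}$ itself as the compatible isomorphism onto $\Q(\sqrt{D_\chi})$, whereas you rescale via $\sqrt{D'}=q\sqrt{D_\chi}$ and Lemma~\ref{prelim:mobius}, and both work.
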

\begin{proof}
The kernel has index two, so its fixed field is quadratic. A real
quadratic field is the field generated by the positive square root of
some positive nonsquare rational; this choice is made before $a$.
Reindexing the signed product gives
$h(\beta_\chi)=\beta_\chi^{\chi(h)}$. The product thus lies in
the quadratic fixed field and its nontrivial conjugate is its inverse,
so its field norm is one. Positivity of all factors gives
\[
 \log\phi(\beta_\chi)=\sum_{g\in G}\chi(g)\log\phi(g(a)).
\]
The product-not-one condition makes this logarithm nonzero. Apply
Corollary~\ref{conj:real-quadratic} in the compatible real quadratic
realization. The full proof, including the realization and norm
transport, is given in Theorem~\ref{ext:character-exponent}.
\end{proof}

This is a result about the displayed \emph{normalized} character sum.
It does not assert exponent two for arbitrary unnormalized logarithmic
combinations. If a negative-character automorphism fixes $a$, the
positive and negative product factors pair and cancel, giving
$\beta_\chi(a)=1$. More generally, a nontrivial product requires
$E^{\ker\chi}\subseteq\Q(a)$, and consequently $[\Q(a):\Q]$
must be even. In particular, positive inputs of odd degree have zero
character logarithmic sum; see Proposition~\ref{ext:character-input-field}.

\paragraph{Lean scope.}
This corollary is now a completed Lean endpoint, including the actual
product, quadratic degree, norm one, positive logarithmic identity and
compatible square-root realization. The declaration
\nolinkurl{QuadraticPi.QuadraticCharacterExponent.exists_logSum_irrationalityExponent_eq_two}
chooses $D_\chi$ and the compatible field isomorphism before all
admissible $a$. It assumes neither an unproved quadratic identification
nor an extra analytic interpolation statement. The specified-field
variant is \nolinkurl{logSum_irrationalityExponent_eq_two_of_fieldRange}
in the same namespace. No canonical integral or squarefree choice of
$D_\chi$ is claimed. The later stabilizer and input-degree cancellation
statements are also compiled. Exact source declarations and recorded
build/axiom evidence are listed in Section~\ref{ext:character-product}
and Appendix~\ref{app:formal-status}.

\section{Ratios of two logarithms: two torus coordinates}
\label{ratio:section}

\begin{theorem}\label{ratio:thm:ratio}
Let $a,b>0$ be rational and multiplicatively independent. Then
\[
 2\le\mu\!\left(\frac{\log a}{\log b}\right)\le3.
\]
In particular $\mu(\log2/\log3)\le3$.
\end{theorem}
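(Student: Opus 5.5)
The lower bound is immediate. As recalled in Section~\ref{prelim:transcendence}, multiplicative independence gives $a,b\ne1$ and makes $x=\log a/\log b$ irrational, so Lemma~\ref{prelim:dirichlet} gives $\mu(x)\ge2$. For the upper bound I will argue by contradiction. Fix $\nu>3$ and suppose there are reduced $p_i/q_i$ with $|x-p_i/q_i|\le q_i^{-\nu}$ and unbounded $q_i$. Multiplying by $\log b$ turns each approximation into smallness of the homogeneous form:
\[
 |q_i\log a-p_i\log b|\ll q_i^{1-\nu}.
\]
Equivalently, $p_i\log b$ is a very good approximation to $q_i\log a$, with integer coefficients of size $\asymp q_i$.

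The plan is a two-torus version of the moving-center determinant. The ambient space has coordinates $(Y_1,Y_2,X_1,\dots,X_m)$, and the centers are indexed by pairs $(j_1,j_2)$ in a box. The torus coordinates at a center are $a^{j_1}b^{j_2}$. The additive coordinate $X_i$ is placed at $j_1\log a+j_2\log b$ and recentered using the rational slope $p_i/q_i$. Concretely, the row functions are built along the one-parameter curve $z\mapsto(e^{q_iz},\dots)$, so that $\log b\approx (q_i/p_i)\log a$. Then $a^{q_i}b^{-p_i}$ is close to $1$, and all entries become rational after the usual clearing.

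Interpolation will come from the uniform rational-slope two-torus analogue of Theorem~\ref{geom:thm:main}. It has the same curve inequality (Proposition~\ref{geom:prop:curve}) with $dY/Y$ replaced by the two forms $dY_1/Y_1,\,dY_2/Y_2$. The residue argument kills curves on which the torus coordinates vary, and multiplicative independence of $a,b$ ensures distinct multiplicative centers (Corollary~\ref{geom:cor:distinct}). This yields a nonzero minor $\Delta_H$, now with rational entries. Because the field is $\Q$, there is no norm loss: the arithmetic lower bound is
\[
 \log|\Delta_H|/MH\ge-(1-\bar b)-E_{\rm ar},
\]
as in \eqref{slog:eq:lower} with $d=1$. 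The clearing denominators must also include the powers of $q_i$ and $p_i$ that appear in the slope, which is where the extra cost arises.

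The analytic side follows Lemma~\ref{slog:lem:collision} and the translation identity \eqref{slog:eq:translation}. Write the centers as $z=j_1\log a+j_2\log b$. The shift errors are $\epsilon\asymp |q_i\log a-p_i\log b|/q_i\ll q_i^{-\nu}$ for the normalized variable. However, the second torus coordinate costs an extra $\log q_i$ per transverse degree in the clearing, because both numerator and denominator of the slope enter, not just $Q_i$. The comparison therefore becomes $\nu(A-\theta)>(1-\theta)+(A\text{-dependent extra})$. Running the parameter scheme of Section~\ref{slog:sec:parameters} ($\theta=1-\delta_0$, $A=1-b_0\delta_0$, $b_0>1/2$) should yield the threshold $\nu>3$ in place of $\nu>2$.

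The main obstacle is this bookkeeping. I must show the heights of the two-torus clearing contribute exactly one additional unit, giving the leading term $-(1-\bar b)-\bar b$-type loss that converts $2$ into $3$, and no more. I must also establish the two-torus curve inequality uniformly before the centers are chosen. I would first try to realize the construction as the one-torus construction applied to $Y=a^{j_1}b^{j_2}$ with rows indexed by the pair. I would then check that the collision count still has $O(H^m)$ groups, so that $L\to\infty$ survives, as in the last displayed parameter limits.
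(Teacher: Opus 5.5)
There is a genuine gap: the mechanism you give for the exponent $3$ is not where the loss arises, and it cannot yield $3$.

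\textbf{The arithmetic side costs nothing extra.} In the construction that works you have two torus coordinates $Y,Z$, \emph{diagonal} centers $(b^j,a^j,0)$ for $0\le j<K$, and local coordinates $t=Y/b^j-1$, $s=Z/a^j-1$, $u_i=X_i-\log(1+s)+r_i\log(1+t)$. The numerator $p_i$ costs nothing, because $q_iL_i\bigl(G_i(s)-r_iG_i(t)\bigr)=q_iL_iG_i(s)-p_iL_iG_i(t)$ already has integer coefficients. The multiplier is just $\prod_i(q_iL_i)^{J_i}$, and the lower bound is exactly $-(1-\overline b_H)-E_{\rm ar}$, as you first write.

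Your bookkeeping would not give $3$ in any case. An extra $\log q_i$ per transverse degree makes the main term $-2(1-\overline b_H)$. That is the case $d=2$ of \eqref{slog:eq:gap}, which gives $\nu>4$. A constant loss $-(1-\overline b_H)-\overline b_H=-1$ under the one-variable constraint $A^2<\theta$ is no better, since $A-\theta<A-A^2\le1/4$.

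\textbf{The geometry also needs correcting.} A box of centers $(j_1,j_2)$ does not fit. At $(b^{j_2},a^{j_1})$ the analytic additive coordinate $w-\xi z$ has constant term $(j_1-j_2)\log a$, about which your approximations to $\xi$ say nothing. On the diagonal, $j\log a-\xi j\log b=0$ gives the rational center $0$. For rational slopes the residue argument does not make the torus coordinates constant. It only makes the character $Z^{q_i}/Y^{p_i}$ constant on the persistent component. Multiplicative independence enters through the distinct values $(a^{q_i}/b^{p_i})^j$, which isolate one center. You must then project that character fiber onto the one-torus Proposition~\ref{geom:prop:curve}; Corollary~\ref{geom:cor:distinct} is not the relevant input.

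\textbf{The missing idea is on the analytic side.} Since $t$ and $s$ are independent, the row tests are coefficients $[t^\lambda s^\rho]$ of entire functions of \emph{two} variables,
\[
 f_{a,P}(z,w)=[u^a]P\bigl(e^z,e^w,w-\xi z+u_1,\ldots,w-\xi z+u_m\bigr),
\]
evaluated at $z=j\log b+\log(1+t)$ and $w=j\log a+\log(1+s)$. Within one transverse group the surviving Taylor indices are distinct \emph{pairs} $(k,l)$. So the quadratic saving $n_\delta(n_\delta-1)/2$ of Lemma~\ref{slog:lem:collision} is lost. You only have $\sum(k+l)\ge\frac{2\sqrt2}{3}n^{3/2}-n$, as in Lemma~\ref{ratio:collision}.

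With $M\asymp H^{m+2}$ rows and $D_A(H)\asymp H^m$ low-weight groups, H\"older gives $\sum_an_a^{3/2}\ge(\eta M)^{3/2}/\sqrt{D_A(H)}$. This is still of order $MH$. However, the normalized saving becomes $L\asymp\eta^{3/2}\sqrt{K\theta^m/(v^2A^m)}$, up to polynomial factors in $m$. Making $L\to\infty$ while $K/w_0\to0$ and $v\to\infty$ forces $A^3<\theta^2$ instead of $A^2<\theta$.

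With $\theta=1-\delta$ and $A=1-b_0\delta$, this reads $b_0>2/3$ instead of $b_0>1/2$. The gap condition $\nu(1-b_0)>1$ is unchanged, because the arithmetic term is not degraded. The two conditions together are exactly $\nu>3$. Your count of $O(H^m)$ groups is correct, but the quadratic collision saving does not survive. It is this weaker bivariate saving, not an arithmetic loss, that turns $2$ into $3$.
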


\paragraph{Formal verification.}
The theorem is exported as
\nolinkurl{QuadraticPi.rationalLogRatio_irrationalityExponent_bounds}
in \nolinkurl{QuadraticPi/RationalLogRatioTheorem.lean}.
The actual two-torus interpolation and determinant estimates are proved;
no interpolation premise remains in this public endpoint.
The verified conclusion is the bound $2\le\mu\le3$.
See Appendix~\ref{app:formal-status} for dependencies and compiler evidence.
The completion claim in this paragraph is specific to the rational
ratio theorem and its rational homogeneous two-logarithm consequence.
Its source chain consists of the \nolinkurl{TwoLog*} developments and
these public rational endpoints; it does not identify the more general
number-field comparisons below as completed Lean theorems.

We prove the case $(a,b)=(2,3)$ in detail, and explain the rational
denominator changes at the end. Put
$\xi=\log2/\log3$. Unique factorization proves $\xi\notin\Q$.
To exclude exponent $\nu>3$, suppose there are arbitrarily good
approximations
\begin{equation}\label{ratio:eq:good}
 r_i=p_i/q_i,\qquad |\xi-r_i|\le q_i^{-\nu},\qquad
 w_i=\lceil\log q_i\rceil.
\end{equation}
The fractions can be chosen distinct, nonzero, reduced, and with
successively large denominators. All parameters and fractions will
be fixed before a final polynomial degree $H$ tends to infinity.
Here multiplicative independence means that
$a^u b^v=1$, for $(u,v)\in\Z^2$, implies $u=v=0$.
In particular $a\ne1$ and $b\ne1$, so both real logarithms are
nonzero. If $\log a/\log b=p/q$, with $q\ge1$, then
$q\log a-p\log b=0$ and hence $a^q b^{-p}=1$, a contradiction.
Thus the theorem concerns an irrational real number, and its lower
bound is the ordinary real Dirichlet bound.

To justify the contradiction formulation, if $\mu(\xi)>3$, choose
$3<\nu<\mu(\xi)$ and obtain infinitely many rational approximations
satisfying \eqref{ratio:eq:good}. Their reduced denominators are
unbounded: a bounded interval contains only finitely many reduced
fractions of bounded denominator. Reduction cannot spoil the inequality,
since replacing a denominator by a smaller one increases its negative
power. Discarding finitely many terms gives nonzero, pairwise distinct
slopes and permits each next denominator to exceed any fixed threshold.
The weights $w_i=\lceil\log q_i\rceil$ are positive integers. We use
both sides of the elementary relation
$w_i-1<\log q_i\le w_i$; its lower side accounts for the ceiling loss
in the analytic estimate.

\subsection{The new interpolation statement}

Throughout this section write $w\cdot a=\sum_iw_ia_i$ and
$w_*=\min_iw_i$. The ambient coordinates are $Y,Z,X_1,\ldots,X_m$, $m\ge2$.
Give them degree weights
$W=(w_0,w_0,w_1,\ldots,w_m)$.
At centers
\begin{equation}\label{ratio:eq:centers}
 (Y,Z,X)=(3^j,2^j,0),\qquad0\le j<K,
\end{equation}
use local coordinates
\[
 t=Y/3^j-1,\quad s=Z/2^j-1,\quad
 u_i=X_i-\log(1+s)+r_i\log(1+t),
\]
with weights $V=(v,v,w_1/\theta,\ldots,w_m/\theta)$.

\begin{lemma}\label{ratio:lem:two-torus}
Let $K\ge1$, $m\ge2$, $0<\theta<1$, and $w_0,v>0$ be rational.
Assume there is a rational $\sigma>0$ with
\begin{align}\label{ratio:eq:geom-margins}
 &(1+3\sigma)^{m+2}K(w_0/v)^2\theta^m<1,\nonumber\\
 &(1+3\sigma)^{m+1}(w_0/v)\theta^{m-1}<1,\nonumber\\
 &(1+3\sigma)^{m-1}\theta^{m-1}<1,
 \qquad(1+\sigma)\theta<1,\nonumber\\
 &w_0/v>4(m+3)/\sigma.
\end{align}
There are successive thresholds for $w_1,\ldots,w_m$, independent
of the distinct nonzero rational slopes $r_i$, such that the map
\begin{equation}\label{ratio:eq:two-torus-map}
 P\longmapsto
 \left[P\bigl(3^j(1+t),2^j(1+s),
          (\log(1+s)-r_i\log(1+t)+u_i)_i\bigr)\right]_{j<K}
\end{equation}
from polynomials of $W$-degree at most $H$ onto all coefficient
packets of weight less than $H$ is surjective for all sufficiently
large divisible $H$.
\end{lemma}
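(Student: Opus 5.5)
The plan is to follow the two-stage architecture of Theorem~\ref{geom:thm:main}: first a curve inequality with margin $1+\sigma$ for every irreducible curve meeting the centers \eqref{ratio:eq:centers}, then the blow-up, nefness and Serre-vanishing argument, which transfers verbatim. That argument used only three inputs: the curve inequality, an integer $R$ fixed before the centers, and algebraic regular coordinates obtained by truncating $\log(1+s)$ and $r_i\log(1+t)$, with tails of $V$-weight exceeding $w_i/\theta$. The substitution invariance of packets under such tails is Lemma~\ref{slog:lem:shear}, with $u_i\mapsto u_i+T_i(s,t)$. Since the centers are distinct points $(3^j,2^j,0)$, the ideal sheaf $I$ is well defined. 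The whole weight of the proof therefore lies in the analogue of Proposition~\ref{geom:prop:curve}, namely $\deg_W C\ge(1+\sigma)\sum_P h_P$, with $h_P$ now the minimum of $\operatorname{ord}_P t/v$, $\operatorname{ord}_P s/v$, and $\operatorname{ord}_P u_i\,\theta/w_i$.

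For the curve inequality I first set up the auxiliary polynomial. Counting coefficients, the ratio of conditions to unknowns tends to $(1+3\sigma)^{m+2}K(w_0/v)^2\theta^m<1$, which is the first condition of \eqref{ratio:eq:geom-margins}. This produces $F_N$ of $W$-degree at most $N$ with $V$-order at least $(1+3\sigma)N$ at every center. On $YZ\ne0$ I use the commuting frame
\[
D_Y=Y\partial_Y-\sum_i r_i\partial_{X_i},\qquad D_Z=Z\partial_Z+\sum_i\partial_{X_i},\qquad D_i=\partial_{X_i}.
\]
Locally this frame is $(1+t)\partial_t$, $(1+s)\partial_s$, $\partial_{u_i}$, so the divisor argument gives $D^\gamma F_N|_C=0$ for all $V$-costs up to $\sigma N$. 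The nested-component argument with $L=m+3$ and Lemma~\ref{geom:lem:mult} then yields \eqref{geom:eq:bases} on a persistent component $Z$. The separation \eqref{geom:eq:separate} for the transverse weights, where the slopes enter only through linear forms with bounded coefficients and so do not affect the thresholds, forces the largest nontangent positive index $i$ into every frame normal basis. As before, this gives $T Z\subseteq\ker(dX_i-dZ/Z+r_i\,dY/Y)$.

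This is the main obstacle. In the one-torus case, residues forced $Y$ to be constant. Here the relation only says that $dZ/Z-r_i\,dY/Y$ is exact on $Z$. Taking residues at the zeros and poles of $Y$ and $Z$ on a curve in $Z$ gives $\operatorname{ord}Z=r_i\operatorname{ord}Y$ at every point. Since $r_i$ is rational and nonzero, this says that the monomial $Z^{q_i}Y^{-p_i}$ has no zeros or poles, hence is constant on $Z$. The component therefore lies in a coset of a one-dimensional subtorus, not in a fiber of a single torus coordinate.

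The plan is to split into two cases. If both $Y$ and $Z$ are constant, the curve meets at most one center, and the last divisor comparison of Proposition~\ref{geom:prop:curve} applies; this is where $(1+\sigma)\theta<1$ is used. Otherwise the curve lies in a translate of the subtorus $Y^{p_i}=\kappa Z^{q_i}$. Multiplicative independence of $2$ and $3$ puts at most one center in each such coset, since $3^{jp_i}2^{-jq_i}$ is injective in $j$. Distinctness of the slopes should then let the remaining transverse directions play the role of the fiber argument. On this subtorus the functions $u_l$, $l\ne i$, become $X_l-(r_l-r_i)$ times a single logarithm. I rerun the auxiliary construction there, with one torus variable instead of two and a single center, using the counts $(1+3\sigma)^{m+1}(w_0/v)\theta^{m-1}<1$ and $(1+3\sigma)^{m-1}\theta^{m-1}<1$. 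The derivative-cost bound $w_0/v>4(m+3)/\sigma$ should absorb the ratio $w_0/v$ appearing in \eqref{geom:eq:mult} when the torus direction enters a normal basis.

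I expect the delicate point to be this last step. A torus frame vector's cost $v$ must be compared with coordinate weights $w_0$ in the multiplicity lemma. The coset case also needs a single-center version of the one-torus moving-center argument, now uniform in the rational slope $r_i$, which is exactly why the thresholds must not depend on $p_i$ or $q_i$.
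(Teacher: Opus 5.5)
The outline matches the paper's: auxiliary polynomial, frame $D_Y,D_Z,D_i$, persistent component, largest nontangent positive index $i$, constant character, one center per fiber, and reduction to a one-torus, one-center contact. The coset step is handled differently but acceptably. The paper drops $Z$ and $X_i$, divides $X_l$ by $r_i-r_l$, applies Proposition~\ref{geom:prop:curve} to the image curve, and compares through the degree $e$ of a possibly nonbirational projection. Rerunning the argument with all zero--pole comparisons made on $C$ itself also works. You should record, though, that $dX_i=0$ makes $X_i\equiv0$ on $C$, so that $u_i$ drops out.

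\textbf{The gap.} Your chain starts only when some positive coordinate direction is nontangent to the persistent component $T$, and you never treat the complementary case. Unlike the one-torus situation, that case is not automatic here. If every $\partial_{X_i}$ is tangent, then $T$ has codimension $k\le2$. For $k=1$, nothing in your argument excludes a component of the form $T'\times\mathbb{A}^m$, where $T'$ is a plane curve in the $(Y,Z)$-coordinates passing through many of the points $(3^j,2^j)$. On such a component no torus coordinate and no character need be constant, so neither of your two subsequent cases is ever reached.

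\textbf{What closes it.} This is exactly where the last margin $w_0/v>4(m+3)/\sigma$ enters. Modulo the tangent space of $T$, one has $D_Y\equiv Y\partial_Y$ and $D_Z\equiv Z\partial_Z$, because their additive parts are tangent. Hence both a coordinate normal basis and a frame normal basis can be taken entirely from the two torus indices. Lemma~\ref{geom:lem:mult}, with $L_0=m+3$, then gives $(w_0/v)^k\le 2k!\,(L_0/\sigma)^k$ for $k\in\{1,2\}$. In either case this means $w_0/v\le 2(m+3)/\sigma$, a contradiction.

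The separation inequalities cannot do this job, since the two torus indices have equal weights and are deliberately tied. Your last paragraph places the comparison of $v$ with $w_0$ in the coset case. There, however, only one torus coordinate survives and Proposition~\ref{geom:prop:curve} needs no such hypothesis. The comparison is needed instead for the all-tangent components your proposal omits.
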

The target is the direct sum over the $K$ centers of the finite quotients
$\Q[[t,s,u]]/F_V^{\ge H}$, with basis the monomials
$t^\lambda s^\rho u^\beta$ of weight strictly less than $H$.
Equivalently one can first work over $\C$: because the resulting finite
matrix has rational entries, full row rank over $\C$ is full row rank
over $\Q$. The quantifiers are important. The base data
$(m,K,\theta,w_0,v,\sigma)$ are fixed first. Each threshold for $w_i$
depends only on those data and the earlier weights. After all weights
and all slopes have been fixed, the degree may be required to exceed
a threshold and to run through multiples of a fixed denominator.
Neither the slopes nor this final degree enter the successive weight
thresholds.

\begin{proof}
Put $L_0=m+3$. Label the torus directions $Y,Z$ below every positive
index $1,\ldots,m$. Write $W_Y=W_Z=w_0$, $V_Y=V_Z=v$,
$W_i=w_i$, and $V_i=w_i/\theta$ for $i>0$. Impose
\begin{equation}\label{ratio:tt:sep}
 \prod_{h\in A}W_h>
 C_*\prod_{h\in B}V_h,\qquad
 C_*=\max_{1\le k\le m+1}2k!(L_0/\sigma)^k,
\end{equation}
whenever $|A|=|B|$ lies between $1$ and $m+1$, some positive index
lies in $A\mathbin\triangle B$, and the largest such index belongs
to $A$. The two torus indices are tied: no condition is imposed merely
because they differ. This causes no conflict in the successive choice.
For largest differing positive index $i$, the ratio of the two products
is $w_i$ times a positive expression in previously chosen weights and
fixed torus weights. Higher common indices contribute the fixed factor
$\theta$; all other higher indices are absent. A sufficiently large
$w_i$ therefore meets every new inequality and preserves all earlier
ones. Enlarge these thresholds to meet the one-torus separation
conditions of \eqref{geom:eq:separate} after deleting any one positive index. This is possible for
all the finitely many deletions at once and is independent of $r_i$.

For an irreducible algebraic curve $C$ meeting the open chart $YZ\ne0$,
on its complete smooth model define
\[
 d_W(C)=\sum_P\max\left(0,-\frac{\operatorname{ord}_PY}{w_0},
 -\frac{\operatorname{ord}_PZ}{w_0},-\frac{\operatorname{ord}_PX_1}{w_1},\ldots,
 -\frac{\operatorname{ord}_PX_m}{w_m}\right),
\]
and at each branch through a center put
\[
 h_P=\min\left(\frac{\operatorname{ord}_Pt}{v},\frac{\operatorname{ord}_Ps}{v},
              \min_i\frac{\operatorname{ord}_Pu_i}{w_i/\theta}\right).
\]
We prove $d_W(C)\ge(1+\sigma)\sum_Ph_P$.
Suppose otherwise. The leading dimension count \eqref{ratio:eq:geom-margins} gives,
for every sufficiently large $N$, a nonzero polynomial of $W$-degree
at most $N$ having contact at least $(1+3\sigma)N$ at all centers.
The commuting, degree-preserving frame
\[
 D_Y=Y\partial_Y-\sum_i r_i\partial_{X_i},\quad
 D_Z=Z\partial_Z+\sum_i\partial_{X_i},\quad
 D_i=\partial_{X_i}
\]
is $(1+t)\partial_t,(1+s)\partial_s,\partial_{u_i}$ locally.
The zero--pole comparison therefore forces all derivatives of cost
at most $\sigma N$ to vanish on $C$.
In detail, differentiating by a frame vector of cost $V_h$ lowers
local weighted contact by at most $V_h$ and does not increase polynomial
$W$-degree. A derivative of total cost at most $\sigma N$ therefore
has local contact at least $(1+2\sigma)N$. If its restriction to $C$
were nonzero, its total zero order would be at least
$(1+2\sigma)N\sum_P h_P$, whereas its total pole order would be at
most $N d_W(C)$. The assumed strict failure of the curve inequality
makes the former strictly greater than the latter. A nonzero rational
function on a complete smooth curve has equal total zero and pole
orders, giving the contradiction. This also explains why the spare
contact margin is kept before the persistent-component argument.

For derivative-cost steps $\delta=\sigma N/L_0$, take nested irreducible
components containing $C$ backwards through the zero sets of derivatives
of cost at most $r\delta$, $0\le r\le L_0$. Their dimensions are between
$1$ and $m+1$, so two adjacent components coincide. More explicitly, strict inclusion
of irreducible components would strictly lower dimension, and there
are more stages than the available positive dimensions. Call a persistent
component $T$, and let its codimension be $k$. Lemma~\ref{geom:lem:mult}, the weighted
transverse multiplicity estimate, gives, for every coordinate normal basis
$A$ and every frame normal basis $B$,
\begin{equation}\label{ratio:tt:mult}
 \prod_{h\in A}W_h\le
       2k!(L_0/\sigma)^k\prod_{h\in B}V_h.
\end{equation}
These bases are evaluated on a common generic smooth open subset of $T$.
For the application of the multiplicity lemma, use the equations from
the earlier of the equal adjacent stages. Any further derivative of
cost less than $\delta$ belongs to the equations of the next stage,
so vanishes on the same component. Both stages are considered inside
$YZ\ne0$, where the displayed fields are a frame. The initial zero
set is proper because its defining polynomial is nonzero, and every
component in the chain contains the curve; consequently
$1\le k\le m+1$. Thus the constants used in
\eqref{ratio:tt:sep} cover every possible persistent component.

If all positive coordinate directions are tangent to $T$, then
$k\le2$. Modulo the tangent space, $D_Y$ and $D_Z$ are the nonzero
scalar multiples $Y\partial_Y$ and $Z\partial_Z$ of their coordinate
directions; the additive terms are tangent. Both types of normal bases
can thus be chosen entirely from the two torus indices. Equation
\eqref{ratio:tt:mult} gives
\[
 (w_0/v)^k\le2k!(L_0/\sigma)^k,
\]
contradicting \eqref{ratio:eq:geom-margins} for $k=1$ or $2$.

Otherwise let $i$ be the largest positive index whose coordinate
direction is nontangent. Its nonzero image in the normal quotient
extends to a coordinate normal basis $A$. Every positive index larger
than $i$ is tangent, hence occurs in no coordinate or frame normal
basis, since $D_j=\partial_{X_j}$ for positive $j$. If a frame normal
basis $B$ omitted $i$, the largest positive index in
$A\mathbin\triangle B$ would be exactly $i$, belonging to $A$.
Equation \eqref{ratio:tt:sep} would contradict \eqref{ratio:tt:mult}. Therefore
\emph{every} frame normal basis contains $D_i$.

Consequently the images of all other frame vectors do not span the
normal quotient. Their ambient span is the hyperplane
\[
 Q_i=\ker(dX_i+r_i\,dY/Y-dZ/Z).
\]
For an ambient hyperplane $Q$ and a tangent space $S$, its image in the
quotient by $S$ fails to be surjective exactly when $S\subseteq Q$:
if $S$ contains a vector outside $Q$, then $Q+S$ is the full ambient
space; otherwise $Q+S=Q$. Thus the tangent space of $T$ is contained
in $Q_i$, giving on $T$, with $r_i=p_i/q_i$ reduced,
\[
 q_i\,dX_i=d\log(Z^{q_i}/Y^{p_i}).
\]
The character on the right is constant on $T$. Indeed, if nonconstant,
restrict through a point of nonzero differential to an algebraic curve
and pass to its complete smooth model. A zero or pole of this character
has a nonzero residue in its logarithmic differential, whereas the exact
differential $q_i\,dX_i$ has residue zero. This is impossible. Then
$X_i$ too is constant. Its value on $C$ is zero, since $C$ meets a center.
The character takes the distinct values $(2^{q_i}/3^{p_i})^j$ at the
centers, so $C$ meets just one, say $c_j$.
To make the rigidity step explicit, the differential identity holds
in the function field of $T$. A nonconstant character can be restricted
to a curve on which it remains nonconstant; on the complete model of
that curve it has a zero or pole. In a local parameter $z$, writing
that character as $z^n u(z)$ gives residue $n\ne0$ for its logarithmic
differential. By contrast, differentiating any Laurent expansion has
zero coefficient of $dz/z$, so an exact differential has residue zero.
Once the character is constant, $dX_i=0$ forces $X_i$ to be constant
in characteristic zero. Finally, $2^{q_i}/3^{p_i}$ is a positive real
number different from one, by multiplicative independence. Its integer
powers are therefore distinct, even when $p_i$ is negative.

On this character fiber, the local relation is
\[
 \log(Z/2^j)=r_i\log(Y/3^j).
\]
Since $r_i\ne0$, the orders of the two torus shifts coincide, including
the case where both vanish identically. Also $u_i=0$ and, for $l\ne i$,
\[
 u_l=X_l-(r_i-r_l)\log(Y/3^j).
\]
Dropping $Z,X_i$, and dividing $X_l$ by $r_i-r_l$, gives precisely the
one-torus, one-center contact of Proposition~\ref{geom:prop:curve} in $m$ ambient coordinates. Its
volume hypotheses follow from \eqref{ratio:eq:geom-margins}, and its
successive separation conditions were included above.
There are $m-1$ remaining additive variables and one image center.
The one-torus volume conditions required there are
\[
 (1+3\sigma)^m(w_0/v)\theta^{m-1}<1,
 \qquad (1+3\sigma)^{m-1}\theta^{m-1}<1,
 \qquad (1+\sigma)\theta<1.
\]
The first follows from the stronger second line of
\eqref{ratio:eq:geom-margins}, and the other two are already among its
hypotheses. The nonzero rescaling $X_l\mapsto X_l/(r_i-r_l)$ changes
neither pole orders nor local contact orders, since $r_i\ne r_l$.
When the torus shifts vanish identically their orders are $+\infty$;
this convention is compatible with deleting the redundant shift.

This projection need not be birational. It is nonconstant and finite on
complete smooth curve models: $Z$ is algebraic over $Y$ and $X_i=0$;
if all remaining coordinates were constant, $C$ would be constant.
Let its degree be $e$. The reduced weighted pole degree on $C$ equals
$e$ times that of the image and is at most $d_W(C)$. Local contacts
pull back with ramification. The sum of contacts over the single
original center is at most $e$ times the full image-center contact.
Proposition~\ref{geom:prop:curve} on the image thus implies
\[
 d_W(C)\ge e\,d_{W'}(\mathrm{image}(C))
       \ge e(1+\sigma)\sum_{P'\text{ over image center}}h_{P'}
       \ge(1+\sigma)\sum_{P\mapsto c_j}h_P,
\]
the required contradiction.

Finally replace
 each logarithm by a Taylor polynomial whose omitted
terms have weight strictly greater than $w_i/\theta$. This is an
invertible coordinate change preserving the weighted ideal: a replacement
of $u_i$ adds only monomials of strictly higher weight, and its inverse
subtracts the same series. Thus contacts and the finite jet quotient
are preserved. The resulting coordinates are algebraic and regular.
The curve-to-jet construction in the proof of
Theorem~\ref{geom:thm:main} now applies: use ordinary blow-ups of the
finite-colength local ideals, the nef class supplied by the curve
inequality, its ample strict perturbation, eventual ordinary-power
pushdown, and Serre vanishing. The argument uses the curve inequality
and local regular parameters, not the number of torus coordinates.
It therefore supplies the asserted surjection here.
\end{proof}

\subsection{The arithmetic minor and the exact logarithm replacement}

Fix a rational $F_0>2/\theta$ and put
\[
 T_i=\lceil F_0w_i/v\rceil,\qquad
 G_i(t)=\sum_{1\le k<T_i}(-1)^{k+1}t^k/k,\qquad
 L_i=\operatorname{lcm}(1,\ldots,T_i-1).
\]
Use columns $P=Y^hZ^kX^\gamma$ of weighted degree
$w_0(h+k)+w\cdot\gamma\le H$, and all rows
\[
 (j,\lambda,\rho,\beta),\qquad 0\le j<K,\qquad
                   v(\lambda+\rho)+w\cdot\beta/\theta<H.
\]
Let $A_H^{\rm tr}$ be the matrix with entry
\begin{equation}\label{ratio:matrix}
 [t^\lambda s^\rho u^\beta]
 P\bigl(3^j(1+t),2^j(1+s),(G_i(s)-r_iG_i(t)+u_i)_i\bigr),
\end{equation}
and let $A_H^{\rm full}$ use full logarithms instead of $G_i$.

\begin{lemma}[The same selected minor uses full logarithms]
\label{ratio:shear}
For every choice of $M$ columns, where $M$ is the number of rows,
the corresponding determinants of $A_H^{\rm tr}$ and
$A_H^{\rm full}$ are equal. In particular the interpolation lemma
provides a nonzero rational full-row minor $\Delta_H$ of both matrices.
\end{lemma}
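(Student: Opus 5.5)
The plan is to repeat the argument of Lemma~\ref{slog:lem:shear} with two torus variables, and then read off nonvanishing from Lemma~\ref{ratio:lem:two-torus}. Write the $i$th additive tail as
\[
 \mathcal T_i(s,t)=\bigl(\log(1+s)-G_i(s)\bigr)-r_i\bigl(\log(1+t)-G_i(t)\bigr).
\]
By construction, every monomial of $\mathcal T_i$ is of the form $s^a$ or $t^a$ with $a\ge T_i$. Its $V$-weight is therefore at least
\[
 vT_i\ge F_0w_i>2w_i/\theta>w_i/\theta=V_i .
\]
The full-logarithm entries are obtained from \eqref{ratio:matrix} by the substitution $u_i\mapsto u_i+\mathcal T_i(s,t)$. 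This substitution adds to each $u_i$ only terms of strictly larger weight than $u_i$. Consequently it, and its inverse $u_i\mapsto u_i-\mathcal T_i$, preserve every ideal $F_V^{\ge H}$, so they induce mutually inverse automorphisms of each finite packet $\Q[[t,s,u]]/F_V^{\ge H}$. Crucially, this substitution acts on the target coordinates only, independently of the column $P$. The same linear change of row coordinates therefore applies to every column, and $A_H^{\rm full}=U A_H^{\rm tr}$ for a single square matrix $U$, block diagonal in the centers $j$.

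Next I would check that $U$ is unitriangular, exactly as in Lemma~\ref{slog:lem:shear}. The coefficient of $t^{\lambda'}s^{\rho'}u^{\gamma}$ contributes to that of $t^\lambda s^\rho u^\beta$ only if $\gamma\ge\beta$. In addition, the $\gamma-\beta$ extra factors of $\mathcal T$ supply the Taylor degree difference $(\lambda+\rho)-(\lambda'+\rho')$. Since each factor has weight at least $V_i$, this forces
\[
 v\bigl((\lambda+\rho)-(\lambda'+\rho')\bigr)\ge\sum_iV_i(\gamma_i-\beta_i).
\]
Hence the source row has weight at most the target weight, and it already lies in the packet, including at the weight boundary. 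Moreover, every contribution other than the identity term has strictly smaller total Taylor degree $\lambda'+\rho'<\lambda+\rho$, because $\mathcal T_i$ has no constant term. Ordering rows by increasing $\lambda+\rho$, arbitrarily within each level, makes $U$ unitriangular, so $\det U=1$. Since $U$ multiplies whole rows, the Cauchy--Binet restriction is trivial here: any choice of $M$ columns gives $\det(A_H^{\rm full})_{\rm cols}=\det U\cdot\det(A_H^{\rm tr})_{\rm cols}$, which is the first claim.

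For the second claim, I would apply Lemma~\ref{ratio:lem:two-torus}, with the fixed base data and the fractions $r_i$ chosen beyond the successive weight thresholds. For all sufficiently large divisible $H$, the map \eqref{ratio:eq:two-torus-map} is surjective, and its matrix in the monomial bases is precisely $A_H^{\rm full}$. So $A_H^{\rm full}$ has full row rank $M$ over $\C$, and some selection of $M$ columns has nonzero determinant. By the equality just proved, the same minor of $A_H^{\rm tr}$ is nonzero. The entries of $A_H^{\rm tr}$ are rational, since $r_i\in\Q$, the $G_i$ have rational coefficients, and $3^j,2^j\in\Z$. Hence this common minor $\Delta_H$ is a nonzero rational number.

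The main obstacle is the filtration bookkeeping in the second step. One must ensure that the mixed tail, involving both $s$ and $t$ and the slope $r_i$, still produces only row coordinates inside the finite packet, and that the triangular order uses the total degree $\lambda+\rho$ rather than a single Taylor variable. The choice $T_i=\lceil F_0w_i/v\rceil$ with $F_0>2/\theta$ gives the required margin.
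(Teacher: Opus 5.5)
Your proposal is correct and follows essentially the paper's proof. The tail substitution on $u_i$ induces one column-independent, block-diagonal row change; you show it is unitriangular and then use multiplicativity of the determinant for any fixed set of $M$ columns. The only differences are minor: the paper orders monomials by increasing weight (each nonidentity term strictly raises weight because $vT_i>w_i/\theta$), whereas you order by total Taylor degree $\lambda+\rho$ as in Lemma~\ref{slog:lem:shear}, and you spell out via Lemma~\ref{ratio:lem:two-torus} the nonvanishing and rationality of $\Delta_H$, which the paper leaves implicit.
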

\begin{proof}
Put $R_i(t)=G_i(t)-\log(1+t)$. The truncated substitution is the
full substitution followed by
\[
                    u_i\longmapsto u_i+R_i(s)-r_iR_i(t).
\]
The added terms have weight at least $vT_i>w_i/\theta$.
Consequently this substitution preserves the ideal of monomials of
weight at least $H$ in $\Q[[t,s,u]]$. On the quotient, order the
monomials by increasing weight, breaking ties in any fixed way.
The substitution has a triangular coefficient matrix with diagonal
one: every nonidentity term has strictly larger weight. Its inverse
subtracts the same series and preserves the quotient as well.
The row change is independent of the selected column. Apply this at
each of the $K$ centers; the resulting block triangular matrix has
determinant one. Left multiplication by it takes the full matrix to
the truncated one, proving the assertion. This does not claim
coefficientwise equality of the two matrices.
\end{proof}

Elementary lattice-point counting in the weighted simplex gives
\begin{equation}\label{ratio:count}
 M\sim\frac{K\theta^mH^{m+2}}{(m+2)!v^2\prod_iw_i},\qquad
 \overline b_H=\frac1{MH}\sum_{\rm rows}w\cdot\beta,
                         \qquad 0\le\overline b_H\le\theta.
\end{equation}
The weights and $m$ are fixed in this asymptotic. Choose one nonzero
full-row minor and write
\[
                  J_i=\sum_{\rm columns}\gamma_i-\sum_{\rm rows}\beta_i.
\]
The number of selected columns is exactly $M$. Full row rank supplies
at least one such nonzero determinant for each sufficiently large
admissible $H$; no fixed choice of columns for all degrees is needed.
Within a fixed degree, however, that one column set is retained through
every subsequent arithmetic and analytic expansion.

As before, a nonzero determinant term proves $J_i\ge0$.
Indeed, a coefficient entry is zero unless $\beta_i\le\gamma_i$ for
every $i$, because the original monomial has degree $\gamma_i$ in its
$i$th additive variable. In a nonzero term of the determinant expansion,
one nonzero entry is chosen in each row and column. Summing these
inequalities over that permutation proves $J_i\ge0$ coordinatewise.
 Scale column
$\gamma$ by $\prod_i(q_iL_i)^{\gamma_i}$ and row $\beta$ by
$\prod_i(q_iL_i)^{-\beta_i}$. The identity
\[
 q_iL_i(G_i(s)-r_iG_i(t))=q_iL_iG_i(s)-p_iL_iG_i(t)
\]
shows that every scaled entry is integral.
For clarity, the nonzero entry with row $\beta$ and column $\gamma$
contains the factor
\[
 \prod_i\binom{\gamma_i}{\beta_i}
        (G_i(s)-r_iG_i(t))^{\gamma_i-\beta_i}.
\]
After the stated row and column scaling, its remaining powers are
powers of the integer-coefficient polynomials
$q_iL_iG_i(s)-p_iL_iG_i(t)$; the binomial factors and the coefficients
of $3^{jh}2^{jk}(1+t)^h(1+s)^k$ are integers as well. Zero entries
remain zero even when their row scaling has negative exponent.
This is an entrywise integrality statement for the truncated matrix.
Lemma~\ref{ratio:shear} then transfers its nonzero determinant, with
exactly the same value, to the full-logarithm matrix.
 Therefore
\[
 C_H\Delta_H\in\Z\setminus\{0\},\qquad
                       C_H=\prod_i(q_iL_i)^{J_i}\in\Z_{>0}.
\]
Since $\log q_i\le w_i$, $\log L_i\le\Lambda F_0w_i/v$ with
$\Lambda=4\log2$, and $\sum_iw_iJ_i\le MH(1-\overline b_H)$,
we obtain
\begin{equation}\label{ratio:lower}
 \frac{\log|\Delta_H|}{MH}
       \ge-(1-\overline b_H)-E_{\rm ar},
                              \qquad E_{\rm ar}=\Lambda F_0/v.
\end{equation}
The row factors matter: clearing all columns without returning the
row grades would lose the favorable term $\overline b_H$.
Indeed $\sum_iw_iJ_i\le MH-\sum_{\rm rows}w\cdot\beta
=MH(1-\overline b_H)$, by the degree bound on each selected column.
Thus the integer determinant gives the sharper intermediate estimate
\[
 \frac{\log|\Delta_H|}{MH}
 \ge -(1+E_{\rm ar})(1-\overline b_H),
\]
which implies \eqref{ratio:lower} because
$0\le\overline b_H\le\theta<1$. The least-common-multiple estimate
uses $T_i-1<F_0w_i/v$, so the ceiling in $T_i$ introduces no additional
term into the displayed arithmetic error.

\subsection{Translation to an entire family in two variables}

For a transverse multi-index $a\in\Z_{\ge0}^m$ and a selected column
$P$, define
\begin{equation}\label{ratio:entire}
 f_{a,P}(z,w)=[u^a]P(e^z,e^w,w-\xi z+u_1,\ldots,w-\xi z+u_m).
\end{equation}
All these functions are entire. The selected column set is the same
for every row. Set
\[
 z_j(t)=j\log3+\log(1+t),\qquad
 w_j(s)=j\log2+\log(1+s),\qquad
 \epsilon_i(t)=(\xi-r_i)\log(1+t).
\]
The central additive term vanishes because $j\log2-\xi j\log3=0$.
Expanding the additive variables gives the exact finite identity
\begin{align}
 &[u^\beta]P\bigl(3^j(1+t),2^j(1+s),
                   (\log(1+s)-r_i\log(1+t)+u_i)_i\bigr)
 \nonumber\\
 &\hspace{8mm}=
 \sum_{a\ge\beta}\binom a\beta\,
                 \epsilon(t)^{a-\beta}f_{a,P}(z_j(t),w_j(s)).
 \label{ratio:translation}
\end{align}
Only $a$ with $w\cdot a\le H$ occur, because the selected columns
have that bound.
Here $a\ge\beta$ means coordinatewise inequality, and
$\binom a\beta=\prod_i\binom{a_i}{\beta_i}$.
At $(z,w)=(z_j(t),w_j(s))$ the common additive variable in
\eqref{ratio:entire} is
$\log(1+s)-\xi\log(1+t)$.
Adding $\epsilon_i(t)$ changes it to
$\log(1+s)-r_i\log(1+t)$, which proves
\eqref{ratio:translation} directly by the multivariable binomial
formula. Although the functions are entire in two variables, this
translation in the transverse index is finite for every polynomial.
 Extracting $[t^\lambda s^\rho]$ introduces a finite
convolution over $0\le\ell\le\lambda$. Its scalars are
\[
 \binom a\beta[t^\ell]\epsilon(t)^{a-\beta},
\]
and its remaining row tests are
$[t^{\lambda-\ell}s^\rho]f_{a,P}(z_j(t),w_j(s))$.
This expansion is independent of the selected column.

On $|t|=1/2$, $|\log(1+t)|\le\log2<1$. Since
$q_i^{-\nu}\le\exp(-\nu w_i+\nu)$, Cauchy's estimate and
$\binom a\beta\le2^{|a|}$ give
\begin{equation}\label{ratio:scalar}
 \left|\binom a\beta[t^\ell]\epsilon(t)^{a-\beta}\right|
 \le 2^\ell\exp\left(-\nu w\cdot(a-\beta)
                              +\frac{(\nu+\log2)H}{w_*}\right).
\end{equation}
For fixed $m$ and weights there are only polynomially many choices
of $(a,\ell)$ per row. Their total contribution to the logarithm of
the determinant bound is $O(M\log H)=o(MH)$.
For example there are at most
$\prod_i(1+H/w_i)(1+H/v)$ choices per row, with constants depending
only on the already fixed weights. Raising this polynomial count to
the $M$th power contributes $O(M\log H)$ to the logarithm. This
counting statement uses the order of limits: $m$ and all weights are
fixed while $H$ grows.

Let
\[
 R=4K(1+\log3+\log2),\qquad B_R=2R(1+|\xi|).
\]
On $|t|=|s|=1/2$ the points $(z_j(t),w_j(s))$ lie in the
polydisc of radius $R/2$. On the radius-$R$ polydisc,
\begin{equation}\label{ratio:growth}
 |f_{a,P}(z,w)|\le
                  \exp\left(H\left(\frac R{w_0}
                                     +\frac{\log B_R}{w_*}\right)\right).
\end{equation}
One way to see the coefficient factor is to extract $u^a$ on
$|u_i|=1$: the additive factors have modulus at most
$R(1+|\xi|)+1\le B_R$. The torus exponents satisfy $h+k\le H/w_0$.

Expand each entire function in normalized Taylor monomials
$(z/R)^k(w/R)^l$. Cauchy's estimate bounds the coefficient by
\eqref{ratio:growth}, whereas a remaining row test is bounded by
\[
                  2^{\lambda-\ell+\rho}2^{-k-l}.
\]
Together with $2^\ell$ in \eqref{ratio:scalar}, the total row test
cost is at most $2^{\lambda+\rho}\le\exp(H\log2/v)$.

\begin{lemma}[Bivariate Taylor collision]\label{ratio:collision}
For $n$ distinct pairs $(k,l)\in\Z_{\ge0}^2$,
\[
                    \sum(k+l)\ge\frac{2\sqrt2}{3}n^{3/2}-n.
\]
If a term of the row expansion assigns $n_a$ rows to transverse
index $a$, the analytic determinant factor has the upper bound
\[
 \exp\left(-c_2\sum_an_a^{3/2}
                         +MH(E_{\rm hol}+o(1))\right),
 \quad c_2=\frac{\sqrt2\log2}{3},
 \quad E_{\rm hol}=\frac R{w_0}+\frac{\log B_R}{w_*}+\frac{\log2}{v}.
\]
\end{lemma}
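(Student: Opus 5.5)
The proof has two parts: an exact discrete minimization for the inequality on pairs, and then a repetition of the collision argument of Lemma~\ref{slog:lem:collision} with bivariate Taylor indices.

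\emph{The combinatorial inequality.} Fix $n$ and consider $\sum(k+l)$ over $n$ distinct pairs. I would first show that this sum is minimized by the greedy choice: fill the diagonals $k+l=0,1,2,\ldots$ in order. This works because diagonal $t$ contains exactly $t+1$ pairs, and replacing a pair on a higher diagonal by an unused pair on a lower one decreases the sum. Write $n=T(T+1)/2+r$ with $0\le r\le T$. The minimum is then
\[
 S(n)=\tfrac13(T-1)T(T+1)+rT .
\]
The statement is equivalent to $S(n)+n\ge g(n)$, where $g(n)=\tfrac{2\sqrt2}{3}n^{3/2}$.

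As a function of $n$, the left side $S(n)+n$ is piecewise linear: on the segment $T(T+1)/2\le n\le(T+1)(T+2)/2$ it has slope $T+1$. The right side $g$ is convex. On each segment a convex function lies below its chord, so it suffices to check the inequality at the breakpoints $n=T(T+1)/2$. At such a breakpoint the left side equals $T(T+1)(2T+1)/6$ and the right side equals $\tfrac13\bigl(T(T+1)\bigr)^{3/2}$. After squaring, the comparison reduces to $(2T+1)^2\ge4T(T+1)$, which is true.

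This second-order tightness is the main obstacle. The crude route via $t_i\ge\sqrt{2i}-3/2$ together with integral comparison loses a linear term of size about $n/2$, and that loss is exactly what the statement cannot afford. The exact breakpoint computation is therefore the step I would rely on.

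\emph{The determinant bound.} I would repeat the proof of Lemma~\ref{slog:lem:collision} with the bivariate Taylor expansion above.
\begin{enumerate}
\item Fix one term of the row expansion~\eqref{ratio:translation}. This fixes the transverse index $a_r$ for each row, and the scalar factors are pulled out.
\item Expand every $f_{a,P}$ in the normalized monomials $(z/R)^k(w/R)^l$. Multilinearity then writes the determinant as an absolutely convergent sum of determinants whose rows are coefficient vectors $(c_{a_r,k_r,l_r,P})_P$.
\item If two rows share the same triple $(a,k,l)$, their coefficient vectors coincide and that determinant vanishes. So in every surviving summand, the $n_a$ rows with transverse index $a$ carry distinct pairs $(k,l)$.
\item Each surviving coefficient determinant is at most $M!\exp(MH(R/w_0+\log B_R/w_*))$, by~\eqref{ratio:growth} and the permutation expansion. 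The row tests contribute at most $2^{\lambda+\rho}\le\exp(H\log2/v)$ per row, times the decay $2^{-\sum(k_r+l_r)}$.
\item Split this decay into two halves. One half is summed freely over all index choices, giving $(1-2^{-1/2})^{-2M}=e^{O(M)}$.
\item For the other half, the combinatorial bound gives
\[
 2^{-\frac12\sum_r(k_r+l_r)}\le\exp\Bigl(-\tfrac{\log2}{2}\sum_a\bigl(\tfrac{2\sqrt2}{3}n_a^{3/2}-n_a\bigr)\Bigr).
\]
This produces the constant $c_2=\sqrt2\log2/3$ and an extra factor $e^{O(M)}$.
\end{enumerate}

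Finally, $M!$, the $e^{O(M)}$ factors and the polynomial count of row choices all contribute $O(M\log M)=o(MH)$ once $m$ and every weight are fixed and $H\to\infty$. This yields the displayed bound with $E_{\rm hol}=R/w_0+\log B_R/w_*+\log2/v$.
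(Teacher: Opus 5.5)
Your proposal is correct. The determinant half is the paper's argument: repeated triples $(a,k,l)$ kill a coefficient determinant, half of $2^{-k-l}$ is summed freely as a fixed geometric factor per row, the other half is fed blockwise into the combinatorial inequality, and $M!$ together with the $e^{O(M)}$ factors is $o(MH)$.

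You differ from the paper on the inequality for pairs. The paper attaches the unit square $[k,k+1]\times[l,l+1]$ to each pair. The integral of $x+y$ over the union of these squares is exactly $\sum(k+l)+n$. The paper compares this with the triangle $x+y\le\sqrt{2n}$ of the same area, on which the integral is $(2\sqrt2/3)n^{3/2}$. This is a bathtub comparison: points of the union outside the triangle have $x+y\ge\sqrt{2n}$, points of the triangle missed by the union have $x+y\le\sqrt{2n}$, and these two sets have equal area. You instead compute the exact discrete minimum $S(n)$ by filling diagonals. You then compare the piecewise linear $S(n)+n$ with the convex $\frac{2\sqrt2}{3}n^{3/2}$ and reduce to the breakpoints, where the inequality becomes $(2T+1)^2\ge4T(T+1)$.

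The paper's route is shorter. Its unit-square integral produces the linear correction $-n$ automatically, so the loss you describe for the crude route never arises. It also carries over verbatim to more Taylor variables, using cubes and simplices. Your route is entirely discrete and exhibits the actual minimizer. As a byproduct it shows the inequality is sharp up to $O(\sqrt n)$. However, it relies on the specific count of $t+1$ pairs on the diagonal $k+l=t$.
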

\begin{proof}
Attach a unit square to each pair. The union has area $n$ and its
integral of $x+y$ is $\sum(k+l)+n$. Among sets of area $n$ in the
positive quadrant, the sublevel triangle $x+y\le\sqrt{2n}$
minimizes that integral. Integrating on the triangle gives
$(2\sqrt2/3)n^{3/2}$, proving the combinatorial inequality.

For a fixed $a$, the Taylor coefficient row indexed by $(k,l)$ is
the vector of those coefficients across the selected columns.
Consequently repeated triples $(a,k,l)$ give identical coefficient
rows and zero determinant. Every surviving term therefore has distinct
pairs within each $a$-block. Reserve one half of $2^{-k-l}$ to use the
combinatorial inequality; summing the other half contributes only a
fixed geometric factor per row. The coefficient determinant is bounded
by $M!$ times the product of its coefficient bounds.
There are only $M!$ summands in its permutation expansion, each with
one coefficient from each row. The relevant coefficients are those of
normalized Taylor monomials, so Cauchy's bound is exactly
\eqref{ratio:growth}, with no factorial derivative normalization left
to account for.
 Its logarithmic
factor is $O(M\log M)=o(MH)$, by \eqref{ratio:count}. The linear
correction $\sum_an_a=M$ is likewise negligible at that scale.

For completeness the Taylor expansion can be truncated at
$k+l\le J$ before applying multilinearity. The geometric majorant
just used is summable independently of $J$, while the omitted entire
series converge uniformly on the smaller testing polydisc. Passing to
the limit therefore preserves the determinant bound. No exchange of
a divergent determinant expansion and summation is involved.
\end{proof}

The compiled argument uses the sufficient weaker collision constant
$\log2/8$, and obtains the same endpoint three. The sharper displayed
constant in Lemma~\ref{ratio:collision} is not asserted to be the
identical formally verified numerical lemma.

There is a useful dichotomy for each term of the finite row translation.
Fix $0<A<1$ and $0<\eta<1$. The number of transverse indices with
$w\cdot a\le AH$ satisfies
\[
                 D_A(H)\sim\frac{A^mH^m}{m!\prod_iw_i}.
\]
If at least $\eta M$ rows have these indices, convexity gives
\[
 \sum_an_a^{3/2}\ge\frac{(\eta M)^{3/2}}{\sqrt{D_A(H)}}.
\]
After division by $MH$ its leading value is
\begin{equation}\label{ratio:L}
 L=\eta^{3/2}
       \sqrt{\frac{K\theta^m}{(m+1)(m+2)v^2A^m}}.
\end{equation}
If fewer than $\eta M$ rows have low transverse weight, then more
than $(1-\eta)M$ have $w\cdot a>AH$. The approximation factors in
\eqref{ratio:scalar} save at least
$\nu(A(1-\eta)-\overline b_H)$ per $MH$. The analytic estimate is
therefore
\begin{equation}\label{ratio:upper}
 \frac{\log|\Delta_H|}{MH}
 \le E_{\rm an}+o(1)+
  \max\{-c_2L+o(1),-\nu(A(1-\eta)-\overline b_H)\},
\end{equation}
where one can take
\[
 E_{\rm an}=\frac R{w_0}+\frac{\log2}{v}
                   +\frac{\log B_R+\nu+\log2}{w_*}.
\]
In the first alternative the omitted approximation factors are at
most one after removal of the displayed ceiling loss; in the second
alternative we simply omit the favorable collision factor. Thus the
two estimates apply term by term before summing all row choices.
The subtraction of $\overline b_H$ in the second alternative is
particularly useful. Summing the approximation exponents over all rows
gives
\[
 \frac1{MH}\sum_{\rm rows}w\cdot(a-\beta)
 =\frac1{MH}\sum_{\rm rows}w\cdot a-\overline b_H
 \ge A(1-\eta)-\overline b_H.
\]
In the first alternative, H\"older's inequality on the at most
$D_A(H)$ low-weight indices gives the stated collision bound regardless
of how the remaining rows are distributed. Substitution of
\eqref{ratio:count} into $\eta^{3/2}\sqrt{M/D_A(H)}/H$ yields exactly
\eqref{ratio:L}. The same arithmetic row grade therefore participates
in both alternatives; it is not a separately chosen approximation
parameter.

\subsection{The exponent-three parameter balance}
\label{ratio:parameters}

Comparing \eqref{ratio:lower} with \eqref{ratio:upper}, it suffices that
\begin{equation}\label{ratio:gap}
 \begin{split}
 g&=\nu(A(1-\eta)-\theta)-(1-\theta)>0,\\
 E_{\rm ar}+E_{\rm an}&<g,\qquad
                     c_2L>1+E_{\rm ar}+E_{\rm an}.
 \end{split}
\end{equation}
Indeed the approximation gap is
$\nu A(1-\eta)-1-(\nu-1)\overline b_H$, minimized at
$\overline b_H=\theta$, while $1-\overline b_H\le1$ handles the
collision alternative.
More explicitly, put $E=E_{\rm ar}+E_{\rm an}$. If the maximum in
\eqref{ratio:upper} is supplied by the approximation alternative,
comparison with \eqref{ratio:lower} would imply
\[
 \nu A(1-\eta)-1-(\nu-1)\overline b_H\le E+o(1).
\]
Its left side is at least $g$, contradicting $g>E$ for all sufficiently
large $H$. If the maximum is supplied by the collision alternative,
the two bounds would instead imply
\[
 c_2L\le1-\overline b_H+E+o(1)\le1+E+o(1),
\]
contradicting the other strict inequality in \eqref{ratio:gap}.
Thus neither alternative is possible. All $o(1)$ terms are taken only
after the fixed positive gaps have been established.

Fix $\nu>3$. Choose rational $b_0$ with
\[
                         2/3<b_0<1-1/\nu.
\]
For small rational $\delta>0$ put $\theta=1-\delta$ and
$A=1-b_0\delta$. Then
\[
                  \nu(A-\theta)>1-\theta,\qquad A^3<\theta^2.
\]
The first inequality is exactly $\nu(1-b_0)>1$, multiplied by
$\delta>0$. For the second, the function
$(1-b_0\delta)^3-(1-\delta)^2$ vanishes at zero and has derivative
$2-3b_0<0$ there. It is consequently negative for all sufficiently
small positive $\delta$. This is the only place that requires the
interval $2/3<b_0<1-1/\nu$ to be nonempty, and hence the condition
$\nu>3$.

The second strict inequality permits rational choices
\[
                    A<B<\theta^{2/3},\qquad
                    B/\theta<C<1/\sqrt B.
\]
The second interval is nonempty precisely when $B^3<\theta^2$.
Also $A>\theta$ because $b_0<1$, hence $B>\theta$ and
$B/\theta>1$. From $C<1/\sqrt B$ and $B>\theta>\theta^2$ one gets
$C\theta<\theta/\sqrt B<1$. These inequalities verify all four
bases of the exponential limits used below.

They satisfy $C>1$, $C\theta<1$, $C\sqrt B<1$, and $C\theta/B>1$.
Choose $\eta>0$ sufficiently small to preserve the positive gap,
and fix $F_0>2/\theta$.
One explicit admissible restriction is
$0<\eta<(\nu(A-\theta)-(1-\theta))/(2\nu A)$, with
$\eta<1$. It makes
$g=\nu(A-\theta)-(1-\theta)-\nu A\eta>0$ a fixed number independent
of $m$, the approximants, and $H$. Both $\eta$ and $F_0$ may be chosen
rational.
 For even integers $m$ set
\begin{equation}\label{ratio:parameter-values}
 K=\lfloor C^m\rfloor,\quad w_0=B^{-m/2},\quad
 v=\left\lceil\sqrt{2K\theta^mw_0^2}\right\rceil,
                  \qquad \sigma=\frac1{100(m+3)}.
\end{equation}
The weights are rational. Their relevant limits are
\begin{align*}
 K/w_0&\le(C\sqrt B)^m\longrightarrow0,\\
 v&\sim\sqrt2(C\theta/B)^{m/2}\longrightarrow\infty,\\
 w_0/v&\sim 2^{-1/2}(C\theta)^{-m/2}\longrightarrow\infty,\\
 (w_0/v)\theta^{m-1}
          &\sim(\sqrt2\theta)^{-1}(\theta/C)^{m/2}\longrightarrow0,\\
 L&\sim\frac{\eta^{3/2}(B/A)^{m/2}}{\sqrt{2(m+1)(m+2)}}
                                                   \longrightarrow\infty.
\end{align*}
Also $K(w_0/v)^2\theta^m\le1/2$. The factor
$(1+3\sigma)^{m+2}$ stays close to $e^{3/100}<2$, so all strict
geometric inequalities of Lemma~\ref{ratio:lem:two-torus} hold for
large $m$, including $w_0/v>4(m+3)/\sigma$. The remaining two
volume inequalities tend to zero exponentially. The errors
$R/w_0$ and $\Lambda F_0/v$ tend to zero, while $L$ tends to infinity.

We now fix one such finite dimension $m$. Only next do we choose
the $m$ rational approximants successively far out in the assumed
sequence, meeting every separation threshold and making $w_*^{-1}$
errors small. Finally $H$ tends to infinity through divisible values.
Thus \eqref{ratio:gap} is a contradiction for each $\nu>3$.
For an explicit ordering with margins, first enlarge the even $m$
until every geometric condition holds,
\[
 c_2L>3,
 \qquad R/w_0+(\Lambda F_0+\log2)/v
              <\min(g/4,1/4).
\]
With this $m$ fixed, $R$ and $B_R$ are fixed. Choose $w_1$, and then
the later $w_i$, far enough along the approximation sequence to satisfy
all successive separation thresholds and
\[
 (\log B_R+\nu+\log2)/w_*<\min(g/4,1/4).
\]
All later weights can be required to exceed the same common lower
bound, so enlarging them preserves this last inequality. Then
$E<\min(g/2,1/2)$ and $c_2L>1+E$.
Only now are the truncations, interpolation threshold, and nonzero
minors fixed degree by degree, and only now is $H$ sent to infinity.
There is no simultaneous limit in dimension, denominators, and degree.

For general positive multiplicatively independent rational $a,b$,
replace the centers by $(b^j,a^j,0)$ and put $\xi=\log a/\log b$.
The characters $(a^{q_i}/b^{p_i})^j$ are distinct. If $v_a a$ and
$v_b b$ are integral, an additional column factor
$v_b^{(K-1)h}v_a^{(K-1)k}$ clears the torus denominators. It adds
only $O(K/w_0)$ to the arithmetic error. The Cauchy radius changes
by a fixed constant depending on $|\log a|,|\log b|$. All limits
above persist.
Take $v_a,v_b$ to be fixed positive integers clearing the denominators
of $a,b$. The additional factor in an entry at center $j$ is
\[
 v_b^{(K-1)h}v_a^{(K-1)k}b^{jh}a^{jk}
 =(v_bb)^{jh}v_b^{(K-1-j)h}
  (v_aa)^{jk}v_a^{(K-1-j)k},
\]
which is integral since $0\le j\le K-1$. For the determinant its
logarithmic cost divided by $MH$ is at most
$(K-1)\max(\log v_a,\log v_b)/w_0$.
This tends to zero in the dimension selection, because
$K/w_0\to0$. For the analytic bounds one may replace $R$ by
$4K(1+|\log a|+|\log b|)$ and use the corresponding $B_R$.
The character $a^{q_i}/b^{p_i}$ remains positive and different from
one, so its powers still separate the centers. These facts cover also
positive rationals below one and ratios $\xi$ of either sign.
 Irrationality follows from multiplicative independence,
and Lemma~\ref{prelim:dirichlet} gives the lower bound two. This
completes Theorem~\ref{ratio:thm:ratio}.

\begin{corollary}[A fixed homogeneous two-logarithm measure]
\label{ratio:linear-form}
For positive multiplicatively independent rationals $a,b$ and every
$\varepsilon>0$, there is $c_{a,b,\varepsilon}>0$ such that
\[
 |p\log a+q\log b|\ge c_{a,b,\varepsilon}
                  \max(1,|p|,|q|)^{-2-\varepsilon}
                \quad((p,q)\in\Z^2\setminus\{(0,0)\}).
\]
\end{corollary}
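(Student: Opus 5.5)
The plan is to deduce the measure directly from Theorem~\ref{ratio:thm:ratio}, by the elementary conversion between a homogeneous linear form and rational approximation to the quotient described in the introduction. No new determinant is needed. Put $\xi=\log a/\log b$. Multiplicative independence gives $a,b\ne1$, so $\log b\ne0$, and it also gives $\xi\notin\Q$. Then
\[
 |p\log a+q\log b|=|\log b|\,|p\xi+q|.
\]
I split into the cases $p=0$ and $p\ne0$.

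\emph{Case $p=0$.} Then $q\ne0$, and the form equals $|q|\,|\log b|\ge|\log b|$. This is already larger than $|\log b|\max(1,|q|)^{-2-\varepsilon}$, so any $c_{a,b,\varepsilon}\le|\log b|$ handles this case.

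\emph{Case $p\ne0$.} Write $-q/p=r/s$ in lowest terms with $s\ge1$. Then $s\le|p|$, and
\[
 |p\xi+q|=|p|\,|\xi-r/s|.
\]
Theorem~\ref{ratio:thm:ratio} gives $\mu(\xi)\le3$. By the definition \eqref{prelim:eq:mu}, the inequality $0<|\xi-r/s|<s^{-3-\varepsilon}$ has only finitely many reduced solutions. Since $\xi$ is irrational, each of these finitely many exceptions has $|\xi-r/s|>0$. One must also account for the denominator $s=1$, which the definition of $\mu$ excludes. For $s=1$ the distance $|\xi-r|$ is bounded below by $\operatorname{dist}(\xi,\Z)>0$. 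Shrinking the constant to cover these finitely many exceptional fractions and the integers, I get a constant $c'>0$, depending only on $a,b,\varepsilon$, such that
\[
 |\xi-r/s|\ge c's^{-3-\varepsilon}\quad\text{for every reduced } r/s.
\]
Combining this with $s\le|p|$ gives
\[
 |p\xi+q|\ge c'|p|\,s^{-3-\varepsilon}\ge c'|p|^{-2-\varepsilon}\ge c'\max(1,|p|,|q|)^{-2-\varepsilon}.
\]
Multiplying by $|\log b|$ and taking $c_{a,b,\varepsilon}=|\log b|\min(1,c')$ completes the argument.

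The only delicate point is the bookkeeping that turns the ``all but finitely many'' form of $\mu(\xi)\le3$ into a constant valid for every pair $(p,q)$. This relies on irrationality of $\xi$ to ensure that none of the finitely many exceptional distances is zero. It also relies on the observation that passing to the reduced denominator $s\le|p|$ only helps. The exponent $-2-\varepsilon$ arises because the factor $|p|$ cancels one power from the quotient exponent $3+\varepsilon$; this is exactly the normalization shift ($H^{-\kappa}$ versus $q^{-\kappa-1}$) described in the introduction. I do not expect any further obstacle.
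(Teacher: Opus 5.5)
Your argument is correct and is essentially the paper's proof. You factor out $|\log b|$, treat $p=0$ directly, and turn the bound $\mu(\xi)\le3$ of Theorem~\ref{ratio:thm:ratio} into an inequality $|\xi-r/s|\ge c's^{-3-\varepsilon}$ valid for every fraction; the extra factor $|p|$ then lowers the exponent to $-2-\varepsilon$, and you get the same constant $|\log b|\min(1,c')$. The only cosmetic difference is that the paper applies this bound with the unreduced denominator $|p|$ and notes that reduction only strengthens it, whereas you reduce first and use $s\le|p|$.
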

\begin{proof}
For $p\ne0$, factor the form as $p\log b\,(\xi+q/p)$.
When $-q/p$ lies in a fixed bounded neighborhood of $\xi$,
the coefficient height is comparable to $|p|$ and the exponent-three
ratio bound gives the assertion after multiplication by $|p\log b|$.
Outside that neighborhood the form is bounded below by a fixed
positive multiple of the height, after separating the region where
$|q/p|$ is large. If $p=0$, use $|q\log b|$ directly. Adjusting the
constant handles the finitely many small denominators.
Here is a direct uniform version of that last step. The ratio theorem
implies, for every $\varepsilon>0$, a constant $c_0>0$ such that
\[
 |\xi-r/s|\ge c_0s^{-3-\varepsilon}
       \qquad(r\in\Z,\ s\in\Z_{\ge1}).
\]
Indeed infinitely many exceptions with unbounded reduced denominator
would give an irrationality exponent at least $3+\varepsilon$;
for bounded denominator the positive distances have a positive minimum
after multiplication by $s^{3+\varepsilon}$. Reducing a fraction only
strengthens this lower bound. Applying it with $s=|p|$ and the
corresponding signed numerator gives, for $p\ne0$,
\[
 |p\log a+q\log b|
 \ge c_0|\log b|\,|p|^{-2-\varepsilon}
 \ge c_0|\log b|\,\max(1,|p|,|q|)^{-2-\varepsilon}.
\]
For $p=0$ and $q\ne0$, the left side is at least $|\log b|$.
Thus $c_{a,b,\varepsilon}=|\log b|\min(c_0,1)$ is sufficient.
Multiplicative independence guarantees that none of the finitely many
forms used in choosing a constant vanishes.

\end{proof}
This measure concerns two homogeneous terms. It asserts no identical
exponent for a third, independently varying, integer constant term.
The uniform positive constant and the homogeneous two-logarithm
conclusion are now exported by
\nolinkurl{QuadraticPi/RationalTwoLogLinearForm.lean};
Appendix~\ref{app:formal-status} gives the exact declarations.
In particular, the completed endpoint is
\nolinkurl{QuadraticPi.rationalTwoLog_linearForm_lowerBound}.
The next section also has checked general number-field and algebraically normalized ratio endpoints, with their exact branch hypotheses.

\section{Conjugate ratios and algebraic normalizations}
\label{ratio:conjugate-section}

The $3d/s$ comparison and the algebraically normalized ratio families
below now have checked Lean endpoints. Their general number-field
interpolation, one-minor norm comparison, analytic estimates, and
parameter choices are assembled in the accompanying sources. Irrational
algebraic slopes are handled by the checked normalized interpolation
adapter. Appendix~\ref{app:formal-status} records the public declarations
and the exact matching, branch, and nonvanishing hypotheses.

The essential arithmetic point is to select one nonzero minor over
the number field and use its images at every embedding. Selecting
unrelated minors at different embeddings would not supply the norm
inequality used below. Exact matching of the chosen logarithm
branches, rather than matching their absolute values, supplies the
analytic smallness at each good embedding.

\begin{theorem}[Partial matching for a quotient of logarithms]
\label{ratio:matching}
Let $E$ have degree $d$ and let $a,b\in E^\times$.
Let $\xi\in\mathbb R\setminus\mathbb Q$ and suppose
there are $s$ distinct embeddings, with $1\le s\le d$,
$\sigma:E\hookrightarrow\mathbb C$ admitting
nonzero logarithms $\lambda_{a,\sigma},\lambda_{b,\sigma}$ with
\[
 e^{\lambda_{a,\sigma}}=\sigma(a),\qquad
 e^{\lambda_{b,\sigma}}=\sigma(b),\qquad
                 \lambda_{a,\sigma}=\xi\lambda_{b,\sigma}.
\]
Then
\[
                         2\le\mu(\xi)\le\frac{3d}{s}.
\]
\end{theorem}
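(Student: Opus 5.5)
The plan is to merge the two-torus construction of Theorem~\ref{ratio:thm:ratio} with the one-minor norm comparison of Theorem~\ref{conj:partial}. The lower bound is Lemma~\ref{prelim:dirichlet}, since $\xi$ is irrational by hypothesis. For the upper bound, put $D=d/s$, fix $\nu>3D$ and $3D<\rho<\nu$, and suppose there are infinitely many reduced $p_i/q_i$ with $|\xi-p_i/q_i|\le q_i^{-\nu}$. The denominators are unbounded, and I would discard finitely many so that the slopes $r_i=p_i/q_i$ are nonzero and distinct. Because the slopes are rational, $\sigma(r_i)=r_i$ at every embedding.

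\emph{Construction.} Take centers $(Y,Z,X)=(b^j,a^j,0)$ in $E$ with local coordinates $t=Y/b^j-1$, $s=Z/a^j-1$ and $u_i=X_i-\log(1+s)+r_i\log(1+t)$. I would prove the surjectivity statement of Lemma~\ref{ratio:lem:two-torus} over $\C$ at one fixed embedding, say a good one $\sigma_0$.

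The rigidity step must be rechecked. On a persistent component the character $Z^{q_i}/Y^{p_i}$ is constant, and its values at the centers are $(\sigma_0(a)^{q_i}/\sigma_0(b)^{p_i})^j$. These are pairwise distinct for $0\le j<K$. Otherwise some power $(\sigma_0(a)^{q_i}\sigma_0(b)^{-p_i})^n$ would equal $1$, so $n(q_i\lambda_a-p_i\lambda_b)\in2\pi\ii\Z$ with $\lambda_a=\xi\lambda_b$. I would exclude this for large $q_i$ using $|q_i\xi-p_i|\to0$ together with $\lambda_b\ne0$. For the finitely many $n<K$ involved, $|n\lambda_b(q_i\xi-p_i)|<2\pi$ eventually, and it is nonzero because $\xi\notin\Q$. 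The center-independent thresholds, the Taylor truncation and the shear Lemma~\ref{ratio:shear} then carry over unchanged. They give a nonzero minor $\Delta_H\in E$ whose selected columns are kept fixed across all embeddings.

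\emph{Arithmetic.} Clear denominators exactly as in Section~\ref{ratio:section}, with extra column factors $v_b^{(K-1)h}v_a^{(K-1)k}$ so that $a,b$ become integral. Taking the absolute norm of $C_H\Delta_H\in\mathcal O_E\setminus\{0\}$ gives
\[
 \sum_\sigma\frac{\log|\sigma\Delta_H|}{MH}\ge-d(1-\overline b_H)-dE_{\rm ar}.
\]

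\emph{Analytic bounds at each embedding.} At a good embedding $\sigma$, substitute $z=j\lambda_{b,\sigma}+\log(1+t)$ and $w=j\lambda_{a,\sigma}+\log(1+s)$. The exact matching $\lambda_{a,\sigma}=\xi\lambda_{b,\sigma}$ kills the central term $w-\xi z$. Then \eqref{ratio:translation}, \eqref{ratio:scalar} and Lemma~\ref{ratio:collision} give \eqref{ratio:upper} with $\nu$ replaced by $\rho$ and constants uniform over the finitely many good $\sigma$. At a bad embedding, use only the Cauchy--Hadamard bound $E_{\rm bad}=O(K/w_0+1/v+(1+\log K)/w_*)$ as in \eqref{conj:bad}. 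Summing all embeddings and dividing by $s$, the contradiction requires
\[
 \rho(A(1-\eta)-\theta)>D(1-\theta),\qquad c_2L>D+E,
\]
where $E=DE_{\rm ar}+E_{\rm an}+(D-1)E_{\rm bad}$.

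\emph{Parameters.} Rerun Section~\ref{ratio:parameters} with $1$ replaced by $D$. Choose $2/3<b_0<1-D/\rho$, which is possible exactly because $\rho>3D$. Put $\theta=1-\delta$ and $A=1-b_0\delta$, so that $A^3<\theta^2$. The same $B,C,K,w_0,v$ then make $L\to\infty$ and every error term tend to $0$. The order of choices is: $m$ first, then the approximants, then $H\to\infty$.

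The main obstacle I expect is the interpolation step over an algebraic base rather than $\{2,3\}$. Specifically, the distinctness of center characters must be derived from the branch matching at the chosen embedding. The one-torus reduction on a character fiber must also still apply when $\sigma_0(a),\sigma_0(b)$ are complex, possibly with $|\sigma_0(b)|=1$, so that the fibers $Y=\text{const}$ no longer separate centers. I would first try to show that only the separation of the characters $(Z^{q_i}Y^{-p_i})$ at the centers is used, which the argument above supplies.
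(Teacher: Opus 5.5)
Your proposal is correct and follows the paper's proof in its overall architecture. Both arguments use one nonzero minor $\Delta_H\in E$ with fixed columns and the absolute norm of $C_H\Delta_H$. Both use the translation and bivariate collision bound at each good embedding and a crude Cauchy bound at the others. Both divide by $s$ and use the parameter scheme of Section~\ref{ratio:parameters} with $2/3<b_0<1-D/\nu$.

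The one step you argue differently is the separation of the characters $(a^{q_i}/b^{p_i})^j$, $0\le j<K$. The paper first shows that $a,b$ are not both roots of unity. It then shows that $\mathcal R=\{(p,q)\in\Z^2: a^q/b^p \text{ is a root of unity}\}$ has rank at most one. Hence at most one rational slope is exceptional, and every other slope gives a base that is not a root of unity; this conclusion is global and independent of $K$.

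You instead use the branch matching at a single good embedding $\sigma_0$. A coincidence would give $n\lambda_{b,\sigma_0}(q_i\xi-p_i)\in2\pi\ii\Z$ with $1\le n<K$. This is impossible once $q_i$ is large, because that number is nonzero and has modulus $<2\pi$. The argument is valid because $K$ is fixed before the approximants are selected, which is already the paper's order of choices. It supplies exactly the finite-range distinctness that Lemma~\ref{ratio:lem:two-torus} uses, and it avoids the rank argument.

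Your remaining worry about complex centers, including $|\sigma_0(b)|=1$, does not arise. Once the character is constant the curve meets a single center. The one-torus reduction therefore applies Proposition~\ref{geom:prop:curve} with one center and never needs fibers of $Y$ to separate centers.

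Finally, your auxiliary exponent $\rho$ is harmless but unnecessary. Unlike in Theorem~\ref{conj:partial}, where $\sigma(c)$ rescales the error, the translation error $(\xi-r_i)\log(1+t)$ here is the same at every embedding. The paper therefore works with $\nu$ directly.
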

\begin{proof}
Let $S$ be the specified set of good embeddings and put $D=d/s$.
Fix $\nu>3D$ and suppose there are infinitely many reduced fractions
$r=p/q$, with $q>0$, satisfying $|\xi-r|\le q^{-\nu}$.
Their denominators are unbounded and the fractions converge to $\xi$.
We shall choose finitely many distinct nonzero fractions
$r_i=p_i/q_i$ successively from this sequence, with
$w_i=\lceil\log q_i\rceil$ as large as required. As before, all
such choices precede the limit in $H$.

\emph{Exceptional slopes and interpolation.}
The elements $a,b$ cannot both be roots of unity. Indeed, every
logarithm of a root of unity is a rational multiple of $2\pi i$;
the quotient of two nonzero such logarithms is rational, contrary
to the hypothesis at any embedding in $S$. Consider the subgroup
\[
 \mathcal R=\{(p,q)\in\mathbb Z^2:
                         a^q/b^p\text{ is a root of unity}\}.
\]
If $(p_1,q_1),(p_2,q_2)\in\mathcal R$ are linearly independent,
eliminating $a$ and then $b$ from their two relations shows that
nonzero integer powers of both $a$ and $b$ are roots of unity.
Both elements would themselves be roots of unity. Thus
$\operatorname{rank}\mathcal R\le1$. All its elements with $q\ne0$
therefore have at most one rational slope $p/q$. This proves that
there is at most one exceptional rational slope for which
$a^q/b^p$ is a root of unity. A sequence converging to irrational
$\xi$ eventually avoids that slope and zero. We choose all $r_i$
from this tail; we may also require $|r_i|\le1+|\xi|$.

At the centers $(b^j,a^j,0)$, the character attached to $r_i$ takes
the values $(a^{q_i}/b^{p_i})^j$. They are pairwise distinct for
$0\le j<K$, since equality of two would make the base a root of
unity. These are exactly the character distinctions used in
Lemma~\ref{ratio:lem:two-torus}. The commuting frame, the separated
normal-basis inequalities, and the reduction of a character fiber
to a one-torus curve are unchanged over an algebraic closure of
$E$. No positivity or reality of the centers is used there. The
lemma consequently gives full row rank over that algebraic closure,
and hence a nonzero full-row minor over $E$.

\emph{One minor and its denominator.}
Use the truncations $G_i$, integers $L_i$, columns
$P=Y^hZ^kX^\gamma$, and rows $(j,\ell,\rho,\beta)$ of
Section~\ref{ratio:section}, with $3^j,2^j$ replaced by $b^j,a^j$.
The entry is
\[
 [t^\ell s^\rho u^\beta]\,
 P\bigl(b^j(1+t),a^j(1+s),
                   (G_i(s)-r_iG_i(t)+u_i)_i\bigr)\in E.
\]
For each sufficiently large divisible $H$, select one nonzero
$M\times M$ minor $\Delta_H$ and retain these same columns throughout
the proof. Lemma~\ref{ratio:shear} is a rational unitriangular row
change; it applies over $E$ and at every embedding. In particular,
the corresponding full-logarithm minor equals $\Delta_H$ exactly.
The row count and
\[
 \overline b_H=\frac1{MH}\sum_{\rm rows}w\cdot\beta,
                         \qquad0\le\overline b_H\le\theta
\]
are still those of \eqref{ratio:count}.

Choose positive integers $v_a,v_b$ such that $v_aa,v_bb$ are
algebraic integers. For the selected columns put
\[
 J_i=\sum_{\rm columns}\gamma_i-\sum_{\rm rows}\beta_i,\qquad
 C_H=\prod_i(q_iL_i)^{J_i}
       v_b^{(K-1)\sum_{\rm columns}h}
       v_a^{(K-1)\sum_{\rm columns}k}.
\]
A nonzero term of the determinant has $\beta_i\le\gamma_i$ at
each paired entry, so $J_i\ge0$. Multiply column $P$ by
$\prod_i(q_iL_i)^{\gamma_i}v_b^{(K-1)h}v_a^{(K-1)k}$ and row
$\beta$ by $\prod_i(q_iL_i)^{-\beta_i}$. The resulting entries
are algebraic integers: the additive coefficients are integral
because $q_iL_iG_i(s)-p_iL_iG_i(t)$ has integral coefficients,
and $j\le K-1$ ensures that the column torus factor clears
$b^{jh}a^{jk}$. Consequently
\[
                         C_H\Delta_H\in\mathcal O_E\setminus\{0\}.
\]
Set $\Lambda=4\log2$ and
\[
 E_{\rm ar}=\frac{\Lambda F_0}{v}
       +\frac{(K-1)\max(\log v_a,\log v_b)}{w_0}.
\]
Using $\log(q_iL_i)\le(1+\Lambda F_0/v)w_i$ and the column
weighted-degree bound gives
\[
 \frac{\log C_H}{MH}\le1-\overline b_H+E_{\rm ar}.
\]
The nonzero rational integer $N_{E/\mathbb Q}(C_H\Delta_H)$
has absolute value at least one. Thus the exact same minor satisfies
\begin{equation}\label{ratio:matching-norm}
 \sum_{\sigma:E\hookrightarrow\mathbb C}
       \frac{\log|\sigma(\Delta_H)|}{MH}
       \ge-\frac{d\log C_H}{MH}
       \ge-d(1-\overline b_H+E_{\rm ar}).
\end{equation}
Every embedding is counted once, including both members of a
nonreal conjugate pair. None of the embedded minors vanishes,
because each embedding is injective.

\emph{The good embeddings.}
For every selected column use the common entire families
\[
 f_{\gamma,P}(z,w)
 =[u^\gamma]P(e^z,e^w,w-\xi z+u_1,\ldots,w-\xi z+u_m).
\]
At $\sigma\in S$ their coefficient tests occur at
\[
 z=j\lambda_{b,\sigma}+\log(1+t),\qquad
 w=j\lambda_{a,\sigma}+\log(1+s).
\]
Here $w-\xi z=\log(1+s)-\xi\log(1+t)$, because the constant
term vanishes by the specified branch matching. In the equal
full-logarithm minor, the difference between its $i$th additive
coordinate and this analytic coordinate is precisely
$(\xi-r_i)\log(1+t)$. It does not depend on $\sigma$.
Therefore the translation identity \eqref{ratio:translation},
the scalar estimate \eqref{ratio:scalar}, and the bivariate Taylor
collision apply to this same selected minor at every good embedding.

For a uniform radius and growth constant take
\[
 R=4K\left(1+\max_{\sigma\in S}
             (|\lambda_{a,\sigma}|+|\lambda_{b,\sigma}|)\right),
                 \qquad B_R=2R(1+|\xi|).
\]
The testing points on $|t|=|s|=1/2$ lie in the radius-$R/2$
polydisc. The proof of \eqref{ratio:upper} consequently gives,
uniformly for $\sigma\in S$,
\begin{equation}\label{ratio:matching-good}
 \frac{\log|\sigma(\Delta_H)|}{MH}
 \le E_{\rm an}+o(1)+
 \max\{-c_2L+o(1),-\nu(A(1-\eta)-\overline b_H)\},
\end{equation}
where $c_2$ and $L$ are as in Lemma~\ref{ratio:collision} and
\eqref{ratio:L}, and a sufficient error is
\[
 E_{\rm an}=\frac R{w_0}+\frac{\log2}{v}
                +\frac{\log B_R+\nu+\log2}{w_*}.
\]
The $o(1)$ terms tend to zero as $H\to\infty$, after all other
parameters and fractions have been fixed.

\emph{The other embeddings.}
No matching logarithms are required here. Define the fixed constants
\[
 c_E=\max_{\sigma:E\hookrightarrow\mathbb C}
        \max(0,\log|\sigma(a)|,\log|\sigma(b)|),\qquad
 Q_\xi=1+(2+|\xi|)\log2.
\]
On $|t|=|s|=1/2$ one has $|G_i(t)|,|G_i(s)|\le\log2$.
On the additional circles $|u_i|=1$, each additive factor is
bounded by $Q_\xi$, using $|r_i|\le1+|\xi|$. Each torus factor
is bounded by $\frac32\exp((K-1)c_E)$. Cauchy's estimate for
$[t^\ell s^\rho u^\beta]$, followed by the determinant expansion,
therefore gives at every remaining embedding
\begin{equation}\label{ratio:matching-bad}
 \frac{\log|\sigma(\Delta_H)|}{MH}
       \le E_{\rm bad}+o(1),\qquad
 E_{\rm bad}=\frac{\log(3/2)+(K-1)c_E}{w_0}
                   +\frac{\log2}{v}+\frac{\log Q_\xi}{w_*}.
\end{equation}
Indeed $h+k\le H/w_0$, $\sum_i\gamma_i\le H/w_*$, and
$\ell+\rho<H/v$ bound each entry by $\exp(HE_{\rm bad})$.
The determinant factor contributes $\log(M!)/(MH)\le\log M/H=o(1)$
by \eqref{ratio:count}. This is a crude growth bound, not a claim
of analytic smallness at a bad embedding.

\emph{The averaged comparison.}
Combine \eqref{ratio:matching-norm}--\eqref{ratio:matching-bad}
and divide by $s$. With
\[
 E=DE_{\rm ar}+E_{\rm an}+(D-1)E_{\rm bad},
\]
it suffices to arrange
\begin{equation}\label{ratio:matching-gap}
 g=\nu(A(1-\eta)-\theta)-D(1-\theta)>0,\qquad
                          E<g,\qquad c_2L>D+E.
\end{equation}
To see this directly, the approximation alternative minus the
norm cost is
\[
 \nu(A(1-\eta)-\overline b_H)-D(1-\overline b_H)
 =\nu A(1-\eta)-D-(\nu-D)\overline b_H\ge g,
\]
since $\nu>D$ and $\overline b_H\le\theta$. The collision
alternative has norm cost at most $D$, since
$1-\overline b_H\le1$. Under \eqref{ratio:matching-gap} both
alternatives contradict the norm inequality for sufficiently large $H$.

\emph{The explicit order of parameter choices.}
Because $\nu>3D$, choose rational $b_0$ with
\[
                         \frac23<b_0<1-\frac D\nu.
\]
For sufficiently small positive rational $\delta$ put
$\theta=1-\delta$ and $A=1-b_0\delta$. Then
\[
 \nu(A-\theta)-D(1-\theta)
                 =\bigl(\nu(1-b_0)-D\bigr)\delta>0,
                  \qquad A^3<\theta^2.
\]
The second inequality follows since the derivative at $\delta=0$
of $(1-b_0\delta)^3-(1-\delta)^2$ is $2-3b_0<0$.
Choose $\eta>0$ small enough that the first strict margin persists,
and fix rational $F_0>2/\theta$. Choose rational $B,C$ satisfying
\[
              A<B<\theta^{2/3},\qquad B/\theta<C<1/\sqrt B.
\]
The interval for $C$ is nonempty because $B^3<\theta^2$.
As $A>\theta$, these choices give $C>1$, $C\theta<1$,
$C\sqrt B<1$, and $C\theta/B>1$. For even integers $m$ use
exactly \eqref{ratio:parameter-values}:
\[
 K=\lfloor C^m\rfloor,\quad w_0=B^{-m/2},\quad
 v=\left\lceil\sqrt{2K\theta^mw_0^2}\right\rceil,
                      \qquad\sigma=\frac1{100(m+3)}.
\]
The geometric inequalities hold for all sufficiently large $m$
by the calculation in Section~\ref{ratio:parameters}. More explicitly,
$K(w_0/v)^2\theta^m\le1/2$, the remaining volume expressions
tend to zero, and $w_0/v$ grows exponentially faster than the required
$4(m+3)/\sigma$. In addition,
\[
 \frac K{w_0}\longrightarrow0,\qquad v\longrightarrow\infty,
 \qquad
 L\sim\frac{\eta^{3/2}(B/A)^{m/2}}
                   {\sqrt{2(m+1)(m+2)}}\longrightarrow\infty.
\]
Thus the arithmetic torus cost, the common-radius cost $R/w_0$,
and all $1/v$ costs tend to zero, including their fixed factors
$D$ and $D-1$. First fix a sufficiently large finite even $m$
so these costs are smaller than the strict margin and $c_2L$ is
larger than the required norm cost. Next select the $m$ fractions
successively far out in the nonexceptional tail. This meets every
slope-independent interpolation threshold and makes all remaining
$w_*^{-1}$ errors small enough for \eqref{ratio:matching-gap}.
Finally let $H$ tend to infinity through sufficiently divisible values
to remove the $o(1)$ terms. This contradicts infinitely many
approximations of exponent $\nu$. Since $\nu>3d/s$ was arbitrary,
$\mu(\xi)\le3d/s$. Irrationality is a hypothesis, so
Lemma~\ref{prelim:dirichlet} supplies $\mu(\xi)\ge2$.
\end{proof}

\begin{corollary}[Full matching for a quotient of logarithms]
\label{ratio:full-matching}
Under the hypotheses of Theorem~\ref{ratio:matching}, if every
embedding of $E$ admits the specified nonzero logarithms with the
same real irrational quotient $\xi$, then
\[
                              2\le\mu(\xi)\le3.
\]
\end{corollary}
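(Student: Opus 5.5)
This is the case $s=d$ of Theorem~\ref{ratio:matching}, so I would prove it by direct specialization. The hypothesis says that every one of the $d$ embeddings $\sigma:E\hookrightarrow\C$ admits nonzero logarithms $\lambda_{a,\sigma},\lambda_{b,\sigma}$ with $e^{\lambda_{a,\sigma}}=\sigma(a)$, $e^{\lambda_{b,\sigma}}=\sigma(b)$ and $\lambda_{a,\sigma}=\xi\lambda_{b,\sigma}$, where $\xi$ is one fixed real irrational number. The set $S$ of good embeddings is therefore all of them. It is nonempty, its size is $s=d$, and $1\le s\le d$ holds, so Theorem~\ref{ratio:matching} applies verbatim. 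It gives $2\le\mu(\xi)\le3d/s=3$.

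Two points should be stated explicitly. First, the count $s$ in Theorem~\ref{ratio:matching} counts embeddings individually, including both members of a nonreal conjugate pair. The hypothesis ``every embedding'' must therefore supply matching logarithms at both $\sigma$ and $\overline\sigma$. This is consistent: if $(\lambda_{a,\sigma},\lambda_{b,\sigma})$ match at $\sigma$, then their complex conjugates match at $\overline\sigma$, because $\xi$ is real. In any case it is part of the assumption, so nothing needs to be constructed. Second, irrationality of $\xi$ is assumed, so the lower bound two is just Lemma~\ref{prelim:dirichlet}, as in the theorem.

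There is essentially no obstacle. The only thing to check is the bookkeeping in the averaged comparison of the proof of Theorem~\ref{ratio:matching}. With $D=d/s=1$, the bad-embedding term $(D-1)E_{\rm bad}$ vanishes and the norm cost is $1-\overline b_H$, exactly as in the rational case. The parameter scheme then requires only $\nu>3$, through $2/3<b_0<1-1/\nu$. No additional argument beyond citing the theorem is needed.
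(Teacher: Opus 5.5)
Your proposal is correct and is essentially the paper's proof: it takes $s=d$ in Theorem~\ref{ratio:matching}, so the norm averages only good-embedding estimates, the bad-embedding term drops out, and the bound is $3d/s=3$. Your side remarks are accurate but not needed for the argument, namely that both members of a conjugate pair are counted and that $D=1$ reduces the parameter condition to $\nu>3$.
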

\begin{proof}
Take $s=d$ in Theorem~\ref{ratio:matching}. In this case the norm
averages precisely $d$ good-embedding estimates, with no
bad-embedding contribution. This is a bound between two and three;
the argument does not identify the exact exponent in general.
\end{proof}

\begin{corollary}[Quadratic norm-one ratios]\label{ratio:quadratic}
Let $E/\mathbb Q$ be quadratic with nontrivial involution $\tau$.
Let $a,b\in E^\times$ satisfy
$\tau(a)=a^{-1}$, $\tau(b)=b^{-1}$. If
$\lambda_a,\lambda_b$ are nonzero logarithms and
$\xi=\lambda_a/\lambda_b\in\mathbb R\setminus\mathbb Q$, then
\[
                              2\le\mu(\xi)\le3.
\]
\end{corollary}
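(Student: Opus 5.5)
The plan is to reduce Corollary~\ref{ratio:quadratic} to Corollary~\ref{ratio:full-matching}, taking $d=2$. The only work is to exhibit, at both embeddings of $E$, nonzero logarithms of the embedded values of $a$ and $b$ whose quotient is the same real irrational $\xi$.

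Fix the embedding $\sigma_1:E\hookrightarrow\C$ for which $e^{\lambda_a}=\sigma_1(a)$ and $e^{\lambda_b}=\sigma_1(b)$; these are the specified logarithms, and their quotient is $\xi$. The other embedding is $\sigma_2=\sigma_1\circ\tau$. The hypothesis $\tau(a)=a^{-1}$ gives $\sigma_2(a)=\sigma_1(a)^{-1}$, and likewise $\sigma_2(b)=\sigma_1(b)^{-1}$. At $\sigma_2$ I would therefore choose
\[
 \lambda_{a,\sigma_2}=-\lambda_a,\qquad \lambda_{b,\sigma_2}=-\lambda_b.
\]
These are nonzero, they satisfy $e^{-\lambda_a}=\sigma_2(a)$ and $e^{-\lambda_b}=\sigma_2(b)$, and their quotient is $(-\lambda_a)/(-\lambda_b)=\xi$. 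This is the same negation device used in Corollary~\ref{conj:real-quadratic}.

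Both embeddings are then good, with the common real irrational quotient $\xi$. Theorem~\ref{ratio:matching} with $s=d=2$, equivalently Corollary~\ref{ratio:full-matching}, gives $2\le\mu(\xi)\le3$. The lower bound is just Lemma~\ref{prelim:dirichlet}, since irrationality is a hypothesis.

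The only point that needs care is bookkeeping, not analysis. Theorem~\ref{ratio:matching} identifies $E$ with a subfield of $\C$ only through its embeddings, so the branches $\lambda_a,\lambda_b$ must be attached to one specific embedding. The matching must also be exact for the chosen branches, not merely for absolute values. The argument works for either choice of distinguished embedding, and it needs no positivity or reality of $a$ or $b$. Nothing else is required, since the determinant machinery is already contained in Theorem~\ref{ratio:matching}.
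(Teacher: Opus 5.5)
Your proposal is correct and follows the paper's proof: keep $\lambda_a,\lambda_b$ at the given embedding and use $-\lambda_a,-\lambda_b$ at the conjugate embedding $\sigma_1\circ\tau$. Both embeddings are then good with the same quotient $\xi$, so Corollary~\ref{ratio:full-matching}, that is, Theorem~\ref{ratio:matching} with $s=d=2$, applies directly.
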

\begin{proof}
At the given embedding use $\lambda_a,\lambda_b$; at its conjugate
use $-\lambda_a,-\lambda_b$. Their exponentials are $a^{-1},b^{-1}$
and their quotient is unchanged. Thus all $s=d=2$ embeddings are
good, and Corollary~\ref{ratio:full-matching} applies.
\end{proof}

For positive $a,b$ in a real quadratic field, individual norms
$\pm1$ suffice after replacing both by their squares. Their real
logarithm quotient is unchanged. Two positive integral units in a
real quadratic field have a rational real-logarithm quotient, because
the unit rank is one; norm-one elements with allowed denominators
can give irrational quotients. For instance
\[
 a=3+2\sqrt2,\qquad b=\frac{3+\sqrt2}{3-\sqrt2}
\]
are independent: $a$ is a unit, whereas $b$ has nonzero valuation
at a prime above $7$. A multiplicative relation first forces the
exponent of $b$ to vanish by that valuation, and then that of $a$
to vanish since $a>1$. A rational logarithm quotient would give
such a relation. Hence
\[
 2\le\mu\!\left(\frac{\log(3+2\sqrt2)}
                   {\log((3+\sqrt2)/(3-\sqrt2))}\right)\le3.
\]

\paragraph{An explicit pair of arctangents.}
In $\mathbb Q(i)$ take
\[
 a=\frac{2+i}{2-i},\qquad b=\frac{4+i}{4-i}.
\]
Choose $\lambda_a=2i\arctan(1/2)$ and
$\lambda_b=2i\arctan(1/4)$. Their conjugates are their negatives.
The Gaussian primes $2+i$ and $4+i$ have norms $5$ and $17$.
Their valuations in a relation $a^u b^v=\zeta$ for a root of unity
$\zeta$ force $u=0$ and then $v=0$. In particular the quotient
of the displayed logarithms is irrational: if it were $p/q$,
exponentiating $q\lambda_a=p\lambda_b$ would give
$a^q b^{-p}=1$. Therefore
\[
                         \boxed{\displaystyle
 2\le\mu\!\left(\frac{\arctan(1/2)}{\arctan(1/4)}\right)\le3.}
\]
Equivalently, for every $\varepsilon>0$ a positive constant $c$
satisfies
\[
 |p\arctan(1/2)+q\arctan(1/4)|
             \ge c\max(1,|p|,|q|)^{-2-\varepsilon}
\]
for all nonzero integer pairs $(p,q)$. This follows from the
exponent-three ratio bound by the same height comparison as in
Corollary~\ref{ratio:linear-form}.

\paragraph{Arctangent divided by the circular period.}
Let $r\in\mathbb Q\setminus\{0,1,-1\}$. In $\mathbb Q(i)$ put
\[
 a=\frac{1+ir}{1-ir},\qquad b=-1,\qquad
 \lambda_a=2i\arctan r,\qquad\lambda_b=i\pi.
\]
The only roots of unity in $\mathbb Q(i)$ are $1,-1,i,-i$;
the displayed $a$ is none of them for these finite $r$.
Therefore $2\arctan r/\pi$ is irrational, since a rational value
would make $a$ a root of unity. The two embeddings are good,
using the negative logarithms at complex conjugation. Apply
Corollary~\ref{ratio:full-matching} to $2\arctan r/\pi$ and then
the rational-scaling case of Lemma~\ref{prelim:mobius}. Hence
\[
                        \boxed{\displaystyle
                2\le\mu(\arctan r/\pi)\le3.}
\]
This is a statement for each fixed $r$; no uniform dependence of
the approximation constant on its height is asserted.

More generally, an involution in a degree-$2m$ field that inverts
both $a$ and $b$ supplies two good embeddings and gives
$2\le\mu(\lambda_a/\lambda_b)\le3m$. This includes relative
norm-one elements in quadratic extensions of higher-degree fields
whenever the specified logarithm quotient is real and irrational.
The upper bound improves to three if the required matching extends
to every embedding. In contrast with the single-logarithm scalar
matching problem, no unconditional classification of all possible
ratio-matching conjugates is asserted here.

\subsection{Algebraic slope normalization and mixed circular--real logarithms}

The irrational-algebraic-slope interpolation, normalized matching
ratios, and mixed circular--real applications in this subsection have
checked Lean endpoints. The endpoint map in
Appendix~\ref{app:formal-status} distinguishes their actual branch and
embedding assumptions from the rational-base special case.

\begin{lemma}[Irrational algebraic slopes in the two-torus interpolation]
\label{ratio:irrational-slope}
In the two-torus interpolation statement, one may replace all distinct
nonzero rational slopes $r_i$ by $cr_i$, where $c$ is any fixed
irrational algebraic number.  Suppose the paired centers $(b^j,a^j)$
are distinct for all $j\ge0$.  No exceptional character slope has
to be removed.  The same sufficient weight inequalities
and slope-independent successive affine-weight thresholds apply.
\end{lemma}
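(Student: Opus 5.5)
I would rerun the proof of Lemma~\ref{ratio:lem:two-torus} with the slopes $r_i$ replaced by $cr_i$ and check each step. The counting is unchanged because the slopes appear only in the local coordinates. The same applies to the selection of the successive thresholds \eqref{ratio:tt:sep}, the spare contact margin and the zero--pole comparison. The new frame is
\[
 D_Y=Y\partial_Y-\sum_i cr_i\partial_{X_i},\quad
 D_Z=Z\partial_Z+\sum_i\partial_{X_i},\quad D_i=\partial_{X_i}.
\]
It is still commuting and degree preserving, and it is locally $(1+t)\partial_t,(1+s)\partial_s,\partial_{u_i}$, since these identities do not depend on the numerical value of the slope. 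So the persistent-component argument and Lemma~\ref{geom:lem:mult} again produce a component $T$ whose every frame normal basis contains $D_i$, for the largest nontangent positive index $i$. Exactly as before, this gives $T_xT\subseteq\ker(dX_i+cr_i\,dY/Y-dZ/Z)$ on $T$. The all-tangent case is also unchanged: there the normal bases come from the torus indices alone, and the margin $w_0/v>4(m+3)/\sigma$ rules it out.

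The step that changes, and the main obstacle, is the rigidity argument. Previously $q_i\,dX_i=d\log(Z^{q_i}/Y^{p_i})$ was the logarithmic differential of a character. Now the relation $dX_i=dZ/Z-cr_i\,dY/Y$ has an irrational coefficient. I would first try the residue argument. Restrict to a curve through a general point of $T$ and take the smooth complete model. At every zero or pole $P$ of $Y$ or $Z$, the residue gives $\operatorname{ord}_PZ=cr_i\operatorname{ord}_PY$. These orders are integers, $cr_i$ is irrational, and $X_i$ is a rational function, so its exact differential has zero residues. Hence $\operatorname{ord}_PY=\operatorname{ord}_PZ=0$ at every point. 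Then $Y$ and $Z$ have neither zeros nor poles on a complete curve and are constant, and $dX_i=0$. Because the curve was chosen through a general point, $Y$, $Z$ and $X_i$ are all constant on $T$.

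This rigidity is stronger than in the rational case. The component lies in a single fiber $Y=b^j$, $Z=a^j$, which contains at most one center because the paired centers are assumed distinct. For the same reason no exceptional character slope has to be removed. On that fiber the curve $C$ has constant torus coordinates, and its contact reduces to additive contact at one center. I would then finish with the fiber argument of Proposition~\ref{geom:prop:curve}: run the one-center fiberwise auxiliary polynomial and its separation conditions (already included among the deleted-index thresholds), or make the final divisor comparison directly using $w_l/v_l=\theta<1/(1+\sigma)$. This gives $d_W(C)\ge(1+\sigma)\sum_Ph_P$.

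Finally, the curve-to-jet passage of Theorem~\ref{geom:thm:main} applies verbatim. We need Taylor truncations whose omitted terms have weight above $w_i/\theta$, which are algebraic regular coordinates now over $\Q(c)$. The blow-up nef/ample argument and Serre vanishing use only the curve inequality. None of these thresholds involves the slopes, so the same successive slope-independent weight thresholds apply, which proves the lemma.
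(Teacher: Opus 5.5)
Your proposal is correct and follows the paper's proof essentially step for step. It uses the same frame, the same kernel inclusion $T_xT\subseteq\ker(dX_i+cr_i\,dY/Y-dZ/Z)$, the same residue argument in which the irrationality of $cr_i$ forces $\operatorname{ord}_PY=\operatorname{ord}_PZ=0$, and the same single-center conclusion. Of your two finishing options, the paper takes the direct one: for a nonconstant $X_l$ it bounds $\sum_P h_P\le\theta\deg X_l/w_l\le\theta\,d_W(C)$ and invokes $(1+\sigma)\theta<1$, so the fiberwise auxiliary-polynomial route is not needed.
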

\begin{proof}
Work over an algebraic closure of the field containing the data.  The
commuting frame becomes
\[
 D_Y=Y\partial_Y-\sum_i cr_i\partial_{X_i},\qquad
 D_Z=Z\partial_Z+\sum_i\partial_{X_i},\qquad
 D_i=\partial_{X_i}.
\]
The coefficients $cr_i$ are constants.  Consequently the frame still
preserves weighted degree, and in the local parameters
\[
 t=Y/b^j-1,\quad s=Z/a^j-1,\quad
 u_i=X_i-\log(1+s)+cr_i\log(1+t)
\]
it is $(1+t)\partial_t,(1+s)\partial_s,\partial_{u_i}$.
The dimension count, zero--pole comparison, persistent-component
construction, and normal-basis inequalities in the proof of
Lemma~\ref{ratio:lem:two-torus} therefore apply without any new
coefficient-dependent weight restriction.

If every affine coordinate direction is tangent to the persistent
component $T$, the same torus-weight contradiction finishes the
argument.  Otherwise choose the largest index $i$ whose affine
coordinate direction is nontangent.  The separated normal-basis
argument says that every frame normal basis must contain $D_i$.
Equivalently, the tangent space of $T$ is contained in
\[
             \ker(dX_i+cr_i\,dY/Y-dZ/Z).
\]
Thus the displayed differential vanishes on $T$.

Here the irrationality of $cr_i$ gives a stronger conclusion than
the rational-character argument.  Restrict to an algebraic curve
meeting the smooth open part of $T$, and pass to its complete smooth
normalization.  At every point $P$ the residue of an exact
differential is zero, whereas the residue of $dY/Y$ is
$\operatorname{ord}_P Y$.  Hence
\[
             cr_i\operatorname{ord}_P Y=\operatorname{ord}_P Z.
\]
Both orders are integers and $cr_i\notin\mathbb Q$, since
$r_i$ is a nonzero rational number.  Both orders must therefore
vanish.  The rational functions $Y$ and $Z$ have no zeros or poles
on the complete curve, so they are constant.  The differential
identity then gives $dX_i=0$, and characteristic zero makes $X_i$
constant as well.

These functions are consequently constant on $T$: if one were
nonconstant, a curve through a general smooth point, chosen with
tangent direction on which its differential is nonzero, would give
a nonconstant restriction, contrary to the preceding argument.
In particular $Y,Z,X_i$ are constant on the putative violating
curve $C\subseteq T$.  Since the paired centers are distinct, $C$
can meet only one center, say $P_0$.

On $C$ both torus shifts and all their logarithmic shifts vanish.
Thus the remaining local parameters restrict to
$u_l=X_l-X_l(P_0)$.  Some $X_l$ is nonconstant, because otherwise
every ambient coordinate would be constant on the curve.  On the
complete normalization of $C$, its branch contacts satisfy
\[
 \sum_{P\mapsto P_0}h_P
 \le \frac{\theta}{w_l}
       \sum_{P\mapsto P_0}\operatorname{ord}_P(X_l-X_l(P_0))
 \le \theta\frac{\deg X_l}{w_l}
 \le\theta\,d_W(C).
\]
The middle inequality bounds the selected zeros by all zeros of
$X_l-X_l(P_0)$, whose total multiplicity is the pole degree
$\deg X_l$.  The last inequality follows directly from the
definition of the weighted pole degree $d_W(C)$.  Since
$(1+\sigma)\theta<1$, this contradicts
$d_W(C)<(1+\sigma)\sum h_P$.  No character-fiber projection is
needed in this case.

Finally, replacing the logarithms by sufficiently long Taylor
polynomials changes each $u_i$ only by terms of strictly larger
weight; multiplication by the fixed scalar $cr_i$ does not change
that fact.  The weighted jet ideals are preserved.  The same
curve-to-jet argument as in Theorem~\ref{geom:thm:main} therefore
proves the required interpolation surjection.  The determinant
equality in Lemma~\ref{ratio:shear} also holds over the present
coefficient field, because its proof is a triangular change of
rows with diagonal one.
\end{proof}

\begin{theorem}[Normalized quotient with matching embeddings]
\label{ratio:normalized}
Let $a,b,c\in E^\times$, $c\notin\mathbb Q$,
$d=[E:\mathbb Q]$, and suppose $a,b$
are not both roots of unity.  Let $x\in\mathbb R\setminus\mathbb Q$.
Suppose that $s\ge1$ embeddings $\sigma:E\hookrightarrow\mathbb C$
admit nonzero logarithms satisfying
\[
 e^{\lambda_{a,\sigma}}=\sigma(a),\quad
 e^{\lambda_{b,\sigma}}=\sigma(b),\quad
             \lambda_{a,\sigma}=\sigma(c)x\lambda_{b,\sigma}.
\]
Then
\[
                             2\le\mu(x)\le3d/s.
\]
\end{theorem}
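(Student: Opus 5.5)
The plan is to run the proof of Theorem~\ref{ratio:matching} with the approximated slope $r_i=p_i/q_i$ replaced by the algebraic slope $c r_i$, using Lemma~\ref{ratio:irrational-slope} for the interpolation. Fix $\nu>3D$ with $D=d/s$, and suppose there are infinitely many reduced $p/q$ with $|x-p/q|\le q^{-\nu}$. Irrationality of $x$ is a hypothesis, so Lemma~\ref{prelim:dirichlet} gives the lower bound two. At a good embedding $\sigma$ the relevant analytic quotient is $\xi_\sigma=\lambda_{a,\sigma}/\lambda_{b,\sigma}=\sigma(c)x$. The corresponding algebraic approximants are $\sigma(c)r_i$, with error $|\sigma(c)|\,|x-r_i|\le|\sigma(c)|q_i^{-\nu}$. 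The fixed factor $\max_{\sigma\in S}|\sigma(c)|$ is absorbed into the $w_*^{-1}$ errors.

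\emph{Interpolation and arithmetic.} I would use the centers $(b^j,a^j,0)$ and the additive coordinates $u_i=X_i-\log(1+s)+cr_i\log(1+t)$. For this I need the paired centers to be distinct. This holds unless $a^j=a^{j'}$ and $b^j=b^{j'}$ for some $j\ne j'$, which would make both roots of unity; that case is excluded by hypothesis. Lemma~\ref{ratio:irrational-slope}, applied over $\overline E$ with the parameters of \eqref{ratio:parameter-values}, then gives a full-row minor, and this minor is nonzero and lies in $E$. The same lemma removes the need to discard any exceptional slope. Lemma~\ref{ratio:shear} carries over verbatim, so the truncated and full-logarithm minors agree. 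For denominators, I would choose $u_0\ge1$ with $u_0c$ integral and scale column $\gamma$ by $\prod_i(u_0q_iL_i)^{\gamma_i}$, together with the torus factors $v_b^{(K-1)h}v_a^{(K-1)k}$, as in Theorem~\ref{ratio:matching}. The identity $u_0q_iL_i(G_i(s)-cr_iG_i(t))=u_0q_iL_iG_i(s)-(u_0c)p_iL_iG_i(t)$ then gives integrality. The only new cost is $\log u_0/w_*$ in $E_{\rm ar}$. The norm inequality \eqref{ratio:matching-norm} follows unchanged.

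\emph{Good and bad embeddings.} At $\sigma\in S$ I would take the entire families
\[
 f_{\gamma,P}(z,w)=[u^\gamma]P\bigl(e^z,e^w,w-\xi_\sigma z+u_1,\ldots,w-\xi_\sigma z+u_m\bigr),
\]
evaluated at $z=j\lambda_{b,\sigma}+\log(1+t)$ and $w=j\lambda_{a,\sigma}+\log(1+s)$. By the branch matching, the central term $j(\lambda_{a,\sigma}-\xi_\sigma\lambda_{b,\sigma})$ vanishes. The $i$th translation is $\sigma(c)(x-r_i)\log(1+t)$, which is small of size $q_i^{-\nu}$. Hence \eqref{ratio:translation}, \eqref{ratio:scalar} and Lemma~\ref{ratio:collision} give \eqref{ratio:matching-good}. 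The families depend on $\sigma$ through $\xi_\sigma$, but only the radius and $B_R=2R(1+\max_\sigma|\xi_\sigma|)$ enter the bounds, and both are fixed before the choice of fractions. At the remaining embeddings the crude Cauchy bound \eqref{ratio:matching-bad} applies with $Q$ enlarged by $\max_\sigma|\sigma(c)|$. The averaged comparison \eqref{ratio:matching-gap} and the parameter order are then exactly those already used: choose $b_0\in(2/3,1-D/\nu)$, then $m$, then the fractions, and finally $H\to\infty$.

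\emph{Main obstacle.} The delicate point is the interpolation step: the slopes $cr_i$ are irrational, and the character-fiber reduction used in Lemma~\ref{ratio:lem:two-torus} has to be replaced. Lemma~\ref{ratio:irrational-slope} handles this through the residue argument, which forces $Y$ and $Z$ to be constant. I must check two things. First, its hypothesis of distinct paired centers really follows from ``not both roots of unity''. Second, its slope-independent weight thresholds, and the requirement that the $r_i$ be distinct and nonzero, remain compatible with choosing the $r_i$ from the tail of the approximation sequence. The remaining work is bookkeeping: every constant depending on $\sigma(c)$ must be fixed before the $w_i$ are chosen.
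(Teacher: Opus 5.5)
Your proposal is correct and follows essentially the same route as the paper's proof. Both arguments obtain the interpolation from Lemma~\ref{ratio:irrational-slope}, with distinct paired centers deduced from the condition that $a,b$ are not both roots of unity. Both clear the extra denominator of $c$ at cost $(\log u)/w_*$, and both use, at each good embedding, the entire families in $w-\sigma(c)xz$ with translation $\sigma(c)(x-r_i)\log(1+t)$. The crude Cauchy bound at the remaining embeddings and the parameter order $b_0,\ m,\ r_i,\ H\to\infty$ of Theorem~\ref{ratio:matching} are also the same.
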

\begin{proof}
Dirichlet's theorem, Lemma~\ref{prelim:dirichlet}, gives the lower
bound because $x$ is real and irrational.  For the upper bound,
suppose that an exponent $\nu>3d/s$ admits arbitrarily large reduced
approximants $r_i=p_i/q_i$ with
$|x-r_i|\le q_i^{-\nu}$.  Select distinct nonzero approximants
successively as in the rational-slope proof, and put
$w_i=\lceil\log q_i\rceil$.

Use the matrix of that proof with torus centers $(b^j,a^j)$ and
slopes $cr_i$.  Its entries lie in $E$.  The centers are distinct:
equality for two different indices would make both $a$ and $b$
roots of unity.  The preceding lemma therefore supplies a nonzero
full-row minor $\Delta_H\in E$, with one fixed set of selected
columns.  Those columns remain fixed throughout the norm argument.

We spell out the additional denominator clearing.  Choose positive
integers $u,v_a,v_b$ such that $uc,v_aa,v_bb$ are algebraic
integers.  With the previous $L_i$ and $G_i$, set
$U_i=uq_iL_i$.  Then
\[
 U_i\bigl(G_i(s)-cr_iG_i(t)\bigr)
       =uq_iL_iG_i(s)-p_i(uc)L_iG_i(t)
\]
has algebraic-integer coefficients.  Scale a column
$Y^hZ^kX^\gamma$ by
\[
 v_b^{(K-1)h}v_a^{(K-1)k}\prod_iU_i^{\gamma_i},
\]
and a row of transverse index $\beta$ by
$\prod_iU_i^{-\beta_i}$.  Every resulting entry is an algebraic
integer: the torus factors clear $(b^j)^h(a^j)^k$ for $j<K$,
and the additive factors clear precisely the powers remaining
after $[u^\beta]$ extraction.

As before, a nonzero determinant term gives
$J_i=\sum_{\rm columns}\gamma_i-\sum_{\rm rows}\beta_i\ge0$.
Thus the total determinant multiplier $C_H$ is a positive rational
integer, and $C_H\Delta_H$ is a nonzero algebraic integer.  In the
notation of the preceding section,
\[
 \frac{\log C_H}{MH}
 \le 1-\overline b_H+\frac{\Lambda F_0}{v}
       +\frac{\log u}{w_*}+O(K/w_0),
       \qquad 0\le\overline b_H\le\theta.
\]
Indeed the extra $u$ contributes
$(\log u)\sum_iJ_i/(MH)\le(\log u)/w_*$.  No degree factor
is introduced until the field norm is taken.

At a good embedding $\sigma$, replace the truncated logarithms
by full logarithms in this same selected minor.  The shear adds
only terms of weight greater than $w_i/\theta$, so its coefficient
matrix is triangular with diagonal one over $E$.  Applying
$\sigma$ preserves this determinant identity.  Define
\[
 f_{\gamma,P,\sigma}(z,w)
 =[v^\gamma]P(e^z,e^w,
             w-\sigma(c)xz+v_1,\ldots,w-\sigma(c)xz+v_m).
\]
The tests occur at
\[
 z=j\lambda_{b,\sigma}+\log(1+t),\qquad
 w=j\lambda_{a,\sigma}+\log(1+s).
\]
By the matching hypothesis the constant term of
$w-\sigma(c)xz$ is zero.  The exact difference between the full
matrix's $i$th additive coordinate and this common analytic
coordinate is
\[
                         \sigma(c)(x-r_i)\log(1+t).
\]
Consequently the previous finite translation identity and
bivariate Taylor collision estimate apply.  The extra factor
$\sigma(c)$ in each translation power contributes only
$O(1/w_*)$ to the normalized logarithmic bound.  The fixed complex
logarithms merely change the constant multiplying the Cauchy
radius $K$.  Entire-function coefficient vectors need only be
common across rows within a single embedding; the determinant
argument never requires them to agree at different embeddings.

There are finitely many embeddings.  We may therefore use one
upper error $E_{\rm an}$ for all good embeddings, obtaining
\[
 \frac{\log|\sigma\Delta_H|}{MH}
 \le E_{\rm an}+o(1)+
 \max\{-c_2L+o(1),-\nu(A(1-\eta)-\overline b_H)\}.
\]
At a bad embedding, bound the truncated matrix directly on
$|t|=|s|=1/2$ and $|u_i|=1$.  The truncated logarithms are bounded
uniformly in their lengths, the slopes $\sigma(c)r_i$ are bounded
once the approximants lie in a fixed neighborhood of $x$, and the
torus centers have logarithmic size $O(K)$.  Coefficient extraction
and the determinant inequality then give
\[
 \frac{\log|\sigma\Delta_H|}{MH}
      \le E_{\rm bad}+o(1),\qquad
 E_{\rm bad}=O(K/w_0+1/v+1/w_*).
\]
All constants here depend only on the fixed algebraic data and
chosen logarithms, not on $H$ or the heights of the selected
approximants.

Put $D=d/s$.  The nonzero integer norm satisfies
\[
 0\le\log\bigl|N_{E/\mathbb Q}(C_H\Delta_H)\bigr|
       =d\log C_H+\sum_\sigma\log|\sigma\Delta_H|.
\]
Using the preceding estimates and dividing by $sMH$ gives the
same two sufficient strict comparisons as in
Theorem~\ref{ratio:matching}:
\[
 \nu(A(1-\eta)-\theta)>D(1-\theta)+E,
                  \qquad c_2L>D+E,
\]
where $E=D E_{\rm ar}+E_{\rm an}+(D-1)E_{\rm bad}$ and
$E_{\rm ar}$ includes the displayed denominator costs.  For the
first comparison the gap at row grade $\overline b_H$ is
\[
 \nu A(1-\eta)-D-(\nu-D)\overline b_H.
\]
Because $\nu>D$, its minimum on $[0,\theta]$ occurs at $\theta$.
The second comparison uses $D(1-\overline b_H)\le D$.

Finally choose $2/3<b_0<1-D/\nu$, put
$\theta=1-\delta$ and $A=1-b_0\delta$, and take $\delta>0$
small.  Then $\nu(A-\theta)>D(1-\theta)$ and $A^3<\theta^2$.
Take $\eta$ small enough to retain the first strict gap.  The
parameter choices of Section~\ref{ratio:parameters} make $L$
arbitrarily large and $K/w_0,1/v$ arbitrarily small.  After fixing
that finite dimension, choose the approximants successively to
meet the interpolation thresholds and make all $1/w_*$ errors
small; finally let $H$ tend to infinity.  Both comparisons hold,
contradicting the norm inequality.  This excludes every
$\nu>3d/s$ and proves the upper bound.
\end{proof}

For nonzero rational $c$, the same conclusion follows from the
unnormalized quotient theorem applied to $cx$, and the
rational-scaling case of Lemma~\ref{prelim:mobius}.

\begin{corollary}[A circular period divided by a rational logarithm]
\label{ratio:pi-log}
For every positive rational $a\ne1$ and every positive rational $D$,
\[
                         \boxed{\displaystyle
             2\le\mu\!\left(\frac{\pi}{\sqrt D\log a}\right)\le3.}
\]
In particular $2\le\mu(\pi/\log2)\le3$.
\end{corollary}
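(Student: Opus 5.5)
The plan is to apply Theorem~\ref{ratio:normalized} in the imaginary quadratic field $E=\Q(\sqrt{-D})$, with $d=2$ and both embeddings good. If $D$ is a rational square, $E$ would collapse to $\Q(\ii)$ only after a rational rescaling. To avoid this case split, I will instead take $E=\Q(\ii,\sqrt D)$ only when needed; the cleaner route, though, is to use $c=\ii\sqrt D$ directly. So the first step is to handle $D$ nonsquare in $E=\Q(\sqrt{-D})$, and the square case afterwards by rational scaling.

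Concretely, put $b=a\in\Q^\times$, $\alpha=-1$ (playing the role of the numerator element ``$a$'' in the theorem), $c=\ii\sqrt D\in E\setminus\Q$, and
\[
 x=\frac{\pi}{\sqrt D\log a}.
\]
At the identity embedding choose $\lambda_{-1}=\ii\pi$ and $\lambda_a=\log a$, the real logarithm, which is nonzero since $a\neq1$. Then $c\,x\,\lambda_a=\ii\sqrt D\cdot\pi/(\sqrt D\log a)\cdot\log a=\ii\pi$, so the matching identity holds. At complex conjugation, $\sigma(c)=-\ii\sqrt D$ and $\sigma(a)=a$. Choose $\lambda_{-1,\sigma}=-\ii\pi$ and keep $\lambda_{a,\sigma}=\log a$; then $\sigma(c)x\lambda_{a,\sigma}=-\ii\pi$, with $e^{-\ii\pi}=-1$. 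Both embeddings are therefore good, $s=d=2$, and the hypotheses that $-1,a$ are not both roots of unity and that $c\notin\Q$ hold. Theorem~\ref{ratio:normalized} then gives $\mu(x)\le 3$, provided $x$ is irrational.

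The main point to check is irrationality of $x$, which the theorem takes as a hypothesis. I would argue as follows. If $\pi/(\sqrt D\log a)=p/q$ with $p\neq0$, then $\ii\pi q=p\,\ii\sqrt D\log a$. Exponentiating gives $a^{p\ii\sqrt D}=\pm1$, with $\ii\sqrt D$ algebraic irrational. Gel'fond--Schneider, as recalled in Section~\ref{prelim:transcendence}, says $\exp(r\log a)$ is transcendental for algebraic irrational $r$ and $a\notin\{0,1\}$, so this is a contradiction.

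When $D=e^2$ is a rational square, $x$ is a rational multiple of $\pi/\log a$. I would then run the same argument with $D=1$ in $E=\Q(\ii)$ and $c=\ii$, and finish with the rational-scaling case of Lemma~\ref{prelim:mobius}. Irrationality in that case follows the same way, since $\ii$ is algebraic irrational. The lower bound $\mu\ge2$ is Lemma~\ref{prelim:dirichlet}. The special case $\pi/\log2$ is $D=1$, $a=2$.
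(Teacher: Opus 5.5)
Your proposal is correct and follows the paper's proof. You use the same field $E=\Q(\sqrt{-D})$, the same torus bases $-1$ and $a$, the same normalization $c=\ii\sqrt D$, the branches $\pm\ii\pi$ and $\log a$ at the two embeddings, and Gel'fond--Schneider with base $a$ for irrationality (the paper proves transcendence of $x$ by the same argument). Your case split for square $D$ is unnecessary, though your detour through Lemma~\ref{prelim:mobius} is valid: since $-D<0$, $\Q(\sqrt{-D})$ is imaginary quadratic for every positive rational $D$ (for example $D=4$ gives $\Q(\ii)$ with $c=2\ii\notin\Q$), so Theorem~\ref{ratio:normalized} applies directly.
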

\begin{proof}
Write $x=\pi/(\sqrt D\log a)$.  The denominator is nonzero, so
$x$ is real and nonzero.  First verify the irrationality required
by Theorem~\ref{ratio:normalized}.  If $x$ were algebraic, then
$i\sqrt D\,x$ would be a nonreal algebraic number, hence an
algebraic irrational number.  Gel'fond--Schneider applied to the
algebraic base $a\ne0,1$ says that its power defined using the
real logarithm of $a$ is transcendental.  But that power is
\[
       \exp(i\sqrt D\,x\log a)=\exp(i\pi)=-1,
\]
a contradiction.  Thus $x$ is transcendental, in particular
irrational.

Now work in the imaginary quadratic field
$E=\mathbb Q(\sqrt{-D})$, using torus bases $-1,a$ and
normalization $c=i\sqrt D$.  The second torus base is not a root
of unity, since it is positive and different from one.  At the
identity choose logarithms $i\pi,\log a$.  At conjugation choose
$-i\pi,\log a$; conjugation also sends $c$ to $-c$.  Thus at
both embeddings the first logarithm equals $\sigma(c)x$ times
the second.  Both embeddings are good, so $s=d=2$ and the
theorem gives $2\le\mu(x)\le3$.
\end{proof}

For $D=1$ the arithmetic also has a direct description: the
cleared nonzero determinant lies in $\mathbb Z[i]$, whence
\[
                         |C_H\Delta_H|\ge1.
\]
More generally, a nonzero algebraic integer $\alpha$ in an
imaginary quadratic field has
$|\alpha|^2=N_{E/\mathbb Q}(\alpha)\in\mathbb Z_{>0}$.
Thus $|\alpha|\ge1$, and the principal arithmetic cost is
$1-\overline b_H$, exactly as in the rational two-logarithm
argument.  This agrees with the averaged-norm calculation at
$d/s=1$.

\begin{corollary}[Arctangent divided by a rational logarithm]
\label{ratio:atan-log}
For every nonzero rational $r$ and every positive rational $a\ne1$,
\[
                         \boxed{\displaystyle
              2\le\mu\!\left(\frac{\arctan r}{\log a}\right)\le3.}
\]
\end{corollary}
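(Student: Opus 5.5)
We apply Theorem~\ref{ratio:normalized} in the field $E=\Q(\ii)$, exactly parallel to Corollary~\ref{ratio:pi-log}. Let
\[
 x=\frac{\arctan r}{\log a},\qquad \alpha=\frac{1+\ii r}{1-\ii r},\qquad c=\ii .
\]
Use torus bases $\alpha$ (in the role of the first base, called $a$ in the theorem) and the rational number $a$ (in the role of $b$). Then $c\notin\Q$, and the positive rational $a\ne1$ is not a root of unity, so the two bases are not both roots of unity. The matching relation needs $\lambda_{\alpha,\sigma}=\sigma(c)\,x\,\lambda_{a,\sigma}$ at each embedding $\sigma$.

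At the identity embedding take $\lambda_\alpha=2\ii\arctan r$ and $\lambda_a=\log a$. Then $e^{\lambda_\alpha}=\alpha$, as in Corollary~\ref{slog:cor:atan}, and
\[
 \ii\,x\log a=\ii\arctan r\ne 2\ii\arctan r .
\]
So the normalization should carry the factor $2$. I would therefore take $c=2\ii$, which keeps $x$ unchanged; alternatively, apply the theorem to $2x$ and then use rational scaling via Lemma~\ref{prelim:mobius}.

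At complex conjugation, $\sigma(\alpha)=\overline\alpha=\alpha^{-1}$ and $\sigma(c)=-2\ii$. Choose $\lambda_{\alpha,\sigma}=-2\ii\arctan r$ and $\lambda_{a,\sigma}=\log a$. Both logarithms are nonzero, because $r\ne0$ and $a\ne1$. Thus $s=d=2$, and Theorem~\ref{ratio:normalized} gives $2\le\mu(x)\le3$ once $x$ is known to be irrational.

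\textbf{Irrationality (main step).} I would argue as in Corollary~\ref{ratio:pi-log} via Gel'fond--Schneider. Suppose $x$ were algebraic. Since $x\ne0$, the number $\beta=2\ii x$ is nonreal algebraic, hence algebraic irrational. The base $a$ is algebraic and different from $0$ and $1$. Gel'fond--Schneider then makes
\[
 \exp(\beta\log a)=\exp(2\ii\arctan r)=\alpha
\]
transcendental. But $\alpha\in\Q(\ii)$ is algebraic, a contradiction. So $x$ is transcendental, in particular irrational.

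The only care needed is that Gel'fond--Schneider is applied with the real logarithm of $a$ as the specified branch, which is exactly the form recalled in Section~\ref{prelim:transcendence}. The remaining hypothesis checks (nonzero logarithms, $c\notin\Q$, and distinctness of the paired centers, which the theorem derives from the bases not both being roots of unity) are immediate. So the main obstacle is only this bookkeeping of branches and the factor $2$ in the matching condition.
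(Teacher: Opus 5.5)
Your proposal is correct and matches the paper's proof. You first get transcendence, hence irrationality, of $x$ from Gel'fond--Schneider with exponent $2\ii x$ and the real logarithm of $a$, and then apply Theorem~\ref{ratio:normalized} in $E=\Q(\ii)$ with bases $(1+\ii r)/(1-\ii r)$ and $a$, normalization $c=2\ii$, and logarithms $\pm2\ii\arctan r$, $\log a$ at the two embeddings, so $s=d=2$; your corrected $c=2\ii$ is the paper's choice, and your alternative through $2x$ and rational scaling would work equally well.
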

\begin{proof}
Set $x=\arctan r/\log a$, which is real and nonzero.  If $x$
were algebraic, the exponent $2ix$ would be nonreal algebraic
and therefore irrational.  Gel'fond--Schneider would make
$\exp(2ix\log a)$ transcendental, whereas
\[
          \exp(2ix\log a)=\exp(2i\arctan r)
                         =\frac{1+ir}{1-ir}\in\mathbb Q(i).
\]
This contradiction proves transcendence, hence irrationality.

In $E=\mathbb Q(i)$ use torus bases $(1+ir)/(1-ir)$ and $a$,
normalization $c=2i$, and logarithms $2i\arctan r,\log a$.
They are both nonzero.  Conjugation negates the first logarithm
and $c$, while preserving the second logarithm.  Hence both
embeddings give the same normalized target $x$.  The base $a$
is not a root of unity, even when the first base is one (as can
happen for $r=\pm1$).  The normalized theorem applies with
$s=d=2$ and proves both bounds.
\end{proof}

\begin{corollary}[A real quadratic logarithm divided by a rational logarithm]
\label{ratio:quadratic-log}
Let $D>0$ be rational and nonsquare, and let
$a\in\mathbb Q(\sqrt D)$ be positive, different from $1$, and
have norm $1$ or $-1$.  For every positive rational $b\ne1$,
\[
                         \boxed{\displaystyle
       2\le\mu\!\left(\frac{\log a}{\sqrt D\,\log b}\right)\le3.}
\]
In particular, writing $\varphi=(1+\sqrt5)/2$,
\[
 2\le\mu\!\left(\frac{\log(1+\sqrt2)}{\sqrt2\log2}\right)\le3,
 \qquad
 2\le\mu\!\left(\frac{\log\varphi}{\sqrt5\log2}\right)\le3.
\]
\end{corollary}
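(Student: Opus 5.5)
The plan is to apply Theorem~\ref{ratio:normalized} in the real quadratic field $E=\Q(\sqrt D)$, so $d=2$, and to check that both embeddings are good. This mirrors the proofs of Corollaries~\ref{ratio:pi-log} and~\ref{ratio:atan-log}.

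\textbf{Reduction to norm one.} If $N(a)=-1$, replace $a$ by $a^2$. This element is still positive, different from $1$, and has norm $1$. It doubles $\log a$, so by the rational-scaling case of Lemma~\ref{prelim:mobius} the exponent of $x=\log a/(\sqrt D\log b)$ is unchanged. Alternatively, keep $a$ and take $c=2\sqrt D$ with the base $a^2$, exactly as in Corollary~\ref{conj:real-quadratic}. So we may assume $\tau(a)=a^{-1}$, where $\tau$ is the nontrivial automorphism.

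\textbf{Choice of data.} Take torus bases $a$ (first) and $b$ (second), normalization $c=\sqrt D\notin\Q$, and target $x$.
\begin{itemize}
\item At the identity embedding, use the real logarithms $\lambda_a=\log a$ and $\lambda_b=\log b$. Both are nonzero, and $\lambda_a=c\,x\,\lambda_b$ holds by definition.
\item At the conjugate embedding, $\sigma(a)=a^{-1}$, $\sigma(b)=b$ and $\sigma(c)=-\sqrt D$. Use $\lambda_{a,\sigma}=-\log a$ and $\lambda_{b,\sigma}=\log b$. Then $\sigma(c)\,x\,\lambda_{b,\sigma}=-\log a$, so the matching identity holds.
\end{itemize}
Thus $s=d=2$. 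Neither $a$ nor $b$ is a root of unity, since both are positive and different from $1$; the theorem only needs them not both to be.

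\textbf{Irrationality of $x$ (main obstacle).} Theorem~\ref{ratio:normalized} requires $x\in\R\setminus\Q$, and this is the only genuinely non-formal point. Suppose $x=p/q$ with $p,q$ integers. Then
\[
q\log a=p\sqrt D\log b .
\]
Consider two cases.
\begin{itemize}
\item If $p\ne0$, then $\log a/\log b=p\sqrt D/q$ is algebraic irrational. Gel'fond--Schneider, applied to the base $b\ne0,1$ with its real logarithm and exponent $p\sqrt D/q$, makes $\exp\bigl((p\sqrt D/q)\log b\bigr)=a$ transcendental. This is a contradiction, since $a$ is algebraic.
\item If $p=0$, then $\log a=0$, contradicting $a\ne1$.
\end{itemize}
More generally, the same argument shows $x$ is not algebraic: were it algebraic, $\sqrt D\,x$ would be nonzero algebraic. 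If $\sqrt D\,x$ is irrational, Gel'fond--Schneider applies as above. If $\sqrt D\,x$ is rational, then $a^{q'}=b^{p'}$ would put a rational number equal to a power of $a$. That is impossible, because the conjugate of $a^{q'}$ is $a^{-q'}\ne a^{q'}$ when $a\ne1$ and $q'\ne0$. Either way, $x$ is irrational.

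\textbf{Conclusion and examples.} Theorem~\ref{ratio:normalized} now gives $2\le\mu(x)\le3d/s=3$. For the displayed examples:
\begin{itemize}
\item $1+\sqrt2\in\Q(\sqrt2)$ has norm $-1$, with $b=2$.
\item $\varphi\in\Q(\sqrt5)$ has norm $-1$, with $b=2$.
\end{itemize}
Both satisfy the hypotheses and are handled by the squaring reduction.
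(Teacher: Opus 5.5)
Your proof is correct and follows essentially the same route as the paper. Both apply Theorem~\ref{ratio:normalized} in $\Q(\sqrt D)$ with torus bases $a,b$ and normalization $c=\sqrt D$, using $\pm\log a$ and $\log b$ at the two embeddings so that $s=d=2$; the norm $-1$ case is handled by passing to $a^2$ (with $c=2\sqrt D$, or equivalently with rational rescaling of the target).

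The only difference is the irrationality step. You note that a nonzero rational $x$ makes $\sqrt D\,x$ automatically irrational, so Gel'fond--Schneider alone excludes it. The paper instead proves the stronger transcendence of $x$, and therefore needs the norm computation $1=N(a)^v=N(b)^u=b^{2u}$ to rule out a rational exponent. Your ``more generally'' paragraph also reproduces that argument.
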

\begin{proof}
First suppose $N_{E/\mathbb Q}(a)=1$, where
$E=\mathbb Q(\sqrt D)$, and write
$x=\log a/(\sqrt D\log b)$.  This is a nonzero real number.
If $x$ were algebraic and $\sqrt D\,x$ were irrational, then
Gel'fond--Schneider applied to $b$ would contradict
\[
                  \exp(\sqrt D\,x\log b)=a.
\]
The remaining possibility is a nonzero rational exponent
$\sqrt D\,x=u/v$, with integers $u\ne0$ and $v>0$.  The real
logarithm equality then gives $a^v=b^u$.  Taking quadratic norms
gives
\[
                 1=N(a)^v=N(b)^u=b^{2u},
\]
which is impossible for positive $b\ne1$ and $u\ne0$.
Thus $x$ is transcendental and in particular irrational.

Use torus bases $a,b$ and normalization $c=\sqrt D$.
At the identity take real logarithms $\log a,\log b$.
The other conjugate of $a$ is $a^{-1}>0$, while $b$ is fixed;
at that embedding take $-\log a,\log b$.  Since conjugation
also negates $c$, both embeddings satisfy the matching identity
for $x$.  The rational base $b$ is not a root of unity.
Theorem~\ref{ratio:normalized}, with $s=d=2$, therefore gives
$2\le\mu(x)\le3$.

If $N(a)=-1$, put $a'=a^2$.  Then $a'>0$, $a'\ne1$, and
$N(a')=1$.  The norm-one case already proved that
$\log a'/(\sqrt D\log b)=2x$ is transcendental; hence $x$
is also transcendental.  Apply the normalized theorem directly
to torus bases $a',b$ and normalization $c'=2\sqrt D$.
At the two embeddings the first logarithms are
$2\log a$ and $-2\log a$, respectively, while the second is
$\log b$ and $c'$ changes sign.  They therefore match the
unchanged target $x$, proving the same two bounds.
Finally $1+\sqrt2$ and $\varphi$ have norm minus one, which
establishes the displayed examples.
\end{proof}

For example, the circular-period corollary with $D=1$ and $a=2$
is equivalent to the following fixed two-term bound: for every
$\varepsilon>0$ a constant $C_\varepsilon>0$ satisfies
\[
 |p\pi+q\log2|
    \ge C_\varepsilon\max(1,|p|,|q|)^{-2-\varepsilon}
                 \qquad((p,q)\in\mathbb Z^2\setminus\{(0,0)\}).
\]
For $p\ne0$, factor the form as
$p\log2\,(\pi/\log2+q/p)$ and apply the exponent-three bound;
when $-q/p$ stays near the fixed target, the height is comparable
to $|p|$.  Outside such a neighborhood, the form is bounded below
by a positive multiple of the height after also separating the
region of large $|q/p|$.  The case $p=0$ is immediate.  Conversely,
substitution of $(p,q)=(q,-p)$ in this linear-form inequality and
division by $q|\log2|$ gives the corresponding exponent-three
lower bounds for rational approximation.  These statements concern
fixed bases; no uniform dependence of $C_\varepsilon$ on their
heights is asserted.

\section{Quadratic logarithmic families: arithmetic and analytic proofs}
\label{extquad:section}

This section supplies the arithmetic and analytic details for the quadratic
families used in the preceding discussion. The arithmetic assertions concern
rational parameters and ordinary rational approximation to real numbers.
The analytic assertions hold more generally for nonzero real parameters,
on the domains stated below. Each proof is accompanied by its exact Lean
source correspondence. The general interpolation and determinant theorem
is used through its established scalar-matching consequence; the work here
is to construct its algebraic data, verify every embedding, and identify
the actual real and complex functions to which it applies.

\subsection{The common-scalar principle and rational linear combinations}
\label{extquad:matching}

For clarity, the all-embedding consequence needed throughout this section
has the following form. Let $E$ be a number field, let $A,c\in E$ with
$c\ne0$, and let $x\in\R\setminus\{0\}$. Suppose that
\begin{equation}\label{extquad:all-matching}
 \exp\bigl(\sigma(c)x\bigr)=\sigma(A)
 \qquad\text{for every embedding }\sigma:E\hookrightarrow\C.
\end{equation}
Then, for every $\nu>2$, there is an integer $Q\ge2$ such that
\begin{equation}\label{extquad:all-denominators}
 q^{-\nu}\le\left|x-\frac pq\right|
 \qquad(p\in\Z,\ q\in\Z,\ q\ge Q).
\end{equation}
This is the precise all-denominator lower bound used here. It first implies
that $x$ is irrational: a rational $x$ would have exact representations
with arbitrarily large denominator multiples. It then implies
$\mu(x)\le2$, and the real Dirichlet lower bound gives $\mu(x)=2$.

The scalar $c$ need not be an algebraic integer. Choose a positive integer
$M$ for which $Mc$ is integral. Equation~\eqref{extquad:all-matching} gives
\[
 \exp\bigl(\sigma(Mc)x\bigr)=\sigma(A^M),
\]
so the integral-scalar theorem applies without changing $x$. In particular,
none of the applications below presupposes that a rational square root
is integral, or that multiplication by an arbitrary algebraic number
preserves an irrationality exponent.

\paragraph{Lean correspondence.}
The endpoint is
\path{QuadraticPi.scalarMatchingLogarithm_eventualLowerBound}
in \path{QuadraticPi/FullEmbeddingLogarithmTheorem.lean}; its hypotheses
are exactly the nonzero scalar, nonzero real target, and the identities
for all complex embeddings above. The integral-denominator reduction is
\path{QuadraticPi.ScalarMatchingPower.exists_integral_matching_power}.
The property \eqref{extquad:all-denominators} is
\path{QuadraticPi.EventualLowerBound}, defined in
\path{QuadraticPi/ExponentConclusion.lean}; the conclusion uses
\path{QuadraticPi.irrationalityExponent_eq_two_of_eventualLowerBound}.

\begin{proposition}[Rational closure for one common scalar]
\label{extquad:common-scalar-closure}
Let $I$ be a finite index set. Suppose $c\in E\setminus\{0\}$,
$A_j\in E$, and $x_j\in\R$ satisfy
\[
 \exp\bigl(\sigma(c)x_j\bigr)=\sigma(A_j)
 \qquad(j\in I,\ \sigma:E\hookrightarrow\C).
\]
For $q_j\in\Q$, put $X=\sum_{j\in I}q_jx_j$. If $X\ne0$, then
$X$ satisfies \eqref{extquad:all-denominators} and $\mu(X)=2$.
\end{proposition}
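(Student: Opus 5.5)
The plan is to reduce Proposition~\ref{extquad:common-scalar-closure} to the single all-embedding statement \eqref{extquad:all-matching}--\eqref{extquad:all-denominators} by building one algebraic element $A\in E$ and one scalar $c'$ that together satisfy the matching identities for $X$ at every embedding.

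\emph{Clear the coefficients.} Choose a positive integer $n$ with $nq_j\in\Z$ for all $j\in I$, and write $k_j=nq_j$. Then
\[
 \sigma(c)\,nX=\sum_{j\in I}k_j\,\sigma(c)x_j .
\]
Exponentiating and using the hypotheses, together with multiplicativity of $\exp$ for integer combinations, gives
\[
 \exp\bigl(\sigma(nc)X\bigr)=\prod_{j\in I}\exp\bigl(\sigma(c)x_j\bigr)^{k_j}
 =\sigma\Bigl(\prod_{j\in I}A_j^{k_j}\Bigr).
\]
This identity holds at every embedding $\sigma$. Negative exponents cause no difficulty. If $k_j\ne0$, then $\sigma(A_j)=\exp(\cdots)\ne0$, so $A_j\ne0$ because $\sigma$ is injective. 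If $k_j=0$, the factor is $1$ and is simply dropped. Put $A=\prod_{k_j\ne0}A_j^{k_j}\in E^\times$ and $c'=nc\ne0$.

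\emph{Apply the all-embedding consequence.} Now $X$ is a nonzero real number with $\exp(\sigma(c')X)=\sigma(A)$ for all embeddings $\sigma:E\hookrightarrow\C$. This is exactly hypothesis \eqref{extquad:all-matching}, with $c'$ in place of $c$. If $c'$ is not integral, we first pass to $Mc'$ and $A^M$, as explained just before the proposition. The established scalar-matching consequence (the $s=d$ case of Theorem~\ref{conj:partial}, in its eventual-lower-bound form) then yields \eqref{extquad:all-denominators} for $X$. From there, irrationality of $X$ and $\mu(X)\le2$ follow as described, and Lemma~\ref{prelim:dirichlet} gives $\mu(X)=2$.

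\emph{Where the difficulty lies.} No analytic step is new, so the only delicate point is branch bookkeeping. We must not replace any logarithm by a principal value. The branch used at each embedding is precisely $\sigma(c')X$, an exact integer combination of the given branches $\sigma(c)x_j$, so no period is lost. The nonvanishing hypothesis $X\ne0$ is what ensures that the chosen logarithm is nonzero, as the theorem requires.
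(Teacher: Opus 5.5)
Your proposal is correct and follows the paper's own proof essentially step for step. You clear denominators with a common integer, form $\prod_j A_j^{k_j}$, verify $\exp(\sigma(nc)X)=\sigma\bigl(\prod_j A_j^{k_j}\bigr)$ at every embedding (negative $k_j$ included), and apply the all-embedding common-scalar principle using $nc\ne0$ and $X\ne0$. Your separate treatment of $k_j=0$ is harmless but not needed: every $A_j$ is already nonzero, because $\sigma(A_j)$ is an exponential.
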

\begin{proof}
Choose one positive integer $N$ with $Nq_j=k_j\in\Z$ for every $j$.
Each $A_j$ is nonzero, since its image under a complex embedding is an
exponential. Define $B=\prod_{j\in I}A_j^{k_j}\in E$. For each $\sigma$,
\begin{align*}
 \exp\bigl(\sigma(Nc)X\bigr)
 &=\exp\left(\sum_{j\in I} k_j\sigma(c)x_j\right)\\
 &=\prod_{j\in I}\exp\bigl(\sigma(c)x_j\bigr)^{k_j}
 =\prod_{j\in I}\sigma(A_j)^{k_j}=\sigma(B).
\end{align*}
These equalities hold also for negative $k_j$, by the exponential identity
$\exp(-z)=\exp(z)^{-1}$. Since $Nc\ne0$ and $X\ne0$, the common-scalar
principle applies. The argument is an identity for the actual real sum
$X$; it requires no addition formula for principal logarithms.
\end{proof}

\paragraph{Lean correspondence.}
The integer $N$ and the coefficients $k_j$ are supplied by
\path{QuadraticPi.RationalSumDenominators.exists_common_positive_denominator}
in \path{QuadraticPi/RationalSumDenominators.lean}. The product and its
embedding identities are proved by
\path{QuadraticPi.scalarMatching_rational_sum}; the lower bound and exact
exponent are
\path{QuadraticPi.scalarMatching_rational_sum_eventualLowerBound} and
\path{QuadraticPi.scalarMatching_rational_sum_irrationalityExponent_eq_two},
all in \path{QuadraticPi/ScalarMatchingLinearCombination.lean}.

The following conclusion also appears in
\cite[Corollary~6]{LiuJiangZhang2026}.
\begin{corollary}[Rational linear combinations of rational logarithms]
\label{p1:rational-log-sums}
Let $a_1,\ldots,a_n$ be positive rational numbers and
$r_1,\ldots,r_n\in\Q$. If $X=\sum_jr_j\log a_j\ne0$, then
$\mu(X)=2$.
\end{corollary}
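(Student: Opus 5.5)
The plan is to apply Proposition~\ref{extquad:common-scalar-closure} with the trivial field $E=\Q$ and the scalar $c=1$. Take the index set $I=\{1,\ldots,n\}$, put $x_j=\log a_j\in\R$ (the real logarithm), and put $A_j=a_j\in\Q^\times$. The only embedding of $\Q$ into $\C$ is the identity, so the matching hypothesis reduces to the single identity
\[
 \exp(1\cdot\log a_j)=a_j\qquad(1\le j\le n).
\]
This identity holds because $a_j>0$. With $q_j=r_j$, the sum $X=\sum_jr_jx_j$ is exactly the target, and it is nonzero by hypothesis.

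Proposition~\ref{extquad:common-scalar-closure} then yields the all-denominator lower bound \eqref{extquad:all-denominators} for every $\nu>2$, together with $\mu(X)=2$. Irrationality of $X$ is part of that conclusion: a rational $X$ would admit exact representations with arbitrarily large denominators, which \eqref{extquad:all-denominators} forbids.

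The mechanism behind the proposition is worth recording, since it is the only real content here. One clears denominators with a single integer $N$ so that $Nr_j=k_j\in\Z$. Then $NX=\log B$ with $B=\prod_ja_j^{k_j}$, a positive rational number. Since $X\ne0$, we have $B\ne1$. Theorem~\ref{slog:thm:real} therefore gives $\mu(\log B)=2$, and rational scaling by $1/N$ (Lemma~\ref{prelim:mobius}) transfers this to $X$. This alternative route avoids the matching formalism entirely and could be stated instead as the proof.

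There is no genuine obstacle. The one point to check is that $c=1$ is admissible: it is nonzero, and integrality is not needed, as noted in the paragraph preceding the proposition. One should also confirm that real logarithms of positive rationals are the logarithms intended, so that no branch issues arise.
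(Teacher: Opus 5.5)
Your proposal is correct and matches the paper's proof, which gives exactly these two routes. One route clears denominators to get $NX=\log A$ with $A\in\Q_{>0}\setminus\{1\}$ and applies the rational case of Theorem~\ref{slog:thm:real} together with rational scaling; the other specializes Proposition~\ref{extquad:common-scalar-closure} to $E=\Q$, $c=1$, $x_j=\log a_j$. The only difference is that the paper leads with the first route and mentions the second as an alternative, whereas you reverse that emphasis.
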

\begin{proof}
Choose $N\ge1$ with $k_j=Nr_j\in\Z$ and set
$A=\prod_ja_j^{k_j}\in\Q_{>0}$. The real product and integer-power
identities give $NX=\log A$. In particular $A\ne1$.
The rational-logarithm case of Theorem~\ref{slog:thm:real} gives
$\mu(\log A)=2$, and invariance under multiplication by the nonzero
rational number $1/N$ gives the claim. Alternatively this is the case
$E=\Q$, $c=1$, $x_j=\log a_j$ of
Proposition~\ref{extquad:common-scalar-closure}.
\end{proof}
The latter formulation is a direct specialization of the completed Lean
theorem \path{QuadraticPi.scalarMatching_rational_sum_irrationalityExponent_eq_two}.
Thus a linear combination is permitted to involve several distinct
rational arguments. The hypothesis is that its actual value is nonzero,
not that each coefficient or each logarithm is nonzero.

\subsection{Normalized real quadratic logarithms and the actual field norm}
\label{extquad:real-quadratic}

\begin{theorem}[The coordinate form, including rational square parameters]
\label{extquad:coordinate-theorem}
Let $D\in\Q$, $D>0$, and $u,v\in\Q$. Put
$\alpha=u+v\sqrt D$. If
\begin{equation}\label{extquad:coordinate-assumptions}
 \alpha>0,\qquad \alpha\ne1,\qquad (u^2-Dv^2)^2=1,
\end{equation}
then
\[
 \mu\left(\frac{\log\alpha}{\sqrt D}\right)=2.
\]
The target also satisfies the all-denominator bound
\eqref{extquad:all-denominators}.
\end{theorem}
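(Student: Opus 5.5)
We split into the irrational-square-root case and the rational-square-root case, and in both we reduce to the common-scalar principle \eqref{extquad:all-matching}. That principle then yields \eqref{extquad:all-denominators} and $\mu=2$.

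\emph{Case 1: $\sqrt D\notin\Q$.} Take $E=\Q(\sqrt D)$, realized inside $\R$ by the positive square root. Put $c=\sqrt D$, $A=\alpha^2$ and $x=2\log\alpha/\sqrt D$, so that $x\ne0$ because $\alpha\ne1$. The hypothesis $(u^2-Dv^2)^2=1$ says $N(\alpha)=\pm1$. Hence the conjugate $\bar\alpha=u-v\sqrt D=\pm\alpha^{-1}$, and $\bar\alpha^2=\alpha^{-2}$.

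Check the two embeddings. At the identity, $\exp(\sqrt D\,x)=\alpha^2$. At the conjugate embedding, $\sigma(c)=-\sqrt D$, so $\exp(-\sqrt D\,x)=\alpha^{-2}=\sigma(\alpha^2)$. Both embeddings therefore match, and the principle gives the bound for $x$.

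The target is $x/2$. Rational scaling (Lemma~\ref{prelim:mobius}) preserves $\mu$. For the all-denominator form, note that $|x/2-p/q|=\tfrac12|x-2p/q|$ and $2p/q$ has denominator $q$; so for any $\nu>\nu'>2$ the bound for $x$ with exponent $\nu'$ absorbs the factor $\tfrac12$ once $q$ is large. Squaring $\alpha$ is used precisely so that the sign $\pm$ in the norm never enters.

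\emph{Case 2: $D=e^2$ with $e\in\Q_{>0}$.} Now $\alpha$ is a positive rational, and $\alpha\cdot(u-ve)=\pm1$ holds in $\Q$. Apply the principle with $E=\Q$, $c=e$, $A=\alpha$ and $x=\log\alpha/e$. The only embedding is the identity, and $\exp(e x)=\alpha$ holds. Since $x\ne0$, the principle gives the conclusion directly. Equivalently, one could use Theorem~\ref{slog:thm:real} together with rational scaling by $1/e$.

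\emph{Expected obstacle.} The only delicate point is bookkeeping: verifying that the squared element's conjugate is exactly $\alpha^{-2}$ under the abstract embedding sending $\sqrt D\mapsto-\sqrt D$. This requires identifying $\Q(\sqrt D)$ with the real subfield and checking that the two complex embeddings are exactly these. A second point is transferring the eventual lower bound \eqref{extquad:all-denominators} through the factor $2$ rather than only transferring $\mu$; the absorption argument above handles that.
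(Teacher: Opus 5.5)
Your proof is correct and is essentially the paper's: square $\alpha$ so that, whatever the sign of $u^2-Dv^2$, the conjugate of $\alpha^2$ is exactly $\alpha^{-2}$, and then apply the all-embedding scalar-matching principle. The paper only streamlines the bookkeeping in two places. It takes $c=2\sqrt D$ with the target itself as $x$, so no factor-two transfer of \eqref{extquad:all-denominators} is needed. It also notes that every embedding of $E=\Q(\sqrt D)$ sends $\sqrt D$ to $\pm\sqrt D$, so the rational-square case needs no separate treatment (there only the $+$ sign occurs).
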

\begin{proof}
Take the actual field $E=\Q(\sqrt D)\subset\R$ and write
$\delta=\sqrt D\in E$. Use
\[
 x=\frac{\log\alpha}{\sqrt D},\qquad
 c=2\delta,\qquad A=(u+v\delta)^2.
\]
The parameter condition gives $\delta\ne0$, and positivity together with
$\alpha\ne1$ gives $x\ne0$. Every embedding $\sigma:E\hookrightarrow\C$
satisfies $\sigma(\delta)^2=D$, hence
$\sigma(\delta)=\sqrt D$ or $-\sqrt D$.

If $\sigma(\delta)=\sqrt D$, then
$\sigma(A)=\alpha^2=\exp(2\log\alpha)=\exp(\sigma(c)x)$.
For the other sign, put $\alpha'=u-v\sqrt D$. The squared norm condition
gives
\[
 \alpha^2(\alpha')^2=(u^2-Dv^2)^2=1,
 \qquad (\alpha')^2=\alpha^{-2}.
\]
Consequently
$\sigma(A)=\alpha^{-2}=\exp(-2\log\alpha)
=\exp(\sigma(c)x)$. This establishes
\eqref{extquad:all-matching} for every embedding.

Squaring $\alpha$ is essential to this proof when $u^2-Dv^2=-1$:
the conjugate of $\alpha$ is then $-\alpha^{-1}$, whereas the conjugate
of $A$ is still exactly $A^{-1}$. No logarithm of a negative real
conjugate is introduced. If $D$ is a rational square, $E$ has degree one;
the same proof checks its actual embedding and does not posit a second
embedding that the field does not possess.
\end{proof}

\paragraph{Lean correspondence.}
In \path{QuadraticPi/RationalNormalizedQuadraticLogarithmTheorem.lean},
the namespace \path{QuadraticPi.RationalNormalizedQuadraticLogField}
defines \path{FieldOf}, \path{delta}, \path{gamma}, \path{algebraicArgument},
and \path{target}. Its lemmas \path{embedding_delta},
\path{argument_sq_mul_conjugate_sq}, \path{conjugate_sq_eq_inv}, and
\path{all_embeddings_match} prove the steps above. The final declarations are
\path{QuadraticPi.rationalNormalizedQuadraticLogarithm_eventualLowerBound},
\path{QuadraticPi.rationalNormalizedQuadraticLogarithm_irrational}, and
\path{QuadraticPi.rationalNormalizedQuadraticLogarithm_irrationalityExponent_eq_two}.

\begin{theorem}[The actual quadratic-field norm formulation]
\label{extquad:actual-norm}
Let $D>0$ be rational and not a square in $\Q$, and let
$E=\Q(\sqrt D)\subset\R$. For $\alpha\in E$ with
\[
 \alpha>0,\qquad \alpha\ne1,\qquad
 N_{E/\Q}(\alpha)\in\{1,-1\},
\]
one has $\mu(\log\alpha/\sqrt D)=2$ and
\eqref{extquad:all-denominators}.
\end{theorem}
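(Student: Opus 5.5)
The plan is to reduce Theorem~\ref{extquad:actual-norm} to the coordinate form, Theorem~\ref{extquad:coordinate-theorem}. That theorem already yields both the exact exponent and the all-denominator bound \eqref{extquad:all-denominators}, so essentially all of the work is translating the hypotheses.

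Since $E=\Q(\sqrt D)$ is quadratic, every $\alpha\in E$ can be written as $\alpha=u+v\sqrt D$ with $u,v\in\Q$. The plan is to obtain this from the basis $\{1,\sqrt D\}$ of $E$ over $\Q$; that it is a basis follows from $\sqrt D\notin\Q$, because $D$ is not a rational square. The positivity of $\alpha$ and the condition $\alpha\ne1$ carry over verbatim. The one point needing care is the identification of the field norm: $N_{E/\Q}(\alpha)=u^2-Dv^2$. To see this, compute the determinant of multiplication by $\alpha$ in the basis $\{1,\sqrt D\}$, or equivalently multiply $\alpha$ by its image $u-v\sqrt D$ under the nontrivial embedding. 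The hypothesis $N_{E/\Q}(\alpha)\in\{1,-1\}$ then gives $(u^2-Dv^2)^2=1$, so the three conditions in \eqref{extquad:coordinate-assumptions} all hold.

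Applying Theorem~\ref{extquad:coordinate-theorem} with these $D,u,v$ then gives $\mu(\log\alpha/\sqrt D)=2$ together with \eqref{extquad:all-denominators}. An independent route to the exponent alone is Corollary~\ref{conj:real-quadratic}; the coordinate theorem is preferable here because it also supplies the all-denominator bound.

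The only step I expect to require any attention is the norm identification, specifically in the formal setting, where ``$E=\Q(\sqrt D)\subset\R$'' must be matched with an abstract quadratic field and its norm map. Mathematically this step is routine. I would handle it by realizing $E$ as the real subfield generated by $\sqrt D$, checking that $[E:\Q]=2$ from the nonsquare hypothesis, and computing the norm as the product over the two embeddings $\sqrt D\mapsto\pm\sqrt D$.
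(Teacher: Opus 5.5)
Your proposal is correct and follows the paper's proof essentially step for step. You write $\alpha=u+v\sqrt D$ in the basis $1,\sqrt D$, identify $N_{E/\Q}(\alpha)$ with the determinant $u^2-Dv^2$ of multiplication by $\alpha$, deduce $(u^2-Dv^2)^2=1$, and invoke Theorem~\ref{extquad:coordinate-theorem} for both conclusions; the paper differs only in obtaining the coordinates from an explicit algebra isomorphism $\Q[T]/(T^2-D)\to E$ (surjective because it hits the generator, injective because the nonsquare hypothesis makes the source a field), then transporting the norm through it.
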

\begin{proof}
Here the field norm is the determinant of multiplication by $\alpha$
on the actual two-dimensional $\Q$-vector space $E$.
The map from the quadratic algebra $\Q[T]/(T^2-D)$ to $E$ sending
$T$ to $\sqrt D$ is surjective because its image contains the field
generator. The nonsquare hypothesis makes the source a field, so this
nonzero homomorphism is injective and is an algebra isomorphism.
Thus $\alpha=u+v\sqrt D$ for rational $u,v$. Relative to the basis
$1,\sqrt D$, multiplication by $\alpha$ has matrix
\[
 \begin{pmatrix}u&Dv\\v&u\end{pmatrix}.
\]
Its determinant is $u^2-Dv^2$. Transport of the determinant norm through
the algebra isomorphism therefore gives
$N_{E/\Q}(\alpha)=u^2-Dv^2$.
Either allowed value of the field norm implies
$(u^2-Dv^2)^2=1$, and Theorem~\ref{extquad:coordinate-theorem} applies.
\end{proof}

\paragraph{Lean correspondence.}
The actual field and the coordinate algebra are related by
\path{QuadraticPi.RealQuadraticFieldCoordinates.coordinateEquiv}
in \path{QuadraticPi/RealQuadraticFieldCoordinates.lean}.
The module \path{QuadraticPi/RealQuadraticFieldNorm.lean} proves
\path{QuadraticPi.RealQuadraticFieldCoordinates.quadraticAlgebra_norm}
and \path{QuadraticPi.RealQuadraticFieldCoordinates.exists_coordinates_norm}
using the determinant-defined \path{Algebra.norm} and its invariance under
algebra equivalence. The endpoints in
\path{QuadraticPi/RealQuadraticLogarithmNormTheorem.lean} are
\path{QuadraticPi.realQuadraticLogarithm_norm_eventualLowerBound},
\path{QuadraticPi.realQuadraticLogarithm_norm_irrational}, and
\path{QuadraticPi.realQuadraticLogarithm_norm_irrationalityExponent_eq_two}.

\begin{remark}\label{extquad:square-distinction}
The nonsquare hypothesis in Theorem~\ref{extquad:actual-norm} cannot simply
be deleted from its norm interpretation. For square $D$, the embedded
field $\Q(\sqrt D)$ is $\Q$, and its field norm is the identity, rather
than a determinant in two independent coordinates. Theorem
\ref{extquad:coordinate-theorem} is the separately proved statement that
allows square $D$, with its explicit coordinate condition. Neither
theorem assumes that $\alpha$ is an algebraic integer.
\end{remark}

\begin{corollary}[Rational linear combinations of conjugate logarithms]
\label{extquad:normalized-sums}
Fix $D\in\Q_{>0}$. For finitely many $u_j,v_j,q_j\in\Q$, suppose
\[
 \alpha_j=u_j+v_j\sqrt D>0,
 \qquad (u_j^2-Dv_j^2)^2=1.
\]
If
\[
 X=\frac1{\sqrt D}\sum_j q_j\log\alpha_j\ne0,
\]
then $\mu(X)=2$ and the all-denominator lower bound
\eqref{extquad:all-denominators} holds for $X$.
\end{corollary}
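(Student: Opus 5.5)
The plan is to reduce the statement to Proposition~\ref{extquad:common-scalar-closure}, with the common scalar and the matching embedding data supplied exactly as in the proof of Theorem~\ref{extquad:coordinate-theorem}. Take the actual field $E=\Q(\sqrt D)\subset\R$ and write $\delta=\sqrt D\in E$. Put
\[
 c=2\delta,\qquad A_j=(u_j+v_j\delta)^2,\qquad x_j=\frac{\log\alpha_j}{\sqrt D}.
\]
Then $X=\sum_jq_jx_j$, and $c\ne0$ because $D>0$.

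The core step is to verify the all-embedding identity $\exp(\sigma(c)x_j)=\sigma(A_j)$ for every $j$ and every embedding $\sigma$. Every $\sigma$ satisfies $\sigma(\delta)^2=D$, so $\sigma(\delta)=\pm\sqrt D$. For the $+$ sign we have $\sigma(A_j)=\alpha_j^2=\exp(2\log\alpha_j)$, which is $\exp(\sigma(c)x_j)$. For the $-$ sign, put $\alpha_j'=u_j-v_j\sqrt D$. The hypothesis $(u_j^2-Dv_j^2)^2=1$ gives $\alpha_j^2(\alpha_j')^2=1$, so $\sigma(A_j)=(\alpha_j')^2=\alpha_j^{-2}=\exp(-2\log\alpha_j)=\exp(\sigma(c)x_j)$. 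This is the same computation as in Theorem~\ref{extquad:coordinate-theorem}, run for each $j$ separately with one common $c$.

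Two features of the setup matter here.
\begin{itemize}
\item The scalar $c$ does not depend on $j$. This is exactly what the proposition requires.
\item The squaring removes the sign issue in the norm $-1$ case: no logarithm of a negative conjugate is ever taken.
\end{itemize}
Individual terms with $\alpha_j=1$, or with $q_j=0$, are harmless. The proposition asks only that the actual sum be nonzero, and $X\ne0$ is a hypothesis.

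When $D$ is a rational square, $E=\Q$ and there is only the identity embedding, so only the $+$ case occurs. No separate argument is needed as long as we quantify over the actual embeddings of the actual field. Proposition~\ref{extquad:common-scalar-closure} then gives the lower bound \eqref{extquad:all-denominators} for $X$ and $\mu(X)=2$.

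I expect no real obstacle. The only point needing care is that every $\alpha_j$ is a genuine element of the single field $E$, so that one embedding acts on all of them compatibly. For nonsquare $D$ this uses the coordinate identification of Theorem~\ref{extquad:actual-norm}; for square $D$ it is trivial.
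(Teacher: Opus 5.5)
Your proposal is correct and is essentially the paper's argument. The paper inlines the proof of Proposition~\ref{extquad:common-scalar-closure}: it clears denominators with $N$ and checks both embeddings directly for $c=2N\sqrt D$ and $B=\prod_j\alpha_j^{2k_j}$. You instead verify the per-term matching with $c=2\delta$ and $A_j=(u_j+v_j\delta)^2$, and let the proposition carry out the product step.

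Your final appeal to Theorem~\ref{extquad:actual-norm} is unnecessary. By construction $\alpha_j=u_j+v_j\delta\in E$ and $\sigma(\alpha_j)=u_j+v_j\sigma(\delta)$, with no coordinate identification needed.
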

\begin{proof}
Write $\alpha_j^*=u_j-v_j\sqrt D$ and
$\varepsilon_j=u_j^2-Dv_j^2\in\{1,-1\}$. The elementary identity
$\alpha_j\alpha_j^*=\varepsilon_j$ gives
\[
 (\alpha_j^*)^2=\alpha_j^{-2}.
\]
Squaring is useful here because it treats both norm signs with the same
positive real logarithms. We do not take the real logarithm of
$\alpha_j^*$ when that conjugate is negative.

Choose a positive integer $N$ such that $k_j=Nq_j\in\Z$ for every $j$,
and put
\[
 E=\Q(\sqrt D),\qquad c=2N\sqrt D,\qquad
 B=\prod_j\alpha_j^{2k_j}\in E^\times.
\]
Negative $k_j$ cause no difficulty, since every $\alpha_j$ is positive.
At the specified real embedding, the ordinary real logarithm gives
\[
 \exp(cX)=\exp\left(2\sum_jk_j\log\alpha_j\right)=B.
\]
If $D$ is nonsquare, the other embedding $\tau$ sends $\sqrt D$ to
$-\sqrt D$. Consequently
\[
 \tau(c)=-c,\qquad
 \tau(B)=\prod_j(\alpha_j^*)^{2k_j}
         =\prod_j\alpha_j^{-2k_j}=B^{-1}
         =\exp(\tau(c)X).
\]
Thus the same real target $X$ has the required exponential identity at
both embeddings of $E$. If $D$ is square, $E=\Q$ has only its identity
embedding, and the first identity already verifies every embedding.
Since $c\ne0$ and $X\ne0$, Theorem~\ref{conj:partial} with all embeddings
matched proves the upper bound two. Its irrationality argument and
Dirichlet's theorem give the lower bound two. Equivalently, the
all-embedding endpoint in Section~\ref{extquad:matching} gives
\eqref{extquad:all-denominators} directly.
\end{proof}

\paragraph{Completed Lean theorem.}
This entire assertion, including rational coefficients of either sign,
is formalized in
\path{QuadraticPi/NormalizedQuadraticLogarithmLinearCombination.lean} as
\path{QuadraticPi.normalizedQuadraticLogarithm_rational_sum_eventualLowerBound}
and
\path{QuadraticPi.normalizedQuadraticLogarithm_rational_sum_irrationalityExponent_eq_two}.
The exact exponent declaration was rechecked with its transitive axioms
in the audit of 12 October 2026 described in
Section~\ref{app:formal-status}. Individual terms, or their coefficients,
may vanish; the hypothesis concerns the final sum. The parameter $D$ is
common to all terms. No assertion about unrelated quadratic fields is
needed or used.

\subsection{Rational values of the positive and circular families}
\label{extquad:rational-families}

For $D>0$, define the real branch
\begin{equation}\label{extquad:positive-family}
 f_D(x)=\frac1{\sqrt D}
 \log\frac{1+\sqrt D\,x}{1-\sqrt D\,x},
 \qquad |\sqrt D\,x|<1.
\end{equation}
For $d>0$, the corresponding negative-parameter branch is
\begin{equation}\label{extquad:circular-family}
 f_{-d}(x)=\frac{2}{\sqrt d}\arctan(\sqrt d\,x),
 \qquad x\in\R.
\end{equation}
In the next two theorems the parameters and evaluation points are
rational; the wider real-variable definitions will be used for analysis.

\begin{theorem}[Positive rational parameters]
\label{extquad:positive-rational}
Let $D\in\Q_{>0}$ and $r\in\Q\setminus\{0\}$ with
$|\sqrt D\,r|<1$. Then $\mu(f_D(r))=2$.
More generally, every nonzero finite rational linear combination of
admissible $f_D(r_j)$, at the same $D$, has exponent two.
Both conclusions have the all-denominator lower bound
\eqref{extquad:all-denominators}.
\end{theorem}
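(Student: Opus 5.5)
The plan is to reduce Theorem~\ref{extquad:positive-rational} to Proposition~\ref{extquad:common-scalar-closure}. We take the common field $E=\Q(\sqrt D)\subset\R$ and the common scalar $c=\sqrt D$. For each admissible $r$ put
\[
 A_r=\frac{1+\sqrt D\,r}{1-\sqrt D\,r}\in E.
\]
This number is positive because $|\sqrt D\,r|<1$. The defining formula \eqref{extquad:positive-family} gives, at the real embedding, $\exp(\sqrt D\,f_D(r))=A_r$. So the identity embedding matches with $x_r=f_D(r)$.

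\textbf{Other embedding.} If $D$ is nonsquare, the second embedding $\tau$ sends $\sqrt D\mapsto-\sqrt D$. Since $r$ is rational,
\[
 \tau(A_r)=\frac{1-\sqrt D\,r}{1+\sqrt D\,r}=A_r^{-1}=\exp\bigl(\tau(c)f_D(r)\bigr).
\]
Hence all embeddings satisfy the hypothesis of Proposition~\ref{extquad:common-scalar-closure} with the same real target $x_r$. If $D$ is a rational square, $E=\Q$ has only the identity embedding, which has already been checked. No second embedding is posited in that case, as in Theorem~\ref{extquad:coordinate-theorem}.

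\textbf{Conclusion.} Proposition~\ref{extquad:common-scalar-closure} applied to the finite family $\{x_{r_j}\}$ with rational coefficients $q_j$ gives the linear-combination statement whenever $X=\sum_j q_jf_D(r_j)\ne0$. It also supplies the all-denominator bound \eqref{extquad:all-denominators} and $\mu(X)=2$.

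\textbf{Single value.} The single-value case is the one-term combination. We only need $f_D(r)\ne0$. This holds because $r\ne0$ gives $A_r\ne1$: indeed $1+\sqrt D\,r=1-\sqrt D\,r$ would force $r=0$. The logarithm of the positive number $A_r\ne1$ is then nonzero.

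\textbf{Main obstacle.} The only delicate point is the bookkeeping that the conjugate identity uses the exact branch. At $\tau$ we need the logarithm $-\sqrt D\,f_D(r)$ of $A_r^{-1}$, rather than a principal logarithm of some possibly unrelated conjugate. Here this is automatic, since the same positive real $A_r$ is simply inverted. Care is also needed to treat the square-$D$ case separately, so that one does not invoke a nonexistent nontrivial embedding. No addition formula for logarithms is needed; Proposition~\ref{extquad:common-scalar-closure} multiplies the $A_{r_j}^{k_j}$ directly.
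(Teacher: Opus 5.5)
Your proof is correct. It reaches the same all-embedding scalar-matching principle as the paper, by a somewhat different decomposition.

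The paper's route is as follows:
\begin{itemize}
\item It rationalizes the Cayley quotient into coordinates $u=(1+Dr^2)/(1-Dr^2)$ and $v=2r/(1-Dr^2)$.
\item It checks $u^2-Dv^2=1$ and that the quotient differs from one.
\item It then invokes Theorem~\ref{extquad:coordinate-theorem} for the single value and Corollary~\ref{extquad:normalized-sums} for the sums.
\end{itemize}
Those two results verify matching with the squared base $\alpha^2$ and the scalar $2\sqrt D$. The squaring is there to cover norm $-1$, which never arises for a Cayley quotient.

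You instead verify $\tau(A_r)=A_r^{-1}$ directly on the quotient. You then apply Proposition~\ref{extquad:common-scalar-closure} with the single scalar $c=\sqrt D$, treating the square-$D$ case separately as you should.

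Your route is shorter and avoids both the coordinate computation and the squaring. It is essentially the argument the paper itself uses for Corollary~\ref{conj:fd-closure} and for the circular analogue, Theorem~\ref{extquad:circular-rational}. What the paper's detour buys is reuse of the norm/coordinate formulation. That formulation also admits arbitrary norm-$\pm1$ elements alongside the Cayley quotients in the same sum, and it aligns with the formalized coordinate endpoints recorded in the Lean correspondence.
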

\begin{proof}
The branch condition makes $1+\sqrt D\,r$ and $1-\sqrt D\,r$ positive.
Rationalizing the denominator gives
\begin{equation}\label{extquad:rational-coordinates}
 \frac{1+\sqrt D\,r}{1-\sqrt D\,r}
 =u+v\sqrt D,\qquad
 u=\frac{1+Dr^2}{1-Dr^2},\quad v=\frac{2r}{1-Dr^2}.
\end{equation}
Here $1-Dr^2=(1-\sqrt D\,r)(1+\sqrt D\,r)>0$, so all displayed
denominators are nonzero. Direct calculation gives
\[
 u^2-Dv^2
 =\frac{(1+Dr^2)^2-4Dr^2}{(1-Dr^2)^2}=1.
\]
The quotient in \eqref{extquad:rational-coordinates} equals one only
when $2\sqrt D\,r=0$, which would force $r=0$.
Theorem~\ref{extquad:coordinate-theorem} proves the single-value assertion.
For a finite sum, use the same coordinates separately for each $r_j$ and
apply Corollary~\ref{extquad:normalized-sums}. Evaluation points equal to
zero are allowed in the sum, provided that its total value is nonzero.
\end{proof}

\paragraph{Lean correspondence.}
The coordinate and domain facts are \path{denominator_pos},
\path{argument_pos}, \path{argument_eq_coordinates}, \path{coordinate_norm},
and \path{argument_ne_one} in the namespace
\path{QuadraticPi.PositiveQuadraticLogarithm}, module
\path{QuadraticPi/PositiveQuadraticLogarithmCoordinates.lean}.
The module \path{QuadraticPi/PositiveQuadraticLogarithmTheorem.lean}
proves \path{QuadraticPi.positiveQuadraticLogarithm_irrationalityExponent_eq_two}
and
\path{QuadraticPi.positiveQuadraticLogarithm_rational_sum_irrationalityExponent_eq_two},
with corresponding \path{eventualLowerBound} declarations.

\begin{theorem}[Circular rational parameters and an additional normalized constant]
\label{extquad:circular-rational}
Let $d\in\Q_{>0}$. For every nonzero $r\in\Q$,
\[
 \mu\left(\frac{2\arctan(\sqrt d\,r)}{\sqrt d}\right)=2,
 \qquad \mu\left(\frac{\pi}{\sqrt d}\right)=2.
\]
More generally, if $q_0,q_j,r_j\in\Q$ and
\[
 X=q_0\frac{\pi}{\sqrt d}+\sum_jq_j f_{-d}(r_j)\ne0,
\]
then $\mu(X)=2$. Every assertion also has
\eqref{extquad:all-denominators}. There is no size restriction on the
rational $r_j$ in this theorem.
\end{theorem}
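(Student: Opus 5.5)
The plan is to reduce everything to Proposition~\ref{extquad:common-scalar-closure}, using the imaginary quadratic field $E=\Q(\sqrt{-d})$. The common scalar will be $c=2\sqrt{-d}$, written as $2\ii\sqrt d$ at the distinguished embedding. When $d$ is a rational square, $E=\Q(\ii)$ and the same notation works with $\sqrt{-d}=\ii\sqrt d$. In every case $E$ is a genuine quadratic field with exactly two complex embeddings, the identity and complex conjugation. Conjugation sends $c$ to $-c$.

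\emph{The arctangent values.} For each rational $r_j$ put
\[
 A_j=\frac{1+\ii\sqrt d\,r_j}{1-\ii\sqrt d\,r_j}\in E,
\]
and $x_j=f_{-d}(r_j)=2\arctan(\sqrt d\,r_j)/\sqrt d$. At the identity embedding, $\exp(\sigma(c)x_j)=\exp(4\ii\arctan(\sqrt d r_j))$. This equals $A_j^2$ by the standard identity $e^{2\ii\arctan y}=(1+\ii y)/(1-\ii y)$, valid for every real $y$ with the continuous real branch. That identity is why no size restriction on $r_j$ is needed. To keep the scalar simple I will therefore take $c=\ii\sqrt d$, which gives $\exp(cx_j)=\exp(2\ii\arctan(\sqrt d r_j))=A_j$ directly.

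At complex conjugation, $\overline{c}=-c$ and $\overline{A_j}=A_j^{-1}$, since $A_j$ has modulus one and the conjugate of the quotient is its reciprocal. So $\exp(\overline c\,x_j)=A_j^{-1}=\overline{A_j}$, and both embeddings match.

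\emph{The normalized period.} Take $x_0=\pi/\sqrt d$ and $A_0=-1$. Then $\exp(\ii\sqrt d\cdot\pi/\sqrt d)=-1$, and at conjugation $\exp(-\ii\pi)=-1$ as well. Thus $\pi/\sqrt d$ is another admissible summand with the same common scalar $c=\ii\sqrt d$.

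\emph{Applying the proposition.} Proposition~\ref{extquad:common-scalar-closure} then gives $\mu(X)=2$ together with the all-denominator bound \eqref{extquad:all-denominators} for every nonzero rational combination $X$. The single-value claims are the special cases of one term. These terms are nonzero because $r\ne0$ makes $\arctan(\sqrt d r)\ne0$, and because $\pi\ne0$. The proposition needs only $X\ne0$ and $c\ne0$; it does not need the individual $A_j$ to differ from $1$.

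\emph{Main obstacle.} The delicate point is not analytic. It is the bookkeeping of embeddings when $d$ is a square. In that case $\sqrt{-d}$ still generates $\Q(\ii)$, so there are two embeddings rather than one, and I must confirm that the scalar $c=\ii\sqrt d$ really lies in $E$ and that conjugation negates it. I will check this by writing $E=\Q(\sqrt{-d})$ abstractly and identifying $c$ with the generator $\sqrt{-d}$ in both cases. The other care point is the branch identity $e^{2\ii\arctan y}=(1+\ii y)/(1-\ii y)$ for arbitrary real $y$. It follows from $\tan$ and the double-angle formulas, because $\arctan y\in(-\pi/2,\pi/2)$ makes $\cos(\arctan y)>0$.
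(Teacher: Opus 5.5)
Your proposal is correct and follows essentially the same route as the paper. You work in the imaginary quadratic field $\Q(\ii\sqrt d)$ with the common scalar $c=\ii\sqrt d$, match the Cayley quotients $A_r$ and the base $-1$ for $\pi/\sqrt d$ at both embeddings, and then apply Proposition~\ref{extquad:common-scalar-closure}; your initial choice $c=2\ii\sqrt d$, with bases $A_j^2$, would have worked equally well, so the mid-argument switch of scalar is only a matter of presentation.
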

\begin{proof}
Work in the actual imaginary quadratic field
$E=\Q(\ii\sqrt d)\subset\C$, and put $\delta=\ii\sqrt d$.
Every embedding sends $\delta$ to $\delta$ or $-\delta$, because
$\delta^2=-d$. For a rational $r$, use the algebraic base
\[
 A_r=\frac{1+r\delta}{1-r\delta}\in E.
\]
The denominator is nonzero: at the distinguished embedding its real
part is one. The elementary identity
\begin{equation}\label{extquad:arctan-exponential}
 \exp(2\ii\arctan t)=\frac{1+\ii t}{1-\ii t}
 \qquad(t\in\R)
\end{equation}
shows that $\exp(\delta f_{-d}(r))=A_r$ at that embedding. Under the
negative sign, the base becomes $A_r^{-1}$ and the exponential also
becomes its reciprocal. Thus every embedding matches with the same
scalar $c=\delta$. If $r\ne0$, then $\arctan(\sqrt d\,r)\ne0$, so
the common-scalar theorem yields exponent two.

For $x_0=\pi/\sqrt d$, the same scalar gives
$\exp(\delta x_0)=\exp(\ii\pi)=-1$. At the other embedding the value
is $\exp(-\ii\pi)=-1$ as well, so the algebraic base is $A_0=-1$.
This proves the assertion for $x_0$, and adjoining this one extra term
to the finite family permits Proposition~\ref{extquad:common-scalar-closure}
to prove the final statement.
\end{proof}

\paragraph{Lean correspondence.}
The actual field, generator, and two-sign embedding statement occur in
\path{QuadraticPi/CircularQuadraticLogField.lean}. The identities above
are \path{exp_generator_value}, \path{all_embeddings_match},
\path{exp_generator_period}, and \path{all_embeddings_match_period}
in the namespace \path{QuadraticPi.CircularQuadraticLogarithm}, module
\path{QuadraticPi/CircularQuadraticLogarithmTheorem.lean}.
The exact-exponent endpoints are
\path{QuadraticPi.circularQuadraticLogarithm_irrationalityExponent_eq_two},
\path{QuadraticPi.rationalNormalizedPi_irrationalityExponent_eq_two}, and
\path{QuadraticPi.circularQuadraticLogarithm_rational_sum_irrationalityExponent_eq_two}.
The source calls $\pi/\sqrt d$ \path{period}; its stated exponential is
$-1$. An additive period of $x\mapsto\exp(\ii\sqrt d\,x)$ is
$2\pi/\sqrt d$, so no factor of two is suppressed in the proof.

\subsection{Real differentiation and the convergent real series}
\label{extquad:real-analysis}

In the remainder of this section, $D$ may be any nonzero real number.
Define $f_D$ by \eqref{extquad:positive-family} for $D>0$ and by
\eqref{extquad:circular-family} with $d=-D$ for $D<0$.
Its stated real domain is
\begin{equation}\label{extquad:real-domain}
 \mathcal I_D=
 \begin{cases}
 \{x\in\R:|\sqrt D\,x|<1\},&D>0,\\
 \R,&D<0.
 \end{cases}
\end{equation}

\begin{proposition}[Derivative and differential identity]
\label{extquad:real-derivative}
For $x\in\mathcal I_D$,
\[
 1-Dx^2>0,\qquad
 f_D'(x)=\frac{2}{1-Dx^2},\qquad
 (1-Dx^2)f_D'(x)=2,
 \qquad f_D(0)=0.
\]
\end{proposition}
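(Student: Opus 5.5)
The plan is to split into the two cases $D>0$ and $D<0$ and differentiate the explicit real formulas directly. In each case I will first establish positivity of $1-Dx^2$ on $\mathcal I_D$, then compute the derivative, and then multiply through.

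For $D>0$ and $x\in\mathcal I_D$, write $y=\sqrt D\,x$, so that $|y|<1$. Then $1-Dx^2=1-y^2=(1-y)(1+y)>0$, since both factors are positive. This positivity also shows that the quotient $(1+y)/(1-y)$ is positive, so $f_D$ is a composition of differentiable real functions on the open interval $\mathcal I_D$. I will write
\[
 f_D(x)=\frac1{\sqrt D}\bigl(\log(1+\sqrt D\,x)-\log(1-\sqrt D\,x)\bigr),
\]
and this identity is valid because both arguments are positive. The chain rule then gives
\[
 f_D'(x)=\frac1{\sqrt D}\left(\frac{\sqrt D}{1+\sqrt D\,x}+\frac{\sqrt D}{1-\sqrt D\,x}\right)
 =\frac{2}{1-Dx^2}.
\]
At $x=0$ the quotient is $1$, so $f_D(0)=\log 1/\sqrt D=0$.

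For $D<0$, put $d=-D>0$. Then $1-Dx^2=1+dx^2\ge1>0$ for every real $x$. From $f_{-d}(x)=2\arctan(\sqrt d\,x)/\sqrt d$ and $\arctan'(y)=1/(1+y^2)$, the chain rule gives
\[
 f_D'(x)=\frac{2}{\sqrt d}\cdot\frac{\sqrt d}{1+dx^2}=\frac{2}{1-Dx^2}.
\]
Also $f_D(0)=2\arctan(0)/\sqrt d=0$.

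In both cases the differential identity $(1-Dx^2)f_D'(x)=2$ follows by multiplying by the nonzero factor $1-Dx^2$. There is no real obstacle here. The only point requiring care is the branch and domain bookkeeping. The splitting of the logarithm of the quotient into a difference of logarithms must be justified by positivity of both factors, not taken from a general formula. It must also be checked that $\mathcal I_D$ is open, so that the derivative is two-sided at every point of the stated domain.
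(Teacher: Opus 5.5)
Your proof is correct and follows essentially the same route as the paper: split by the sign of $D$, use positivity of $1\pm\sqrt D\,x$ on $\mathcal I_D$, apply the chain rule, and for $D<0$ differentiate the arctangent. The only difference is inessential: the paper applies the quotient rule to $(1+\sqrt D\,x)/(1-\sqrt D\,x)$ and uses the logarithmic derivative, whereas you first split the logarithm into a difference, which you correctly justify by positivity of both factors.
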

\begin{proof}
If $D>0$, put $s=\sqrt D$. The two factors $1\pm sx$ are positive
on $\mathcal I_D$, and hence $(1-sx)(1+sx)=1-Dx^2>0$.
With $Q(x)=(1+sx)/(1-sx)$, the quotient rule gives
\[
 Q'(x)=\frac{2s}{(1-sx)^2},\qquad
 \frac1s\frac{Q'(x)}{Q(x)}=\frac2{1-Dx^2}.
\]
The real logarithm is differentiable at the positive value $Q(x)$,
which proves the desired formula.

For $D=-d<0$, the denominator is $1+dx^2>0$ for every real $x$.
The chain rule for the arctangent gives
\[
 \frac{d}{dx}\left(\frac{2\arctan(\sqrt d\,x)}{\sqrt d}\right)
 =\frac2{1+(\sqrt d\,x)^2}=\frac2{1+dx^2}.
\]
Multiplication by the nonzero denominator proves the differential
identity in either case. Direct substitution gives $f_D(0)=0$.
\end{proof}

\paragraph{Lean correspondence.}
The module \path{QuadraticPi/QuadraticLogarithmicFamilyDerivative.lean}
defines the real functions \path{positiveValue}, \path{circularValue},
\path{value}, and the domain predicate \path{Admissible}, all under
\path{QuadraticPi.QuadraticLogarithmicFamily}. Its endpoints are
\path{value_hasDerivAt}, \path{value_deriv}, \path{value_ode}, and
\path{value_zero}; \path{value_pos_rat} and \path{value_neg_rat}
identify these real functions with the arithmetic rational-input
definitions. The domain predicate excludes $D=0$. No derivative-two
claim at that excluded parameter is part of the theorem.

\begin{theorem}[The real power series]
\label{extquad:real-taylor}
For $D\in\R\setminus\{0\}$ and $|D|x^2<1$,
\begin{equation}\label{extquad:real-series}
 f_D(x)=2\sum_{n=0}^{\infty}\frac{D^n x^{2n+1}}{2n+1},
\end{equation}
and the displayed series is absolutely convergent.
\end{theorem}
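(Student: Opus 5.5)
The plan is to compare $f_D$ with the series
\[
 g_D(x)=2\sum_{n\ge0}\frac{D^nx^{2n+1}}{2n+1}
\]
through their derivatives, using Proposition~\ref{extquad:real-derivative}. We restrict throughout to the open interval $J_D=\{x\in\R:|D|x^2<1\}$. This interval is contained in the domain $\mathcal I_D$ of \eqref{extquad:real-domain}. For $D>0$ the two conditions coincide, since $|\sqrt D\,x|<1$ is exactly $Dx^2<1$. For $D<0$ the domain $\mathcal I_D$ is all of $\R$, so $J_D$ is simply a subinterval. In both cases $J_D$ is an open interval containing $0$.

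First we establish absolute convergence. Put $y=|D|x^2<1$. The $n$th term of the absolute series is at most $2|x|\,y^n$, so the ratio or comparison test with a geometric series gives absolute convergence on $J_D$. It also gives uniform convergence on every compact subinterval $|D|x^2\le\rho<1$.

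Next we identify derivatives. The termwise-differentiated series is
\[
 2\sum_{n\ge0}D^nx^{2n}=\frac{2}{1-Dx^2},
\]
a geometric series with ratio $Dx^2$ satisfying $|Dx^2|<1$. It too converges uniformly on compact subintervals. The standard theorem on differentiating uniformly convergent series of functions (or power-series differentiation inside the radius of convergence, viewed as a power series in $x$ with radius $|D|^{-1/2}$) then shows that $g_D$ is differentiable on $J_D$ with
\[
 g_D'(x)=\frac{2}{1-Dx^2}.
\]
By Proposition~\ref{extquad:real-derivative}, $f_D'(x)=2/(1-Dx^2)$ on $\mathcal I_D\supseteq J_D$ as well. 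Hence $f_D-g_D$ has zero derivative on the interval $J_D$, so it is constant there by the mean value theorem. Evaluating at $x=0$, where $f_D(0)=0=g_D(0)$, shows the constant is zero, which proves \eqref{extquad:real-series}.

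The only delicate step is the justification of termwise differentiation together with the containment $J_D\subseteq\mathcal I_D$. Both are routine, so no real obstacle is expected. In the formal setting, the convenient route is to invoke the library's power-series derivative lemma with radius $|D|^{-1/2}$, rather than arguing directly with uniform convergence.
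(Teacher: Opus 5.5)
Your proof is correct, but it takes a different route from the paper. The paper splits by the sign of $D$. For $D>0$ it substitutes $t=\sqrt D\,x$ into the classical expansion $\log(1+t)-\log(1-t)=2\sum_{n\ge0}t^{2n+1}/(2n+1)$, using positivity of $1\pm t$ to rewrite the left side as $\log((1+t)/(1-t))$. For $D<0$ it substitutes $t=\sqrt d\,x$ into the arctangent series. In both cases it then divides by the square root and uses $(\sqrt D\,x)^{2n+1}=\sqrt D\,D^nx^{2n+1}$, respectively $(-d)^n=D^n$; absolute convergence comes from the same geometric bound $2|x|\rho^n$ that you use. You instead characterize both sides by the initial-value problem $(1-Dx^2)y'=2$, $y(0)=0$ on the connected interval $J_D$. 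This moves the sign case split entirely into the already proved Proposition~\ref{extquad:real-derivative}, and in effect you rederive the two classical series rather than quote them. The price is that you need termwise differentiation of a power series of radius $|D|^{-1/2}$ and the zero-derivative-implies-constant step. The paper's route needs no differentiability of the series at all, and it matches the formal proof, which invokes Mathlib's \nolinkurl{Real.hasSum_log_sub_log_of_abs_lt_one} and \nolinkurl{Real.hasSum_arctan} directly.
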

\begin{proof}
For $D>0$, set $t=\sqrt D\,x$, so $|t|<1$. The standard convergent
logarithm series gives
\[
 \log(1+t)-\log(1-t)=2\sum_{n\ge0}\frac{t^{2n+1}}{2n+1}.
\]
Both $1+t$ and $1-t$ are positive, so the left side is exactly
$\log((1+t)/(1-t))$. Divide by $\sqrt D$ and use
\[
 (\sqrt D\,x)^{2n+1}=\sqrt D\,D^n x^{2n+1}
\]
to obtain \eqref{extquad:real-series}.
For $D=-d<0$, use instead
$\arctan t=\sum_{n\ge0}(-1)^nt^{2n+1}/(2n+1)$ with
$t=\sqrt d\,x$ and $|t|<1$. Multiplication by $2/\sqrt d$ gives
the coefficient $(-d)^n=D^n$.

For either sign, put $\rho=|D|x^2<1$. The absolute value of the
$n$th summand is at most $2|x|\rho^n$, so a convergent geometric
series proves absolute convergence. The circular function itself is
defined for all real $x$; the series equality asserted here uses the
smaller open interval $|D|x^2<1$.
\end{proof}

\paragraph{Lean correspondence.}
In \path{QuadraticPi/QuadraticLogarithmicFamilyTaylor.lean},
\path{positiveValue_hasSum} uses
\path{Real.hasSum_log_sub_log_of_abs_lt_one}, and
\path{circularValue_hasSum} uses \path{Real.hasSum_arctan}.
The combined declarations in \path{QuadraticPi.QuadraticLogarithmicFamily}
are \path{value_hasSum}, \path{value_eq_tsum}, and
\path{value_series_absolutely_summable}. Their Lean hypothesis
$|Dx^2|<1$ is equivalent to the real condition above.

\subsection{The principal complex branch and its derivative}
\label{extquad:complex-analysis}

Fix $D\in\R\setminus\{0\}$ and prescribe the square root
\[
 \delta_D=\begin{cases}\sqrt D,&D>0,\\ \ii\sqrt{-D},&D<0.
 \end{cases}
 \qquad \delta_D^2=D,\quad \delta_D\ne0.
\]
Let $\operatorname{Log}$ denote the principal complex logarithm and set
\begin{equation}\label{extquad:complex-function}
 F_D(z)=\frac1{\delta_D}\operatorname{Log}
       \frac{1+\delta_Dz}{1-\delta_Dz}.
\end{equation}
The logarithm is holomorphic on the slit plane
$\C\setminus(-\infty,0]$. All complex differentiability assertions
below refer to the displayed principal branch.

\begin{proposition}[Agreement with the real branches]
\label{extquad:branch-agreement}
For every $x\in\mathcal I_D$, the value $F_D(x)$ is real and equals
$f_D(x)$. When $D<0$, this holds for every finite real $x$.
\end{proposition}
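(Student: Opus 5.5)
The proof splits by the sign of $D$. In each case the complex argument $Q(x)=(1+\delta_Dx)/(1-\delta_Dx)$ is computed explicitly, shown to lie in the slit plane, and its principal logarithm is identified with the real expression defining $f_D(x)$.

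\emph{Case $D>0$.} Here $\delta_D=\sqrt D$ is real. For $x\in\mathcal I_D$ both $1\pm\sqrt D\,x$ are positive, as in Proposition~\ref{extquad:real-derivative}, so $Q(x)$ is a positive real number. The principal logarithm agrees with the real logarithm on $(0,\infty)$. Hence $F_D(x)=\log Q(x)/\sqrt D=f_D(x)$, which is real.

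\emph{Case $D=-d<0$.} Here $\delta_D=\ii\sqrt d$. Put $t=\sqrt d\,x\in\R$, so that $Q(x)=(1+\ii t)/(1-\ii t)$. The denominator has real part one and never vanishes. The quotient has modulus one, since numerator and denominator are complex conjugates. By \eqref{extquad:arctan-exponential} it equals $\exp(2\ii\arctan t)$. Because $|\arctan t|<\pi/2$, the exponent $2\ii\arctan t$ has imaginary part in $(-\pi,\pi)$. Therefore $Q(x)$ avoids $(-\infty,0]$, and the principal logarithm returns exactly this exponent:
\[
 \operatorname{Log}Q(x)=2\ii\arctan t .
\]
This uses the characterization $\operatorname{Log}e^w=w$ for $\operatorname{Im}w\in(-\pi,\pi]$. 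Dividing by $\delta_D=\ii\sqrt d$ gives $F_D(x)=2\arctan(\sqrt d\,x)/\sqrt d=f_{-d}(x)$, which is real, for every real $x$.

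The main point needing care is the branch identification in the circular case. One must check that the open interval bound $|2\arctan t|<\pi$ keeps the argument strictly inside the principal strip, so that no multiple of $2\pi\ii$ is lost and the result holds for all finite real $x$, not only for $|t|<1$. The unbounded endpoint behaviour, with the argument tending to $\pm\pi$, is not attained, so no extra care is needed at finite $x$.
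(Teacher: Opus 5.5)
Your proposal is correct and follows essentially the same route as the paper. For $D>0$ you use positivity of the quotient. For $D<0$ you use the identity $(1+\ii t)/(1-\ii t)=\exp(2\ii\arctan t)$ together with the strict bound $|2\arctan t|<\pi$, which pins down the principal logarithm with no $2\pi\ii$ ambiguity and places the quotient in the slit plane for every real $t$.
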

\begin{proof}
For $D>0$, the quotient is positive real, so its principal complex
logarithm is the real logarithm. Suppose $D=-d<0$ and put
$t=\sqrt d\,x$. Identity~\eqref{extquad:arctan-exponential} gives
\[
 \frac{1+\ii t}{1-\ii t}=\exp(2\ii\arctan t).
\]
The strict inequalities $-\pi/2<\arctan t<\pi/2$ imply
$-\pi<2\arctan t<\pi$. Therefore the principal logarithm of this
exponential is exactly $2\ii\arctan t$, with no multiple of $2\pi\ii$.
Dividing by $\ii\sqrt d$ yields $f_{-d}(x)$.
These inequalities also show that the quotient lies in the slit plane
for every real $t$.
\end{proof}

\paragraph{Lean correspondence.}
The definitions \path{complexRoot} and \path{complexValue}, and the
lemmas \path{principal_log_cayley}, \path{cayley_mem_slitPlane},
\path{circularValue_complex_branch}, \path{positiveValue_complex_branch},
and \path{value_complex_branch}, are in the namespace
\path{QuadraticPi.QuadraticLogarithmicFamily}, module
\path{QuadraticPi/QuadraticLogarithmicFamilyComplexBranch.lean}.
The key branch step applies \path{Complex.log_exp} with the explicit
arctangent bounds; agreement is not inferred from equality of exponentials
alone.

\begin{theorem}[Principal-branch differentiation on a proved domain]
\label{extquad:complex-derivative}
If
\[
 \frac{1+\delta_Dz}{1-\delta_Dz}\in\C\setminus(-\infty,0],
\]
then $1-Dz^2\ne0$ and
\[
 F_D'(z)=\frac2{1-Dz^2}.
\]
In particular, $F_D$ is holomorphic on the open disc
\begin{equation}\label{extquad:complex-disc}
 \mathcal D_D=\{z\in\C:|\delta_Dz|<1\},
\end{equation}
and there it satisfies $(1-Dz^2)F_D'(z)=2$ and $F_D(0)=0$.
\end{theorem}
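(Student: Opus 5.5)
The plan is to differentiate the composition $F_D=\delta_D^{-1}\,\operatorname{Log}\circ Q$, where $Q(z)=(1+\delta_Dz)/(1-\delta_Dz)$, using the chain rule at a point where $Q(z)$ lies in the slit plane. The first step is to check that $Q$ is defined and differentiable there. If $1-\delta_Dz=0$, the quotient is undefined; with the convention that division by zero gives $0$, the value $0$ lies on the excluded ray $(-\infty,0]$, so the hypothesis already rules this out. Similarly $1+\delta_Dz\ne0$, since otherwise $Q(z)=0\notin\C\setminus(-\infty,0]$. Because $(1-\delta_Dz)(1+\delta_Dz)=1-\delta_D^2z^2=1-Dz^2$ and $\delta_D^2=D$, both factors being nonzero gives $1-Dz^2\ne0$.

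Next, compute the derivative with the quotient rule:
\[
 Q'(z)=\frac{2\delta_D}{(1-\delta_Dz)^2}.
\]
The principal logarithm is holomorphic on the slit plane with derivative $1/w$, so
\[
 F_D'(z)=\frac1{\delta_D}\cdot\frac{Q'(z)}{Q(z)}
 =\frac{2}{(1-\delta_Dz)(1+\delta_Dz)}=\frac2{1-Dz^2}.
\]
This is the same algebra as in Proposition~\ref{extquad:real-derivative}, now carried out over $\C$.

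For the disc statement, the main obstacle is to show that $|\delta_Dz|<1$ forces $Q(z)$ into the slit plane; this is the only geometric input. I would write $w=\delta_Dz$ with $|w|<1$ and show $\operatorname{Re}Q>0$. Multiplying numerator and denominator by $\overline{1-w}$ gives
\[
 \operatorname{Re}\frac{1+w}{1-w}=\frac{1-|w|^2}{|1-w|^2}>0,
\]
since the imaginary cross terms cancel. The open right half-plane is contained in $\C\setminus(-\infty,0]$. So the first part applies at every point of $\mathcal D_D$, which proves differentiability there; $\mathcal D_D$ is open as the preimage of an open disc, so $F_D$ is holomorphic on it.

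Finally, multiply the derivative formula by the nonzero factor $1-Dz^2$ to obtain $(1-Dz^2)F_D'(z)=2$. For the normalization, $Q(0)=1$ and $\operatorname{Log}1=0$ give $F_D(0)=0$.
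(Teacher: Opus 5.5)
Your proposal is correct and matches the paper's proof essentially step for step. It derives nonvanishing of $1\pm\delta_Dz$ from slit-plane membership, applies the quotient and chain rules with the principal-logarithm derivative, and uses the computation $\operatorname{Re}\frac{1+w}{1-w}=\frac{1-|w|^2}{|1-w|^2}>0$ to place the disc inside the slit plane. Your explicit appeal to the division-by-zero convention makes precise what the paper leaves implicit when it says membership of the quotient in the slit plane forces both $1-w$ and $1+w$ to be nonzero.
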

\begin{proof}
Write $w=\delta_Dz$. Membership of the quotient in the slit plane
ensures that the quotient is nonzero and that both $1-w$ and $1+w$
are nonzero. Hence
\[
 1-Dz^2=(1-w)(1+w)\ne0.
\]
The complex quotient and chain rules, followed by the derivative of
the principal logarithm on the slit plane, give
\[
 F_D'(z)
 =\frac1{\delta_D}
   \frac{2\delta_D}{(1-w)^2}
   \frac{1-w}{1+w}
 =\frac2{1-Dz^2}.
\]
To check the entire disc, rationalize the complex denominator:
\[
 \operatorname{Re}\frac{1+w}{1-w}
 =\frac{1-|w|^2}{|1-w|^2}>0
 \qquad(|w|<1).
\]
Thus the quotient lies in the right half-plane and in particular in
the slit plane. The set \eqref{extquad:complex-disc} is open and contains
zero. The derivative theorem holds at each of its points, giving
holomorphy and the differential identity. Substitution at zero gives
$\operatorname{Log}1/\delta_D=0$.
\end{proof}

\paragraph{Lean correspondence.}
The source \path{QuadraticPi/QuadraticLogarithmicFamilyHolomorphic.lean}
defines \path{complexDisc} and proves \path{complexDisc_isOpen},
\path{zero_mem_complexDisc}, \path{disc_quotient_re_pos},
\path{disc_quotient_mem_slitPlane}, and \path{complex_denominator_ne_zero}.
The stronger pointwise derivative is
\path{QuadraticPi.QuadraticLogarithmicFamily.complexValue_hasDerivAt_of_slitPlane}.
Its disc corollaries are \path{complexValue_hasDerivAt},
\path{complexValue_differentiableOn}, \path{complexValue_deriv}, and
\path{complexValue_ode}; the basepoint identity is \path{complexValue_zero}.

\subsection{The complex Taylor expansion of the specified branch}
\label{extquad:complex-taylor}

\begin{theorem}[Complex Taylor expansion with absolute convergence]
\label{extquad:complex-series-theorem}
For real $D\ne0$ and $z\in\mathcal D_D$,
\begin{equation}\label{extquad:complex-series}
 F_D(z)=2\sum_{n=0}^{\infty}\frac{D^nz^{2n+1}}{2n+1},
 \qquad
 \sum_{n=0}^{\infty}
 \left|\frac{2D^nz^{2n+1}}{2n+1}\right|<\infty.
\end{equation}
The sum is the same principal-logarithm function as in
\eqref{extquad:complex-function}.
\end{theorem}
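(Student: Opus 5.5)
The plan is to compare two holomorphic functions on the disc $\mathcal D_D$ through their common differential equation, rather than manipulating principal logarithms of sums directly. Write $w=\delta_Dz$, so that $|w|<1$ on $\mathcal D_D$ and $D^nz^{2n+1}=\delta_D^{-1}w^{2n+1}$ (using $\delta_D^2=D$).

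\emph{Absolute convergence.} This is the routine part. The $n$th term has modulus at most $2|z|\,|w|^{2n}$, since $|D^n|\,|z|^{2n}=|\delta_Dz|^{2n}$. The geometric bound therefore gives absolute convergence, locally uniformly on $\mathcal D_D$. Hence the series defines a holomorphic function $S(z)$ on the disc, and it may be differentiated termwise:
\[
 S'(z)=2\sum_{n\ge0}D^nz^{2n}=\frac{2}{1-Dz^2}.
\]
Here $|Dz^2|=|w|^2<1$.

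\emph{Identification with $F_D$.} Theorem~\ref{extquad:complex-derivative} gives $F_D'(z)=2/(1-Dz^2)$ on $\mathcal D_D$ and $F_D(0)=0$; also $S(0)=0$. Consequently $F_D-S$ has zero derivative on the open disc. The disc is connected (indeed convex), so $F_D-S$ is constant, and evaluating at $0$ shows the constant is zero. This gives the equality in \eqref{extquad:complex-series} for the principal-branch function \eqref{extquad:complex-function} itself. No claim about $\operatorname{Log}(1+w)-\operatorname{Log}(1-w)$ is needed.

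\emph{Expected obstacle.} The main care point is branch bookkeeping. The differentiation must be for exactly the displayed principal-logarithm expression, which is what Theorem~\ref{extquad:complex-derivative} supplies by placing the Cayley quotient in the right half-plane. A secondary point is justifying termwise differentiation of the power series inside its radius of convergence $|\delta_D|^{-1}$. I would invoke the standard fact that a power series is holomorphic on its open disc of convergence with the differentiated series as derivative; here the radius is exactly the radius of $\mathcal D_D$.

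As a consistency check, the restriction to real $x$ with $|D|x^2<1$ should recover Theorem~\ref{extquad:real-taylor} via Proposition~\ref{extquad:branch-agreement}.
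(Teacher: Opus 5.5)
Your proof is correct, but it takes a different route from the paper's. The paper never differentiates. For $|w|<1$ it applies the complex arctangent series at $\zeta=-\ii w$. It then uses the principal-logarithm representation $\operatorname{Arctan}\zeta=-\frac{\ii}{2}\operatorname{Log}\frac{1+\ii\zeta}{1-\ii\zeta}$, which is built into that series theorem, to obtain $\operatorname{Log}\frac{1+w}{1-w}=\sum_{n\ge0}2w^{2n+1}/(2n+1)$ directly. Finally it substitutes $w=\delta_Dz$ and divides by $\delta_D$. Absolute convergence comes from the same geometric bound you use.

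You instead combine three ingredients: Theorem~\ref{extquad:complex-derivative}, termwise differentiation of the power series on its disc of convergence (which is exactly $\mathcal D_D$), and the fact that a function with vanishing derivative on a connected open set is constant.

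The paper's route is independent of the derivative theorem and needs no differentiation theory. However, its branch correctness is inherited from the convention of the cited arctangent series, which has to agree with the principal logarithm in \eqref{extquad:complex-function}. Your route is self-contained within the paper. It reduces all branch bookkeeping to the right-half-plane placement of the Cayley quotient already established in Theorem~\ref{extquad:complex-derivative}. The cost is that the Taylor theorem then depends logically on that theorem and on the standard holomorphy of power series.
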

\begin{proof}
First let $|w|<1$. Apply the convergent complex arctangent series to
$\zeta=-\ii w$:
\[
 \operatorname{Arctan}\zeta
 =\sum_{n\ge0}\frac{(-1)^n\zeta^{2n+1}}{2n+1}.
\]
With the principal-logarithm convention used in that series theorem,
\[
 \operatorname{Arctan}\zeta
 =-\frac{\ii}{2}\operatorname{Log}
      \frac{1+\ii\zeta}{1-\ii\zeta}.
\]
Consequently $2\ii\operatorname{Arctan}(-\ii w)
=\operatorname{Log}((1+w)/(1-w))$.
For each $n$,
\[
 2\ii(-1)^n(-\ii w)^{2n+1}=2w^{2n+1},
\]
so multiplication of the convergent series by $2\ii$ yields
\[
 \operatorname{Log}\frac{1+w}{1-w}
 =\sum_{n\ge0}\frac{2w^{2n+1}}{2n+1}.
\]
This derivation identifies the actual principal logarithm and does not
use an unrestricted identity $\operatorname{Log}(a/b)
=\operatorname{Log}a-\operatorname{Log}b$.

Now take $w=\delta_Dz$ and divide by $\delta_D\ne0$. The relation
$\delta_D^2=D$ gives
\[
 (\delta_Dz)^{2n+1}=\delta_D D^n z^{2n+1},
\]
which proves the first identity in \eqref{extquad:complex-series}.
Also $\rho=|D||z|^2=|\delta_Dz|^2<1$, and the absolute value of its
$n$th summand is bounded by $2|z|\rho^n$. The geometric series proves
the second assertion.
\end{proof}

\paragraph{Lean correspondence.}
The source \path{QuadraticPi/QuadraticLogarithmicFamilyComplexTaylor.lean}
proves \path{rotated_arctan_odd_pow}, \path{principal_log_cayley_hasSum},
and \path{complexRoot_mul_odd_pow} in the namespace
\path{QuadraticPi.QuadraticLogarithmicFamily}. The final convergence,
sum identity, and absolute convergence statements are, respectively,
\path{complexValue_hasSum}, \path{complexValue_eq_tsum}, and
\path{complexValue_series_absolutely_summable}. The input series theorem
is Mathlib's \path{Complex.hasSum_arctan}.

\begin{remark}[Scope of the analytic identification]
\label{extquad:scope}
The arithmetic exponent-two results above apply to rational parameters
under their stated nonvanishing and branch hypotheses. The analytic
proofs establish the displayed functions, derivatives, and convergent
series for nonzero real parameters. The circular real branch is identified
with the principal complex expression on the entire real axis, whereas
the Taylor statements use the open convergence disc. No assertion about
boundary sums, a classification of maximal complex continuation domains,
or uniqueness for the differential equation is needed for these results
or included in their stated formal endpoints.
\end{remark}
% ===== END verified_quadratic_analysis.tex =====

% ===== BEGIN verified_extensions.tex =====
\section{Verified field reductions and reciprocal logarithms}
\label{ext:verified-field-reductions}

This section supplies a detailed verification companion to
Theorem~\ref{conj:invariant} and Corollaries~\ref{conj:trace}
and~\ref{conj:character}. It records the additional algebraic arguments
that have now been checked in Lean, including actual intermediate fields,
the counts of their embeddings, explicit polynomial identities, and the
choice of the real square-root branch. The invariant-field bound,
reciprocal-trace consequences, and normalized character-sum application
are checked. The converse classification of all matching embeddings is
also checked relative to the explicit standard Gel'fond--Schneider
input; that input is not needed for the direct upper-bound proof.

All filenames in this section are relative to
\nolinkurl{anc/lean/QuadraticPi/} in the accompanying
source distribution. Unless another namespace is specified, a module
\nolinkurl{M.lean} uses the namespace \nolinkurl{QuadraticPi.M}.
The sources and their separate \nolinkurl{#check} and
\nolinkurl{#print axioms} audit files give the exact Lean types.

\subsection{The precise approximation conclusion}
\label{ext:approximation-conclusion}

For clarity, the upper estimates proved in the new modules are first
established in the following stronger, explicit form. For a real number
$x$ and a finite real cutoff $\kappa$, let $\mathcal E(x,\kappa)$ denote
\begin{equation}\label{ext:eventual-definition}
 \forall\nu>\kappa\quad \exists Q\ge2\quad
 \forall p\in\Z\quad\forall q\in\Z_{\ge Q},\qquad
 q^{-\nu}\le\left|x-\frac pq\right|.
\end{equation}
No coprimality condition is imposed on $p,q$. In Lean this predicate is
\nolinkurl{QuadraticPi.EventualLowerBoundAbove}.

The passage from~\eqref{ext:eventual-definition} to an exponent bound
has two logically distinct parts. First $x$ is irrational: if
$x=p_0/q_0$, arbitrarily large multiples $(np_0,nq_0)$ give zero error,
contradicting the positive lower bound. Second, for each $\nu>\kappa$,
every reduced rational approximation with error less than $q^{-\nu}$
has denominator less than the corresponding $Q$. There are only
finitely many such rationals in a bounded neighborhood of $x$.
Consequently the approximation spectrum is bounded above by $\kappa$.
Dirichlet's theorem supplies the lower endpoint $2$. Thus
\begin{equation}\label{ext:eventual-to-exponent}
 \mathcal E(x,\kappa)\quad\Longrightarrow\quad
 x\notin\Q,\qquad 2\le\mu(x)\le\kappa.
\end{equation}
This order of reasoning avoids using the real-valued supremum before
its nonempty, bounded approximation spectrum has been established.

\paragraph{Lean correspondence.}
The definition and the implication are in
\nolinkurl{GeneralExponentConclusion.lean}, in the namespace
\nolinkurl{QuadraticPi}; the relevant declarations
are \nolinkurl{irrational_of_eventualLowerBoundAbove},
\nolinkurl{bddAbove_approximationExponents_of_eventualLowerBoundAbove},
and \nolinkurl{irrationalityExponent_bounds_of_eventualLowerBoundAbove}.
The Diophantine input used below is the previously checked partial
embedding theorem
\nolinkurl{QuadraticPi.partialScalarMatchingLogarithm_eventualLowerBound}
in \nolinkurl{PartialEmbeddingLogarithmTheorem.lean}. The field
reductions do not replace that input by an assumption about a
logarithm's exponent.

\subsection{Invariant coordinates and compatible embedding actions}
\label{ext:invariant-coordinates}

Let $E$ be a field containing $\Q$, let $a,c\in E$ with $c\ne0$, and
let $\iota,\sigma:E\longrightarrow\C$ be field embeddings. Define
the actual intermediate field
\begin{equation}\label{ext:invariant-field}
 K=\Q\bigl(c^2,\ a+a^{-1},\ c(a-a^{-1})\bigr)\subseteq E.
\end{equation}
For applications $a$ is nonzero; the coordinate lemma in Lean is even
formulated without this additional hypothesis, using the total inverse
operation in a field. Throughout the mathematical discussion of
logarithms we retain $a\ne0$.

\begin{proposition}[Exact coordinate criterion]\label{ext:coordinate-criterion}
The embeddings $\sigma$ and $\iota$ agree on $K$ if and only if
one of the following two compatible actions holds:
\begin{align*}
 &\sigma(c)=\iota(c),&\quad \sigma(a)&=\iota(a),\\
 &\sigma(c)=-\iota(c),&\quad \sigma(a)&=\iota(a)^{-1}.
\end{align*}
If $x\in\R$ satisfies
$\exp(\iota(c)x)=\iota(a)$, every such $\sigma$ also satisfies
$\exp(\sigma(c)x)=\sigma(a)$.
\end{proposition}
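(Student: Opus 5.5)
The plan is a direct algebraic verification on the three generators of $K$, followed by a short solve for $(\sigma(c),\sigma(a))$ and then the exponential transfer. Agreement on $K$ is equivalent to agreement on its three generators, since $K$ is generated over $\Q$ by them and embeddings fix $\Q$. So I would first reduce the criterion to the system
\[
 \sigma(c)^2=\iota(c)^2,\qquad
 \sigma(a)+\sigma(a)^{-1}=\iota(a)+\iota(a)^{-1},\qquad
 \sigma(c)\bigl(\sigma(a)-\sigma(a)^{-1}\bigr)=\iota(c)\bigl(\iota(a)-\iota(a)^{-1}\bigr).
\]

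\emph{The easy direction.} Substituting either displayed action into the three equations verifies them immediately. The first action is trivial. For the second, $(-\iota(c))^2=\iota(c)^2$, the sum $\iota(a)^{-1}+\iota(a)$ is symmetric, and
\[
 -\iota(c)\bigl(\iota(a)^{-1}-\iota(a)\bigr)=\iota(c)\bigl(\iota(a)-\iota(a)^{-1}\bigr).
\]

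\emph{The converse.} The first equation gives $\sigma(c)=\varepsilon\iota(c)$ with $\varepsilon=\pm1$. Since $c\ne0$, both $\iota(c)$ and $\sigma(c)$ are nonzero, so the third equation becomes
\[
 \sigma(a)-\sigma(a)^{-1}=\varepsilon\bigl(\iota(a)-\iota(a)^{-1}\bigr).
\]
Adding this to the second equation and halving gives:
\begin{itemize}
\item for $\varepsilon=1$, $\sigma(a)=\iota(a)$;
\item for $\varepsilon=-1$, $\sigma(a)=\iota(a)^{-1}$.
\end{itemize}
This is the step to watch. It needs division by $\iota(c)$ and by $2$, both available in characteristic zero with $c\ne0$. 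It also needs $a\ne0$ so that inverses behave, which we retain throughout. One might worry that the sum and difference determine $\sigma(a)$ only up to the involution; that worry is removed by the fact that the sign $\varepsilon$ is already fixed by the $c$-equation. Nothing about $a$ beyond nonvanishing is used.

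\emph{The exponential transfer.} For the first action, $\exp(\sigma(c)x)=\exp(\iota(c)x)=\iota(a)=\sigma(a)$. For the second action,
\[
 \exp(\sigma(c)x)=\exp(-\iota(c)x)=\exp(\iota(c)x)^{-1}=\iota(a)^{-1}=\sigma(a),
\]
using $\exp(-z)=\exp(z)^{-1}$. No branch choice arises, because the identity concerns the given real $x$ and actual exponentials, not logarithms of $\sigma(a)$. I expect no genuine obstacle here; the only care needed is bookkeeping of the nonvanishing hypotheses in the converse.
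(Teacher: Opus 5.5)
Your proposal is correct and matches the paper's proof essentially step for step. Both reduce agreement on $K$ to the three generator equations, use $c\ne0$ to get $\sigma(c)=\pm\iota(c)$, cancel $\iota(c)$ in the third equation and add the result to the second to solve for $\sigma(a)$, verify the converse by substitution, and transfer the exponential identity via $\exp(-z)=\exp(z)^{-1}$. One minor remark: adding the two equations cancels the $\sigma(a)^{-1}$ terms formally, so nonvanishing of $a$ is not actually used at that step, which is why the paper notes that the Lean version of this coordinate lemma omits that hypothesis.
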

\begin{proof}
Agreement on $K$ is equivalent to agreement on its three displayed
generators: addition, multiplication and inversion preserve agreement,
and both embeddings agree on $\Q$. Write
$A=\iota(a)$, $B=\sigma(a)$, $C=\iota(c)$ and $C'=\sigma(c)$.
The generator equalities are
\[
 (C')^2=C^2,\qquad B+B^{-1}=A+A^{-1},\qquad
 C'(B-B^{-1})=C(A-A^{-1}).
\]
Since $C\ne0$, the first equality gives $C'=C$ or $C'=-C$.
In the first case cancel $C$ in the third equality and add it to
the second; this gives $2B=2A$. In the second case the same cancellation
gives $B-B^{-1}=-(A-A^{-1})$, so adding the second equality gives
$2B=2A^{-1}$. These are exactly the asserted actions. Conversely,
substitution of either action fixes all three generators.

The first action preserves the exponential equation directly. For the
second action use
\[
 \exp(-\iota(c)x)=\exp(\iota(c)x)^{-1}=\iota(a)^{-1}.
\]
No statement about other matching embeddings is needed in this argument.
\end{proof}

If $E=\Q(a,c)$, then $E=K(c)$, because
\begin{equation}\label{ext:recover-a}
 a=\frac12\left((a+a^{-1})+
                   \frac{c(a-a^{-1})}{c}\right).
\end{equation}
Also $c$ is a root of the monic polynomial $X^2-c^2\in K[X]$.
It follows that $E/K$ is finite and
\begin{equation}\label{ext:index-one-two}
                         [E:K]\in\{1,2\}.
\end{equation}
The finite-dimensionality assertion matters in the implementation:
Lean's finite-rank value is zero for an infinite-dimensional vector
space, so proving only a numerical inequality $\operatorname{finrank}
\le2$ would not suffice. The construction $E=K(c)$ and integrality
of $c$ establish finite-dimensionality and positive rank explicitly.

\paragraph{Lean correspondence.}
In \nolinkurl{ScalarMatchingInvariantField.lean}, the definitions are
\nolinkurl{invariantField} and \nolinkurl{FixesInvariantField}.
The coordinate and action equivalences are
\nolinkurl{fixes_iff_coordinates},
\nolinkurl{coordinates_iff_actions}, and
\nolinkurl{fixes_iff_actions}; the exponential implication is
\nolinkurl{fixing_embedding_matches}. Formula~\eqref{ext:recover-a}
is used in \nolinkurl{adjoin_invariantField_c_eq_top}.
The degree argument is recorded by
\nolinkurl{c_isIntegral_invariantField},
\nolinkurl{finrank_invariantField_le_two},
\nolinkurl{finrank_invariantField_pos},
\nolinkurl{finiteDimensional_invariantField}, and
\nolinkurl{finrank_invariantField_eq_one_or_two}.

\subsection{Counting a proved matching set and cancelling the field degree}
\label{ext:invariant-bound}

\begin{theorem}[Verified invariant-field bound]\label{ext:invariant-bound-theorem}
Let $E$ be a number field, $a,c\in E$, $c\ne0$, and let
$\iota:E\longrightarrow\C$ be an embedding. Suppose $x\in\R\setminus\{0\}$
and $\exp(\iota(c)x)=\iota(a)$. With $K$ defined
by~\eqref{ext:invariant-field}, one has
\[
 \mathcal E(x,2[K:\Q]),\qquad
 x\notin\Q,\qquad 2\le\mu(x)\le2[K:\Q].
\]
In particular, $[K:\Q]=1$ implies $\mu(x)=2$.
The hypothesis $E=\Q(a,c)$ is not required.
\end{theorem}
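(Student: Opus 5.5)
The plan is to reduce Theorem~\ref{ext:invariant-bound-theorem} to the partial matching theorem (Theorem~\ref{conj:partial}, in its eventual-lower-bound form \nolinkurl{partialScalarMatchingLogarithm_eventualLowerBound}) applied to the number field $E$ itself, and then cancel the field degree by a tower-law count. First, since $\exp(\iota(c)x)=\iota(a)$, the element $a$ is nonzero, so $a^{-1}$ is the genuine inverse and $K$ in \eqref{ext:invariant-field} is an actual subfield of $E$. Let $S$ be the set of embeddings $\sigma:E\hookrightarrow\C$ that agree with $\iota$ on $K$. By Proposition~\ref{ext:coordinate-criterion}, every $\sigma\in S$ satisfies $\exp(\sigma(c)x)=\sigma(a)$. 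Thus $S$ is a nonempty set of matching embeddings, and it contains $\iota$.

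Next we count $S$. The embeddings of $E$ extending the fixed embedding $\iota|_K:K\hookrightarrow\C$ number exactly $[E:K]$, because $E/K$ is a finite separable extension in characteristic zero and $\C$ is algebraically closed. Hence $s=|S|\ge[E:K]$. We do not need equality, and we do not need $E=\Q(a,c)$: we never use the index-one-or-two statement \eqref{ext:index-one-two}, only the extension count for the possibly larger field $E$. Theorem~\ref{conj:partial}, applied with $c\ne0$, $x\ne0$ and this $S$, gives $\mathcal E(x,2d/s)$ with $d=[E:\Q]$. By the tower law,
\[
 \frac{2d}{s}\le\frac{2[E:K][K:\Q]}{[E:K]}=2[K:\Q].
\]
Since $\mathcal E(x,\kappa)$ is monotone in $\kappa$, this yields $\mathcal E(x,2[K:\Q])$. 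The implication \eqref{ext:eventual-to-exponent} then gives $x\notin\Q$ and $2\le\mu(x)\le2[K:\Q]$. The case $[K:\Q]=1$ is immediate.

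The main obstacle is the counting step in the form the earlier endpoint requires. The partial-matching theorem takes an actual finite set of embeddings $E\to\C$, so we must exhibit the $[E:K]$ extensions of $\iota|_K$ as distinct elements of that set and prove the lower bound on their cardinality. We would first try to obtain this from the standard count of $K$-algebra homomorphisms $E\to\C$, which equals $[E:K]$ by separability; this is achieved by viewing $\C$ as a $K$-algebra via $\iota|_K$. We would then transport these homomorphisms to plain embeddings agreeing with $\iota$ on $K$. This map is injective, since the embeddings are determined by the underlying functions.
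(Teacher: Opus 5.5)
Your proposal is correct and follows essentially the same route as the paper. Both proofs take the set of embeddings agreeing with $\iota$ on $K$, get matching from Proposition~\ref{ext:coordinate-criterion}, count the $[E:K]$ extensions of $\iota|_K$, apply the partial-matching eventual bound, and cancel the degree with the tower law. Your use of $|S|\ge[E:K]$ together with monotonicity of $\mathcal E$ in $\kappa$, in place of the paper's exact count, is a harmless variation.
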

\begin{proof}
Consider the set of embeddings
\[
 S=\{\sigma:E\longrightarrow\C:
                 \sigma|_K=\iota|_K\}.
\]
There are exactly $[E:K]$ such embeddings. Indeed, use $\iota|_K$
to regard $\C$ as a $K$-algebra. The extensions of this embedding
are precisely the $K$-algebra embeddings of $E$ into $\C$.
The finite extension $E/K$ is separable in characteristic zero, and
the target is algebraically closed, so their number is $[E:K]$.
The set is nonempty because it contains $\iota$.

By Proposition~\ref{ext:coordinate-criterion}, every member of $S$
is a matching embedding. Apply the partial embedding bound to this
specific set. It gives
\[
 \mathcal E\left(x,\frac{2[E:\Q]}{|S|}\right).
\]
The field tower identity now gives
\[
 \frac{2[E:\Q]}{|S|}
    =\frac{2[K:\Q][E:K]}{[E:K]}=2[K:\Q].
\]
The denominator is positive. Apply~\eqref{ext:eventual-to-exponent};
when $[K:\Q]=1$, its two endpoint inequalities are equalities.
\end{proof}

This proof requires a sufficiently large set of matching embeddings,
not a classification of every matching embedding. In the minimal field
$E=\Q(a,c)$ the constructed set has one or two members by
\eqref{ext:index-one-two}. In a larger ambient field it can have more
members, and the same tower cancellation still applies.
The converse assertion in Theorem~\ref{conj:invariant}, that an arbitrary
matching embedding agrees with $\iota$ on $K$, now has a checked adapter
to the explicit Gel'fond--Schneider input. It is not required by the
checked proof of Theorem~\ref{ext:invariant-bound-theorem}.

\paragraph{Lean correspondence.}
\nolinkurl{ScalarMatchingInvariantEmbeddings.lean} constructs the finite
set of actual ring embeddings and proves its exact membership and
cardinality in \nolinkurl{exists_embeddings_over_intermediateField}.
Its minimal-field specializations are
\nolinkurl{exists_fixing_embeddings} and
\nolinkurl{exists_matching_embeddings}.
In \nolinkurl{ScalarMatchingInvariantDegreeTheorem.lean}, the tower
cancellation is \nolinkurl{degree_ratio}; the conclusions are
\nolinkurl{invariantField_eventualLowerBound},
\nolinkurl{invariantField_irrational},
\nolinkurl{invariantField_irrationalityExponent_bounds}, and
\nolinkurl{invariantField_irrationalityExponent_eq_two}.

\subsection{The reciprocal trace and its square-root parameter}
\label{ext:reciprocal-parameter}

For $a>0$, $a\ne1$, put
\[
 T(a)=a+a^{-1},\qquad X(a)=\frac{\log a}{a-a^{-1}}.
\]
The denominator is nonzero: equality $a=a^{-1}$ would give $a^2=1$,
and positivity would force $a=1$. Likewise $\log a\ne0$, so $X(a)\ne0$.

\begin{theorem}[Reciprocal-trace form]\label{ext:reciprocal-trace}
If $a>0$ is algebraic and $a\ne1$, then
\[
 \mathcal E\bigl(X(a),2[\Q(T(a)):\Q]\bigr),\qquad
                 2\le\mu(X(a))\le2[\Q(T(a)):\Q].
\]
These statements do not assume a reciprocal minimal polynomial.
\end{theorem}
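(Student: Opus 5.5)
The plan is to apply Theorem~\ref{ext:invariant-bound-theorem} with the real field $E=\Q(a)$, regarded through its real embedding $\iota$ (the inclusion $\Q(a)\subset\R$). For the scalar I take $c=a-a^{-1}\in E$, which is nonzero because $a>0$ and $a\ne1$. For the target I take $x=X(a)=\log a/(a-a^{-1})$, which is real and nonzero by the remarks preceding the statement. The matching hypothesis is then immediate:
\[
 \exp(\iota(c)x)=\exp\bigl((a-a^{-1})\,X(a)\bigr)=\exp(\log a)=a=\iota(a).
\]
Theorem~\ref{ext:invariant-bound-theorem} therefore yields $\mathcal E(x,2[K:\Q])$, $x\notin\Q$, and $2\le\mu(x)\le2[K:\Q]$, where
\[
 K=\Q\bigl(c^2,\ a+a^{-1},\ c(a-a^{-1})\bigr).
\]
That theorem does not require minimal generation, and nothing about reciprocal minimal polynomials enters the argument.

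The remaining work is to identify $K$ with $\Q(T(a))$. I will use two polynomial identities:
\[
 c^2=(a-a^{-1})^2=(a+a^{-1})^2-4=T(a)^2-4,
 \qquad
 c\,(a-a^{-1})=c^2=T(a)^2-4.
\]
Both of these generators lie in $\Q(T(a))$. The third generator is $T(a)$ itself, so $K=\Q(T(a))$ as subfields of $E$. Substituting this identity into the conclusion of Theorem~\ref{ext:invariant-bound-theorem} gives exactly the displayed statement.

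I expect no genuine obstacle. The only points requiring care are bookkeeping. First, the intermediate field must be taken inside $E=\Q(a)$, and its degree over $\Q$ is intrinsic, so it does not matter whether $\Q(T(a))$ is viewed inside $E$ or inside $\R$. Second, the embedding $\iota$ must be the real inclusion, so that $\log a$ is the real logarithm appearing in the hypothesis $\exp(\iota(c)x)=\iota(a)$.
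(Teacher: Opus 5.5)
Your proposal is correct and follows essentially the same route as the paper: apply Theorem~\ref{ext:invariant-bound-theorem} with $E=\Q(a)$, the real inclusion, and $c=a-a^{-1}$, then use $c^2=T(a)^2-4$ and $c(a-a^{-1})=c^2$ to identify the invariant field with $\Q(T(a))$. Your remark that the degree is intrinsic matches the paper's note that this field is transported from inside $\Q(a)$ to the subfield of $\R$ generated by $T(a)$.
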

\begin{proof}
Take $E=\Q(a)\subseteq\R$ and $c=a-a^{-1}$. At the distinguished
real embedding, $cX(a)=\log a$, so the exponential matching equation
is $e^{\log a}=a$. Moreover
\[
 c^2=T(a)^2-4,\qquad c(a-a^{-1})=c^2.
\]
The invariant field~\eqref{ext:invariant-field} is therefore exactly
$\Q(T(a))$. Apply Theorem~\ref{ext:invariant-bound-theorem}.
The formal proof also transports this field from its representation
inside $\Q(a)$ to the actual subfield of $\R$ generated by $T(a)$,
so the degree in the displayed result is the degree of the stated real
number, rather than an unconnected auxiliary field.
\end{proof}

For a real algebraic parameter $t>2$, define
\[
 s=\sqrt{t^2-4},\qquad A(t)=\frac{t+s}{2},\qquad
 V(t)=\frac{\log A(t)}{s}.
\]
Here $s>0$ and $A(t)>1$. Since
$(t+s)(t-s)=4$, one has the exact identities
\begin{equation}\label{ext:parameter-identities}
 A(t)^{-1}=\frac{t-s}{2},\qquad
 A(t)+A(t)^{-1}=t,\qquad A(t)-A(t)^{-1}=s.
\end{equation}
The number $s$ is algebraic because its square is algebraic; hence
$A(t)$ is algebraic. Applying Theorem~\ref{ext:reciprocal-trace} gives
\begin{equation}\label{ext:parameter-bound}
 \mathcal E(V(t),2[\Q(t):\Q]),\qquad
                    2\le\mu(V(t))\le2[\Q(t):\Q].
\end{equation}
In particular, for every rational $t>2$,
\begin{equation}\label{ext:rational-parameter}
 \boxed{\displaystyle
 \mu\left(\frac{\log((t+\sqrt{t^2-4})/2)}{\sqrt{t^2-4}}\right)=2.}
\end{equation}
This rational specialization needs no nonsquare hypothesis on $t^2-4$.

\paragraph{Lean correspondence.}
\nolinkurl{ReciprocalTraceLogarithmTheorem.lean} defines
\nolinkurl{traceField} and \nolinkurl{target}. Its algebraic identity
is \nolinkurl{invariantField_eq_traceField}, its degree transport is
\nolinkurl{traceField_finrank}, and its exponential equation is
\nolinkurl{distinguished_matching}. The resulting statements are
\nolinkurl{reciprocalTrace_eventualLowerBound},
\nolinkurl{reciprocalTrace_irrational},
\nolinkurl{reciprocalTrace_irrationalityExponent_bounds}, and
\nolinkurl{reciprocalTrace_irrationalityExponent_eq_two}.
\nolinkurl{ReciprocalTraceParameterTheorem.lean} defines
\nolinkurl{radical}, \nolinkurl{argument}, and \nolinkurl{value}.
Its declarations \nolinkurl{argument_inv},
\nolinkurl{argument_trace}, \nolinkurl{argument_difference}, and
\nolinkurl{value_eq_reciprocalTarget} verify
\eqref{ext:parameter-identities} and the equality $V(t)=X(A(t))$.
The endpoint declarations are
\nolinkurl{parameter_eventualLowerBound},
\nolinkurl{parameter_irrationalityExponent_bounds}, and
\nolinkurl{rational_parameter_irrationalityExponent_eq_two};
\nolinkurl{rational_field_finrank} proves the degree-one specialization.

\subsection{Inversion and the unambiguous root above one}
\label{ext:inversion}

Both the target and trace are invariant under inversion:
\[
 X(a^{-1})=
 \frac{-\log a}{-(a-a^{-1})}=X(a),\qquad T(a^{-1})=T(a).
\]
For $a>0$, $a\ne1$, one has $T(a)>2$, since
$a(T(a)-2)=(a-1)^2>0$. Furthermore
\[
 \sqrt{T(a)^2-4}=|a-a^{-1}|.
\]
Thus $A(T(a))=a$ when $a>1$, and $A(T(a))=a^{-1}$ when $0<a<1$.
In either case
\begin{equation}\label{ext:target-parameter}
                              X(a)=V(T(a)).
\end{equation}
This proves the equivalence of the two formulations for all admissible
positive $a$, including the interval $(0,1)$. No branch choice is left
implicit. The chosen representative $b\in\{a,a^{-1}\}$ above one
has the same trace and target; if $a$ is algebraic, so is $b$.

\paragraph{Lean correspondence.}
\nolinkurl{ReciprocalTraceInversion.lean} proves
\nolinkurl{target_inv}, \nolinkurl{trace_inv},
\nolinkurl{trace_radicand}, \nolinkurl{trace_gt_two},
\nolinkurl{radical_trace}, \nolinkurl{argument_trace_of_one_lt},
\nolinkurl{argument_trace_of_lt_one},
\nolinkurl{target_eq_parameter},
\nolinkurl{exists_one_lt_representative}, and
\nolinkurl{exists_algebraic_one_lt_representative}.
The two first identities are stated even for Mathlib's total real
operations, but their logarithmic interpretation here is restricted
to $a>0$, $a\ne1$.

\subsection{Reciprocal minimal polynomials and the full fixed field}
\label{ext:reciprocal-fixed-field}

Suppose additionally that the actual monic minimal polynomial
$f\in\Q[X]$ of $a$ satisfies
\begin{equation}\label{ext:reciprocal-minpoly}
                              f^{\mathrm{rev}}=f.
\end{equation}
The hypothesis concerns the minimal polynomial itself; a reciprocal
polynomial that merely annihilates $a$ does not supply this assumption.

\begin{theorem}[Verified reciprocal field structure]
\label{ext:reciprocal-field-structure}
Let $a>0$ be algebraic with $a\ne1$ and assume
\eqref{ext:reciprocal-minpoly}. In $E=\Q(a)$ there is an involutive
$\Q$-automorphism $\tau$ with $\tau(a)=a^{-1}$. If
$K=\Q(a+a^{-1})$, then
\[
 [E:K]=2,\qquad E^{\langle\tau\rangle}=K,
 \qquad [E:\Q]=2[K:\Q].
\]
Consequently, if $\deg f=2m$, then $[K:\Q]=m$ and
\[
 \mathcal E(X(a),\deg f),\qquad 2\le\mu(X(a))\le\deg f.
\]
\end{theorem}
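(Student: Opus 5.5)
The plan is to build the involution from the reciprocal minimal polynomial, identify its fixed field with $K$, and then feed the resulting degree relation into Theorem~\ref{ext:reciprocal-trace}.

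\emph{Constructing $\tau$.} Let $n=\deg f$. From $f^{\mathrm{rev}}(X)=X^nf(1/X)$ and $f^{\mathrm{rev}}=f$ we get $f(a^{-1})=a^{-n}f^{\mathrm{rev}}(a)=a^{-n}f(a)=0$. Hence $a^{-1}$ is a root of the irreducible polynomial $f$ and lies in $E=\Q(a)$. The standard extension of isomorphisms $\Q(a)\to\Q(a^{-1})$ therefore gives a $\Q$-embedding $\tau:E\to E$ with $\tau(a)=a^{-1}$. It is surjective by dimension count, and $\tau^2(a)=a$, so $\tau^2=\mathrm{id}$.

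\emph{The involution is nontrivial.} We need $a^{-1}\ne a$. Since $a>0$ and $a\ne1$, equality would give $a^2=1$ and then $a=1$, a contradiction. So $\tau$ has order exactly two.

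\emph{Fixed field and degree.} Artin's theorem gives $[E:E^{\langle\tau\rangle}]=2$. Clearly $K=\Q(a+a^{-1})\subseteq E^{\langle\tau\rangle}$. On the other hand, $a$ is a root of $X^2-(a+a^{-1})X+1\in K[X]$, so $[E:K]\le2$. The tower $[E:K]=[E:E^{\langle\tau\rangle}][E^{\langle\tau\rangle}:K]$, with left side at most $2$ and first factor equal to $2$, forces $[E:K]=2$ and $E^{\langle\tau\rangle}=K$. Multiplicativity of degrees then gives $[E:\Q]=2[K:\Q]$. If $\deg f=2m$, this yields $[K:\Q]=m$, that is, $2[K:\Q]=\deg f$.

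\emph{Diophantine conclusion.} Theorem~\ref{ext:reciprocal-trace} gives $\mathcal E(X(a),2[\Q(T(a)):\Q])$. Here $\Q(T(a))$ is the real subfield generated by $a+a^{-1}$, which is the image of $K$ under the real embedding, so the two degrees agree. Substituting $2[K:\Q]=\deg f$ gives $\mathcal E(X(a),\deg f)$, and \eqref{ext:eventual-to-exponent} then yields $2\le\mu(X(a))\le\deg f$.

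The only step needing care is identifying $K$, defined abstractly inside $E$, with the real field in the earlier theorem. I expect this to be bookkeeping rather than a genuine obstacle: the field is the same subfield of $\R$ in both places, once $E=\Q(a)\subset\R$.
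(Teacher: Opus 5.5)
Your proof is correct and follows the paper's argument. You build $\tau$ from $f(a^{-1})=0$, bound $[E:K]\le2$ using $X^2-(a+a^{-1})X+1$, identify the fixed field via the tower law, and substitute $2[K:\Q]=\deg f$ into Theorem~\ref{ext:reciprocal-trace}; the one difference is that you invoke Artin's theorem to get $[E:E^{\langle\tau\rangle}]=2$, whereas the paper first gets $[E:K]=2$ from nontriviality of $\tau$ and then needs only that the fixed field is a proper subfield of $E$ containing $K$, avoiding Artin's theorem.
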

\begin{proof}
Because $a\ne0$, reversal of a polynomial gives
$f^{\mathrm{rev}}(a^{-1})=a^{-\deg f}f(a)=0$.
By reciprocity, $a^{-1}$ is therefore another root of $f$.
The minimal polynomials of $a$ and $a^{-1}$ agree. Sending the
generator $a$ to $a^{-1}$ gives an isomorphism
$\Q(a)\longrightarrow\Q(a^{-1})$; these are the same subfield,
since each generator is the inverse of the other. This constructs
$\tau$ as an automorphism of the actual simple field. Applying $\tau$
twice fixes $a$, and hence fixes every element of $E$.

The automorphism fixes $a+a^{-1}$, so it fixes $K$. It is nontrivial:
if $a=a^{-1}$, positivity would give $a=1$. On the other hand $a$
satisfies $X^2-(a+a^{-1})X+1\in K[X]$, so $[E:K]\le2$.
Nontriviality excludes degree one, proving $[E:K]=2$.

Let $J=E^{\langle\tau\rangle}$ be the standard fixed field.
Then $K\subseteq J\subseteq E$. It is a proper subfield because
$\tau(a)\ne a$. The tower identity, or equivalently the divisibility
$[E:J]\mid [E:K]=2$, now forces $[E:J]=2$ and $J=K$.
This is a full fixed-field equality, not just the observation that the
trace is fixed. Finally apply the tower identity over $\Q$ and the
bound of Theorem~\ref{ext:reciprocal-trace}.
\end{proof}

\paragraph{Lean correspondence.}
\nolinkurl{ReciprocalPolynomialDegree.lean} constructs
\nolinkurl{inverseAutomorphism} after proving
\nolinkurl{inverse_minpoly_root}, \nolinkurl{inverse_minpoly_eq},
and \nolinkurl{adjoin_inverse_eq}. The general construction is over
an arbitrary base field; the real logarithmic application is over
$\Q$. The generator action and square of the automorphism are
\nolinkurl{inverseAutomorphism_generator} and
\nolinkurl{inverseAutomorphism_involutive}.
The degree conclusions are \nolinkurl{reciprocal_trace_index},
\nolinkurl{reciprocal_field_degree}, and
\nolinkurl{reciprocal_trace_degree}; the Diophantine conclusion is
\nolinkurl{reciprocalPolynomial_irrationalityExponent_bounds},
with its stronger preceding
\nolinkurl{reciprocalPolynomial_eventualLowerBound}.

\nolinkurl{ReciprocalTraceFixedField.lean} uses Mathlib's
\nolinkurl{IntermediateField.fixedField} and the subgroup
\nolinkurl{Subgroup.zpowers} generated by the automorphism.
The main equality is
\nolinkurl{reciprocal_fixedField_eq_traceField};
\nolinkurl{reciprocal_fixed_iff_mem_traceField} gives its elementwise
form, and \nolinkurl{reciprocal_fixedField_map_eq_realTraceField}
identifies its image under the actual inclusion into $\R$ with
$\Q(a+a^{-1})$. The proof reduces to
\nolinkurl{fixedField_eq_traceField_of_index_two}.

\subsection{A completely explicit quartic example}
\label{ext:quartic}

Take
\[
 t=2+\sqrt2,\qquad
 a=\frac{2+\sqrt2+\sqrt{2+4\sqrt2}}2,\qquad
 x=\frac{\log a}{\sqrt{2+4\sqrt2}}.
\]
Since $t^2-4=2+4\sqrt2$, these are precisely $a=A(t)$ and $x=V(t)$.
The trace field is
$\Q(t)=\Q(\sqrt2)$, of degree two. Therefore the checked parameter
theorem gives
\begin{equation}\label{ext:quartic-exponent}
                 \mathcal E(x,4),\qquad 2\le\mu(x)\le4.
\end{equation}
The exact degree of the logarithm's algebraic argument can also be
verified without an unproved irreducibility assumption. From
$a+a^{-1}=2+\sqrt2$, multiply by $a$ to get
\[
                       a^2-2a+1=\sqrt2\,a.
\]
Squaring and collecting coefficients gives
\[
 f(a)=0,\qquad f(X)=X^4-4X^3+4X^2-4X+1.
\]
The translation by one is
\[
                       f(X+1)=X^4-2X^2-4X-2.
\]
This polynomial is Eisenstein at $2$: its leading coefficient is not
divisible by $2$, all other coefficients are divisible by $2$, and
its constant coefficient is not divisible by $4$. Thus it is
irreducible over $\Z$ and over $\Q$. Translation is a polynomial
ring automorphism, so $f$ is irreducible over $\Q$ as well. Since it
is monic and annihilates $a$, it is the minimal polynomial of $a$.
Consequently
\[
                 [\Q(a):\Q]=4,\qquad [\Q(t):\Q]=2.
\]
This degree-four computation is additional algebraic information;
it does not sharpen~\eqref{ext:quartic-exponent} to $\mu(x)=2$.
The exact exponent of this example remains undetermined by the results
proved here.

\paragraph{Lean correspondence.}
\nolinkurl{ReciprocalQuarticExample.lean} proves
\nolinkurl{trace_field_eq}, \nolinkurl{trace_field_finrank},
\nolinkurl{alpha_trace}, \nolinkurl{alpha_quartic_eq_zero},
\nolinkurl{definingPolynomial_aeval},
\nolinkurl{quartic_eventualLowerBound}, and
\nolinkurl{quartic_irrationalityExponent_bounds}.
The irreducibility extension
\nolinkurl{ReciprocalQuarticIrreducibility.lean} uses the same namespace,
\nolinkurl{QuadraticPi.ReciprocalQuarticExample}.
Its declarations \nolinkurl{definingPolynomial_taylor_one},
\nolinkurl{shiftedPolynomial_isEisensteinAt},
\nolinkurl{shiftedPolynomial_map_irreducible}, and
\nolinkurl{definingPolynomial_irreducible} check the translated
Eisenstein argument. The exact minimal polynomial and degree are
\nolinkurl{alpha_minpoly}, \nolinkurl{alpha_minpoly_natDegree},
and \nolinkurl{alpha_field_finrank}.

\subsection{Quadratic-character logarithmic combinations: complete descent}
\label{ext:character-product}

We now give the complete argument for Corollary~\ref{conj:character},
including the quadratic field, the actual field norm, the real logarithmic
identity, and the exponent-two conclusion. All these steps have formal
counterparts in the saved Lean sources. The earlier description of this
family as only a completed product construction is superseded by the
degree, norm, logarithm, exponent, stabilizer and input-field increments
of 11 October 2026. The normalization and the nonzero-sum condition are
essential parts of the statement.

\paragraph{The actual product and its action.}
Let $E/F$ be a field extension whose full automorphism group
$G=\operatorname{Aut}_F(E)$ is finite. Let
$\chi:G\to\{1,-1\}$ be a group homomorphism. For $a\in E^\times$ set
\[
 \beta_\chi(a)=\prod_{g\in G}g(a)^{\chi(g)}.
\]
The product is over the full automorphism group; when $a$ does not
generate $E$, repeated conjugate values retain their multiplicities.

\begin{proposition}[Signed product and kernel descent]
\label{ext:character-product-proposition}
The element $\beta_\chi(a)$ is nonzero and, for every $h\in G$,
\begin{equation}\label{ext:character-equivariance}
 h(\beta_\chi(a))=\beta_\chi(a)^{\chi(h)}.
\end{equation}
Consequently $\beta_\chi(a)\in E^{\ker\chi}$; every automorphism
outside the kernel sends it to its reciprocal.
\end{proposition}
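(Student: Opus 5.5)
The plan is a direct computation inside the finite group $G$, reindexing the product and using that $\chi$ is a homomorphism into $\{1,-1\}$. Nonvanishing comes first. Each $g\in G$ is a field automorphism, so $g(a)\ne0$ for $a\in E^\times$. Any integer power, in particular the exponent $-1$, of a nonzero element is nonzero. A finite product of nonzero elements of a field is nonzero, so $\beta_\chi(a)\in E^\times$.

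For the equivariance \eqref{ext:character-equivariance}, apply $h$ termwise. Since $h$ is a ring homomorphism it preserves products and inverses, which gives
\[
 h(\beta_\chi(a))=\prod_{g\in G}(hg)(a)^{\chi(g)}.
\]
Now substitute $g'=hg$, which is a bijection of the finite set $G$. Because $\chi$ is a homomorphism with values in $\{\pm1\}$, one has $\chi(g)=\chi(h^{-1}g')=\chi(h)^{-1}\chi(g')=\chi(h)\chi(g')$. Hence
\[
 h(\beta_\chi(a))=\prod_{g'\in G}g'(a)^{\chi(h)\chi(g')}
 =\Bigl(\prod_{g'\in G}g'(a)^{\chi(g')}\Bigr)^{\chi(h)},
\]
using commutativity of $E^\times$ to pull the common exponent $\chi(h)$ out of the product. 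This is the only step that needs care: the exponent arithmetic must be done in the abelian group $E^\times$ with integer exponents, and the reindexing must be a genuine bijection of the index set. Multiplicities of repeated conjugate values are automatically retained because the product runs over group elements, not over values.

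The consequences are then immediate. For $h\in\ker\chi$ the exponent is $1$, so $h$ fixes $\beta_\chi(a)$, and therefore $\beta_\chi(a)$ lies in the fixed field $E^{\ker\chi}$. For $h\notin\ker\chi$ the exponent is $-1$, so $h$ sends $\beta_\chi(a)$ to its reciprocal, as claimed.
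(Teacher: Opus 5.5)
Your proof is correct and is essentially the paper's argument: nonvanishing from $g(a)\ne0$, then reindexing by $k=hg$ and using $\chi(h^{-1})=\chi(h)$ to turn the exponent into $\chi(h)\chi(k)$ in the abelian group $E^\times$. The fixed-field and reciprocal consequences are read off from the exponent $\pm1$ exactly as in the paper.
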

\begin{proof}
Every $g(a)$ is nonzero, so every signed power and their product is
nonzero. Applying $h$ to the product, and setting $k=hg$, gives
\[
 h(\beta_\chi(a))
 =\prod_{g\in G}(hg)(a)^{\chi(g)}
 =\prod_{k\in G}k(a)^{\chi(h^{-1}k)}.
\]
Since $\chi(h)^{-1}=\chi(h)$, the last exponent is
$\chi(h)\chi(k)$. Integer-power laws in $E^\times$ therefore give
\eqref{ext:character-equivariance}. For $h\in\ker\chi$ the value
is fixed, which is exactly membership in the kernel fixed field.
\end{proof}

\paragraph{Degree and norm.}
Suppose now that $E/F$ is finite Galois and that $\chi$ is nontrivial.
It takes both values $1,-1$, so
$[G:\ker\chi]=2$. The finite Galois correspondence gives
\begin{equation}\label{ext:character-degree}
 K:=E^{\ker\chi},\qquad [K:F]=2.
\end{equation}
The kernel is normal, so $K/F$ is Galois. Choose $h\notin\ker\chi$.
Its restriction $\iota$ to $K$ is the nonidentity automorphism of
$K/F$: otherwise $h$ would belong to the subgroup fixing $K$, which
is precisely $\ker\chi$. Equation~\eqref{ext:character-equivariance}
gives $\iota(\beta_\chi)=\beta_\chi^{-1}$. Thus the genuine
field norm, defined as the determinant of multiplication, satisfies
\begin{equation}\label{ext:character-norm}
 N_{K/F}(\beta_\chi)
   =\beta_\chi\,\iota(\beta_\chi)=1.
\end{equation}
Here we used the standard Galois expression of the norm as the product
over all embeddings of $K/F$. There are exactly two such embeddings.
The norm is taken in $K/F$, not in the larger ambient extension $E/F$.

\begin{lemma}[Compatible real quadratic realization]
\label{ext:character-realization}
Let $K/\Q$ have degree two and let $\psi:K\hookrightarrow\R$ be
a field embedding. There is a positive nonsquare rational number $D$
and a $\Q$-isomorphism
\[
 e:K\longrightarrow\Q(\sqrt D)\subset\R
 \quad\hbox{such that}\quad e(x)=\psi(x)\quad(x\in K).
\]
These choices depend on $K,\psi$, and can be made before any element
whose logarithm is to be considered.
\end{lemma}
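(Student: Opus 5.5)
The plan is to use $\psi$ to identify $K$ with its real image and then write that image as a field generated by a square root. Since $[K:\Q]=2$ and $\operatorname{char}=0$, choose any $\theta\in K\setminus\Q$; then $K=\Q(\theta)$ and $\theta$ has minimal polynomial $X^2+pX+q$ with $p,q\in\Q$. Completing the square gives $\delta=2\theta+p\in K\setminus\Q$ with
\[
 \delta^2=p^2-4q=:D_0\in\Q,
\]
and $K=\Q(\delta)$. Since $\delta\notin\Q$, $D_0$ is not a rational square. Apply $\psi$: the real number $\psi(\delta)$ satisfies $\psi(\delta)^2=D_0$, so $D_0>0$. Put $D=D_0$; then $\psi(\delta)=\pm\sqrt D$.

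Next, fix the sign. If $\psi(\delta)=-\sqrt D$, replace $\delta$ by $-\delta$; the new $\delta$ still generates $K$ and still squares to $D$. Thus we may assume $\psi(\delta)=\sqrt D$, the positive real root. The corestriction of $\psi$ to its image is then an injective $\Q$-algebra map $K\to\R$ whose image is the subfield generated over $\Q$ by $\psi(\delta)=\sqrt D$, namely $\Q(\sqrt D)\subset\R$. Define $e$ as this corestriction. It is a $\Q$-isomorphism onto $\Q(\sqrt D)$, and $e(x)=\psi(x)$ for all $x$ by construction. Alternatively, one can pass through $\Q[T]/(T^2-D)$, as in the proof of Theorem~\ref{extquad:actual-norm}, sending $T\mapsto\delta$ on one side and $T\mapsto\sqrt D$ on the other.

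The choices of $\theta$, $D$ and the sign depend only on $(K,\psi)$, so they are fixed before any element $a$ is considered. The only delicate point is bookkeeping rather than mathematics. One must check that the image $\psi(K)$ equals the particular subfield $\Q(\sqrt D)$ obtained by adjoining the positive root, and not merely a field abstractly isomorphic to it. The sign normalization of $\delta$ handles this, and positivity of $D$ follows from the realness of $\psi$. No canonical squarefree or integral choice of $D$ is needed, which matches the scope remark after Corollary~\ref{conj:character}.
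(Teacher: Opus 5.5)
Your proof is correct and follows the paper's argument: complete the square on a non-rational generator of $K$, get positivity of $D$ from the realness and injectivity of $\psi$, get nonsquareness from $\delta\notin\Q$, and take $e$ to be $\psi$ corestricted to its image. The one difference is that your sign normalization of $\delta$ is unnecessary, since $\Q(\psi(\delta))=\Q(\sqrt D)$ as subfields of $\R$ for either sign of $\psi(\delta)$, as the paper observes.
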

\begin{proof}
Choose $r\in K\setminus\Q$ and write its monic minimal polynomial
as $X^2+bX+c$, with $b,c\in\Q$. The element $s=r+b/2$ generates
$K$ and satisfies
\[
 s^2=D:=b^2/4-c\in\Q.
\]
It is nonzero, since otherwise $r$ would be rational. Injectivity of
$\psi$ gives $D=\psi(s)^2>0$. If $D=q^2$ for $q\in\Q$, then
$(s-q)(s+q)=0$ would make $s$ rational, a contradiction. Thus $D$
is nonsquare. The real identity $\psi(s)^2=D$ gives
$\psi(s)=\sqrt D$ or $-\sqrt D$, and either root generates the
same subfield of $\R$. It follows that $\psi(K)=\Q(\sqrt D)$.
Regard $\psi$ as an isomorphism onto this image to obtain $e$.
The Lean proof obtains $s^2=D$ using the quadratic instance of
Mathlib's cyclic/Kummer extension theorem and proves the same positivity,
nonsquareness and compatibility statements.
\end{proof}

\begin{theorem}[Exponent two for the character logarithmic sum]
\label{ext:character-exponent}
Let $E/\Q$ be finite Galois, let
$\chi:\operatorname{Aut}_{\Q}(E)\to\{1,-1\}$ be nontrivial,
and fix a field embedding $\phi:E\hookrightarrow\R$.
There exists a positive nonsquare $D\in\Q$, chosen before $a$, such
that every $a\in E$ satisfying
\begin{equation}\label{ext:character-admissible}
 \phi(g(a))>0\quad(g\in G),\qquad \beta_\chi(a)\ne1
\end{equation}
obeys
\begin{equation}\label{ext:character-mu}
 \mu\left(\frac{1}{\sqrt D}
       \sum_{g\in G}\chi(g)\log\phi(g(a))\right)=2.
\end{equation}
One may instead specify any positive nonsquare rational $D$ for which
\[
 \phi(E^{\ker\chi})=\Q(\sqrt D)
 \quad\hbox{as subfields of }\R.
\]
No canonical, integral or squarefree choice of $D$ is asserted.
\end{theorem}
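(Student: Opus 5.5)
The plan is to reduce Theorem~\ref{ext:character-exponent} to Theorem~\ref{extquad:actual-norm} by the descent of Proposition~\ref{ext:character-product-proposition}, with the quadratic realization chosen before the input $a$.

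\emph{Choosing $D$.} By \eqref{ext:character-degree}, $K=E^{\ker\chi}$ has degree two over $\Q$. I apply Lemma~\ref{ext:character-realization} to $K$ and the restriction $\psi=\phi|_K$. This gives a positive nonsquare rational $D$ with $\phi(K)=\Q(\sqrt D)$, and the isomorphism $e$ is just $\phi$ corestricted to its image. Nothing here depends on $a$. For the specified-$D$ variant I skip the lemma and use the assumed equality of real subfields directly; in that case $\phi|_K$ is again an isomorphism onto $\Q(\sqrt D)$.

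\emph{The element and its norm.} Fix an admissible $a$; it is nonzero, since $\phi(a)>0$. Set $\beta=\beta_\chi(a)$. By Proposition~\ref{ext:character-product-proposition}, $\beta\in K$, and \eqref{ext:character-norm} gives $N_{K/\Q}(\beta)=1$. Put $\alpha=\phi(\beta)\in\Q(\sqrt D)$. Norms are preserved under field isomorphisms, because the norm is the determinant of multiplication, which is transported by $e$. Hence $N_{\Q(\sqrt D)/\Q}(\alpha)=1$. Since $\phi$ is a ring homomorphism,
\[
\alpha=\prod_{g\in G}\phi(g(a))^{\chi(g)},
\]
a product of positive reals raised to the powers $\pm1$. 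So $\alpha>0$, and the real logarithm gives
\[
\log\alpha=\sum_{g\in G}\chi(g)\log\phi(g(a)).
\]
Injectivity of $\phi$ together with $\beta\ne1$ gives $\alpha\ne1$.

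\emph{Conclusion.} Theorem~\ref{extquad:actual-norm}, applied to $\alpha$, yields $\mu(\log\alpha/\sqrt D)=2$. This is exactly \eqref{ext:character-mu}. Corollary~\ref{conj:real-quadratic} could be used equally well at this point.

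The main obstacle is the transport step. I need to know that the abstract norm $N_{K/\Q}(\beta)=1$, obtained from the Galois description with $\iota(\beta)=\beta^{-1}$, equals the norm of $\alpha$ computed in the embedded real field $\Q(\sqrt D)$. I also need the positivity and logarithmic identity to be read through that same embedding $\phi$, and not through some other real embedding of $K$. I would handle this by invariance of $\operatorname{Algebra.norm}$ under the algebra equivalence $e$, which is compatible with $\phi$ by construction. This fixes both the branch and the sign of $\sqrt D$ simultaneously.
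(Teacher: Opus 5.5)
Your proposal is correct and follows essentially the same route as the paper. You choose $K=E^{\ker\chi}$ and the compatible realization before $a$ (via Lemma~\ref{ext:character-realization}, or $\phi|_K$ itself when $D$ is specified), descend $\beta_\chi(a)$ to a positive norm-one element of $\Q(\sqrt D)$ different from one, read the logarithmic identity through $\phi$, and apply the real quadratic norm theorem. The only cosmetic difference is that you invoke Theorem~\ref{extquad:actual-norm}, whereas the paper's proof cites Corollary~\ref{conj:real-quadratic}; both give the same conclusion, and the paper's formal endpoint in fact applies the former.
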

\begin{proof}
First form $K=E^{\ker\chi}$, whose degree is two by
\eqref{ext:character-degree}. Apply
Lemma~\ref{ext:character-realization} to the restriction of $\phi$
to $K$. This chooses $D$ and the compatible map $e$ once for the
field and the character, before choosing $a$.

Now take an admissible $a$. The identity automorphism in
\eqref{ext:character-admissible} shows $a\ne0$. Let
$b=\beta_\chi(a)\in K$. All factors of $\phi(b)$ are positive,
so $\phi(b)>0$. The ordinary real logarithm is additive on finite
products of positive numbers and satisfies $\log(z^{-1})=-\log z$
for $z>0$. Therefore
\begin{equation}\label{ext:character-log-identity}
 \log\phi(b)=\sum_{g\in G}\chi(g)\log\phi(g(a)).
\end{equation}
The hypothesis $b\ne1$ and injectivity of $\phi$ imply
$\phi(b)\ne1$; its logarithm is consequently nonzero.

The element $e(b)$ of the actual real field $\Q(\sqrt D)$ has
real value $\phi(b)>0$, is different from one, and has norm one.
The last statement follows from \eqref{ext:character-norm} and
invariance of the field norm under $\Q$-algebra isomorphisms.
The real quadratic norm logarithm theorem
(Corollary~\ref{conj:real-quadratic}) applies to $e(b)$ and gives
\[
 \mu\bigl(\log(e(b))/\sqrt D\bigr)=2.
\]
Compatibility of $e$ with $\phi$ and
\eqref{ext:character-log-identity} identify its argument with that
in \eqref{ext:character-mu}. For a specified $D$ with the asserted
field-image equality, $\phi|_K$ itself supplies the compatible
isomorphism onto $\Q(\sqrt D)$; the remaining steps are identical.
\end{proof}

This theorem concerns a \emph{normalized} signed sum. It does not say
that every unnormalized combination of conjugate logarithms has exponent
two, nor that arbitrary rational coefficients may replace the character
without another argument. The assumptions that all the evaluated
conjugates are positive and that the actual signed product is not one
are explicit. Under positivity the latter is equivalent to nonvanishing
of the real logarithmic sum, by \eqref{ext:character-log-identity}.

\begin{proposition}[Cancellation and the necessary input field]
\label{ext:character-input-field}
For finite Galois $E/F$ and $a\ne0$, a nontrivial product
$\beta_\chi(a)\ne1$ implies
\[
 E^{\ker\chi}\subseteq F(a).
\]
If $\chi$ is nontrivial, it follows that $2\mid[F(a):F]$.
Thus every nonzero input of odd degree has $\beta_\chi(a)=1$.
With a real embedding and positive evaluated conjugates, its character
logarithmic sum is zero.
\end{proposition}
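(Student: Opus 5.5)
The plan is a contrapositive via the stabilizer of $a$. Put $L=F(a)$ and $H=\operatorname{Gal}(E/L)$, the subgroup of $G$ fixing $a$. We will show that if $H\not\subseteq\ker\chi$, then $\beta_\chi(a)=1$. If instead $H\subseteq\ker\chi$, the Galois correspondence reverses the inclusion and gives $E^{\ker\chi}\subseteq E^H=L$, which is the claimed inclusion.

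For the cancellation step, suppose $h\in H$ has $\chi(h)=-1$, and reindex the product by right multiplication: $g\mapsto gh$ is a bijection of $G$. Since $(gh)(a)=g(h(a))=g(a)$, we get
\[
 \beta_\chi(a)=\prod_{g\in G}(gh)(a)^{\chi(gh)}=\prod_{g\in G}g(a)^{-\chi(g)}=\beta_\chi(a)^{-1}.
\]
Hence $\beta_\chi(a)^2=1$, and this alone does not give $\beta_\chi(a)=1$. So I would instead pair cosets directly. Let $R$ be a set of representatives of the left cosets of the index-two subgroup $\ker\chi$, for instance $R=\{1,h\}$. Grouping $g=kr$ with $k\in\ker\chi$ and $r\in R$, the factors coming from $k$ and $kh$ are $k(a)^{\chi(k)}$ and $k(h(a))^{-\chi(k)}=k(a)^{-\chi(k)}$, which are inverse to each other. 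Thus the whole product is $1$ as an honest product identity, not merely a square. This pairing is the step that needs care, and the explicit coset pairing is what settles it.

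For the degree consequence, $[F(a):F]=[E^{\ker\chi}:F]\,[F(a):E^{\ker\chi}]$ by the tower law. The first factor equals $2$ by \eqref{ext:character-degree}, so $2\mid[F(a):F]$. Taking the contrapositive, an input of odd degree forces $\beta_\chi(a)=1$.

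For the final sentence, with a real embedding and all evaluated conjugates positive, the identity \eqref{ext:character-log-identity} turns the logarithmic sum into $\log\phi(\beta_\chi(a))=\log1=0$.
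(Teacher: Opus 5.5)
Your proposal is correct and follows the paper's proof: the same contrapositive via $\operatorname{Gal}(E/F(a))\subseteq\ker\chi$ and the Galois correspondence, followed by the tower law with $[E^{\ker\chi}:F]=2$ and the logarithmic identity. Your coset pairing is exactly the paper's cancellation, which writes $\beta_\chi(a)=P_+/P_-$ and notes that $g\mapsto gh$ bijects the two sign classes while $(gh)(a)=g(a)$, so $P_+=P_-$; your preliminary reindexing, which yields only $\beta_\chi(a)^2=1$, can simply be dropped.
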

\begin{proof}
We first establish cancellation in the more general setting of a finite
automorphism group, without assuming finite degree or Galoisness.
Suppose $h(a)=a$ and $\chi(h)=-1$. Write
\[
 P_+=\prod_{\chi(g)=1}g(a),\qquad
 P_-=\prod_{\chi(g)=-1}g(a),\qquad
 \beta_\chi(a)=P_+/P_-.
\]
Right multiplication $g\mapsto gh$ bijects the two sign classes,
while $(gh)(a)=g(h(a))=g(a)$. Hence $P_+=P_-$. All factors are
nonzero, so $P_-\ne0$ and $\beta_\chi(a)=1$. This argument uses
right multiplication and does not assume that $G$ is commutative.

By contraposition, $\beta_\chi(a)\ne1$ forces every automorphism
fixing $a$ into $\ker\chi$. For finite Galois $E/F$ this says
$\operatorname{Gal}(E/F(a))\subseteq\ker\chi$. Fixed fields
reverse inclusions, and the Galois correspondence now gives
$E^{\ker\chi}\subseteq F(a)$. If $\chi$ is nontrivial, the first
field has degree two over $F$, so the tower law makes $[F(a):F]$
even. Odd degree therefore forces product one. Under positivity,
\eqref{ext:character-log-identity} then gives the zero sum.
\end{proof}
Containment of the quadratic field, or even input degree, is necessary
for a nontrivial product; neither condition is asserted sufficient.

\paragraph{Exact Lean correspondence and recorded verification.}
All the following filenames are in \nolinkurl{QuadraticPi/}.
The product is a definition, \nolinkurl{QuadraticCharacterProduct.product},
and its descended value is \nolinkurl{fixedFieldProduct}. The proofs
\nolinkurl{product_equivariant}, \nolinkurl{product_mem_fixedField},
\nolinkurl{product_ne_zero} and \nolinkurl{fixedFieldProduct_ne_zero}
are in \nolinkurl{QuadraticCharacterProduct.lean}.

The field degree is
\nolinkurl{QuadraticPi.QuadraticCharacterDegree.character_fixedField_finrank};
the actual norm is
\nolinkurl{QuadraticPi.QuadraticCharacterNorm.norm_fixedFieldProduct}.
The positivity, logarithmic identity and nonzero sum are respectively
\nolinkurl{product_pos}, \nolinkurl{log_product} and
\nolinkurl{logSum_ne_zero} in namespace
\nolinkurl{QuadraticPi.QuadraticCharacterLogarithm}.
The compatible realization is
\nolinkurl{QuadraticPi.RealQuadraticClassification.exists_positive_nonsquare_realization}.

The complete endpoint in \nolinkurl{QuadraticCharacterExponentTheorem.lean}
is
\nolinkurl{QuadraticPi.QuadraticCharacterExponent.exists_logSum_irrationalityExponent_eq_two}.
Its type has $\exists D\,\exists e$ \emph{before} $\forall a$.
The variants
\nolinkurl{logSum_irrationalityExponent_eq_two_of_fieldRange} and
\nolinkurl{logSum_irrationalityExponent_eq_two_of_equiv} in that namespace
accept, respectively, the specified real-field equality and an explicitly
compatible isomorphism. The existence endpoint proves that such a
realization exists, rather than leaving it as a premise. It applies
\nolinkurl{QuadraticPi.realQuadraticLogarithm_norm_irrationalityExponent_eq_two}.

The cancellation module \nolinkurl{QuadraticCharacterStabilizer.lean}
proves \nolinkurl{product_eq_one_of_fixed_negative},
\nolinkurl{stabilizer_le_kernel_of_product_ne_one} and
\nolinkurl{logSum_eq_zero_of_fixed_negative}. The later
\nolinkurl{QuadraticCharacterInputField.lean} proves, in its matching
namespace, \nolinkurl{fixedField_le_adjoin_of_product_ne_one},
\nolinkurl{two_dvd_finrank_adjoin_of_product_ne_one},
\nolinkurl{product_eq_one_of_odd_finrank_adjoin} and
\nolinkurl{logSum_eq_zero_of_odd_finrank_adjoin}.

The saved exponent build completed successfully with 5586 Lake jobs;
the input-field build completed with 2411 jobs. Their saved transitive
axiom reports for the four realization/exponent declarations and four
input-field declarations list only \nolinkurl{propext},
\nolinkurl{Classical.choice} and \nolinkurl{Quot.sound}. They list
no \nolinkurl{sorryAx} or additional mathematical axiom. These are
historical build and audit results. The fresh selected-endpoint audit
for the present revision is recorded in Appendix~\ref{app:formal-status}.
Completion of this particular family does
not imply completion of every theorem in the consolidated manuscript.

% ===== END verified_extensions.tex =====

% ===== BEGIN graded_logarithmic_measures.tex =====
% Consolidated and independently audited, 11 October 2026.
% This file has no dependency on the stopped investigation of powers of pi.
\section{Graded approximation and a coefficient-uniform principle}
\label{graded:framework}

The preceding fixed-value theorems and the results in this section have
different quantifiers. Here the evaluation point is required to be
sufficiently small. Once that point has been fixed, the resulting
inequalities hold simultaneously for a varying target polynomial and a
varying rational approximant. We give the construction in detail because
this uniformity cannot be deduced from the irrationality exponent of each
individual target.

The rank input is the general differential-system part of
Fischler--Rivoal's work \cite{FR2025}, not their irrationality theorem for
values of $E$-functions. Logarithms are not $E$-functions. We use
Theorem~3, Lemma~1, the lift of Section~4.5, and Theorem~4 of that paper.
In particular the required independence is over
$\overline{\mathbb Q}$, not merely over $\mathbb Q$.

\subsection{Adjacent grades and the rank theorem}

Fix a positive integer $m$ and analytic functions $f_1,\ldots,f_m$
satisfying a rational inhomogeneous differential system
\begin{equation}\label{graded:inhomogeneous}
 f'=s_0+Sf,\qquad s_0\in\mathbb Q(z)^m,
 \quad S\in M_m(\mathbb Q(z)).
\end{equation}
For an integer $G\ge1$, write
\[
 \Theta=\{\kappa\in\mathbb Z_{\ge0}^m:|\kappa|=G\},\qquad
 \Omega=\Theta\cup\{\lambda:|\lambda|=G-1\},
\]
\[
 \theta=\binom{G+m-1}{m-1},\qquad
 \omega=\theta+\binom{G+m-2}{m-1},\qquad
 \frac{\omega}{\theta}=1+\frac{G}{G+m-1}.
\]
The letters $\Omega,\Theta$ in this section denote finite index sets,
not periods. A polynomial indexed by a vector with a negative coordinate
is, by convention, zero.

The lift can be defined without a lengthy matrix formula. Solve the
signed dual system
\[
 a'=-S^{\mathsf T}a,\qquad c'=s_0^{\mathsf T}a.
\]
Differentiation preserves the list of coordinates
\begin{equation}\label{graded:lift-coordinates}
 a^\kappa\quad(|\kappa|=G),\qquad
 ca^\lambda\quad(|\lambda|=G-1).
\end{equation}
Consequently this list satisfies $Y'=\mathcal A Y$ for an
$\omega\times\omega$ rational matrix $\mathcal A$.
For a polynomial column $P=(P_\lambda)_{\lambda\in\Omega}$ put
\[
 \nabla P=P'+\mathcal A^{\mathsf T}P.
\]
If $Y'=\mathcal A Y$, the product rule gives
\begin{equation}\label{graded:pairing}
 \frac{d^k}{dz^k}(P^{\mathsf T}Y)
                 =(\nabla^kP)^{\mathsf T}Y.
\end{equation}
This identity explains why covariant derivatives of the polynomial
column produce derivatives of the same small analytic remainder.

\paragraph{Continuation convention for the rank statement.}
The value at the ordinary point is obtained by one coherent analytic
continuation of the original solution germ along a path avoiding the
system's poles. The lifted Nilsson solutions and contact estimates refer
to that same continuation. This is the branch convention used by the
checked rank endpoint; unrelated values on different branches are not
combined.

\begin{lemma}[Short derivatives give full rank]\label{graded:rank}
Let $r\in\mathbb Q\setminus\{0\}$ be an ordinary point of the
system, and suppose that $1,f_1(r),\ldots,f_m(r)$ are linearly
independent over $\overline{\mathbb Q}$. Suppose also that the
solutions of the lifted system used below belong to the Nilsson class
at zero, and that multiplication by their coefficients lowers the
order of a holomorphic function by at most a fixed constant.
Choose
\[
 0<\eta\le\frac1{\omega+1},\qquad
 \sigma=\frac{\omega-\eta}{\theta}>1.
\]
There is a constant $C_r$ such that the following holds for all
sufficiently large integers $M$. If $P_\lambda\in\mathbb Q[z]$,
not all zero, have degrees less than $M$, and
\begin{equation}\label{graded:contact}
 R_\kappa=P_\kappa+\sum_{j=1}^mP_{\kappa-e_j}f_j,
 \qquad \operatorname{ord}_0R_\kappa\ge
 K:=\lfloor\sigma M\rfloor\quad(\kappa\in\Theta),
\end{equation}
then the columns
\[
 \frac{\nabla^kP(r)}{k!},\qquad
 0\le k\le\lfloor\eta M\rfloor+C_r,
\]
span $\mathbb Q^\Omega$.
\end{lemma}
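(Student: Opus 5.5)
The plan is to reduce Lemma~\ref{graded:rank} to the Fischler--Rivoal differential-system rank theorem (their Theorem~3 with Lemma~1, the lift of Section~4.5, and Theorem~4). The argument has three steps: translate the contact hypothesis into a statement about the lifted system, apply the rank theorem, and convert its conclusion into the spanning statement for the short covariant derivatives.

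\textbf{Step 1: the lifted system.} Following \eqref{graded:lift-coordinates}, we take $Y$ to be a fundamental solution matrix of $Y'=\mathcal A Y$. Its columns are built from solutions $a$ of the dual system together with the corresponding primitives $c$. The pairing $P^{\mathsf T}Y$ is then a finite combination of the remainders $R_\kappa$ with coefficients that are solutions of the dual system. By \eqref{graded:pairing}, the $k$th derivative of this pairing is $(\nabla^kP)^{\mathsf T}Y$.

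We use the two analytic hypotheses as follows.
\begin{itemize}
\item The Nilsson-class hypothesis ensures that each remainder still has order at least $K-c_0$ after multiplication by these coefficients, where $c_0$ is a fixed constant.
\item Suppose the columns $\nabla^kP(r)/k!$ for $k\le\lfloor\eta M\rfloor+C_r$ failed to span $\mathbb Q^\Omega$. Then some nonzero $v\in\overline{\mathbb Q}^\Omega$ would satisfy $v^{\mathsf T}\nabla^kP(r)=0$ for all such $k$.
\end{itemize}

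\textbf{Step 2: zero estimate versus contact count.} We combine this degeneracy with the high contact at $0$. The reduced function $v^{\mathsf T}Y(r)^{-1}\cdots$, obtained by transporting $v$ to a solution vector via the fundamental matrix at the ordinary point $r$, vanishes to order about $\eta M$ at $r$. It also has total order about $K=\lfloor\sigma M\rfloor$ at $0$, summed over the $\theta$ remainders, while the polynomial data involve only $\omega M$ coefficients.

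Fischler--Rivoal's Theorem~3, a generalized Shidlovskii lemma, bounds the total vanishing at the singular point $0$ plus the ordinary points of such a degenerate configuration by $\omega M-\theta K+O(1)$. Two facts make the bound contradictory.
\begin{itemize}
\item The choice $\sigma=(\omega-\eta)/\theta$ gives $\omega M-\theta K\le\eta M+O(1)$.
\item Vanishing of $\lfloor\eta M\rfloor+C_r$ derivatives exceeds this bound once $C_r$ absorbs the fixed constants.
\end{itemize}

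The independence of $1,f_1(r),\ldots,f_m(r)$ over $\overline{\mathbb Q}$ enters through Lemma~1 and Theorem~4. These show that the only algebraic relations available at $r$ come from the trivial ones, so a degenerate $v$ would force the polynomial column itself to satisfy a relation incompatible with $P\ne0$. This is also why independence over $\mathbb Q$ is insufficient: $v$ a priori has $\overline{\mathbb Q}$ coordinates. Finally, because $\nabla^kP(r)\in\mathbb Q^\Omega$, spanning over $\overline{\mathbb Q}$ implies spanning over $\mathbb Q$.

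\textbf{Main obstacle.} The hardest part is matching the exact hypotheses of Fischler--Rivoal's Theorem~3 to the graded lift. This requires the following.
\begin{itemize}
\item Check that the adjacent-grade coordinates form the lifted system of their Section~4.5.
\item Verify that the Nilsson and order-loss constants are uniform in $M$.
\item Keep a single analytic continuation for the values at $r$.
\end{itemize}
If their statement is phrased with a degree bound rather than a contact bound, I would first rewrite the contact condition \eqref{graded:contact} as their Pad\'e condition, and then read off $C_r$ from their explicit constant.
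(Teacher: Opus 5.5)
You reduce to the same three results of \cite{FR2025} as the paper, and your count $\omega M-\theta K\le\eta M+\theta$ is correct. However, the step that makes Theorem~3 applicable is missing, and the mechanism you offer for it would fail.

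What Theorem~3 must rule out is a \emph{functional} degeneracy: a nonzero space $\mathcal N$ of lifted solutions with $P^{\mathsf T}Y\equiv0$. Equivalently, the $\nabla^kP$ span a proper $\mathbb Q(z)$-subspace. This is not a vector $v$ orthogonal to finitely many jets at $r$. Nor is it ``incompatible with $P\ne0$''; nothing about a nonzero $P$ excludes it.

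The paper excludes it by a dimension count, in three steps.
\begin{enumerate}
\item Normalize the dual solution by formal parameters $a(r)=\rho$, put $c=\sum_ja_jf_j$, and extract the coefficients of $\rho^\kappa$, $\kappa\in\Theta$. This gives $\theta$ explicit solutions with values $Z_\kappa=E_\kappa+\sum_jf_j(r)E_{\kappa-e_j}$ at $r$.
\item Theorem~4, applied to the $\overline{\mathbb Q}$-independence of $1,f_1(r),\ldots,f_m(r)$, yields $\dim R>(\omega/\theta)\dim(R\cap\operatorname{span}\{Z_\kappa\})$ for every proper nonzero $\overline{\mathbb Q}$-subspace $R$.
\item Lemma~1 says that $\mathcal N(r)$ is such a subspace; it is even defined over $\mathbb Q$.
\end{enumerate}
The two sides of that strict inequality differ by at least $1/\theta$, which exceeds the slack $\eta$ built into $\sigma=(\omega-\eta)/\theta$. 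This is where $\eta\le1/(\omega+1)$ enters, a hypothesis your outline never uses.

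Your account of the field is also off. The jets $\nabla^kP(r)/k!$ are rational, so $v$ can be taken in $\mathbb Q^\Omega$. The field $\overline{\mathbb Q}$ matters only as the hypothesis of Theorem~4.

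Step~1 needs correcting for the same reason. Pairing $P$ with a full fundamental matrix does not give combinations of the $R_\kappa$. Only the $\theta$-dimensional span $\mathcal F$ of the solutions above, built with the specific primitive $c=\sum_ja_jf_j$, does. The complementary $\omega-\theta$ directions behave differently. For example, the solutions with zero upper-grade coordinates and lower-grade coordinates $a^\lambda$ (obtained by shifting $c$ by a constant) pair to $\sum_{|\lambda|=G-1}P_\lambda a^\lambda$, which is not small at $0$.

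This distinction explains three things:
\begin{itemize}
\item why the count is $\theta K$ against $\omega M$;
\item why the Nilsson loss $w$ is needed only on $\mathcal F$;
\item why your single transported function in Step~2 cannot both vanish to order about $\eta M$ at $r$ and have order about $K$ at $0$: transporting $v$ through $Y(r)^{-1}$ produces a general solution, not one in $\mathcal F$.
\end{itemize}
The zero estimate must come from Theorem~3 applied to all the $\nabla^kP$. That theorem is available only after the nondegeneracy above has been established.
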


Here a Nilsson function is a finite sum of terms
$z^\alpha(\log z)^j$ multiplied by holomorphic functions, with the
order interpreted as the smallest relevant real part of $\alpha$.
Ordinary points at zero cause no loss of order. The explicit transport
used later has only powers of $\sqrt z$ and is also covered.

\begin{proof}
Normalize the homogeneous dual solution by $a(r)=\rho$, where
$\rho_1,\ldots,\rho_m$ are independent formal parameters, and set
$c(z)=\sum_ja_j(z)f_j(z)$. Equation
\eqref{graded:inhomogeneous} gives $c'=s_0^{\mathsf T}a$.
Taking the coefficient of $\rho^\kappa$, $\kappa\in\Theta$, in
\eqref{graded:lift-coordinates} gives $\theta$ solutions
$Y_\kappa$ of the lifted system. Their endpoint values are
\[
 Z_\kappa=E_\kappa+
              \sum_jf_j(r)E_{\kappa-e_j}.
\]
They are independent, since their upper-grade coordinates form the
identity matrix. Their solution span $\mathcal F$ therefore has
dimension $\theta$. Pairing any member of $\mathcal F$ with $P$
expresses its remainder as a fixed linear combination of the
$R_\kappa$, with coefficients coming from the dual transport. Hence
all these remainders have order at least $K-w$ for a fixed $w\ge0$.

Theorem~4 of \cite{FR2025}, applied to the verified
$\overline{\mathbb Q}$-independence of the endpoint values, says
that every proper nonzero subspace $R$ defined over
$\overline{\mathbb Q}$ satisfies
\[
 \dim R>\frac{\omega}{\theta}
                \dim\bigl(R\cap\operatorname{span}\{Z_\kappa\}\bigr).
\]
Lemma~1 of that source says that the evaluation image at $r$ of
the solutions whose remainder vanishes identically is defined over
$\mathbb Q$. The strict inequality consequently proves the
nondegeneracy required by its Theorem~3.

Apply that theorem with polynomial degree bound $M-1$, dimension
$\omega$, parameter $\eta$, and the fixed loss $w$. Because
$\sigma>1$, one has
$\lfloor\sigma M\rfloor\ge\sigma(M-1)$, so its vanishing-order
hypothesis holds. Its derivative budget is at most
$\eta(M-1)+c_1+(w+1)\omega$, up to the source's indexing
convention. Absorb the fixed terms into $C_r$. Division by $k!$
does not alter rank. All the evaluated columns are rational, so
their rational and complex ranks agree.
\end{proof}

The field in this lemma is essential. Rational independence of a
collection of algebraic numbers does not meet the hypothesis. For
powers of one nonzero logarithm of an algebraic number,
Hermite--Lindemann gives transcendence, hence independence of its
distinct powers over $\overline{\mathbb Q}$. For the two logarithms
treated below we use Baker's theorem. Neither argument proves
independence of arbitrary products of several logarithms.

\subsection{The elementary passage from forms to a measure}

\begin{lemma}[Two independent forms]\label{graded:two-form}
Let $x\in\mathbb R$. Suppose that for every sufficiently large
integer $M$ there are two integer pairs $(A_{M,s},B_{M,s})$,
$s=1,2$, with nonzero determinant, such that, for fixed $a,c>0$,
\[
 |B_{M,s}|\le e^{aM+o(M)},\qquad
 |A_{M,s}+B_{M,s}x|\le e^{-cM+o(M)}.
\]
Then $x$ is irrational and $\mu(x)\le1+a/c$.
\end{lemma}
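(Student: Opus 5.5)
Fix a reduced fraction $p/q$ with $q$ large, and use the two forms at a single index $M$ chosen from $q$. Since the determinant $A_{M,1}B_{M,2}-A_{M,2}B_{M,1}$ is a nonzero integer, the two vectors $(A_{M,s},B_{M,s})$ are linearly independent. Hence at least one $s$ satisfies $A_{M,s}q+B_{M,s}p\neq0$: if both vanished, $(p,q)$ would be orthogonal to two independent vectors, forcing $p=q=0$. For that $s$ we have the integer lower bound $|A_{M,s}q+B_{M,s}p|\ge1$.

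Next we split this integer into a remainder part and an approximation part:
\[
 A_{M,s}q+B_{M,s}p=q\,(A_{M,s}+B_{M,s}x)-B_{M,s}\,(qx-p).
\]
Taking absolute values gives
\[
 1\le q\,e^{-cM+o(M)}+e^{aM+o(M)}\,q\,|x-p/q|.
\]
We now choose $M$ as the least integer with $q\,e^{-cM+o(M)}\le\frac12$. Concretely, fix $\varepsilon>0$ and take $M=\lceil(\log q+\log 2)/(c-\varepsilon)\rceil$. This is admissible once $q$ is large, because the $o(M)$ terms are eventually at most $\varepsilon M$ and the forms are available for all sufficiently large $M$. With this choice,
\[
 |x-p/q|\ge\tfrac12 q^{-1}e^{-(a+\varepsilon)M}\ge C\,q^{-1-(a+\varepsilon)/(c-\varepsilon)},
\]
where $C>0$ depends only on $a,c,\varepsilon$ and the rounding in $M$. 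Letting $\varepsilon\to0$ yields $\mu(x)\le1+a/c$. The multiplicative constant is absorbed as in the remark after \eqref{prelim:eq:mu}.

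Irrationality comes from the same estimate. If $x=p_0/q_0$, apply it to the unreduced representations $np_0/(nq_0)$ with $n\to\infty$: the difference $x-p/q$ is zero, while the bound forces it to be positive. Equivalently, $|A_{M,s}q_0+B_{M,s}p_0|=q_0|A_{M,s}+B_{M,s}x|\to0$ for both $s$, and the integer lower bound from the nonzero determinant then forces $p_0=q_0=0$, a contradiction. The argument never uses reducedness, so it also covers the reduced-denominator convention in the definition of $\mu$.

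The only delicate point is the quantifier bookkeeping. The $o(M)$ terms must be made smaller than $\varepsilon M$ uniformly, which requires fixing $\varepsilon$ before $q$, and $M$ must lie in the range where the hypothesis provides forms. Both hold for all sufficiently large $q$, and the finitely many small $q$ do not affect $\mu$.
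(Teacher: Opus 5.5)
Your proof is correct and follows the paper's argument essentially step for step. Both use the nonzero determinant to get a nonzero integer $qA_{M,s}+pB_{M,s}$, choose $M=\lceil\log(2q)/(c-\varepsilon)\rceil$ to obtain $1\le\frac12+qe^{(a+\varepsilon)M}|x-p/q|$, and prove irrationality by running the same nonzero-integer argument on a fixed representation $p_0/q_0$ as $M\to\infty$. Your explicit splitting identity, and your remark that reducedness is never used, spell out steps the paper leaves implicit.
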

\begin{proof}
For any integers $p,q$ with $q\ge1$, the two integers
$qA_{M,s}+pB_{M,s}$ cannot both vanish. Fix a small
$\tau\in(0,c)$ and choose
$M=\lceil\log(2q)/(c-\tau)\rceil$, increasing a fixed lower
cutoff when necessary. For sufficiently large $q$ the small-form
bound and one of these nonzero integers give
\[
 1\le\frac12+q e^{(a+\tau)M}|x-p/q|.
\]
This proves the asserted exponent on letting $\tau\downarrow0$.
If $x$ were rational, use its fixed numerator and denominator and
let $M$ tend to infinity in the same nonzero-integer argument.
\end{proof}

The next formulation retains the height of a varying target. It will
be used both for polynomial values and for linear combinations of two
logarithms.

\begin{proposition}[Uniform coefficient contraction]
\label{graded:contraction}
Fix an endpoint vector $\xi=(\xi_1,\ldots,\xi_m)\in\mathbb R^m$.
Suppose that for each sufficiently large $M$ a family of integer
columns $v_{M,k}\in\mathbb Z^\Omega$ spans the whole coordinate
space. For $c=(c_1,\ldots,c_m)\in\mathbb Z^m\setminus\{0\}$,
put $H=\max(1,|c_1|,\ldots,|c_m|)$ and
\[
 A_{M,k}(c)=\sum_{|\kappa|=G}c^\kappa(v_{M,k})_\kappa,
 \qquad
 B_{M,k}(c)=\sum_{|\lambda|=G-1}c^\lambda(v_{M,k})_\lambda.
\]
Assume fixed rates $a,c_*>0$ give, uniformly in the coefficient
vector and the indicated jets,
\[
 |B_{M,k}(c)|\le C H^{G-1}e^{aM+o(M)},\qquad
 |A_{M,k}(c)+B_{M,k}(c)\,c\cdot\xi|
                   \le C H^Ge^{-c_*M+o(M)}.
\]
Set $\mu_*=1+a/c_*$ and $\nu_*=G\mu_*-1$. For every
$\delta>0$ there is $C_\delta>0$, independent of $c,p,Q$, such
that
\begin{equation}\label{graded:contraction-bound}
 \left|c\cdot\xi-\frac pQ\right|
 \ge C_\delta H^{-\nu_*-\delta}Q^{-\mu_*-\delta}
 \quad(p\in\mathbb Z, Q\ge1).
\end{equation}
Equivalently one may use any positive linked slack $\zeta$ and
the exponents $\nu_*+G\zeta$ and $\mu_*+\zeta$.
\end{proposition}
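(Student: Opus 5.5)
The plan is to imitate the proof of Lemma~\ref{graded:two-form}, now with the pair $(A_{M,k}(c),B_{M,k}(c))$ playing the role of an integer form attached to the target $c\cdot\xi$. The key algebraic observation is that the spanning hypothesis on the columns $v_{M,k}$ forces nonvanishing. Consider the vector
\[
 Z(c,p,Q)=\bigl(Q\,c^\kappa\bigr)_{|\kappa|=G}\oplus\bigl(p\,c^\lambda\bigr)_{|\lambda|=G-1}\in\mathbb Z^\Omega .
\]
It is nonzero whenever $c\ne0$ and $Q\ge1$, since some monomial $c^\kappa$ of degree $G$ is nonzero. Because the $v_{M,k}$ span $\mathbb Q^\Omega$, some $k$ has nonzero pairing
\[
 N_k=\langle Z,v_{M,k}\rangle=QA_{M,k}(c)+pB_{M,k}(c)\in\mathbb Z .
\]
Therefore $|N_k|\ge1$ for that index $k$.

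Next I rewrite this integer as a small form plus a term controlled by the approximation error:
\[
 N_k=Q\bigl(A_{M,k}+B_{M,k}\,c\cdot\xi\bigr)-QB_{M,k}\bigl(c\cdot\xi-p/Q\bigr).
\]
Applying the two uniform hypotheses gives
\[
 1\le CQH^Ge^{-c_*M+o(M)}+CQH^{G-1}e^{aM+o(M)}\,|c\cdot\xi-p/Q| .
\]
The bounds are uniform in $k$, so it does not matter which $k$ was selected.

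The choice of $M$ depends on both $Q$ and $H$. Fix a small $\tau\in(0,c_*)$ and take $M=\lceil\log(2CQH^G)/(c_*-\tau)\rceil$. This makes the first term at most $1/2$, once the $o(M)$ is absorbed into $\tau M$. The approximation is then bounded below by
\[
 \tfrac12\,C^{-1}Q^{-1}H^{-(G-1)}e^{-(a+\tau)M}.
\]
Substituting $e^{M}\approx(2CQH^G)^{1/(c_*-\tau)}$ produces
\[
 Q^{-1-(a+\tau)/(c_*-\tau)}\,H^{-(G-1)-G(a+\tau)/(c_*-\tau)}
\]
times a constant. As $\tau\to0$ the exponents tend to $\mu_*=1+a/c_*$ for $Q$ and $G\mu_*-1=\nu_*$ for $H$. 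Choosing $\tau$ small relative to $\delta$ gives \eqref{graded:contraction-bound}. The linked-slack version follows because the $\tau$-perturbation of the $H$ exponent is $G$ times that of the $Q$ exponent.

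The main obstacle is the uniformity of the threshold. $M$ must exceed the fixed lower cutoff $M_0$ beyond which spanning holds and the $o(M)$ terms are bounded by $\tau M$. This cutoff is independent of $c,p,Q$, because the hypotheses are uniform in $c$. So $M\ge M_0$ holds whenever $QH^G$ is large. In the finitely many remaining cases, where $QH^G$ is bounded, I would use $M=M_0$ directly: the same pairing inequality then gives either a fixed positive lower bound on the error or shows the first term is at most $1/2$. Alternatively I would just shrink $C_\delta$, since bounded $Q$ and $H$ leave a bounded set of exponent factors. Here I must take care that $c\cdot\xi\ne p/Q$; this follows from the same inequality with $M=M_0$ large. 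The rounding of $M$ to an integer changes $e^{M}$ by only the fixed factor $e^{1/(c_*-\tau)}$, and that factor is absorbed into $C_\delta$.
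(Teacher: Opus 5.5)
Your proposal is correct and is essentially the paper's proof. Both arguments take a nonzero integer $QA_{M,k}(c)+pB_{M,k}(c)$ from the spanning hypothesis, choose $M\approx\log(2CQH^G)/(c_*-\tau)$, and substitute to get the exponents $G-1+G(a+\tau)/(c_*-\tau)$ and $1+(a+\tau)/(c_*-\tau)$. The differences are cosmetic. You obtain nonvanishing directly from the nonzero vector $(Qc^\kappa)\oplus(pc^\lambda)$ rather than through two independent pairs $(A,B)$. The paper also avoids your case split for small $QH^G$ by taking $M=M_0+\lceil\cdots\rceil$ and absorbing $e^{(a+\tau)M_0}$ into $C_\delta$. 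Incidentally, the ceiling costs a factor $e^{a+\tau}$ rather than $e^{1/(c_*-\tau)}$, which is harmless.
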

\begin{proof}
The functionals defining $A$ and $B$ are nonzero and have disjoint
supports. For example $c_j\ne0$ gives a nonzero upper coefficient
$c_j^G$ and lower coefficient $c_j^{G-1}$; when $G=1$ the latter
is the constant monomial one. Full rank therefore gives two
independent integer pairs $(A,B)$, for every $c\ne0$, at every
$M$ above the same fixed cutoff.

Fix $\tau\in(0,c_*)$. Enlarge a constant $C\ge1$ and a cutoff
$M_0$ so that the two displayed estimates hold with rates
$a+\tau$ and $c_*-\tau$ for all $M\ge M_0$. Choose
\[
 M=M_0+\left\lceil
 \frac{\log(2C)+\log Q+G\log H}{c_*-\tau}\right\rceil.
\]
One of $QA+pB$ is a nonzero integer. For that pair,
\[
 1\le Q|A+B\,c\cdot\xi|
       +Q|B|\,|c\cdot\xi-p/Q|
 \le\frac12+Q|B|\,|c\cdot\xi-p/Q|.
\]
In particular its $B$ cannot be zero. Substitution of the upper
bound for $M$ gives a constant independent of $c,p,Q$ and
exponents
\[
 G-1+\frac{G(a+\tau)}{c_*-\tau},
 \qquad 1+\frac{a+\tau}{c_*-\tau}.
\]
Choose $\tau$ sufficiently small for any prescribed linked slack;
then choose $\zeta\le\delta/G$ to obtain
\eqref{graded:contraction-bound}.
\end{proof}

\section{Powers of a logarithm at a small rational point}
\label{graded:small-log}

\begin{theorem}\label{graded:small-log-theorem}
For each fixed integer $n\ge1$ and every $\varepsilon>0$, there
is $b_0(n,\varepsilon)$ such that every integer $b\ge b_0$ satisfies
\[
 \mu\bigl(\log^n(1+1/b)\bigr)\le2+\varepsilon,
 \qquad
 \mu\bigl(\arctan^n(1/b)\bigr)\le2+\varepsilon.
\]
The logarithm and arctangent have their real branches. This
recovers the small-argument phenomenon stated by Chudnovsky in
\cite{Chudnovsky1983}; no historical novelty is claimed for that conclusion.
\end{theorem}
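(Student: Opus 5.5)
The plan is to run the graded Pad\'e machinery of Section~\ref{graded:framework} on the unipotent system formed by the divided powers of one logarithm. I would then read off the exponent from Proposition~\ref{graded:contraction} with a single fixed coefficient vector, so that $H=1$. Concretely, for fixed $n$ take $m=n$ and
\[
 f_j(z)=\frac{\log^j(1+z)}{j!}\qquad(1\le j\le n),
 \qquad f_1'=\frac1{1+z},\quad f_j'=\frac{f_{j-1}}{1+z}\ (j\ge2).
\]
This is a rational inhomogeneous system \eqref{graded:inhomogeneous}, with $s_0=e_1/(1+z)$ and $S$ nilpotent with entries $1/(1+z)$. For the arctangent use instead $\arctan^j(z)/j!$, with $1/(1+z^2)$ in place of $1/(1+z)$.

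The target is $\xi_n=f_n(1/b)$, which equals $\log^n(1+1/b)/n!$. Rational scaling by $n!$ is harmless by Lemma~\ref{prelim:mobius}, and $\xi_n$ equals $c\cdot\xi$ for $c=e_n$. The hypotheses of Lemma~\ref{graded:rank} are checked as follows.
\begin{enumerate}
\item The point $r=1/b$ is an ordinary point.
\item The numbers $1,f_1(r),\ldots,f_n(r)$ are linearly independent over $\overline{\mathbb Q}$. A nontrivial relation would make $\log(1+r)$ a root of a nonzero polynomial over $\overline{\mathbb Q}$, contradicting Hermite--Lindemann. For the arctangent, apply the same argument to $2\ii\arctan r$, a logarithm of $(1+\ii r)/(1-\ii r)$.
\item The dual and lifted systems have only the singular points $-1$ (respectively $\pm\ii$) at finite distance. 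Their solutions are holomorphic near $0$, so the Nilsson and order-loss conditions hold trivially there.
\end{enumerate}

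Fix a grade $G$ and a small $\eta$. For large $M$, Siegel's lemma produces $P_\lambda\in\mathbb Z[z]$, $\lambda\in\Omega$, of degree $<M$, satisfying the contact conditions \eqref{graded:contact} of order $K=\lfloor\sigma M\rfloor$ with $\sigma=(\omega-\eta)/\theta$. There are $\omega M$ unknowns against $\theta K=(\omega-\eta)M$ equations. The coefficients of the $f_j$ have denominators dividing $\operatorname{lcm}(1,\dots,K)^{G}$ times factorials bounded by $e^{O_G(M)}$, so $\log\max|P|=O_{G,\eta}(M)$, independently of $b$. Lemma~\ref{graded:rank} then shows that the columns $\nabla^kP(r)/k!$, $k\le\eta M+C_r$, span $\mathbb Q^\Omega$. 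I take $v_{M,k}$ to be these columns multiplied by $b^{M+k}(1+\ldots)$, together with the lcm denominators coming from $\nabla^k/k!$, which involves $(1+z)^{-k}$ and $\mathcal A^{\mathsf T}$.

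With these choices I expect the following two estimates.
\begin{itemize}
\item The lower coordinates satisfy $|B_{M,k}|\le e^{aM+o(M)}$ with $a=(1+\eta)\log b+O_{G,\eta}(1)$.
\item The small forms, which by \eqref{graded:pairing} are $k$th derivatives at $r$ of a remainder of order $\ge K-w$, satisfy $|A+B\xi_n|\le e^{-c_*M+o(M)}$ with $c_*=(\sigma-1-\eta)\log b-O_{G,\eta}(1)$. Here the remainder has size $b^{-(K-k)}e^{O(M)}$ and is then multiplied by $b^{M+k}$.
\end{itemize}

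Proposition~\ref{graded:contraction} with $H=1$ gives $\mu(\xi_n)\le\mu_*+\delta$, where $\mu_*=1+a/c_*$. Since
\[
 \sigma-1=\frac{\omega-\theta-\eta}{\theta}=\frac{G}{G+n-1}-\frac\eta\theta,
\]
the ratio $a/c_*$ tends to $(1+\eta)\bigl(G/(G+n-1)-\eta/\theta-\eta\bigr)^{-1}$ as $b\to\infty$. The parameters are chosen in this order:
\begin{enumerate}
\item choose $G$ large enough that $(G+n-1)/G<1+\varepsilon/3$;
\item choose $\eta$ small;
\item choose $b_0$ so large that the $O_{G,\eta}(1)/\log b$ losses fit within $\varepsilon/3$.
\end{enumerate}
This gives $\mu\le2+\varepsilon$, with exactly the quantifier order $n,\varepsilon\to b_0$ of the statement.

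The main obstacle is the uniform arithmetic bookkeeping in the second step. The constants in $a$ and $c_*$ must be genuinely independent of $b$, apart from the explicit powers of $b$. Three sources have to be controlled.
\begin{itemize}
\item The Siegel bound must not depend on $r$. This holds because the construction is at $z=0$, before specializing.
\item The denominators of $\nabla^k P/k!$, with $k\le\eta M+C_r$, must cost only $b^{k}e^{O(k)}$. I would clear them using $(1+z)^k$ and $\operatorname{lcm}$ bounds, as in the proof of \eqref{slog:eq:clearing}.
\item The constant $C_r$ from Lemma~\ref{graded:rank} depends on $r$. Because it enters only additively in $k$, it contributes $O(1)$ and not $O(M)$, which is harmless for fixed $b$.
\end{itemize}
I would verify the last point first, since any $r$-dependence that is linear in $M$ would destroy the limit.
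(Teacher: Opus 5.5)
Your proposal is correct and follows the paper's proof essentially step for step. The shared steps are: adjacent grades on the powers of $h$; a $b$-independent Siegel bound at $z=0$; the short-jet rank lemma at $r=1/b$; the two corner coordinates $Ge_n$ and $(G-1)e_n$, where your $c=e_n$, $H=1$ contraction is just Lemma~\ref{graded:two-form}; Cauchy bounds on $|z-r|=1/(2b)$; and the order of choices $G$, then $\eta$, then $b$. Using divided powers $h^j/j!$ only changes fixed constants.

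For the clearing you flag as the main obstacle, the paper writes $v_k=[t^k]\exp(A_0^{\mathsf T}u_r(t))P(r+t)$ with $A_0$ integral and nilpotent, which gives $d_k=b^MB_b^kD_\ell(k)$. For the arctangent $B_b=b^2+1$, so there your rates should read $(1+2\eta)\log b$ and $(\sigma-1-3\eta)\log b$. For the logarithm the decay rate is $(\sigma-1-2\eta)\log b$, as your own count $b^{-(K-k)}b^{M+k}$ shows. Both corrections are harmless once $\eta\to0$.
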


We give quantitative rates, so that the order of all choices is visible.
Let $h(z)=\log(1+z)$ or $h(z)=\arctan z$, and apply the adjacent
grades to $f_j=h^j$, $1\le j\le n$. Fix $G\ge1$, choose
$0<\eta\le1/(\omega+1)$, and put
\[
 \sigma=\frac{\omega-\eta}{\theta},\qquad
 \ell=G(n-1)+1,\qquad H_s=\sum_{j=1}^s\frac1j.
\]
We will choose $\sigma>1+3\eta$. Define
\[
 \beta=\frac{\omega-\eta}{\eta}\sigma(H_n+\log2),\qquad
 B_b=\begin{cases}b+1,&h(z)=\log(1+z),\\b^2+1,&h(z)=\arctan z.
 \end{cases}
\]
For $b\ge4$, set
\begin{align}
 a_b&=\beta+\log b+\eta\log B_b+\eta(H_\ell+\log4),
 \label{graded:small-height}\\
 c_b&=(\sigma-1-\eta)\log b-\eta\log B_b
          -\beta-\sigma\log3-\eta(H_\ell+\log2).
 \label{graded:small-decay}
\end{align}
We prove the more precise assertion
\begin{equation}\label{graded:small-rate-result}
 c_b>0\quad\Longrightarrow\quad
 \mu(h(1/b)^n)\le1+a_b/c_b.
\end{equation}

\subsection{Integral contact polynomials}

For $s,v\ge0$ define
\[
 D_s(v)=\prod_{j=1}^s\operatorname{lcm}
                   (1,\ldots,\lfloor v/j\rfloor),
\]
with an empty least common multiple equal to one. A useful elementary
observation is that $D_s(v)$ clears the coefficient of degree at most
$v$ in a product of at most $s$ logarithmic series whose coefficient
of degree $j$ has denominator dividing $j$. Indeed, arrange the
positive indices of a product in decreasing order. Its $j$th largest
index is at most $\lfloor v/j\rfloor$, so the product of its
denominators divides $D_s(v)$. The prime number theorem in the form
$\log\operatorname{lcm}(1,\ldots,v)=v+o(v)$ gives
\begin{equation}\label{graded:harmonic-clearing}
 \log D_s(v)=H_s v+o(v)\qquad(s\text{ fixed}).
\end{equation}

For every sufficiently large $M$, impose
\eqref{graded:contact} with $K=\lfloor\sigma M\rfloor$ on
$\omega M$ integer coefficients. There are at most
$\theta K\le(\omega-\eta)M$ equations. On $|z|=1/2$ both
choices of $h$ have modulus less than one; their fixed powers
therefore have coefficients of degree at most $K$ bounded by $2^K$.
Multiplication by $D_n(K)$ makes all equations integral. The elementary
box form of Siegel's lemma gives a nonzero solution with
\begin{equation}\label{graded:contact-height}
 \log H(P)\le\beta M+o(M).
\end{equation}
More explicitly, a system with $U$ unknowns, at most $E<U$ equations,
and integer entries of modulus at most $L$ has a nonzero integer
kernel vector of logarithmic height at most
$E(\log L+O(\log U))/(U-E)+O(1)$. This follows by mapping the
integer box $\{0,\ldots,T\}^U$ into its bounded box of possible
equation values and choosing $T$ so that two vectors have the same
image. Here $U=\omega M$, $E\le(\omega-\eta)M$, and
$\log L\le(H_n+\log2)\sigma M+o(M)$.

\subsection{Integral transport and divided derivatives}

The lifted connection is $\mathcal A=h'(z)A_0$ for an integral
nilpotent matrix $A_0$. Its transport has integral divided powers:
\begin{equation}\label{graded:integral-exponential}
 A_0^{\ell+1}=0,\qquad A_0^j/j!\in M_\omega(\mathbb Z).
\end{equation}
To verify this directly, use the increment $u=h(z)-h(r)$. The
signed dual system is transported by
\[
 a_j(u)=\sum_{k=j}^n(-1)^{k-j}\binom{k}{j}a_k(0)u^{k-j},
 \qquad
 c(u)=c(0)+\sum_{k=1}^n(-1)^{k-1}a_k(0)u^k.
\]
The induced transport on $a^\kappa$ and $ca^\lambda$ is an
integer polynomial matrix in $u$. Its degree is at most
$G(n-1)+1=\ell$, and its value at $u=0$ is the identity.
It is thus exactly $\exp(A_0u)$, proving
\eqref{graded:integral-exponential}.

At $r=1/b$, set
$v_k=\nabla^kP(r)/k!$ and $u_r(t)=h(r+t)-h(r)$. The Taylor
identity corresponding to \eqref{graded:pairing} is
\[
 v_k=[t^k]\exp(A_0^{\mathsf T}u_r(t))P(r+t).
\]
For every positive $j$, the number
$j B_b^j[t^j]u_r(t)$ is integral. In the logarithmic case this
follows from $u_r(t)=\log(1+bt/(b+1))$. In the arctangent case
expand
$u_r'(t)=b^2/(b^2+1+2bt+b^2t^2)$.
The product-clearing observation and the degree bound for $P$ now give
\begin{equation}\label{graded:small-jet-integers}
 d_kv_k\in\mathbb Z^\Omega,
 \qquad d_k=b^M B_b^kD_\ell(k).
\end{equation}
On $|t|\le1/4$, the transport is bounded by a constant depending
only on $n,G$ for all $b\ge4$, and $|P(r+t)|\le2H(P)$.
Cauchy's inequality, followed by \eqref{graded:harmonic-clearing},
proves the coefficient bound $e^{a_bM+o(M)}$ for all
$k\le\eta M+O_b(1)$.

\subsection{Rank and the small remainder}

The value $h(r)$ is nonzero and transcendental. In the logarithmic
case $e^{h(r)}=1+r$; in the arctangent case
$e^{2ih(r)}=(1+ir)/(1-ir)$. If $h(r)$ were algebraic, these
identities would contradict Hermite--Lindemann. Its first $n$ powers,
together with one, are therefore independent over
$\overline{\mathbb Q}$. Lemma~\ref{graded:rank} applies and
gives full rank through $k\le\eta M+C_b$.

To identify the relevant scalar remainder, put
\[
 a_j(z)=\binom nj(h(r)-h(z))^{n-j},\qquad
 \mathcal R_*(z)=\sum_{|\kappa|=G}a(z)^\kappa R_\kappa(z).
\]
The corresponding lifted solution has endpoint vector
$E_{Ge_n}+h(r)^nE_{(G-1)e_n}$. Hence
\begin{equation}\label{graded:corner-identity}
 \frac{\mathcal R_*^{(k)}(r)}{k!}
      =(v_k)_{Ge_n}+h(r)^n(v_k)_{(G-1)e_n}.
\end{equation}
All the auxiliary factors are uniformly bounded on $|z|\le1/2$.
Since $\mathcal R_*$ has a zero of order at least $K$, the maximum
principle applied after division by $z^K$ gives
\[
 |\mathcal R_*(z)|\le C_{n,G}H(P)(2|z|)^K.
\]
On the circle $|z-r|=1/(2b)$, Cauchy's inequality therefore gives
\[
 \left|\frac{\mathcal R_*^{(k)}(r)}{k!}\right|
         \le C_{n,G}H(P)(3/b)^K(2b)^k.
\]
Multiplication by $d_k$ yields precisely the decay rate $c_b$ in
\eqref{graded:small-decay}. Full rank ensures that two jets have
independent pairs in the two corner coordinates. Apply
Lemma~\ref{graded:two-form} to prove
\eqref{graded:small-rate-result}.

For fixed $n,G,\eta$, the bound $1+a_b/c_b$ tends as $b\to\infty$
to
\[
 1+\frac{1+\eta}{\sigma-1-2\eta}
 \quad\text{or}\quad
 1+\frac{1+2\eta}{\sigma-1-3\eta},
\]
respectively for logarithms and arctangents. First choose $G$ large,
then $\eta>0$ small, and finally $b$ large. Since
$\omega/\theta\to2$ as $G\to\infty$, this proves
Theorem~\ref{graded:small-log-theorem}. In every use of the
auxiliary-degree limit, $n,G,\eta,b$ have already been fixed.

An explicit, coarse threshold is available. Put
\[
 A=\beta+\eta(H_\ell+\log8),\qquad
 C=\beta+\sigma\log3+\eta(H_\ell+\log4),
\]
\[
 \Delta=(1+\varepsilon)(\sigma-1-3\eta)-(1+2\eta).
\]
When $\Delta>0$, it suffices to take
\begin{equation}\label{graded:small-threshold}
 b\ge\max\left\{4,
   \exp\frac{A+(1+\varepsilon)C}{\Delta}\right\}.
\end{equation}
Indeed $\log B_b\le2\log b+\log2$ gives
$a_b\le(1+2\eta)\log b+A$ and
$c_b\ge(\sigma-1-3\eta)\log b-C$; the threshold makes the
first expression at most $(1+\varepsilon)$ times the second,
which is consequently positive. For example, one may choose
\[
 G\ge\max\left\{1,
 \left\lceil\frac{4(1+\varepsilon)(n-1)}{\varepsilon}\right\rceil
 \right\},\qquad
 \eta=\min\left\{\frac1{2(\omega+1)},
 \frac{\varepsilon}{4((1+\varepsilon)(3+1/\theta)+2)}\right\}.
\]
These choices give $\Delta\ge\varepsilon/2$.

\section{Quadratic logarithms and polynomial targets}
\label{graded:quadratic}

For a nonzero integer $d$, define the real germ
\[
 h_d(z)=\sum_{j\ge0}\frac{d^jz^{2j+1}}{2j+1},\qquad
 h_d'(z)=\frac1{1-dz^2}.
\]
For $d>0$ this is
$\log((1+\sqrt d\,z)/(1-\sqrt d\,z))/(2\sqrt d)$; for $d<0$
it is $\arctan(\sqrt{|d|}z)/\sqrt{|d|}$. We only evaluate inside
the disk of convergence. Introduce the single fixed function
\begin{equation}\label{graded:fixed-F}
 F(t)=\sum_{j\ge0}\frac{t^j}{2j+1}
 =\frac{\operatorname{arctanh}\sqrt t}{\sqrt t},\qquad
 2tF'(t)+F(t)=\frac1{1-t}.
\end{equation}
The expression is independent of the square-root choice. It satisfies
$b h_d(1/b)=F(d/b^2)$. Keeping this one differential system fixed
removes the parameter $d$ from the contact-height estimate.

\begin{theorem}[Uniform polynomial targets]\label{graded:quadratic-uniform}
Fix $n\ge1$, $\varepsilon>0$, and $A>1+1/\varepsilon$. There
is an explicitly specified constant $C=C(n,\varepsilon,A)$ such
that, for every nonzero integer $d$ and every integer
$b\ge C|d|^A$, every nonconstant rational polynomial $P$ of degree
at most $n$ satisfies
\[
 \mu\bigl(P(h_d(1/b))\bigr)\le2+\varepsilon.
\]
The threshold does not depend on $P$. Constants in a lower-bound
inequality at a fixed endpoint may depend on that endpoint and on $P$.
\end{theorem}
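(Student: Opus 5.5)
The plan is to run the graded construction of Section~\ref{graded:small-log} once, for the fixed function $F$ of \eqref{graded:fixed-F}, and to obtain every polynomial target from Proposition~\ref{graded:contraction}. Put $f_j(t)=F(t)^j$ for $1\le j\le n$. Since $F$ satisfies the first-order inhomogeneous equation $F'=\bigl((1-t)^{-1}-F\bigr)/(2t)$, these powers satisfy a rational system of the form \eqref{graded:inhomogeneous}, with a regular singular point at $t=0$ and a singularity at $t=1$. The system does not depend on $d$. The transport at $t=0$ involves $\sqrt t$, which is the case covered by the Nilsson remark after Lemma~\ref{graded:rank}. The endpoint is $r=d/b^2$, and $F(r)=bh_d(1/b)$. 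Any $P$ of degree at most $n$ can be written as $P(h_d(1/b))=\sum_j p_jb^{-j}F(r)^j$. After clearing denominators, approximating this number is equivalent, up to a Möbius map (Lemma~\ref{prelim:mobius}), to approximating $c\cdot\xi$ with $\xi=(F(r)^j)_j$ and an integer vector $c$. The vector $c$ depends on $P$ and on $b$, but only as a fixed endpoint datum. This is enough, because the final exponent bound $\mu_*=1+a/c_*$ in \eqref{graded:contraction-bound} does not involve $H$.

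\textbf{Construction.} First choose $G$ large and $\eta$ small. Siegel's lemma then gives contact polynomials $P_\lambda(t)$ for $F^j$ at $t=0$. These have height $e^{\beta M+o(M)}$, with $\beta$ computed as before except that the denominators $2j+1$ replace $j$. The clearing products $D_s$ are then formed with odd lcms, and their harmonic-type constants stay absolute. Because the system is fixed, $\beta$ depends only on $n,G,\eta$ and not on $d$.

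\textbf{Rank.} The rank step needs $1,F(r),\dots,F(r)^n$ to be $\overline{\Q}$-independent. Here $F(r)$ is a nonzero algebraic multiple of a logarithm of the algebraic number $(b+\sqrt d)/(b-\sqrt d)$, or of an arctangent when $d<0$. Hermite--Lindemann makes it transcendental, exactly as in Section~\ref{graded:small-log}, so Lemma~\ref{graded:rank} gives full rank of the jets $\nabla^kP(r)/k!$ for $k\le\eta M+C_r$.

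\textbf{Jets and the main obstacle.} The jets are integral after multiplication by $d_k\approx b^{2M}B^kD_\ell(k)$, where $B\asymp b^2$ is the denominator of $1-r$. They have size $e^{a M}$ with $a\approx 2\log b+O(\eta\log b)+O(1)$. The remainder must be bounded at the point $r=|d|/b^2$, where the contact of order $K=\lfloor\sigma M\rfloor$ at $0$ gives decay $(C|r|)^K$. I expect this to be the main obstacle. The difficulty is to verify that the Cauchy circle around $r$, of radius $\asymp|r|$, together with the $\sqrt t$ transport factors, costs only $e^{O(M)}$ uniformly in $d$. Then $c_*\approx(\sigma-1-O(\eta))\log(b^2/|d|)-O(1)$ and $a\approx(1+O(\eta))\log(b^2)+\log|d|\cdot O(1)$. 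The tension between the $b^{2M}$ clearing of $P(r)$ and the factor $|d|^{K}$ is what forces a threshold of the form $b\ge C|d|^A$.

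\textbf{Threshold.} Substitute $b=|d|^{A'}$ into the limit of $1+a/c_*$ with $\sigma\to2$. In the worst case $d\sim b^{1/A}$, a term $\log|d|\approx(1/A)\log b$ enters the numerator. The ratio then stays at most $2+\varepsilon$ precisely when $A>1+1/\varepsilon$, up to the $G,\eta$ slack. I would make this choice explicit, as in \eqref{graded:small-threshold}: first fix $G$ and $\eta$ depending on $(n,\varepsilon,A)$, then the constant $C$. Proposition~\ref{graded:contraction}, applied with the vector $c$ coming from $P$ and with $G$ in the role of the grade, then yields $\mu\le\mu_*\le2+\varepsilon$ for every nonconstant $P$. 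The resulting constant depends on $c$, hence on $P$, while the exponent does not.
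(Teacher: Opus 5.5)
Your architecture is the paper's: one fixed system for the powers $F^j$, Siegel contact columns whose height rate $\beta$ does not depend on $d$, rank at $r=d/b^2$ via Hermite--Lindemann and Lemma~\ref{graded:rank}, and reduction to an integer coefficient vector through $b^nP(X/b)$. The gap is the step you leave open as ``the main obstacle'', because that step decides the exponent $1+1/\varepsilon$, and the rates you write down there are wrong.

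\textbf{The clearing factor.} Your $d_k\approx b^{2M}B^kD_\ell(k)$, with $B\asymp b^2$ the denominator of $1-r$, does not make the jets integral. Near $r$ the transport is a polynomial in $U=\sqrt{t/r}$ and $V$. Because of the regular singularity at $0$, its Taylor coefficients carry powers of $1/r=b^2/d$, for example $[s^j]U^m=\binom{m/2}{j}(b^2/d)^j$, in addition to powers of $b^2/(b^2-d)$. You need a factor $(4|d(b^2-d)|)^k$, as in Lemma~\ref{graded:F-clearing}: $d_k=q^{M-1-k}(4|d(q-d)|)^kD_\ell(k)$ with $q=b^2$.

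\textbf{Where $\log|d|$ enters.} The factor $|d|^k$ is cancelled exactly by the Cauchy factor $(2q/|d|)^k$ on the circle $|t-r|=|r|/2$; this is \eqref{graded:F-cancellation}. Consequently $\log|d|$ does not appear in the height rate $\alpha=(1+\eta)\log q+\beta+\eta(\log10+H_\ell)$. It enters the decay rate only through the contact factor $(3|d|/q)^K$, with coefficient $-\sigma$: $\gamma=(\sigma-1-\eta)\log q-\sigma\log|d|-\beta-\sigma\log3-\eta(\log10+H_\ell)$. Your $c_*\approx(\sigma-1-O(\eta))\log(b^2/|d|)$ puts coefficient $-(\sigma-1)$ on $\log|d|$. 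Your $a$ also contains a $\log|d|$ term that should not be there.

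\textbf{Why this changes the threshold.} Put $\beta'=\beta+\eta(\log10+H_\ell)$ and $\tau=(1+\varepsilon)(\sigma-1-\eta)-(1+\eta)$. With the correct rates, $\alpha\le(1+\varepsilon)\gamma$ is equivalent to $\tau\log q\ge\beta'+(1+\varepsilon)(\beta'+\sigma\log3)+(1+\varepsilon)\sigma\log|d|$. As $\sigma\to2$ we have $\tau\to\varepsilon$, so $b\ge C|d|^A$ suffices exactly when $A>(1+\varepsilon)\sigma/(2\tau)\to1+1/\varepsilon$, with $C$ as in \eqref{graded:uniform-C}.

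Your rates, in the same limit, are $a\approx2\log b+\kappa\log|d|$ and $c_*\approx2\log b-\log|d|$. They give the threshold $A>(1+\varepsilon+\kappa)/(2\varepsilon)$, which equals $1+1/\varepsilon$ only if $\kappa=1+\varepsilon$. With $\kappa=0$ they would ``prove'' the stronger and unjustified threshold $(1+\varepsilon)/(2\varepsilon)$. So your final ``precisely when $A>1+1/\varepsilon$'' is asserted, not derived.

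\textbf{The uniformity worry is easy.} Take $|r|\le1/4$ and $|t-r|\le|r|/2$. Then $|U|\le\sqrt{3/2}$, and integrating $V'=U/(2t(1-t))$ along the segment from $r$ gives $|V|\le\frac45\sqrt{3/2}<1$. So the transport contributes only a constant depending on $n$ and $G$, and the only factors exponential in $M$ are the explicit ones above. You should also note that $C\ge2$ and $A>1$ give $|d|\le b^2/4$, which keeps this disk inside $|t|\le3/8$.
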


\subsection{Contact height for the fixed function}

Use $f_j=F^j$, $1\le j\le n$, and the same adjacent grades.
Their differential system is
\[
 (F^j)'=\frac{j}{2t(1-t)}F^{j-1}-\frac{j}{2t}F^j.
\]
The origin is regular singular for this system, although the distinguished
functions $F^j$ themselves are holomorphic there. The coefficients
through degree $K$ of these powers are cleared by $D_n(2K+n)$:
in a product of $j\le n$ coefficients the sum of the odd denominator
indices is at most $2K+n$, and the same ordering argument applies.
Also $|F(t)|\le2$ on $|t|=1/2$. The contact construction therefore
gives integer columns of degree less than $M$ with
\begin{equation}\label{graded:F-height}
 \log H(P_M)\le\beta M+o(M),\qquad
 \beta=\frac{\omega-\eta}{\eta}\sigma(2H_n+\log2),
 \qquad K=\lfloor\sigma M\rfloor.
\end{equation}
This height bound is independent of the eventual rational endpoint.

\subsection{Explicit transport and the degree saving in denominators}

Fix integers $a\ne0$, $q>0$, with $|a|\le q/4$, and put $r=a/q$.
Near $t=r$, define
\[
 U(t)=\sqrt{t/r},\qquad
 V(t)=\frac{\operatorname{arctanh}\sqrt t-
                   \operatorname{arctanh}\sqrt r}{\sqrt r},
\]
with the local branches chosen so that $U(r)=1$, $V(r)=0$.
These local quantities satisfy
\[
 U'=\frac U{2t},\qquad V'=\frac U{2t(1-t)},\qquad
 UF(t)-V=F(r).
\]
The signed dual transport of the augmented system is therefore
\begin{align}
 a_j(t)&=U(t)^j\sum_{k=j}^n\binom kj(-V(t))^{k-j}a_k(r),
 \label{graded:F-dual}\\
 c(t)&=c(r)+\sum_{k=1}^n(-1)^{k-1}a_k(r)V(t)^k.
 \nonumber
\end{align}
Differentiation verifies these formulas directly. The induced lifted
transport is an integer polynomial matrix in $U,V$. Its degree in
$V$ is at most $\ell=G(n-1)+1$.

\begin{lemma}[An explicit clearing integer]\label{graded:F-clearing}
For $k<M$, the divided covariant jet $v_k=\nabla^kP_M(r)/k!$
is cleared by the positive integer
\begin{equation}\label{graded:F-dk}
 d_k=q^{M-1-k}\bigl(4|a(q-a)|\bigr)^kD_\ell(k).
\end{equation}
\end{lemma}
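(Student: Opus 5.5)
We follow the template of the small-log case, formula \eqref{graded:small-jet-integers}, but now track the explicit transport \eqref{graded:F-dual} at the rational point $r=a/q$. The Taylor identity corresponding to \eqref{graded:pairing} gives
\[
 v_k=[s^k]\,\mathcal T(r+s)^{\mathsf T}P_M(r+s),
\]
where $\mathcal T$ is the lifted transport matrix. By \eqref{graded:F-dual}, $\mathcal T$ is an integer polynomial matrix in $U(r+s),V(r+s)$ of $V$-degree at most $\ell$, and the $U$-power attached to a coordinate is controlled by the grade. The plan is to clear two sources of denominators separately and multiply the clearing integers.

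\emph{The polynomial part.} $P_M$ has integer coefficients and degree less than $M$, so $[s^i]P_M(r+s)$ equals a sum of binomial multiples of $r^{e-i}$ with $e\le M-1$. Multiplying by $q^{M-1-i}$ clears these. When we combine index $i$ with a transport coefficient of index $k-i$, the factor $q^{M-1-k}\cdot q^{k-i}$ is available. The extra $q^{k-i}$ is absorbed into the transport clearing described next.

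\emph{The transport series.} We write $U(r+s)=\sqrt{1+s/r}$. Its $s^j$ coefficient is $\binom{1/2}{j}r^{-j}$. Since $4^j\binom{1/2}{j}\in\Z$, the factor $(4q/a)^j$ clears it, and the same holds for every integer power $U^p$ by the product rule for binomial series. For $V$, we use
\[
 V'(t)=\frac{U(t)}{2t(1-t)}.
\]
We expand $1/(t(1-t))$ at $t=r$: its Taylor coefficients at order $j$ have denominators dividing $(a(q-a))^{j+1}$ times powers of $q$. Convolving with the $U$-expansion and integrating, we claim that $j\,(4|a(q-a)|/q)^{j}\,[s^j]V(r+s)$ lies in $\Z[1/?]$. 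More precisely, it becomes integral after the available compensating powers of $q$. This is the analogue of $jB_b^j[t^j]u_r$ being integral in the small-log case. The ordering argument behind $D_s(v)$ then shows that the product of at most $\ell$ such $V$-factors, of total $s$-degree at most $k$, is cleared by $D_\ell(k)$ times $(4|a(q-a)|)^k$ times the $q$-powers. The $U$-factors are handled by the same $(4\cdot)$-type clearing, which divides $(4|a(q-a)|)^k$ because $|a|\le q/4$ keeps everything away from the branch point.

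\emph{Bookkeeping.} In each monomial the total $s$-degree split between $P_M$, the $U$-factors, and the $V$-factors sums to $k$. The $q$-exponents then add to at most $M-1-k$ for the polynomial part, plus the powers cancelled against the $r^{-j}=(q/a)^j$ factors. This gives exactly \eqref{graded:F-dk}.

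\emph{Main obstacle.} The hardest step is verifying the integrality claim for $[s^j]V(r+s)$ with the precise base $4|a(q-a)|$ and the precise $q$-power bookkeeping. The danger is that the combined denominators might involve $q^{j}$ in a way not compensated by the single global $q^{M-1-k}$. We would first check this on the explicit expansion
\[
 V'(r+s)=\frac{q^2}{2a(q-a)}\bigl(1+s q/a\bigr)^{-1/2}\bigl(1-sq/(q-a)\bigr)^{-1}\cdot(\dots).
\]
If an extra factor of $q$ appears, we would absorb it by rescaling $s=\tau/q$. This is legitimate because we only need integrality of divided coefficients. We would then confirm that the degree-saving of $q^{-k}$ exactly cancels.
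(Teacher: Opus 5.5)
Your decomposition is the paper's: $v_k$ is the $s^k$-coefficient of the transported column. The coefficient of $s^{i}$ in $P_M(r+s)$ has denominator dividing $q^{M-1-i}$, so the surplus $q^{k-i}$ must come from the \emph{numerator} of the transport coefficient of index $j=k-i$. The factor $D_\ell$ comes from the ordering argument.

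The gap is that the one computation that constitutes the lemma is left as a claim with a placeholder (``$\mathbb Z[1/?]$''). That computation is: $(B^j/q^j)[s^j]U^m$ and $(B^j/q^j)\,j[s^j]V$ are integers, where $B=4a(q-a)$. Whether the powers of $q$ sit in numerators or denominators is exactly the point, and your text does not settle it.

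Several statements around that step are also wrong:
\begin{itemize}
\item For $U^m$ one has $[s^j]U^m=\binom{m/2}{j}(q/a)^j$. What clears this is $4^ja^j$, a divisor of $B^j$, and the essential fact is that $q^j$ stays in the numerator. Your ``$(4q/a)^j$ clears it'' puts $q$ on the wrong side.
\item The divisibility has nothing to do with $|a|\le q/4$. That hypothesis serves the analytic estimates and, with $a\ne0$, only guarantees $B\ne0$.
\item The fallback of rescaling $s=\tau/q$ is not legitimate. $v_k$ is the Taylor coefficient in $t-r$, and rescaling multiplies it by $q^{-k}$, i.e.\ it changes $d_k$ itself. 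If a stray power of $q$ really occurred, the lemma would be false as stated.
\end{itemize}

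The step closes along your own lines, because the factor $(\dots)$ in your expansion is simply $1$:
$V'(r+s)=\frac{q^2}{2a(q-a)}(1+sq/a)^{-1/2}(1-sq/(q-a))^{-1}$. Hence, for $j\ge1$,
\[
 j[s^j]V(r+s)=\frac{q^{j+1}}{2a}\sum_{u=0}^{j-1}\binom{-1/2}{u}a^{-u}(q-a)^{-(j-u)}.
\]
Multiplying by $B^j/q^j$ turns the $u$th summand into $\frac q2\,4^j\binom{-1/2}{u}a^{j-u-1}(q-a)^u$. This is an integer because $4^u\binom{-1/2}{u}=(-1)^u\binom{2u}{u}$, and $j-u\ge1$ leaves a factor $4$ to absorb the $\frac12$. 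No extra $q$ appears; a spare factor $q$ even survives.

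The bookkeeping then goes through:
\begin{itemize}
\item A $V$-coefficient of index $j$ lies in $q^j j^{-1}B^{-j}\mathbb Z$, and a $U^m$-coefficient lies in $q^jB^{-j}\mathbb Z$.
\item Since $V(r)=0$, the ordering argument puts the $s^j$-coefficient of any monomial $U^mV^p$ with $p\le\ell$ in $q^jB^{-j}D_\ell(j)^{-1}\mathbb Z$.
\item Pairing with $q^{-(M-1-k+j)}\mathbb Z$ from $P_M$, and using $B^j\mid B^k$ and $D_\ell(j)\mid D_\ell(k)$ for $j\le k<M$, gives exactly $d_k$.
\end{itemize}
This is the paper's proof.
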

\begin{proof}
Write $t=r+s$ and put $B=4a(q-a)$, which may have either sign.
For every nonnegative integer $m$, the binomial series gives
\[
 \frac{B^j}{q^j}[s^j]U(r+s)^m\in\mathbb Z.
\]
Indeed $[s^j]U^m=\binom{m/2}{j}(q/a)^j$ and
$4^j\binom{m/2}{j}$ is integral. Next, for $j\ge1$,
\[
 j[s^j]V(r+s)
 =\frac{q^{j+1}}{2a}
   \sum_{u=0}^{j-1}\binom{-1/2}{u}
        a^{-u}(q-a)^{-(j-u)}.
\]
Multiplication by $B^j/q^j$ makes this integral: each summand
becomes
$\frac q2\,4^j\binom{-1/2}{u}a^{j-u-1}(q-a)^u$,
and $u\le j-1$ leaves enough factors of two.
Thus, for a monomial in $U,V$ of $V$-degree at most $\ell$, its
coefficient of $s^j$ is cleared by
$(B^j/q^j)D_\ell(j)$. The ordering-of-indices argument supplies
the factors $D_\ell$, and convolution with the $U$ coefficients
does not introduce additional denominators.

The coefficient of $s^{k-j}$ in $P_M(r+s)$ has denominator dividing
$q^{M-1-k+j}$. The factor $q^j$ in the numerator of the transport
coefficient cancels the extra $q^j$ in that denominator. Every term
of total order $k$ is consequently cleared by
$q^{M-1-k}|B|^kD_\ell(k)$, as claimed.
\end{proof}

The saving $q^{-k}$ in this formula is important. It comes from the
degree lost on differentiating the polynomial; bounding all evaluated
polynomial derivatives by the same denominator would discard it.

\subsection{Rank and the exact rates}

The value $F(r)$ is nonzero and transcendental. Otherwise
\[
 \exp(2\sqrt r\,F(r))=\frac{1+\sqrt r}{1-\sqrt r}
\]
would contradict Hermite--Lindemann. Its distinct powers are therefore
independent over $\overline{\mathbb Q}$. The explicit functions
$U,V$ near zero are finite combinations of holomorphic functions
and powers of $\sqrt t$; the transported remainder coefficients
are Nilsson functions of fixed orders. Lemma~\ref{graded:rank}
gives full rank through $k\le\eta M+O_r(1)$. For all sufficiently
large $M$ these indices satisfy $k<M$, as required above.

On the disk $|t-r|\le|r|/2$, transport is bounded by a constant
depending on the fixed endpoint, $n$, and $G$. Polynomial values are
bounded by a constant times $H(P_M)$ because $|t|\le3/8$.
Cauchy's inequality contributes $(2q/|a|)^k$ to a divided jet.
After multiplication by \eqref{graded:F-dk}, the cancellation is
\begin{equation}\label{graded:F-cancellation}
 q^{M-1-k}\bigl(4|a(q-a)|\bigr)^k(2q/|a|)^k
 =q^{M-1}\bigl(8|q-a|\bigr)^k
 \le q^{M-1}(10q)^k.
\end{equation}
It follows that the cleared coefficients have rate
\begin{equation}\label{graded:F-alpha}
 \alpha=(1+\eta)\log q+\beta+\eta(\log10+H_\ell).
\end{equation}

Each original contact remainder is bounded on $|t|=1/2$ by
$C H(P_M)$ and vanishes to order $K$. It is thus bounded on the
local disk by $C H(P_M)(3|a|/q)^K$. Multiplication by the fixed
transport, followed by Cauchy's inequality and
\eqref{graded:F-cancellation}, gives decay rate
\begin{equation}\label{graded:F-gamma}
 \gamma=(\sigma-1-\eta)\log q-\sigma\log|a|
       -\beta-\sigma\log3-\eta(\log10+H_\ell).
\end{equation}
The multiplicative constants can depend on $r$, but they contribute
only $o(M)$ after taking logarithms. No bound uniform in a changing
endpoint is hidden in this auxiliary-degree limit.

\subsection{The joint polynomial measure}

For a nonconstant polynomial
$P(X)=c_0+\sum_{j=1}^nc_jX^j\in\mathbb Z[X]$, write
$H_+(P)=\max(1,|c_1|,\ldots,|c_n|)$; the constant coefficient
is omitted deliberately.

\begin{theorem}[Joint height and denominator bound]\label{graded:joint}
Fix $n,G,\eta,a,q$ as above and suppose $\gamma>0$. Put
\[
 x=F(a/q),\qquad \mu_*=1+\alpha/\gamma,
 \qquad \nu_*=G\mu_*-1.
\]
For every $\delta>0$ there is $C_\delta>0$ such that
\begin{equation}\label{graded:joint-bound}
 \left|P(x)-\frac pQ\right|
 \ge C_\delta H_+(P)^{-\nu_*-\delta}Q^{-\mu_*-\delta}
\end{equation}
for all nonconstant integer polynomials $P$ of degree at most $n$,
all $p\in\mathbb Z$, and all $Q\ge1$. The constant depends on
the fixed point and displayed parameters, but not on $P,p,Q$.
The linked-slack exponents $\nu_*+G\zeta$, $\mu_*+\zeta$ are
also valid for every $\zeta>0$.
\end{theorem}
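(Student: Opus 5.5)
The plan is to derive Theorem~\ref{graded:joint} from Proposition~\ref{graded:contraction}, using the endpoint vector $\xi=(x,x^2,\ldots,x^n)$ with $x=F(a/q)$. The coefficient vector is $c=(c_1,\ldots,c_n)$, taken from a nonconstant $P\in\Z[X]$. Then $c\cdot\xi=P(x)-c_0$ and $H=H_+(P)$. The integer $c_0$ is absorbed into the numerator: $|P(x)-p/Q|=|c\cdot\xi-(p-c_0Q)/Q|$. Hence a lower bound in \eqref{graded:contraction-bound} that is uniform in the numerator gives \eqref{graded:joint-bound} with the same constant. This explains why $H_+$ omits $c_0$. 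Nonconstancy gives $c\ne0$, as that proposition requires.

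For the integer columns, at each large $M$ I take the contact polynomials $P_M$ of \eqref{graded:F-height} and set $v_{M,k}=d_k\nabla^kP_M(r)/k!$, for $0\le k\le\lfloor\eta M\rfloor+C_r$. These are integral by Lemma~\ref{graded:F-clearing}. They span $\Q^\Omega$ by Lemma~\ref{graded:rank}; its $\overline{\Q}$-independence hypothesis holds because $F(r)$ is transcendental, and the Nilsson hypothesis is checked as in the preceding subsection. Multiplying by $d_k\ne0$ does not change rank. The height estimate $|(v_{M,k})_\lambda|\le e^{\alpha M+o(M)}$ is \eqref{graded:F-alpha}. To see this, $d_k$ together with the Cauchy factor $(2q/|a|)^k$ gives at most $q^{M-1}(10q)^k$ by \eqref{graded:F-cancellation}, with $k\le\eta M+O(1)$. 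Summing over the at most $\binom{G+n-2}{n-1}$ monomials $c^\lambda$ with $|\lambda|=G-1$ then gives $|B_{M,k}(c)|\le CH^{G-1}e^{\alpha M+o(M)}$.

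The small form is the main step. For fixed $c$, I would form the scalar remainder $\mathcal R_c(t)=\sum_{|\kappa|=G}a_c(t)^\kappa R_\kappa(t)$. Here $a_c$ is the dual transport \eqref{graded:F-dual} normalized by $a_c(r)=c$ and $c(r)=0$. By the lift construction (coefficient of $\rho^\kappa$ specialized at $\rho=c$), its endpoint vector is $\sum_\kappa c^\kappa E_\kappa+(c\cdot\xi)\sum_\lambda c^\lambda E_\lambda$. This holds because $\sum_\kappa c^\kappa\sum_j\xi_jE_{\kappa-e_j}$ reorganizes as $(\sum_jc_j\xi_j)\sum_{|\lambda|=G-1}c^\lambda E_\lambda$, the same multinomial identity behind \eqref{graded:corner-identity}. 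The pairing \eqref{graded:pairing} then gives
\[
 d_k\frac{\mathcal R_c^{(k)}(r)}{k!}=A_{M,k}(c)+B_{M,k}(c)\,c\cdot\xi .
\]
The key uniformity point is that $a_c(t)$ is a polynomial in $c$, homogeneous of degree one in each entry, with coefficients given by the fixed functions $U,V$. So on the local disk $|a_c(t)^\kappa|\le C H^G$, with $C$ depending only on $r,n,G$. The remainder bound $CH(P_M)(3|a|/q)^K$ for each $R_\kappa$, followed by Cauchy's inequality and \eqref{graded:F-cancellation}, then gives $CH^Ge^{-\gamma M+o(M)}$ uniformly in $c$, which is the rate \eqref{graded:F-gamma}.

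With $\gamma>0$, Proposition~\ref{graded:contraction} applies with rates $a=\alpha$ and $c_*=\gamma$, giving $\mu_*=1+\alpha/\gamma$ and $\nu_*=G\mu_*-1$, together with its linked-slack variant. I expect the main obstacle to be making the $o(M)$ terms and constants genuinely independent of $c$. This requires a single cutoff in $M$ beyond which the rank lemma holds. That lemma concerns the columns only, not $c$, so this should be fine. One then tracks that every $c$-dependence is exactly through the monomials $c^\kappa$ and $c^\lambda$.
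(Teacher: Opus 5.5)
This is essentially the paper's proof. You fix the contact columns independently of the coefficients, clear their jets by Lemma~\ref{graded:F-clearing}, and pair against the dual solution with $a_c(r)=c$, whose paired remainder $\mathcal R_c=\sum_\kappa a_c^\kappa R_{M,\kappa}$ is homogeneous of degree $G$ and bounded uniformly in $c$. You then apply Proposition~\ref{graded:contraction} with rates $\alpha,\gamma$ and absorb $c_0$ by $p\mapsto p-Qc_0$.

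One slip needs fixing. The auxiliary lift coordinate in \eqref{graded:F-dual} must take the value $c\cdot\xi$ at $r$, so that it equals $\sum_j a_{c,j}(t)F(t)^j$; it must not take the value $0$. With value $0$, the pairing would give $\mathcal R_c-(c\cdot\xi)\sum_\lambda P_{M,\lambda}a_c^\lambda$, whose cleared jets are only $A_{M,k}(c)$. The endpoint vector you write next already assumes the correct normalization.
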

\begin{proof}
Choose the integral contact column for every sufficiently large $M$
once and for all, independently of the target coefficients. Clear
its jets as above. For $c=(c_1,\ldots,c_n)$, the solution of the
dual system with endpoint $a_c(r)=c$ depends linearly on $c$.
The lifted solution with lower coordinate factor
$\sum_j(a_c)_jF^j$ has endpoint determined by
$c\cdot(F(r),\ldots,F(r)^n)$. Its paired remainder is exactly
\[
 \mathcal R_c(t)=\sum_{|\kappa|=G}a_c(t)^\kappa R_{M,\kappa}(t).
\]
It is homogeneous of degree $G$ in $c$. On the fixed local disk,
$\|a_c(t)\|\le C_rH_+(P)$, with the same $C_r$ for every
coefficient vector. The preceding analytic estimates consequently
give the hypotheses of Proposition~\ref{graded:contraction} with
rates $\alpha,\gamma$. Its small $o(M)$ terms and its rank cutoff
are independent of $c$. Apply the proposition and absorb $c_0$ by
replacing $p$ with $p-Qc_0$. This replacement leaves $Q$ and
$H_+(P)$ unchanged.
\end{proof}

When $x=h_d(1/b)=F(d/b^2)/b$ with fixed $d,b$, the same exponents
hold. Indeed $\widehat P(X)=b^nP(X/b)$ is integral and
$H_+(\widehat P)\le b^nH_+(P)$; apply
\eqref{graded:joint-bound} to $\widehat P$ and numerator $b^np$.
All factors involving this fixed $b$ enter the lower-bound constant.

To prove Theorem~\ref{graded:quadratic-uniform}, put
\[
 B=\beta+\eta(\log10+H_\ell),\quad
 C_1=B+\sigma\log3,\quad
 \tau=(1+\varepsilon)(\sigma-1-\eta)-(1+\eta).
\]
First choose $G$ large and then $\eta>0$ small so that $\tau>0$
and
\[
 \frac{(1+\varepsilon)\sigma}{2\tau}<A.
\]
These choices are possible because the fraction tends to
$1+1/\varepsilon$. One entirely explicit recipe is to put
$u=A\varepsilon-(1+\varepsilon)>0$, choose
\[
 G\ge\max\left\{1,
 \left\lceil\frac{2(1+\varepsilon)(2A-1)(n-1)}u\right\rceil
 \right\},
\]
and then set
\[
 L_*=2A(2+\varepsilon)
       +\frac{(1+\varepsilon)(2A-1)}\theta,\qquad
 \eta=\min\left\{\frac1{2(\omega+1)},\frac{u}{2L_*}\right\}.
\]
Writing $\delta=(n-1)/(G+n-1)$, direct expansion gives
\[
 2A\tau-(1+\varepsilon)\sigma
 =2u-(1+\varepsilon)(2A-1)\delta-\eta L_*
 \ge u>0,
\]
so the required margin, including $\tau>0$, holds. Now take
\begin{equation}\label{graded:uniform-C}
 C=\max\left\{2,
 \exp\frac{B+(1+\varepsilon)C_1}{2\tau}\right\}.
\end{equation}
For $a=d$, $q=b^2$, the hypothesis $b\ge C|d|^A$ gives
$|a|\le q/4$ and
\[
 \tau\log q\ge B+(1+\varepsilon)C_1
                         +(1+\varepsilon)\sigma\log|a|.
\]
Thus $\gamma>0$ and $\alpha\le(1+\varepsilon)\gamma$.
The joint theorem, with the target fixed and slack tending to zero,
gives $\mu(P(h_d(1/b)))\le2+\varepsilon$ for integral $P$.
Clearing a fixed rational polynomial's coefficients and using rational
scaling proves the stated rational-polynomial version.

\begin{corollary}[A concrete tradeoff]\label{graded:joint-hd}
Fix a nonzero integer $d$ and integers $n,G\ge1$. Given $\rho>0$,
for all sufficiently large $b$ there is $c>0$ such that
\[
 \left|P(h_d(1/b))-\frac pQ\right|
 \ge c H_+(P)^{-(2G+n-2+G\rho)}
          Q^{-(2+(n-1)/G+\rho)}
\]
simultaneously for all nonconstant $P\in\mathbb Z[X]$ of degree
at most $n$, all integers $p$, and all positive integers $Q$.
\end{corollary}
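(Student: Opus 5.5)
The plan is to specialize Theorem~\ref{graded:joint} at the endpoint $r=d/b^2$, that is $a=d$ and $q=b^2$, and then transfer the bound from $F(d/b^2)$ to $h_d(1/b)$ by the rescaling remark made after that theorem. The data $d,n,G$ are fixed and only $\eta$ remains free in the construction. I would therefore choose $\eta$ from $\rho$ first, and only afterwards require $b$ to be large. This respects the quantifier order of the graded method: grade and parameter first, evaluation point next, auxiliary degree last.

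\emph{Step 1: limits of the rates.} With $m=n$ we have $\omega/\theta=1+G/(G+n-1)$, so
\[
 \sigma-1=\frac{G}{G+n-1}-\frac{\eta}{\theta}.
\]
With $a=d$ fixed and $q=b^2\to\infty$, formulas \eqref{graded:F-alpha} and \eqref{graded:F-gamma} have the form $\alpha=(1+\eta)\log q+O_\eta(1)$ and $\gamma=(\sigma-1-\eta)\log q+O_{\eta,d}(1)$. Here $\beta$ and $H_\ell$ depend only on $n,G,\eta$, and the term $\sigma\log|d|$ is a constant. Hence
\[
 \mu_*=1+\alpha/\gamma\ \longrightarrow\ 1+\frac{1+\eta}{\sigma-1-\eta}\qquad(b\to\infty).
\]
As $\eta\downarrow0$ this limit tends to $1+(G+n-1)/G=2+(n-1)/G$.

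I would therefore fix $0<\eta\le1/(\omega+1)$ so small that $\sigma-1-\eta>0$ and the $b$-limit of $\mu_*$ is below $2+(n-1)/G+\rho/4$. Then, for all sufficiently large $b$, we get $|d|\le b^2/4$, $\gamma>0$, and
\[
 \mu_*<2+\frac{n-1}{G}+\frac{\rho}{2},\qquad \nu_*=G\mu_*-1<2G+n-2+\frac{G\rho}{2}.
\]

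\emph{Step 2: apply the joint theorem with linked slack.} Use the linked-slack form of Theorem~\ref{graded:joint} with $\zeta=\rho/2$ at $x_0=F(d/b^2)$. This gives a constant $c'>0$ such that
\[
 \left|\widehat P(x_0)-\frac{p'}{Q}\right|\ge c'H_+(\widehat P)^{-(2G+n-2+G\rho)}Q^{-(2+(n-1)/G+\rho)}
\]
for all nonconstant integral $\widehat P$ of degree at most $n$, all $p'\in\Z$, and all $Q\ge1$. The constant $c'$ is uniform in $\widehat P,p',Q$.

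\emph{Step 3: transfer to $h_d(1/b)$.} Since $h_d(1/b)=x_0/b$, put $\widehat P(X)=b^nP(X/b)$. Then $\widehat P$ is integral and nonconstant, $H_+(\widehat P)\le b^nH_+(P)$, and
\[
 \left|P(h_d(1/b))-\frac pQ\right|=b^{-n}\left|\widehat P(x_0)-\frac{b^np}{Q}\right|.
\]
The factors $b^{-n}$ and $b^{-n(2G+n-2+G\rho)}$ depend only on the fixed $b$, so they go into $c$. This gives the claimed inequality simultaneously for all $P,p,Q$.

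\emph{Main obstacle.} The only delicate point is Step~1: the error terms in $\alpha$ and $\gamma$ must be genuinely independent of $b$ once $\eta$ is fixed. This holds because the constant $\beta$ from \eqref{graded:F-height} comes from the fixed function $F$ and does not depend on the endpoint. I must also make sure that the exponent in $H_+(P)$ comes out as exactly $G\mu_*-1$ plus $G\zeta$, without any extra loss; the linked-slack clause of Proposition~\ref{graded:contraction} provides exactly this.
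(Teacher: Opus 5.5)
Your proposal is correct and follows essentially the same route as the paper. The paper also specializes Theorem~\ref{graded:joint} at $a=d$, $q=b^2$, and computes the limits $\mu_*\to2+(n-1)/G$ and $\nu_*\to2G+n-2$ by letting $b\to\infty$ and then $\eta\downarrow0$. It then chooses $\eta$ first, $b$ next and the linked slack last, and transfers the bound to $h_d(1/b)$ through $\widehat P(X)=b^nP(X/b)$, putting all fixed-$b$ factors into the constant. Your passage to the larger displayed exponents is implicit; it is valid because $H_+(P)\ge1$ and $Q\ge1$.
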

\begin{proof}
For fixed $d,n,G$, first let $b$ tend to infinity in the rate
quotient and then let $\eta$ decrease to zero. The limits are
\[
 \mu_*\longrightarrow2+\frac{n-1}{G},\qquad
 \nu_*\longrightarrow2G+n-2.
\]
Choose $\eta$ first, then $b$ sufficiently large, and finally the
linked slack in Theorem~\ref{graded:joint} so that the two displayed
exponents fit below those of the corollary.
\end{proof}

There are two useful readings of this tradeoff. At $G=1$ it gives
the classical small-point algebraicity measure: for every
$\varepsilon>0$, sufficiently small fixed points satisfy
$w_n(x)\le n+\varepsilon$, where $w_n$ is the exponent for
$0<|P(x)|<H(P)^{-w}$ with integer polynomials of degree at most
$n$. This height-only phenomenon belongs to the earlier
Chudnovsky--Chudnovsky theory; see \cite{Nagata1995}. Larger $G$
improves the exponent of $Q$ toward two while increasing the cost in
the target's coefficient height. We do not claim historical priority
for this particular quantitative tradeoff.

The joint theorem also records coefficient content. If
$R(X)=r_0+\sum_{j=1}^nr_jX^j$ is nonconstant and
$g=\gcd(|r_1|,\ldots,|r_n|)>0$, ignoring zero entries, then
\begin{equation}\label{graded:content}
 |R(x)|\ge C_\delta H_+(R)^{-\nu_*-\delta}
                          g^{(G-1)\mu_*}.
\end{equation}
Apply \eqref{graded:joint-bound} to
$\sum_{j\ge1}(r_j/g)X^j$, with $p=-r_0$, $Q=g$, and multiply
by $g$. The two equal slack exponents cancel in the power of $g$.

As one explicit example, take $n=3$, $G=9$, $\eta=1/101$.
Then $\theta=55$, $\omega=100$, $\ell=19$, and
$\sigma=10099/5555$. If
\[
 b\ge e^{200000}|d|^3,
\]
the rates satisfy $\mu_*\le3$ and $\nu_*\le26$. For a direct
check, put $T=2(\sigma-1-\eta)-(1+\eta)$. One has
$T>0.606$, $6T>2\sigma$, $B<80047$, and $C_1<80050$;
thus $400000T>B+2C_1$. These inequalities imply
$T\log(b^2)\ge B+2C_1+2\sigma\log|d|$ and hence
$\alpha\le2\gamma$. Therefore, for every $\delta>0$,
\[
 \left|P(h_d(1/b))-\frac pQ\right|
 \ge c_{d,b,\delta}H_+(P)^{-26-\delta}Q^{-3-\delta}
\]
uniformly over all nonconstant integer polynomials of degree at most
three. The threshold is explicit; a numerical value of the final
positive constant is not asserted.

\section{A joint measure for two small rational logarithms}
\label{graded:two-logs}

For integers $N\ge8$, write
$\xi_N=\log(1+1/N)$ and $\zeta_N=\log(1+2/N)$.

\begin{theorem}\label{graded:two-log-joint}
For every integer $G\ge1$ and every $\rho>0$, there is an
explicit integer $N_0(G,\rho)$ such that, for every $N\ge N_0$,
there exists $c=c(N,G,\rho)>0$ satisfying
\[
 \left|u\xi_N+v\zeta_N-\frac pQ\right|
 \ge c H^{-2G-G\rho}Q^{-2-1/G-\rho},\qquad
 H=\max(1,|u|,|v|),
\]
for all $(u,v)\in\mathbb Z^2\setminus\{(0,0)\}$,
$p\in\mathbb Z$, and $Q\ge1$. In particular
\begin{equation}\label{graded:ratio-near-three}
 \mu\left(\frac{\log(1+1/N)}{\log(1+2/N)}\right)
                  \le3+\rho\qquad(N\ge N_0(1,\rho)).
\end{equation}
\end{theorem}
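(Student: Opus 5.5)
We follow the template of Theorem~\ref{graded:joint}, replacing powers of one function by the two functions $f_1(z)=\log(1+z)$, $f_2(z)=\log(1+2z)$, evaluated at $z=r=1/N$. Their system is $f'=s_0$ with $S=0$ and $s_0=(1/(1+z),2/(1+2z))^{\mathsf T}$, so $m=2$. The grades are $\Theta=\{|\kappa|=G\}$ and $\Omega=\Theta\cup\{|\lambda|=G-1\}$, with $\theta=G+1$ and $\omega=2G+1$. Hence $\omega/\theta=1+G/(G+1)$, and this ratio controls the final $Q$-exponent $1+1/(\sigma-1)\to2+1/G$. The target $u\xi_N+v\zeta_N$ is the linear form $c\cdot\xi$ with $c=(u,v)$, which is exactly the situation of Proposition~\ref{graded:contraction}. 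That proposition, with $a,c_*$ the cleared-height and decay rates, yields $\mu_*=1+a/c_*$ and $\nu_*=G\mu_*-1$. Once we show $a/c_*\to 1/(\sigma-1)$ as $N\to\infty$ and $\eta\to0$, we get $\mu_*\to 2+1/G$ and $\nu_*\to 2G$. Then for $N\ge N_0$ the slack $\delta$ fits inside $\rho$, which gives the displayed inequality.

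The first step is the contact construction. Impose \eqref{graded:contact} with $K=\lfloor\sigma M\rfloor$ on $\omega M$ unknowns, subject to $\theta K\le(\omega-\eta)M$ equations. Products of at most $G$ factors $f_1,f_2$ have coefficients cleared by $D_G(K)$, because the degree-$j$ coefficient has denominator dividing $j$, with extra powers of $2$. Siegel's lemma then gives $\log H(P)\le\beta M+o(M)$ with $\beta$ independent of $N$.

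The second step is transport and clearing. The dual system is $a'=0$, $c'=s_0^{\mathsf T}a$, so $c(z)=c(r)+a_1u_1(z)+a_2u_2(z)$ with $u_i=f_i(z)-f_i(r)$. The lifted matrix is nilpotent with integral divided powers, as in \eqref{graded:integral-exponential}. Since $u_1(r+t)=\log(1+Nt/(N+1))$ and $u_2(r+t)=\log(1+2Nt/(N+2))$, the jets $v_k$ are cleared by $N^M((N+1)(N+2))^kD_1(k)$. This gives height rate $a_N=\log N+2\eta\log N+O(1)$.

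The third step is rank and decay. Here $\log(1+1/N)$ and $\log(1+2/N)$ are logarithms of multiplicatively independent rationals $(N+1)/N$ and $(N+2)/N$; independence follows from the prime factorization of $N+1$ and $N+2$, together with the fact that both ratios exceed one. Baker's theorem with the constant $1$ therefore gives $\overline{\mathbb Q}$-independence of $1,\xi_N,\zeta_N$. Since $S=0$ the Nilsson hypotheses are trivial, so Lemma~\ref{graded:rank} gives full rank for $k\le\eta M+C_N$. The paired remainder $\mathcal R_c=\sum a_c^\kappa R_\kappa$ with constant $a_c=c$ is bounded on $|z-r|=1/(2N)$ by $C H^GH(P)(3/N)^K$. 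This yields decay rate $c_N=(\sigma-1-O(\eta))\log N-O(1)$, uniformly in $c$ because $a_c$ is constant.

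The main obstacle is ensuring that the bounds are genuinely uniform in $(u,v)$, with the $H^G$ and $H^{G-1}$ factors appearing exactly as Proposition~\ref{graded:contraction} requires. We also need all $N$-independent constants to be explicit enough to produce an explicit $N_0(G,\rho)$, mirroring the threshold computation \eqref{graded:small-threshold}.

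For \eqref{graded:ratio-near-three}, take $G=1$, so the $H$-exponent is $2+\rho$ and the $Q$-exponent is irrelevant. Set $p=0$, $Q=1$ and $(u,v)=(q,-p')$; this gives $|q\xi_N-p'\zeta_N|\gg H^{-2-\rho}$. The conversion to an irrationality bound then proceeds as in Corollary~\ref{ratio:linear-form}, giving exponent at most $3+\rho$.
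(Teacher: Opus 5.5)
Your proposal is correct and follows the paper's proof essentially step for step. The shared steps are: Siegel-lemma contact columns of $N$-independent height; the nilpotent transport $I+N_1u_1+N_2u_2$ with clearing by powers of $N$, by $L_N=(N+1)(N+2)$ and by $D_1(k)$; Baker's theorem for $1,\xi_N,\zeta_N$; Lemma~\ref{graded:rank}; Proposition~\ref{graded:contraction}; and the specialization $G=1$, $p=0$, $Q=1$ for the ratio.

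Your constants are slightly wasteful but harmless. The contact equations are linear in $f_1,f_2$, so $D_1(K)$ already clears them and $D_G(K)$ is not needed. The paper's factor $N^{M-1-k}$ in place of your $N^M$ keeps the $N^{-k}$ saving and gives rates $(1+\eta)\log N$ and $(\sigma-1-2\eta)\log N$; with these, the explicit threshold $N_0$ is a routine computation.
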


\begin{proof}
The positive rational numbers $(N+1)/N$ and $(N+2)/N$ are
multiplicatively independent. Any prime divisor of $N+1$ divides
neither $N$ nor $N+2$; its valuation first forces the exponent of
$(N+1)/N$ in a multiplicative relation to vanish, and then the other
exponent vanishes. Baker's theorem \cite{Baker1975} implies independence
of their logarithms over $\overline{\mathbb Q}$. Its transcendence
statement for nonzero algebraic linear forms in logarithms also
excludes an algebraic nonzero constant. Consequently
$1,\xi_N,\zeta_N$ are independent over $\overline{\mathbb Q}$.

Use $f_1(z)=\log(1+z)$, $f_2(z)=\log(1+2z)$ and the two
adjacent grades. Here $\theta=G+1$, $\omega=2G+1$. Choose
$0<\eta\le1/(\omega+1)$ and set
\[
 \sigma=\frac{\omega-\eta}{\theta},\qquad
 \beta=\frac{\omega-\eta}{\eta}\sigma(1+\log4).
\]
The fixed contact equations are
\[
 R_\kappa=P_\kappa+P_{\kappa-e_1}f_1+P_{\kappa-e_2}f_2
                   =O(z^K),\qquad K=\lfloor\sigma M\rfloor.
\]
Their coefficients are cleared by $D_1(K)$; both logarithms have
modulus less than one on $|z|=1/4$, so their coefficients are bounded
by $4^K$. The same Siegel argument gives integer contact columns
with $\log H(P_M)\le\beta M+o(M)$, independently of $N$.

In this case the dual variables $a_1,a_2$ are constant. The lifted
connection has the especially simple form
$\mathcal A=f'_1N_1+f'_2N_2$, with integral matrices satisfying
$N_iN_j=0$ for every $i,j$. Its normalized transport is
\[
 I+N_1(f_1(z)-f_1(r))+N_2(f_2(z)-f_2(r)),\qquad r=1/N.
\]
Thus only one logarithmic factor appears, independently of $G$.
The origin and $r$ are ordinary points, and
Lemma~\ref{graded:rank} gives full jet rank through
$k\le\eta M+O_N(1)$.

Set $L_N=(N+1)(N+2)$. For $k<M$ an exact clearing integer is
\begin{equation}\label{graded:two-log-clearing}
 d_k=N^{M-1-k}L_N^kD_1(k).
\end{equation}
Indeed
\[
 [t^j](f_i(r+t)-f_i(r))
   =\frac{(-1)^{j-1}}j\left(\frac{iN}{N+i}\right)^j.
\]
The divided polynomial derivative of order $k-j$ has denominator
dividing $N^{M-1-k+j}$; the numerator $N^j$ cancels that extra
factor. The remaining logarithmic denominators are cleared by
$L_N^kD_1(k)$. This proves \eqref{graded:two-log-clearing} without
any grade-dependent power of the least common multiple.

On $|t|=1/8$ around $r$, the transport is bounded uniformly for
$N\ge8$. Cauchy's inequality gives coefficient rate
\[
 \alpha_N=\beta+\log N+
       \eta\{\log(L_N/N)+\log8+1\}.
\]
For $(u,v)\ne(0,0)$, contract against the monomials in
$c=(u,v)$. The scalar remainder
$\sum_{|\kappa|=G}c^\kappa R_\kappa$ has order at least $K$
and is bounded on $|z-r|=1/(2N)$ by
$C_GH^GH(P_M)(6/N)^K$. The derivative factor is $(2N)^k$.
After clearing, the decay rate is
\[
 \gamma_N=(\sigma-1)\log N-\beta-\sigma\log6
                       -\eta\{\log(2L_N)+1\}.
\]
Both estimates are uniform in $u,v,p,Q$. For $\gamma_N>0$,
Proposition~\ref{graded:contraction} therefore gives linked-slack
exponents
\[
 G-1+G\alpha_N/\gamma_N+G\delta,
 \qquad 1+\alpha_N/\gamma_N+\delta.
\]

For completeness, the following recipe specifies a sufficient
threshold. Put $h=\rho/2$, $t=1+1/G+h$, and choose
\[
 \eta=\min\left\{\frac1{2(\omega+1)},
 \frac{hG}{2(G+1)\{t(2+1/(G+1))+1\}}\right\}.
\]
Define $\sigma,\beta$ as above and
\[
 a_0=\beta+\eta(1+\log16),\qquad
 c_0=\beta+\sigma\log6+\eta(1+\log4),
\]
\[
 \Delta=t(\sigma-1-2\eta)-(1+\eta)
                    \ge\frac{hG}{2(G+1)}>0.
\]
Since $L_N\le2N^2$ for $N\ge8$, take
\[
 N_0=\left\lceil\max\left\{8,
          \exp\frac{a_0+t c_0}{\Delta}\right\}\right\rceil.
\]
Then $\gamma_N>0$ and $\alpha_N/\gamma_N\le t$ for all
$N\ge N_0$. Taking the linked slack $\delta\le\rho/2$ proves
the first assertion.

For the ratio use $G=1$, $Q=1$, $p=0$, and $(u,v)=(b,-a)$.
If $a/b$ stays in a fixed bounded neighborhood of
$\xi_N/\zeta_N$, its height $\max(b,|a|)$ is at most a fixed
multiple of $b$. Divide the resulting lower bound for
$|b\xi_N-a\zeta_N|$ by $b|\zeta_N|$. Approximants outside
that neighborhood have a constant lower bound, so
\eqref{graded:ratio-near-three} follows. Multiplicative independence
already proves that the ratio is irrational.
\end{proof}

Increasing $G$ improves the exponent of the rational approximant's
denominator $Q$, but it worsens the coefficient-height exponent.
For a quotient one sets $Q=1$ and lets the two integer coefficients
vary, so $G=1$ is the favorable choice in this construction. It gives
the near-three bound above, not a near-two quotient theorem.

\begin{corollary}[The lower bound and the range up to three]
\label{graded:range-two-three}
Under the hypotheses and endpoint thresholds of
Theorems~\ref{graded:small-log-theorem} and
\ref{graded:quadratic-uniform}, every indicated target $y$ satisfies
\[
 2\le\mu(y)\le2+\varepsilon.
\]
In particular $0<\varepsilon\le1$ gives $2\le\mu(y)\le3$.
\end{corollary}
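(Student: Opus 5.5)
The corollary combines two facts: the Dirichlet lower bound of Lemma~\ref{prelim:dirichlet}, which needs only that $y$ is irrational and real, and the upper bounds already proved in Theorems~\ref{graded:small-log-theorem} and~\ref{graded:quadratic-uniform}. So the plan has three steps: check that each indicated target is a real irrational number, quote the upper bound, and specialize $\varepsilon$.

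\emph{Step 1: the targets are real and irrational.} For $y=\log^n(1+1/b)$ or $y=\arctan^n(1/b)$, realness is immediate from the real branches. For $y=P(h_d(1/b))$, the series converges at $1/b$ because $b\ge C|d|^A$ forces $|d|/b^2\le1/4$, so $h_d(1/b)$ is real. For irrationality I would reuse the transcendence arguments given in the proofs. The value $h(1/b)$ is nonzero and transcendental by Hermite--Lindemann, through $e^{h(r)}=1+r$ or $e^{2ih(r)}=(1+ir)/(1-ir)$. Likewise $h_d(1/b)=F(d/b^2)/b$ is transcendental by the identity $\exp(2\sqrt r\,F(r))=(1+\sqrt r)/(1-\sqrt r)$ with $r=d/b^2$. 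A transcendental number $x$ has transcendental $x^n$ and transcendental $P(x)$ for every nonconstant $P\in\Q[X]$: otherwise $x$ would be a root of the nonzero polynomial $P(X)-y$ with algebraic coefficients. In particular each target is irrational. Alternatively, the eventual lower bound in Lemma~\ref{graded:two-form} and Theorem~\ref{graded:joint} already excludes rational values, because exact representations with arbitrarily large denominators would violate it.

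\emph{Step 2: the bounds.} Lemma~\ref{prelim:dirichlet} gives $\mu(y)\ge2$. The cited theorems give $\mu(y)\le2+\varepsilon$ under their thresholds. When $\varepsilon\le1$, this yields $2+\varepsilon\le3$.

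\emph{Main obstacle.} The only point needing care is the quantifier order. The thresholds $b_0(n,\varepsilon)$ and $C(n,\varepsilon,A)$ depend on $\varepsilon$, so $\varepsilon$ must be fixed before the argument. The corollary therefore makes no exponent-two claim for a fixed $b$, only the bound $2\le\mu(y)\le2+\varepsilon$ for the chosen $\varepsilon$.
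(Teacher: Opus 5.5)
Your proposal is correct and follows the paper's proof: Hermite--Lindemann (via $e^{h(r)}=1+r$, the Cayley quotient, or the identity for $F$) gives transcendence of the base value, transcendence passes to $P(x)$ for nonconstant $P\in\Q[X]$, Dirichlet supplies the lower bound two, and Theorems~\ref{graded:small-log-theorem} and~\ref{graded:quadratic-uniform} supply the upper bound. Your alternative irrationality argument, from the positive lower bounds in Lemma~\ref{graded:two-form} and Theorem~\ref{graded:joint}, is also valid, and your remark on the quantifier order matches the paper's caveat.
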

\begin{proof}
A nonzero logarithm of an algebraic number is transcendental by
Hermite--Lindemann. A nonzero real arctangent of a nonzero algebraic
number is also transcendental: its exponential after multiplication by
$2\ii$ is the algebraic Cayley quotient, so algebraicity would contradict
the same theorem. The displayed logarithmic and circular expressions for
$h_d(1/b)$ therefore show that this value is transcendental; multiplying
by a nonzero algebraic normalization does not change that assertion.
For a transcendental $z$, every nonconstant $P\in\Q[X]$ has
transcendental value $P(z)$. Otherwise $z$ would satisfy the nonzero
polynomial $P(X)-P(z)$ over $\overline\Q$, and hence be algebraic.
This also covers the positive integer powers in
Theorem~\ref{graded:small-log-theorem}. Dirichlet's theorem now supplies
the lower bound two, and the cited upper bounds give the conclusion.
\end{proof}
These graded applications have checked Lean endpoints relative to the explicit classical inputs recorded in Appendix~\ref{app:formal-status}; the literal arithmetic recipes are included. The required threshold is chosen after
$\varepsilon$ (and the degree and growth parameters), so this corollary
does not infer exponent two by taking $\varepsilon\downarrow0$ at a
fixed arbitrary argument.

\section{Restricted powers and the meaning of the conclusions}
\label{graded:restricted}

\begin{proposition}[Fixed power classes]\label{graded:restricted-power}
Let $x\ne0$ be real, let $c>0$, and let $n\ge1$ be an integer.
Suppose $c^n\in\mathbb Q$ and $\mu(x/c)=2$. The approximation
exponent of $x^n$ restricted to rational numbers in
$c^n\mathbb Q^n$ is exactly $2/n$, when the height is the reduced
denominator of the approximating rational number.
\end{proposition}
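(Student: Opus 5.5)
The approach is to transport approximations of $x^n$ by elements of $c^n\Q^n$ into approximations of $y=x/c$ by rationals, and back. Write the approximant as $c^n(p/q)^n$ with $p/q$ reduced and $q\ge1$; since $c^n=u/v\in\Q$ is fixed, the reduced denominator $Q$ of $c^n p^n/q^n$ satisfies $q^n/v'\le Q\le vq^n$ for a constant $v'$ depending only on $u,v$, because cancellation against $q^n$ can only come from the fixed numerator $u$. So the height $Q$ is comparable to $q^n$. If $x^n=c^n y^n$ with $y\ne0$ irrational (irrationality follows from $\mu(y)=2$ being defined), the error factors as
\[
 \bigl|x^n-c^n(p/q)^n\bigr|=c^n\,|y-p/q|\,\Bigl|\sum_{k=0}^{n-1}y^k(p/q)^{n-1-k}\Bigr|.
\]
For $p/q$ near $y$, and when $n$ is even possibly also near $-y$, the last factor is comparable to $n|y|^{n-1}\neq0$. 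Approximants far from $\pm y$ have error bounded below by a positive constant and are irrelevant.

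For the lower bound $2/n$, I will apply Lemma~\ref{prelim:dirichlet} to $y$, which gives infinitely many reduced $p/q$ with $|y-p/q|\le q^{-2}$. Then the error for $x^n$ is $\ll q^{-2}\asymp Q^{-2/n}$. The $Q$ are unbounded, since distinct $p/q\to y$ give distinct values $(p/q)^n$ converging to $y^n$, at least after passing to a subsequence, so finiteness of reduced fractions with bounded denominator applies. The constant is absorbed for every exponent below $2/n$, exactly as in the remark following \eqref{prelim:eq:mu}.

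For the upper bound, suppose there are infinitely many restricted approximants with error $<Q^{-\nu}$ and $\nu>2/n$. Their $p/q$ converge to $y$ or to $-y$, so some sign $\pm$ occurs infinitely often. Reduced $p/q$ with $|\pm y-p/q|\ll Q^{-\nu}\ll q^{-n\nu}$ and $n\nu>2$ would then give $\mu(\pm y)\ge n\nu>2$. Since $\mu(-y)=\mu(y)$ by Lemma~\ref{prelim:mobius}, this contradicts the hypothesis. The $q$ are unbounded because the $Q$ are and $Q\le vq^n$.

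I expect the main obstacle to be the precise denominator comparison $Q\asymp q^n$ together with the handling of the sign ambiguity for even $n$. In particular, the representation of an element of $c^n\Q^n$ as $c^n(p/q)^n$ is unique only up to sign, and this is why I will argue near both $\pm y$.
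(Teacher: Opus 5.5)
Your proposal is correct and follows essentially the same route as the paper: the reduced-denominator comparison $Q\asymp q^n$ (the paper notes that the relevant gcd divides $uv$), local bi-Lipschitz behaviour of $t\mapsto t^n$ on each branch $\pm y$, and a Dirichlet-type lower bound (the paper uses continued-fraction convergents). One small slip: for even $n$ and $p/q$ near $-y$, the sum $\sum_k y^k(p/q)^{n-1-k}$ tends to $0$ and is not comparable to $n|y|^{n-1}$. On that branch you should instead factor $(-y)^n-(p/q)^n$, which is legitimate because $y^n=(-y)^n$.
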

\begin{proof}
Write $c^n=u/v$ in lowest terms with $u,v>0$, and write a member
of the class as $(u/v)(A/B)^n$, with $(A,B)=1$, $B>0$.
Its reduced denominator is
\[
 q=\frac{vB^n}{\gcd(uA^n,vB^n)}\asymp_{u,v}B^n,
\]
because the greatest common divisor in this expression divides
$uv$. For approximants on the branch near $x/c$, the power map
has nonzero derivative and is locally bi-Lipschitz. If $n$ is even,
the other branch is handled by the same argument at $-x/c$.
Thus an approximation to $x^n$ of exponent greater than $2/n$
would give an approximation to $x/c$ of exponent greater than two.
Conversely, its continued-fraction convergents give infinitely many
errors $O(B^{-2})$, hence $O(q^{-2/n})$ after taking powers.
Absorb fixed constants by an arbitrarily small exponent slack.
\end{proof}

For instance an exponent-two theorem for $\log\alpha$, with
$\alpha>0$ rational and $\alpha\ne1$, gives restricted exponent
$2/n$ for $(\log\alpha)^n$ along $\mathbb Q^n$. The normalized
quadratic exponent-two theorem gives restricted exponent one for
$(\log\alpha)^2$ along the fixed class $d\mathbb Q^2$ when $d$ is
a positive nonsquare integer and $\alpha>0$ belongs to
$\mathbb Q(\sqrt d)$, has norm $1$ or $-1$, and
$\alpha\ne1$ (squaring handles norm $-1$).

These statements concern restricted approximation sets. They do not
give an unrestricted irrationality exponent for a fixed higher power.
Similarly, in the small-point theorems the endpoint depends on the
desired error in the exponent: $\forall\varepsilon\,\exists b_0$
cannot be interchanged to infer exponent exactly two at one fixed
$b$. The joint measures have constants uniform in the target's
coefficients and rational approximant, but those constants remain
attached to the fixed evaluation point. The restricted-power theorem
and its displayed logarithmic applications are now proved in Lean:
see \nolinkurl{QuadraticPi/ActualRestrictedPowerExponent.lean} and
\nolinkurl{QuadraticPi/RestrictedLogarithmicPowers.lean}.
The preceding graded rank, small-point polynomial, and joint-measure applications now have checked endpoints relative to their explicit classical inputs. Appendix~\ref{app:formal-status} identifies these boundaries and the exact literal recipes; it does not claim native proofs of the cited foundations.
% ===== END graded_logarithmic_measures.tex =====

\clearpage
\appendix

% ===== BEGIN formal_status.tex =====
\section{Exact formalization status and source map}\label{app:formal-status}\label{app:formal-status-increment69}\label{app:formal-status-increment33}

\subsection{Reviewed statement-level scope, 16 October 2026}

Checkpoint136 reviews all 62 numbered theorem, lemma, proposition, and
corollary statements against their current Lean endpoints. The map below
contains 53 direct endpoint entries and nine entries complete as
applications relative to named classical theorem inputs. The latter are
entries 9, 16, 52, 55, 56, and 58--61. There is no remaining concrete
application-adapter gap identified by this review. This is a statement
about these numbered results and their recorded supporting interfaces;
it does not assert that every sentence of the written proofs has a
separate Lean translation, or that the cited classical theorems have
new native Lean proofs.

Earlier recovery ledgers used the label \texttt{PARTIAL\_FORMAL} for
these nine classical boundaries even after their applications had been
assembled. This edition uses \texttt{CITED\_FOUNDATION\_ENDPOINT} so that
a complete conditional application is not confused with an unfinished
adapter. The historical ledgers are preserved. The count is not a
percentage of native proofs. The exact exported theorem types, rather
than a status label, determine the mathematical scope.

All mathematical statement and proof environments from the previous
114-page edition are preserved verbatim. Status prose is updated, and
explicit domain and continuation conventions are added immediately before
the multiplicity and rank statements. The previous source and PDF are
retained unchanged. The one new mathematical source module in this
checkpoint packages the already proved Gaussian M\"obius irrationality
and exponent conclusions for the actual Gaussian base field.

\subsection{Foundations supplied explicitly as theorem parameters}

The following list contains twelve tracked interface names, not twelve
independent classical theorems. Several interfaces specialize the same
classical theory. They are explicit parameters in the application types,
not new global mathematical axioms. A successful \texttt{\#print axioms}
report for a conditional theorem checks its proof from its hypotheses;
it does not supply proofs of those hypotheses.

\paragraph{Classical interface 1.}
\nolinkurl{QuadraticPi.GradedHarmonicGrowth.SharpLcmAsymptotic}.
The sharp exponential growth of $\operatorname{lcm}(1,\ldots,n)$, in the standard form equivalent to the prime number theorem. This supplies the sharp arithmetic clearing rate; routes that do not use it are identified separately.

\paragraph{Classical interface 2.}
\nolinkurl{QuadraticPi.GradedAlgebraicValueIndependence.AlgebraicLogTranscendence}.
The classical Hermite--Lindemann nonzero algebraic-logarithm transcendence input. Polynomial-value and power irrationality are then derived in the application sources. The input is independent of the new approximation conclusions.

\paragraph{Classical interface 3.}
\nolinkurl{QuadraticPi.GradedCitedSeparationBridge.CitedGenericSeparation}.
The cited generic separation statement for the rational differential-system setting used here. Its application is made only after the actual lifted system and its branch and independence data have been supplied. The reference and interface matching are recorded in the saved rank review; see also \cite{FR2025}.

\paragraph{Classical interface 4.}
\nolinkurl{QuadraticPi.GradedGenericRankCitation.CitedGenericQuadraticRank}.
The cited quadratic-family generic rank input, specialized with the actual graded system. Native adapters provide the ordinary point, independence data, clearing, contact, and uniformity information. The input is not the paper's final short-jet or approximation conclusion.

\paragraph{Classical interface 5.}
\nolinkurl{QuadraticPi.GradedTwoLogBakerApplication.CitedBakerLogarithmIndependence}.
Baker's linear independence theorem for logarithms of algebraic numbers \cite[Chapters 1--2]{Baker1975}, in the precise two-logarithm application form. It is used in the graded joint measure, after the actual rational exponential arguments have been identified.

\paragraph{Classical interface 6.}
\nolinkurl{QuadraticPi.GradedGeneralNilssonRankCitation.CitedGenericNilssonRank}.
The cited general rational-system Nilsson rank result matched to the hypotheses recorded for \cite[Theorems 3--4 and Lemma 1]{FR2025}. The actual finite lift and Nilsson-order data are exposed in the rank endpoint; they are not inferred merely from analyticity.

\paragraph{Classical interface 7.}
\nolinkurl{QuadraticPi.GradedOrdinaryAnalyticExistenceCitation.CitedOrdinaryAnalyticFundamental}.
The ordinary analytic fundamental-solution theorem on a simply connected pole-free domain. The precise source is \cite[Section I.1.2 and Section I.2]{Milicic1989}. Transport to the original germ and the evaluation point is handled by checked adapters.

\paragraph{Classical interface 8.}
\nolinkurl{QuadraticPi.GradedSimplyConnectedUnionCitation.CitedSimplyConnectedUnion}.
The two-open-set simply-connected-union consequence of van Kampen, with nonempty path-connected intersection; see \cite[Theorem 1.20 and its two-set discussion, pp.43--44]{Hatcher2002}. This is a topological input, separate from the ODE theorem.

\paragraph{Classical interface 9.}
\nolinkurl{QuadraticPi.SmoothLocalPolynomialCertificate.CitedRegularPointLocalEquations}.
The local-equation form of smoothness at a regular complex affine point. The original defining ideal localizes to an ideal generated by the codimension-many equations with independent linear parts. This is the standard local complete-intersection consequence in \cite[Tags 00TS and 00T5]{StacksProject}; the proof of Tag 00T5 gives the localized generators and cotangent independence.

\paragraph{Classical interface 10.}
\nolinkurl{QuadraticPi.ScalarMatchingInvariantConverse.CitedGelfondSchneider}.
The classical Gel'fond--Schneider theorem, used for the converse matching-embedding classification. The invariant-field upper bound and its direct algebraic coordinate criterion do not need this converse input. See the transcendence discussion in \cite{Baker1975}.

\paragraph{Classical interface 11.}
\nolinkurl{QuadraticPi.AffineGeneralityCitation.CitedRegularPrincipalOpen}.
A prime complex affine component has a nonempty principal open of regular points of the expected local dimension. This is the principal-open specialization of dense-open regularity and the closed-point dimension theorem; see \cite[Tags 0B8X and 00P0]{StacksProject}, and \cite[Theorems 5.18, 5.23 and Corollary 5.25]{Milne2008}.

\paragraph{Classical interface 12.}
\nolinkurl{QuadraticPi.AffineGeneralityCitation.CitedComplexPrincipalOpenDensity}.
A nonempty principal open in an irreducible complex affine variety is dense in the ordinary complex topology. This follows from equality of ordinary and Zariski closures for constructible sets in \cite[Proposition 15.2, p.197]{Milne2008}, which cites \cite[Chapter I, Section 10, Corollary 1, p.60]{Mumford1999}. Restriction to the actual analytic open set is then proved in Lean.

\subsection{Conventions checked at the application boundaries}

The real exponent uses reduced rational denominators; the Gaussian
exponent uses the complex modulus of the denominator. They are distinct
approximation problems. The Gaussian Dirichlet lower-exponent proof needs
only its checked constant four. The smaller constant two in the written
auxiliary argument has not been substituted into that proof.

In moving-center interpolation the polynomial source cutoff is weak
($\le H$), whereas the target coefficient cutoff is strict ($<H$).
Successive weight thresholds are chosen first, then a divisibility
modulus independent of the centers, and finally the height cutoff that
may depend on the centers. The bounded-fiber and distinct-center
adapters preserve this order. In two-logarithm determinant arguments,
one selected minor and its ordered columns are used at all embeddings.
The shear comparison does not choose a replacement minor.

In the transverse multiplicity statement, the original analytic domain
meets the component and both original normal bases are general on that
same domain. A canonical nontrivial filter is constructed from relatively
open dense subsets; only finite intersections are required, with no Baire
hypothesis. The geometric classical inputs contain neither the weighted
bound nor a supplied compatible point. The literal factor-two bound holds
for every positive $N$, with no equation-dependent lower cutoff.

In the general rank statement the lift, contact and ordinary-point values
refer to a coherent continuation of the original solution germ. The
short-jet offset is chosen before the later contact and slope parameters
and before the polynomial family. In the graded application statements,
the integer endpoint recipes and both signs of the quadratic parameter
are included. For each exponent slack a cutoff is chosen; one cannot
exchange these quantifiers to deduce exponent exactly two at an arbitrary
fixed small point.

The joint polynomial height $H_+(P)$ controls only the positive-degree
coefficients. The constant coefficient is absorbed into the rational
target numerator. The positive lower-bound constant is uniform over
$P,p,Q$ at the fixed endpoint; it may depend on that endpoint and the
specified parameters. Finally, the restricted power-class exponent
$2/n$ is not an ordinary unrestricted exponent of a higher power.

\subsection{Correspondence for every numbered statement}

The entry numbers below are review identifiers in source order. The
mathematical result numbers are obtained through the printed cross
references. Public names are fully qualified and are accompanied by their
source-module paths in the machine-readable correspondence file. Supporting
anchors from earlier checkpoints are retained in that file, including
those that are no longer the preferred public endpoint.

\subsubsection*{Review 1: Lemma~\ref{prelim:dirichlet}}
\noindent\emph{Direct checked endpoint/composition.}
The real and Gaussian exponents use reduced rational denominators and complex denominator modulus, respectively. Both lower exponents are at least two. The Gaussian proof uses the sufficient error constant four; it does not certify the sharper auxiliary constant two displayed in the written pigeonhole argument.

\noindent Public declarations:
\begin{itemize}[leftmargin=1.3em,itemsep=2pt,parsep=0pt]
\item \nolinkurl{QuadraticPi.two_le_extendedIrrationalityExponent}
\item \nolinkurl{QuadraticPi.two_le_gaussianIrrationalityExponent}
\end{itemize}

\subsubsection*{Review 2: Lemma~\ref{prelim:mobius}}
\noindent\emph{Direct checked endpoint/composition.}
The real endpoint includes irrationality and extended-exponent equality. The Gaussian endpoint now packages nonvanishing of the denominator, irrationality of the image, and exponent equality for coefficients in the actual field $\Q(\ii)$. This last packaging reuses the existing four-denominator clearing and Gaussian integer proofs; it adds no Diophantine assumption.

\noindent Public declarations:
\begin{itemize}[leftmargin=1.3em,itemsep=2pt,parsep=0pt]
\item \nolinkurl{QuadraticPi.GaussianMobiusPaperEndpoint.paper_fractional_linear_invariance}
\item \nolinkurl{QuadraticPi.extendedIrrationalityExponent_rat_mobius}
\item \nolinkurl{QuadraticPi.irrational_rat_mobius}
\end{itemize}

\subsubsection*{Review 3: Theorem~\ref{slog:thm:real}}
\noindent\emph{Direct checked endpoint/composition.}
The actual positive algebraic argument supplies the number field and its degree. Irrationality is proved separately. The power corollary covers every positive integer power with rational value; the minimum-power-degree theorem supplies the stronger bound described after the statement. Rational scaling is justified by the proved real invariance theorem.

\noindent Public declarations:
\begin{itemize}[leftmargin=1.3em,itemsep=2pt,parsep=0pt]
\item \nolinkurl{QuadraticPi.algebraicLogarithm_irrationalityExponent_bounds}
\item \nolinkurl{QuadraticPi.rationalPowerLogarithm_irrationalityExponent_eq_two}
\item \nolinkurl{QuadraticPi.algebraicLogarithm_irrational}
\item \nolinkurl{QuadraticPi.algebraicLogarithm_irrationalityExponent_bounds_minimumPowerDegree}
\end{itemize}

\subsubsection*{Review 4: Theorem~\ref{slog:thm:complex}}
\noindent\emph{Direct checked endpoint/composition.}
The argument is a complex algebraic number and the logarithm is any specified nonzero complex number with the prescribed exponential, including nonprincipal branches. The degree is relative to the actual Gaussian base field. Gaussian irrationality and the minimum over positive powers have separate checked declarations.

\noindent Public declarations:
\begin{itemize}[leftmargin=1.3em,itemsep=2pt,parsep=0pt]
\item \nolinkurl{QuadraticPi.algebraicGaussianLogarithm_irrationalityExponent_bounds}
\item \nolinkurl{QuadraticPi.gaussianPower_exp_irrationalityExponent_eq_two}
\item \nolinkurl{QuadraticPi.algebraicGaussianLogarithm_gaussianIrrational}
\item \nolinkurl{QuadraticPi.algebraicGaussianLogarithm_irrationalityExponent_bounds_minimumPowerDegree}
\end{itemize}

\subsubsection*{Review 5: Corollary~\ref{slog:cor:atan}}
\noindent\emph{Direct checked endpoint/composition.}
The ordinary real exponent is obtained from the Gaussian logarithm of the Cayley quotient, with the real arctangent branch and nonzero rational argument. It is not an identification of real and Gaussian approximation problems for arbitrary complex numbers.

\noindent Public declarations:
\begin{itemize}[leftmargin=1.3em,itemsep=2pt,parsep=0pt]
\item \nolinkurl{QuadraticPi.rationalArctangent_irrationalityExponent_eq_two}
\end{itemize}

\subsubsection*{Review 6: Theorem~\ref{geom:thm:main}}
\noindent\emph{Direct checked endpoint/composition.}
The checked theorem is the bounded-fiber statement itself, with the separate conditions $S\theta^m<1$ and $K(w_0/v_0)\theta^m<1$. The divisibility modulus is selected before the centers; the eventual height cutoff may depend on them. Source weight is at most $H$ and target weight is strictly less than $H$. No stronger global additive separation is imposed.

\noindent Public declarations:
\begin{itemize}[leftmargin=1.3em,itemsep=2pt,parsep=0pt]
\item \nolinkurl{QuadraticPi.MovingLogOccupancyInterpolation.bounded_fiber_interpolation}
\end{itemize}

\subsubsection*{Review 7: Corollary~\ref{geom:cor:PA}}
\noindent\emph{Direct checked endpoint/composition.}
The original globally separated formulation is retained as a checked specialization. Its geometric packet and eventual divisibility conclusion agree with the statement. It is no longer the only verified moving-center case, since the bounded-fiber endpoint of the preceding theorem is also available.

\noindent Public declarations:
\begin{itemize}[leftmargin=1.3em,itemsep=2pt,parsep=0pt]
\item \nolinkurl{QuadraticPi.MovingLogUniformInterpolation.movingLogInterpolation}
\end{itemize}

\subsubsection*{Review 8: Corollary~\ref{geom:cor:distinct}}
\noindent\emph{Direct checked endpoint/composition.}
The actual distinct-multiplicative-center endpoint imposes no additive distinctness. The occupancy condition becomes automatic with $S=1$. The geometric-progression adapter proves distinctness for a real base greater than one, with the same order of choices for weights, divisibility modulus, centers, and eventual height.

\noindent Public declarations:
\begin{itemize}[leftmargin=1.3em,itemsep=2pt,parsep=0pt]
\item \nolinkurl{QuadraticPi.MovingLogDistinctInterpolation.distinct_multiplicative_centers}
\item \nolinkurl{QuadraticPi.MovingLogDistinctInterpolation.real_geometric_progression}
\end{itemize}

\subsubsection*{Review 9: Lemma~\ref{geom:lem:mult}}
\noindent\emph{Checked application on explicit classical inputs.}
The paper endpoint gives the displayed factor $2$ for every positive $N$; a stronger factor $1$ is also proved. Three standard geometric inputs are explicit: regular-point local equations, a regular principal open with the correct local dimension, and density of complex principal opens. The original domain must meet the component. Each original normal basis is valid on a relatively open dense locus there. The common nontrivial filter, avoidance of other components, and compatible point are constructed in Lean; no supplied generic point or equation-dependent cutoff remains.

\noindent Public declarations:
\begin{itemize}[leftmargin=1.3em,itemsep=2pt,parsep=0pt]
\item \nolinkurl{QuadraticPi.GeneralMultiplicityPaperEndpoint.paper_weighted_transverse_multiplicity}
\end{itemize}

\subsubsection*{Review 10: Proposition~\ref{geom:prop:curve}}
\noindent\emph{Direct checked endpoint/composition.}
The bounded-fiber curve endpoint supplies the weighted degree inequality with the previously selected weights. Its local charges, coordinate weights, and occupancy data are the ones used by the interpolation theorem. The geometric progression specialization and the stronger original globally separated case are retained.

\noindent Public declarations:
\begin{itemize}[leftmargin=1.3em,itemsep=2pt,parsep=0pt]
\item \nolinkurl{QuadraticPi.MovingLogBoundedFibreCurve.weighted_curve_inequality}
\end{itemize}

\subsubsection*{Review 11: Lemma~\ref{slog:lem:shear}}
\noindent\emph{Direct checked endpoint/composition.}
The same selected source columns and the same full coefficient rows occur on both sides. A filtration-triangular coordinate shear has determinant one. The weak inequality at the weight boundary is included, and the nonzero-minor equivalence is checked without choosing a new minor.

\noindent Public declarations:
\begin{itemize}[leftmargin=1.3em,itemsep=2pt,parsep=0pt]
\item \nolinkurl{QuadraticPi.LogarithmicFullRowShear.det_selectedMatrix_add_tail}
\item \nolinkurl{QuadraticPi.LogarithmicFullRowShear.selected_det_ne_zero_iff}
\end{itemize}

\subsubsection*{Review 12: Lemma~\ref{slog:lem:collision}}
\noindent\emph{Direct checked endpoint/composition.}
The finite translated-row determinant endpoint retains the factorial, the $2^{\sum\ell_r}$ factor, and the exact quadratic occupancy term and linear correction. It uses the actual analytic row expansions. The holomorphic-growth estimate is supplied by the previously verified column analysis, rather than inferred from a formal determinant identity alone.

\noindent Public declarations:
\begin{itemize}[leftmargin=1.3em,itemsep=2pt,parsep=0pt]
\item \nolinkurl{QuadraticPi.MovingLogFiniteCollision.selected_column_collision_bound_exact}
\item \nolinkurl{QuadraticPi.MovingLogFiniteCollision.translated_row_determinant_bound_exact}
\end{itemize}

\subsubsection*{Review 13: Theorem~\ref{conj:partial}}
\noindent\emph{Direct checked endpoint/composition.}
The checked scalar-matching endpoint uses a specified finite set of distinct actual number-field embeddings and exact exponential matching. The upper bound is $2d/s$, and the lower bound comes from proved irrationality and Dirichlet. All-embedding matching gives equality two; mere equality of absolute values is not substituted for branch matching.

\noindent Public declarations:
\begin{itemize}[leftmargin=1.3em,itemsep=2pt,parsep=0pt]
\item \nolinkurl{QuadraticPi.partialScalarMatchingLogarithm_irrationalityExponent_bounds}
\item \nolinkurl{QuadraticPi.scalarMatchingLogarithm_irrationalityExponent_eq_two}
\end{itemize}

\subsubsection*{Review 14: Corollary~\ref{conj:real-quadratic}}
\noindent\emph{Direct checked endpoint/composition.}
The actual real quadratic field, both norm signs, the positive nonunit argument, and the chosen positive square root are included. Norm minus one is handled by a power reduction. No unsupported invariance under an irrational algebraic scaling is used.

\noindent Public declarations:
\begin{itemize}[leftmargin=1.3em,itemsep=2pt,parsep=0pt]
\item \nolinkurl{QuadraticPi.realQuadraticLogarithm_norm_irrationalityExponent_eq_two}
\end{itemize}

\subsubsection*{Review 15: Corollary~\ref{conj:fd-closure}}
\noindent\emph{Direct checked endpoint/composition.}
Fixed-parameter positive and circular rational-sum endpoints supply the nonzero rational linear-combination assertions. The circular closure includes the normalized period. Admissibility and nonvanishing of the final target are preserved; no statement about an arbitrary nonzero algebraic linear combination is inferred.

\noindent Public declarations:
\begin{itemize}[leftmargin=1.3em,itemsep=2pt,parsep=0pt]
\item \nolinkurl{QuadraticPi.positiveQuadraticLogarithm_rational_sum_irrationalityExponent_eq_two}
\item \nolinkurl{QuadraticPi.circularQuadraticLogarithm_rational_sum_irrationalityExponent_eq_two}
\end{itemize}

\subsubsection*{Review 16: Theorem~\ref{conj:invariant}}
\noindent\emph{Checked application on explicit classical inputs.}
The converse classification is completed relative to the explicit Gel'fond--Schneider interface. In the minimal field $E=\Q(\alpha,c)$ the good embeddings are exactly those fixing the invariant field, and their number is one or two. The all-matching rigidity consequence is also checked. The invariant-degree upper bound itself already has a direct proof that does not require this converse.

\noindent Public declarations:
\begin{itemize}[leftmargin=1.3em,itemsep=2pt,parsep=0pt]
\item \nolinkurl{QuadraticPi.ScalarMatchingInvariantConverse.invariantField_theorem}
\item \nolinkurl{QuadraticPi.ScalarMatchingAllEmbeddingRigidity.all_matching_rigidity}
\end{itemize}

\subsubsection*{Review 17: Corollary~\ref{conj:trace}}
\noindent\emph{Direct checked endpoint/composition.}
The reciprocal-trace argument uses the actual trace field and the actual positive real branch. The separate real-parameter endpoint identifies $\alpha=(t+\sqrt{t^2-4})/2$ and its normalization for $t>2$. The ambient degree is not silently substituted for the trace degree.

\noindent Public declarations:
\begin{itemize}[leftmargin=1.3em,itemsep=2pt,parsep=0pt]
\item \nolinkurl{QuadraticPi.ReciprocalTraceLogarithmTheorem.reciprocalTrace_irrationalityExponent_bounds}
\item \nolinkurl{QuadraticPi.ReciprocalTraceParameterTheorem.parameter_irrationalityExponent_bounds}
\end{itemize}

\subsubsection*{Review 18: Corollary~\ref{conj:character}}
\noindent\emph{Direct checked endpoint/composition.}
The quadratic subfield is realized inside the chosen real embedding before the input element is selected. Both the existential choice of positive nonsquare $D_\chi$ and a specified compatible realization are checked. Positivity of all evaluated conjugates supplies the real logarithmic identity and the actual norm-one element.

\noindent Public declarations:
\begin{itemize}[leftmargin=1.3em,itemsep=2pt,parsep=0pt]
\item \nolinkurl{QuadraticPi.QuadraticCharacterExponent.exists_logSum_irrationalityExponent_eq_two}
\item \nolinkurl{QuadraticPi.QuadraticCharacterExponent.logSum_irrationalityExponent_eq_two_of_fieldRange}
\end{itemize}

\subsubsection*{Review 19: Theorem~\ref{ratio:thm:ratio}}
\noindent\emph{Direct checked endpoint/composition.}
The endpoint is for arbitrary positive multiplicatively independent rational arguments, not only the example $2,3$. It supplies irrationality, the lower bound two, and the upper bound three. Its interpolation and parameter choices are assembled by proved declarations.

\noindent Public declarations:
\begin{itemize}[leftmargin=1.3em,itemsep=2pt,parsep=0pt]
\item \nolinkurl{QuadraticPi.rationalLogRatio_irrationalityExponent_bounds}
\end{itemize}

\subsubsection*{Review 20: Lemma~\ref{ratio:lem:two-torus}}
\noindent\emph{Direct checked endpoint/composition.}
The rational-slope two-torus endpoint uses the displayed sufficient weight inequalities and slope-independent successive thresholds. It concerns the actual formal logarithmic coefficient packet and eventual divisible heights. The same full-row packet is used later in selecting the arithmetic minor.

\noindent Public declarations:
\begin{itemize}[leftmargin=1.3em,itemsep=2pt,parsep=0pt]
\item \nolinkurl{QuadraticPi.TwoLogUniformInterpolation.twoLogUniformInterpolation}
\end{itemize}

\subsubsection*{Review 21: Lemma~\ref{ratio:shear}}
\noindent\emph{Direct checked endpoint/composition.}
The manuscript adapter identifies the truncated rational matrix and the full-logarithm matrix with the same ordered rows and columns. Their selected determinants are equal, including at the weight boundary. Rationality and nonvanishing therefore concern one minor, suitable for taking all number-field images.

\noindent Public declarations:
\begin{itemize}[leftmargin=1.3em,itemsep=2pt,parsep=0pt]
\item \nolinkurl{QuadraticPi.TwoLogManuscriptShear.det_fullLog_selected_eq_ratMinor}
\item \nolinkurl{QuadraticPi.TwoLogManuscriptShear.fullLog_det_ne_zero_iff_ratMinor}
\end{itemize}

\subsubsection*{Review 22: Lemma~\ref{ratio:collision}}
\noindent\emph{Direct checked endpoint/composition.}
The sharp native pair count has coefficient $2\sqrt2/3$ and linear correction $-n$. The analytic collision chain gives the stated constant $\sqrt2\log2/3$ and the actual infinite sum. An older weaker collision estimate is retained historically but is not the endpoint cited for this statement.

\noindent Public declarations:
\begin{itemize}[leftmargin=1.3em,itemsep=2pt,parsep=0pt]
\item \nolinkurl{QuadraticPi.TwoLogSharpCollision.pair_degree_sum_lower_rpow}
\item \nolinkurl{QuadraticPi.TwoLogSharpCollision.selected_formal_main_collision_bound}
\item \nolinkurl{QuadraticPi.TwoLogSharpCollision.infinite_collision_exp_bound}
\end{itemize}

\subsubsection*{Review 23: Corollary~\ref{ratio:linear-form}}
\noindent\emph{Direct checked endpoint/composition.}
The real homogeneous two-logarithm lower bound is uniform over every nonzero integer pair and includes pairs with a zero coordinate. Its coefficient height is $\max(1,|p|,|q|)$. The positive constant depends only on the fixed rational bases and exponent slack.

\noindent Public declarations:
\begin{itemize}[leftmargin=1.3em,itemsep=2pt,parsep=0pt]
\item \nolinkurl{QuadraticPi.rationalTwoLog_linearForm_lowerBound}
\end{itemize}

\subsubsection*{Review 24: Theorem~\ref{ratio:matching}}
\noindent\emph{Direct checked endpoint/composition.}
The public partial-matching quotient endpoint has literal upper exponent $3d/s$. It uses one common real irrational quotient and exact nonzero logarithm branches at the selected embeddings, and one selected arithmetic minor transported through those embeddings.

\noindent Public declarations:
\begin{itemize}[leftmargin=1.3em,itemsep=2pt,parsep=0pt]
\item \nolinkurl{QuadraticPi.partialEmbeddingMatchedLogRatio_irrationalityExponent_bounds}
\end{itemize}

\subsubsection*{Review 25: Corollary~\ref{ratio:full-matching}}
\noindent\emph{Direct checked endpoint/composition.}
The all-embedding specialization cancels the actual field degree and gives $2\le\mu\le3$. It preserves the nonzero branch and irrationality assumptions of the preceding theorem. It is not restricted to rational exponential arguments.

\noindent Public declarations:
\begin{itemize}[leftmargin=1.3em,itemsep=2pt,parsep=0pt]
\item \nolinkurl{QuadraticPi.allEmbeddingMatchedLogRatio_irrationalityExponent_bounds}
\end{itemize}

\subsubsection*{Review 26: Corollary~\ref{ratio:quadratic}}
\noindent\emph{Direct checked endpoint/composition.}
The quadratic norm-one ratio endpoint uses the actual degree-two field norm and matching branches. The equivalence between norm one and inversion under the nontrivial quadratic involution supplies the displayed formulation. The real irrational quotient remains an explicit hypothesis as in the paper.

\noindent Public declarations:
\begin{itemize}[leftmargin=1.3em,itemsep=2pt,parsep=0pt]
\item \nolinkurl{QuadraticPi.quadraticNormOneLogRatio_irrationalityExponent_bounds}
\end{itemize}

\subsubsection*{Review 27: Lemma~\ref{ratio:irrational-slope}}
\noindent\emph{Direct checked endpoint/composition.}
The irrational-algebraic-slope interpolation and its cofinal number-field minor adapter are checked. Rational character slopes cannot equal the fixed irrational algebraic normalization. The same sufficient weight inequalities and slope-independent successive thresholds apply; no exceptional rational slope is deleted.

\noindent Public declarations:
\begin{itemize}[leftmargin=1.3em,itemsep=2pt,parsep=0pt]
\item \nolinkurl{QuadraticPi.TwoLogNonrationalUniformInterpolation.twoLogNormalizedUniformInterpolation}
\item \nolinkurl{QuadraticPi.TwoLogNonrationalNumberFieldMinors.cofinal_field_minors}
\end{itemize}

\subsubsection*{Review 28: Theorem~\ref{ratio:normalized}}
\noindent\emph{Direct checked endpoint/composition.}
The normalized matching endpoint incorporates the nonrational algebraic normalization and the not-both-roots-of-unity condition. It gives the stated $3d/s$ bound under the paper hypotheses. A separate stronger endpoint proves irrationality when the additional exact nonvanishing inputs are supplied; the printed theorem already assumes irrationality.

\noindent Public declarations:
\begin{itemize}[leftmargin=1.3em,itemsep=2pt,parsep=0pt]
\item \nolinkurl{QuadraticPi.normalizedEmbeddingMatchedLogRatio_irrationalityExponent_bounds}
\item \nolinkurl{QuadraticPi.normalizedEmbeddingMatchedLogRatio_irrational}
\item \nolinkurl{QuadraticPi.normalizedEmbeddingMatchedLogRatio_unconditional_irrationalityExponent_bounds}
\end{itemize}

\subsubsection*{Review 29: Corollary~\ref{ratio:pi-log}}
\noindent\emph{Direct checked endpoint/composition.}
The unconditional circular-period/rational-logarithm adapter proves irrationality and both exponent bounds for every positive rational $D$ and positive rational $a\ne1$. It includes $D=1$ and rational square parameters, with the actual real square-root branch.

\noindent Public declarations:
\begin{itemize}[leftmargin=1.3em,itemsep=2pt,parsep=0pt]
\item \nolinkurl{QuadraticPi.CircularRealLogRatioUnconditional.normalizedPiLogRatio_irrationalityExponent_bounds}
\end{itemize}

\subsubsection*{Review 30: Corollary~\ref{ratio:atan-log}}
\noindent\emph{Direct checked endpoint/composition.}
The actual arctangent/rational-logarithm adapter proves irrationality and the full $[2,3]$ range for every nonzero rational arctangent argument. The real arctangent branch and the positive rational logarithm argument are explicit.

\noindent Public declarations:
\begin{itemize}[leftmargin=1.3em,itemsep=2pt,parsep=0pt]
\item \nolinkurl{QuadraticPi.CircularRealLogRatioUnconditional.arctangentRealLogRatio_irrationalityExponent_bounds}
\end{itemize}

\subsubsection*{Review 31: Corollary~\ref{ratio:quadratic-log}}
\noindent\emph{Direct checked endpoint/composition.}
The adapter covers both quadratic norm signs and proves irrationality before applying the normalized ratio bound. Separate checked declarations supply the two displayed examples, including the golden-ratio identity and its positive square-root choices.

\noindent Public declarations:
\begin{itemize}[leftmargin=1.3em,itemsep=2pt,parsep=0pt]
\item \nolinkurl{QuadraticPi.RealQuadraticRationalLogRatioUnconditional.norm_ratio_irrationalityExponent_bounds}
\item \nolinkurl{QuadraticPi.RealQuadraticRationalLogRatioUnconditional.log_one_add_sqrt_two_ratio_bounds}
\item \nolinkurl{QuadraticPi.RealQuadraticRationalLogRatioUnconditional.log_golden_ratio_bounds}
\end{itemize}

\subsubsection*{Review 32: Proposition~\ref{extquad:common-scalar-closure}}
\noindent\emph{Direct checked endpoint/composition.}
The common-scalar rational-sum endpoint supplies both the all-denominator lower bound and exponent two for a nonzero target. The shared algebraic scalar and every exact matching identity are retained. Nonzero rational combinations are not replaced by arbitrary algebraic combinations.

\noindent Public declarations:
\begin{itemize}[leftmargin=1.3em,itemsep=2pt,parsep=0pt]
\item \nolinkurl{QuadraticPi.scalarMatching_rational_sum_eventualLowerBound}
\item \nolinkurl{QuadraticPi.scalarMatching_rational_sum_irrationalityExponent_eq_two}
\end{itemize}

\subsubsection*{Review 33: Corollary~\ref{p1:rational-log-sums}}
\noindent\emph{Direct checked endpoint/composition.}
The real rational-logarithm sum is reduced through an actual positive rational product after clearing rational coefficient denominators. Both the explicit eventual lower bound and exponent two are checked, under nonvanishing of the final sum.

\noindent Public declarations:
\begin{itemize}[leftmargin=1.3em,itemsep=2pt,parsep=0pt]
\item \nolinkurl{QuadraticPi.rationalLogarithm_rational_sum_eventualLowerBound}
\item \nolinkurl{QuadraticPi.rationalLogarithm_rational_sum_irrationalityExponent_eq_two}
\end{itemize}

\subsubsection*{Review 34: Theorem~\ref{extquad:coordinate-theorem}}
\noindent\emph{Direct checked endpoint/composition.}
The rational-coordinate theorem includes rational square parameters as well as genuine quadratic extensions. It checks the squared norm condition, the actual normalized value, and the all-denominator conclusion. Thus a degree-two assumption is not added when the parameter happens to be a square.

\noindent Public declarations:
\begin{itemize}[leftmargin=1.3em,itemsep=2pt,parsep=0pt]
\item \nolinkurl{QuadraticPi.rationalNormalizedQuadraticLogarithm_irrationalityExponent_eq_two}
\end{itemize}

\subsubsection*{Review 35: Theorem~\ref{extquad:actual-norm}}
\noindent\emph{Direct checked endpoint/composition.}
The actual nonsquare real quadratic field and norm are used. The norm-one and norm-minus-one cases both give exponent two and the corresponding explicit lower-bound conclusion. This agrees with the coordinate formulation after identifying the chosen embedding.

\noindent Public declarations:
\begin{itemize}[leftmargin=1.3em,itemsep=2pt,parsep=0pt]
\item \nolinkurl{QuadraticPi.realQuadraticLogarithm_norm_irrationalityExponent_eq_two}
\end{itemize}

\subsubsection*{Review 36: Corollary~\ref{extquad:normalized-sums}}
\noindent\emph{Direct checked endpoint/composition.}
The normalized quadratic rational-sum endpoint covers the common fixed parameter and nonzero rational combination. It uses checked logarithm identities and scalar matching. The irrational square-root normalization is justified by the matching construction, rather than by rational-scaling invariance.

\noindent Public declarations:
\begin{itemize}[leftmargin=1.3em,itemsep=2pt,parsep=0pt]
\item \nolinkurl{QuadraticPi.normalizedQuadraticLogarithm_rational_sum_irrationalityExponent_eq_two}
\end{itemize}

\subsubsection*{Review 37: Theorem~\ref{extquad:positive-rational}}
\noindent\emph{Direct checked endpoint/composition.}
The positive rational-parameter family uses its admissible real interval, nonzero input, and positive logarithm quotient. Its individual and nonzero rational-sum endpoints are checked, with constants at the fixed target.

\noindent Public declarations:
\begin{itemize}[leftmargin=1.3em,itemsep=2pt,parsep=0pt]
\item \nolinkurl{QuadraticPi.positiveQuadraticLogarithm_irrationalityExponent_eq_two}
\item \nolinkurl{QuadraticPi.positiveQuadraticLogarithm_rational_sum_irrationalityExponent_eq_two}
\end{itemize}

\subsubsection*{Review 38: Theorem~\ref{extquad:circular-rational}}
\noindent\emph{Direct checked endpoint/composition.}
The circular rational-parameter endpoints include the actual real arctangent expression, the normalized period, and their nonzero rational linear combinations. The real branches and fixed parameter are part of the formal interface.

\noindent Public declarations:
\begin{itemize}[leftmargin=1.3em,itemsep=2pt,parsep=0pt]
\item \nolinkurl{QuadraticPi.circularQuadraticLogarithm_irrationalityExponent_eq_two}
\item \nolinkurl{QuadraticPi.rationalNormalizedPi_irrationalityExponent_eq_two}
\item \nolinkurl{QuadraticPi.circularQuadraticLogarithm_rational_sum_irrationalityExponent_eq_two}
\end{itemize}

\subsubsection*{Review 39: Proposition~\ref{extquad:real-derivative}}
\noindent\emph{Direct checked endpoint/composition.}
The derivative, positivity of the denominator, differential identity, and value at zero have separate checked declarations. Their domains are the paper's admissible real interval. No complex logarithm branch is used without its real-branch agreement theorem.

\noindent Public declarations:
\begin{itemize}[leftmargin=1.3em,itemsep=2pt,parsep=0pt]
\item \nolinkurl{QuadraticPi.QuadraticLogarithmicFamily.value_hasDerivAt}
\item \nolinkurl{QuadraticPi.QuadraticLogarithmicFamily.value_ode}
\item \nolinkurl{QuadraticPi.QuadraticLogarithmicFamily.denominator_pos}
\item \nolinkurl{QuadraticPi.QuadraticLogarithmicFamily.value_zero}
\end{itemize}

\subsubsection*{Review 40: Theorem~\ref{extquad:real-taylor}}
\noindent\emph{Direct checked endpoint/composition.}
The actual real series and absolute summability are proved on the stated convergence domain. Its coefficients and odd powers agree with the displayed Taylor expansion. This is an analytic equality of values, rather than only an identity of formal power series.

\noindent Public declarations:
\begin{itemize}[leftmargin=1.3em,itemsep=2pt,parsep=0pt]
\item \nolinkurl{QuadraticPi.QuadraticLogarithmicFamily.value_eq_tsum}
\item \nolinkurl{QuadraticPi.QuadraticLogarithmicFamily.value_series_absolutely_summable}
\end{itemize}

\subsubsection*{Review 41: Proposition~\ref{extquad:branch-agreement}}
\noindent\emph{Direct checked endpoint/composition.}
The complex principal-branch expression is proved equal to the actual real family on its admissible interval. For negative parameter the interval is all finite real inputs. The chosen complex square root and the real/circular branches are explicitly identified.

\noindent Public declarations:
\begin{itemize}[leftmargin=1.3em,itemsep=2pt,parsep=0pt]
\item \nolinkurl{QuadraticPi.QuadraticLogarithmicFamily.value_complex_branch}
\end{itemize}

\subsubsection*{Review 42: Theorem~\ref{extquad:complex-derivative}}
\noindent\emph{Direct checked endpoint/composition.}
The slit-plane hypothesis implies nonvanishing of the denominator and the displayed complex derivative. The open-disc differentiability, differential equation, and value at zero are separately checked. The statement does not assert differentiability across the principal logarithm cut.

\noindent Public declarations:
\begin{itemize}[leftmargin=1.3em,itemsep=2pt,parsep=0pt]
\item \nolinkurl{QuadraticPi.QuadraticLogarithmicFamily.complexValue_hasDerivAt_of_slitPlane}
\item \nolinkurl{QuadraticPi.QuadraticLogarithmicFamily.complexValue_differentiableOn}
\item \nolinkurl{QuadraticPi.QuadraticLogarithmicFamily.complex_denominator_ne_zero}
\item \nolinkurl{QuadraticPi.QuadraticLogarithmicFamily.complexValue_ode}
\item \nolinkurl{QuadraticPi.QuadraticLogarithmicFamily.complexValue_zero}
\end{itemize}

\subsubsection*{Review 43: Theorem~\ref{extquad:complex-series-theorem}}
\noindent\emph{Direct checked endpoint/composition.}
The complex Taylor expansion is an actual absolutely summable series on the stated disc, with the same coefficients as the real expansion. The branch is the principal branch whose domain was established in the preceding theorem.

\noindent Public declarations:
\begin{itemize}[leftmargin=1.3em,itemsep=2pt,parsep=0pt]
\item \nolinkurl{QuadraticPi.QuadraticLogarithmicFamily.complexValue_eq_tsum}
\item \nolinkurl{QuadraticPi.QuadraticLogarithmicFamily.complexValue_series_absolutely_summable}
\end{itemize}

\subsubsection*{Review 44: Proposition~\ref{ext:coordinate-criterion}}
\noindent\emph{Direct checked endpoint/composition.}
The exact two-action criterion and the implication to exponential matching are checked over the actual intermediate invariant field. This direct algebraic statement does not use the converse Gel'fond--Schneider classification of all matching embeddings.

\noindent Public declarations:
\begin{itemize}[leftmargin=1.3em,itemsep=2pt,parsep=0pt]
\item \nolinkurl{QuadraticPi.ScalarMatchingInvariantField.fixes_iff_actions}
\item \nolinkurl{QuadraticPi.ScalarMatchingInvariantField.fixing_embedding_matches}
\end{itemize}

\subsubsection*{Review 45: Theorem~\ref{ext:invariant-bound-theorem}}
\noindent\emph{Direct checked endpoint/composition.}
The invariant-field bound uses a constructed set of fixing embeddings and tower-degree cancellation. It applies in an arbitrary ambient number field, with no minimal-generation requirement. Both its all-denominator estimate and exponent bound are checked independently of the converse classification.

\noindent Public declarations:
\begin{itemize}[leftmargin=1.3em,itemsep=2pt,parsep=0pt]
\item \nolinkurl{QuadraticPi.ScalarMatchingInvariantDegreeTheorem.invariantField_eventualLowerBound}
\item \nolinkurl{QuadraticPi.ScalarMatchingInvariantDegreeTheorem.invariantField_irrationalityExponent_bounds}
\end{itemize}

\subsubsection*{Review 46: Theorem~\ref{ext:reciprocal-trace}}
\noindent\emph{Direct checked endpoint/composition.}
The reciprocal-trace all-denominator estimate and exponent bound use the actual generated trace field. They are direct specializations of the invariant-field upper bound. Positivity and exclusion of $1$ guarantee the displayed real normalization is defined and nonzero.

\noindent Public declarations:
\begin{itemize}[leftmargin=1.3em,itemsep=2pt,parsep=0pt]
\item \nolinkurl{QuadraticPi.ReciprocalTraceLogarithmTheorem.reciprocalTrace_eventualLowerBound}
\item \nolinkurl{QuadraticPi.ReciprocalTraceLogarithmTheorem.reciprocalTrace_irrationalityExponent_bounds}
\end{itemize}

\subsubsection*{Review 47: Theorem~\ref{ext:reciprocal-field-structure}}
\noindent\emph{Direct checked endpoint/composition.}
The actual inversion automorphism, its involutivity, index two, and equality of its fixed field with the trace field are checked. The resulting degree and exponent conclusions use the existing reciprocal-trace endpoints. The translated Eisenstein quartic example is also preserved in the accompanying sources.

\noindent Public declarations:
\begin{itemize}[leftmargin=1.3em,itemsep=2pt,parsep=0pt]
\item \nolinkurl{QuadraticPi.ReciprocalPolynomialDegree.reciprocal_trace_index}
\item \nolinkurl{QuadraticPi.ReciprocalPolynomialDegree.inverseAutomorphism_involutive}
\item \nolinkurl{QuadraticPi.ReciprocalTraceFixedField.reciprocal_fixedField_eq_traceField}
\item \nolinkurl{QuadraticPi.ReciprocalPolynomialDegree.reciprocalPolynomial_irrationalityExponent_bounds}
\end{itemize}

\subsubsection*{Review 48: Proposition~\ref{ext:character-product-proposition}}
\noindent\emph{Direct checked endpoint/composition.}
The signed product is constructed as an actual field element. Equivariance, nonvanishing, kernel-fixed-field membership, and inversion outside the kernel are proved. These are the algebraic inputs to the later logarithmic identity and exponent application.

\noindent Public declarations:
\begin{itemize}[leftmargin=1.3em,itemsep=2pt,parsep=0pt]
\item \nolinkurl{QuadraticPi.QuadraticCharacterProduct.product_equivariant}
\item \nolinkurl{QuadraticPi.QuadraticCharacterProduct.product_mem_fixedField}
\item \nolinkurl{QuadraticPi.QuadraticCharacterProduct.product_ne_zero}
\end{itemize}

\subsubsection*{Review 49: Lemma~\ref{ext:character-realization}}
\noindent\emph{Direct checked endpoint/composition.}
The real quadratic realization is a compatible field isomorphism with an actual positive nonsquare rational parameter. It depends only on the quadratic field and its real embedding, and is chosen before the logarithmic input. This order of quantifiers is used by the character-sum endpoint.

\noindent Public declarations:
\begin{itemize}[leftmargin=1.3em,itemsep=2pt,parsep=0pt]
\item \nolinkurl{QuadraticPi.RealQuadraticClassification.exists_positive_nonsquare_realization}
\end{itemize}

\subsubsection*{Review 50: Theorem~\ref{ext:character-exponent}}
\noindent\emph{Direct checked endpoint/composition.}
The complete character-sum chain includes the actual quadratic realization, norm one, positivity, real logarithm identity, nonvanishing, and exponent two. Both a field chosen before the input and a prescribed compatible real field range are covered.

\noindent Public declarations:
\begin{itemize}[leftmargin=1.3em,itemsep=2pt,parsep=0pt]
\item \nolinkurl{QuadraticPi.QuadraticCharacterExponent.exists_logSum_irrationalityExponent_eq_two}
\item \nolinkurl{QuadraticPi.QuadraticCharacterExponent.logSum_irrationalityExponent_eq_two_of_fieldRange}
\end{itemize}

\subsubsection*{Review 51: Proposition~\ref{ext:character-input-field}}
\noindent\emph{Direct checked endpoint/composition.}
The input-field descent proves containment of the character quadratic fixed field, even degree for a nontrivial product, and cancellation in odd degree. Positivity at a real embedding then turns the product identity into the zero logarithmic sum. No incorrect descent to the input field is presumed.

\noindent Public declarations:
\begin{itemize}[leftmargin=1.3em,itemsep=2pt,parsep=0pt]
\item \nolinkurl{QuadraticPi.QuadraticCharacterInputField.fixedField_le_adjoin_of_product_ne_one}
\item \nolinkurl{QuadraticPi.QuadraticCharacterInputField.two_dvd_finrank_adjoin_of_product_ne_one}
\item \nolinkurl{QuadraticPi.QuadraticCharacterInputField.logSum_eq_zero_of_odd_finrank_adjoin}
\end{itemize}

\subsubsection*{Review 52: Lemma~\ref{graded:rank}}
\noindent\emph{Checked application on explicit classical inputs.}
The literal short-derivative endpoint is completed relative to cited separation, general Nilsson rank, ordinary analytic existence, and the simply-connected-union theorem. Values at $r$ are on one coherent continuation of the original solution germ. The finite lifted system, contact order, and short-jet budget are identified. The offset is chosen before $\sigma,\eta,M,P$, as required for uniformity; full rank is not a supplied endpoint hypothesis.

\noindent Public declarations:
\begin{itemize}[leftmargin=1.3em,itemsep=2pt,parsep=0pt]
\item \nolinkurl{QuadraticPi.GradedGeneralRankClosed.paper_general_rank_lemma}
\end{itemize}

\subsubsection*{Review 53: Lemma~\ref{graded:two-form}}
\noindent\emph{Direct checked endpoint/composition.}
The two-independent-forms criterion is checked with both rowwise and common-height variants. It produces irrationality and the real exponent estimate from the actual determinant and growth/decay assumptions. It does not require the prime number theorem or a new analytic rank assertion.

\noindent Public declarations:
\begin{itemize}[leftmargin=1.3em,itemsep=2pt,parsep=0pt]
\item \nolinkurl{QuadraticPi.TwoIndependentFormsCriterion.two_independent_forms_irrationalityExponent_bounds}
\item \nolinkurl{QuadraticPi.TwoIndependentFormsCriterion.rowwise_two_independent_forms_irrationalityExponent_bounds}
\item \nolinkurl{QuadraticPi.TwoIndependentFormsCriterion.two_independent_forms_extendedIrrationalityExponent_bound}
\end{itemize}

\subsubsection*{Review 54: Proposition~\ref{graded:contraction}}
\noindent\emph{Direct checked endpoint/composition.}
The contraction theorem is uniform in the coefficient vector and rational target. The positive coefficient height omits the constant coefficient, which is absorbed into the target numerator. Both equal-slack and linked-slack exponent formulas are checked, with a constant attached to the fixed endpoint.

\noindent Public declarations:
\begin{itemize}[leftmargin=1.3em,itemsep=2pt,parsep=0pt]
\item \nolinkurl{QuadraticPi.UniformCoefficientContraction.uniform_coefficient_contraction}
\item \nolinkurl{QuadraticPi.UniformCoefficientContraction.uniform_coefficient_contraction_linked_slack}
\end{itemize}

\subsubsection*{Review 55: Theorem~\ref{graded:small-log-theorem}}
\noindent\emph{Checked application on explicit classical inputs.}
The literal common cutoff for every integer base above it is checked for both real-branch families, relative to sharp lcm growth, algebraic-logarithm transcendence, separation, and quadratic rank. The separate pointwise rate theorem covers both literal rate formulas when their decay rate is positive. Irrationality of every positive integer power supplies the lower bound two.

\noindent Public declarations:
\begin{itemize}[leftmargin=1.3em,itemsep=2pt,parsep=0pt]
\item \nolinkurl{QuadraticPi.GradedLiteralSmallPowerEndpoint.literal_common_small_power_cutoff}
\item \nolinkurl{QuadraticPi.GradedSmallPowerPointwiseEndpoint.literal_small_power_pointwise_rates}
\end{itemize}

\subsubsection*{Review 56: Theorem~\ref{graded:quadratic-uniform}}
\noindent\emph{Checked application on explicit classical inputs.}
The actual $h_d(1/b)$ endpoint covers both nonzero integer signs of $d$, every sufficiently large integer $b$, and every nonconstant rational polynomial of bounded degree. Its cutoff uses the literal stated arithmetic recipe and is independent of the polynomial. The four quadratic-application classical interfaces remain explicit.

\noindent Public declarations:
\begin{itemize}[leftmargin=1.3em,itemsep=2pt,parsep=0pt]
\item \nolinkurl{QuadraticPi.GradedLiteralUniformEndpoint.integer_base_literal_uniform_quadratic_polynomial}
\end{itemize}

\subsubsection*{Review 57: Lemma~\ref{graded:F-clearing}}
\noindent\emph{Direct checked endpoint/composition.}
The clearing integer is tied to the actual divided analytic covariant derivative at $a/q$, with $k<M$ and polynomial degree less than $M$. The input context includes $4|a|\le q$, $a\ne0$, and positive $q$. Every factor, including $q^{M-1-k}$, the absolute-value factor, and the harmonic clearing factor, is checked.

\noindent Public declarations:
\begin{itemize}[leftmargin=1.3em,itemsep=2pt,parsep=0pt]
\item \nolinkurl{QuadraticPi.GradedAnalyticJetClearing.integer_endpoint_dividedDerivative_cleared}
\end{itemize}

\subsubsection*{Review 58: Theorem~\ref{graded:joint}}
\noindent\emph{Checked application on explicit classical inputs.}
The exact paper rates $\alpha,\gamma$ and their positivity hypotheses are used at the fixed rational point. Both displayed joint bounds and linked-slack bounds are checked uniformly in $P,p,Q$. The proof retains the four explicit quadratic-application classical inputs; it does not substitute a conservative rate for the literal sharp recipe.

\noindent Public declarations:
\begin{itemize}[leftmargin=1.3em,itemsep=2pt,parsep=0pt]
\item \nolinkurl{QuadraticPi.GradedSharpPaperRates.polynomial_joint_paper_bound}
\item \nolinkurl{QuadraticPi.GradedSharpPaperRates.polynomial_joint_paper_linked_bound}
\end{itemize}

\subsubsection*{Review 59: Corollary~\ref{graded:joint-hd}}
\noindent\emph{Checked application on explicit classical inputs.}
The checked scaled endpoint evaluates the actual $h_d(1/b)$ and has exactly the two displayed exponents. Its constant is uniform over nonconstant integer polynomials and rational approximants at the fixed endpoint. This route uses transcendence, separation, and quadratic rank; it does not need sharp lcm asymptotics.

\noindent Public declarations:
\begin{itemize}[leftmargin=1.3em,itemsep=2pt,parsep=0pt]
\item \nolinkurl{QuadraticPi.GradedScaledJointTradeoff.quadratic_log_concrete_joint_tradeoff}
\end{itemize}

\subsubsection*{Review 60: Theorem~\ref{graded:two-log-joint}}
\noindent\emph{Checked application on explicit classical inputs.}
The literal two-logarithm joint endpoint supplies the explicit integer cutoff and the exact coefficient-height and denominator exponents, with constants uniform over $(u,v),p,Q$. Baker independence, separation, quadratic rank, and sharp lcm growth are explicit. The ratio consequence reuses the stronger rational-ratio bound $\mu\le3$ for every positive $N$.

\noindent Public declarations:
\begin{itemize}[leftmargin=1.3em,itemsep=2pt,parsep=0pt]
\item \nolinkurl{QuadraticPi.GradedTwoLogJointEndpoint.literal_two_log_joint}
\item \nolinkurl{QuadraticPi.GradedTwoLogJointEndpoint.literal_two_log_ratio}
\end{itemize}

\subsubsection*{Review 61: Corollary~\ref{graded:range-two-three}}
\noindent\emph{Checked application on explicit classical inputs.}
The lower bound follows from proved irrationality of the indicated polynomial and power values, relative to the same classical inputs as their upper estimates. The literal thresholds are retained. The range up to three requires $0<\varepsilon\le1$; the quantifiers $\forall\varepsilon\,\exists b_0$ are not interchanged.

\noindent Public declarations:
\begin{itemize}[leftmargin=1.3em,itemsep=2pt,parsep=0pt]
\item \nolinkurl{QuadraticPi.GradedLiteralSmallPowerEndpoint.literal_small_power_range}
\item \nolinkurl{QuadraticPi.GradedLiteralSmallPowerEndpoint.literal_quadratic_polynomial_range}
\end{itemize}

\subsubsection*{Review 62: Proposition~\ref{graded:restricted-power}}
\noindent\emph{Direct checked endpoint/composition.}
The exact restricted exponent is $2/n$ with the reduced denominator height of the approximating rational in the fixed power class. It is not the ordinary unrestricted irrationality exponent of a higher power. The scalar-power rationality and the nonzero real base are explicit.

\noindent Public declarations:
\begin{itemize}[leftmargin=1.3em,itemsep=2pt,parsep=0pt]
\item \nolinkurl{QuadraticPi.restricted_power_exponent_eq}
\end{itemize}

\subsection{Compiler evidence and reproduction}

The fixed compiler is official Lean 4.34.1, commit
\nolinkurl{5045d0056413266e57c625dcd7c365b10e377c52}, with mathlib commit
\nolinkurl{d13f23b723b8a846827a245b89c10fc7d3f11612} and the nine recorded
dependency pins. The compiler, kernel, project trust settings, and bounded
execution configuration are unchanged. The maximum admitted compiler
process count remains two, with four threads per process.

Checkpoint135 built all inherited coverage-anchor modules plus the new
multiplicity modules: 336 targets and 6241 jobs, followed by 2135 printed
declaration types and axiom reports. Checkpoint136 builds the same source
coverage plus the final Gaussian adapter: 337 targets and 6242 jobs.
Its fresh audit checks 2145 declaration types and axiom reports, including
all previous coverage anchors, both new Gaussian declarations, and eight
previously proved supporting declarations used by this review. The only
axioms reported are \texttt{propext}, \texttt{Classical.choice}, and
\texttt{Quot.sound}. No new global mathematical axiom, admitted proof,
compiler modification, or native-decision shortcut is introduced.

The audit source is
\nolinkurl{anc/lean/validation/LogarithmicPaperStep136Audit_2026-10-16.lean}.
The recorded build log, audit output, statement review, selected endpoint
source map, and complete target list are supplied in
\nolinkurl{anc/lean/validation/checkpoint136/}. Readers start with
\nolinkurl{anc/REPRODUCTION.txt} and \nolinkurl{anc/lean/README.txt}.
The accompanying publication source snapshot is labelled
\texttt{logarithms-v2-2026-10-20-Lean136}; its file manifest records the
exact source hashes and the pinned public dependencies.

The mathematical edition and its statement review remain dated
October 16, 2026. The publication copy was prepared on October 19, 2026,
with filename-based font loading and an author-footnote disclosure of
ChatGPT assistance. A literature revision on October 20, 2026 added the
comparison with \cite{LiuJiangZhang2026}, revised the abstract and
contribution statements, and placed citations beside overlapping results.
The explicit manuscript date is retained and is distinguished from this
revision date and from any arXiv submission date.
All 62 numbered statement environments and all 55
proof environments are preserved verbatim from the reviewed edition.
The source distribution makes the local import closure available without
requiring the research recovery archives; the pinned public dependencies
are obtained separately. Its classical theorem inputs remain explicit.
The retained October 16 compiler evidence is distinguished from the
fresh bounded reproduction completed on October 19. The latter built the
337 original targets and four additional manuscript-source targets, and
checked the original 2145-declaration audit and a separate 51-declaration
audit, using the pinned dependency cache and a source-guarded project
cache. It was not a from-scratch rebuild of every dependency. The results
and logs are supplied in
\nolinkurl{anc/lean/validation/final-review-2026-10-19/}.
The October 20 revision changes no Lean source or mathematical statement;
it verifies the unchanged proof-source bytes and refreshes the article
manifest without claiming another Lean compilation.

% ===== END formal_status.tex =====

\clearpage
\phantomsection
\addcontentsline{toc}{section}{References}

% ===== BEGIN bibliography.tex =====
% Bibliography fragment for literature.tex; the root manuscript may append
% its independently checked geometry and formalization references.
% ===== END bibliography.tex =====

\end{document}